\newcommand{\newdef}[2]{\hypertarget{def:#1}{\emph{#2}}}
\newcommand{\recalldef}[2]{\hyperlink{def:#1}{#2}}
\newcommand\xbar{\bar{x}}
\newcommand\bbar{\bar{b}}
\newcommand{\tCC}{\widetilde{\Cc}}
\newcommand{\Ra}{\Rightarrow}
\newcommand{\La}{\Leftarrow}
\crefname{thm}{Theorem}{Theorems}
\crefname{defi}{Definition}{Definitions}
\crefname{lem}{Lemma}{Lemmas}
\crefname{cor}{Corollary}{Corollaries}
\crefname{conj}{Conjecture}{Conjectures}
\crefname{exa}{Example}{Examples}
\crefname{prob}{Problem}{Problems}
\crefname{claim}{Claim}{Claims}
\crefname{prop}{Proposition}{Propositions}
\crefname{fact}{Fact}{Facts}
\crefname{rem}{Remark}{Remarks}
\crefname{part}{Part}{Parts}
\newcommand{\Cc}{\mathscr{C}}
\newcommand{\Dd}{\mathscr{D}}
\newcommand{\Ee}{\ensuremath{\mathscr E}}
\newcommand{\Gg}{\ensuremath{\mathscr G}}
\newcommand{\Hh}{\ensuremath{\mathscr H}}
\newcommand{\PW}{\mathscr{P\!W\!}}
\newcommand{\Ff}{\mathscr{F}}
\newcommand{\Pp}{\mathscr{P}}
\newcommand{\Cubic}{{\mathscr C}\!\!\textit{\calligra ubic}\,}
\newcommand{\Pl}{{\mathscr P}\!\!\textit{\calligra lanar}\,}
\newcommand{\Ss}{\mathscr{S}}
\newcommand{\Tt}{\mathscr{T}}
\newcommand{\TP}{\ensuremath{\mathscr{T\!\!P}}}
\newcommand{\N}{\mathbb{N}}
\newcommand{\dist}{\mathrm{dist}}
\renewcommand{\phi}{\varphi}
\renewcommand{\FO}{{\rm FO}}
\newcommand{\MSO}{{\rm MSO}}
\newcommand{\NFCP}{{\rm NFCP}}
\newcommand{\cover}{\mathrel{\mathlarger{\vartriangleleft}}}
\newcommand{\rloc}[2]{\ensuremath{\mathcal{B}_{#1}^{#2}}}
\newcommand{\Uu}{\mathcal{U}}
\newtheoremstyle{claim}
{5pt}
{0pt}
{\itshape}
{0pt}
{\itshape}
{.}
{ }
{$\vartriangleright$ \thmname{#1}\thmnumber{ #2}\textnormal{\thmnote{ (#3)}}}
\theoremstyle{definition}
\newtheorem{claim}[thm]{$\vartriangleright$ Claim} 
\newenvironment{claimproof}{\begin{proof}[Proof of the claim]
	
}{\end{proof}}
\def\l@part{\@tocline{-1}{2pt plus2pt}{0pt}{}{\bfseries}}
\renewcommand{\@bibtitlestyle}{%
	\@xp\part\@xp*\@xp{\refname}%
}
	\DeclareMathOperator{\join}{+}
\DeclareMathOperator{\comp}{\oplus}
\DeclareMathOperator{\union}{\uplus}
\DeclareMathOperator{\Th}{\text{Th}}
\newcommand{\dnnc}{is self-copying\xspace}
\newcommand{\dset}{downset\xspace}
\newenvironment{absolutelynopagebreak}
{\par\nobreak\vfil\penalty0\vfilneg
	\vtop\bgroup}
{\par\xdef\tpd{\the\prevdepth}\egroup
	\prevdepth=\tpd}
\newcommand{\ERCagreement}{
	\vfill
	\begin{absolutelynopagebreak}
	\noindent\rule{2cm}{0.1pt}~\\[7pt]
	\begin{minipage}{.75\textwidth}
	\footnotesize
This paper is part of projects that have received funding from the European Research Council (ERC) under the European Union's Horizon 2020 research and innovation programme (grant agreements No 810115 -- {\sc Dynasnet}) and from the German
Research Foundation (DFG) with grant agreement
No 444419611. The first author was also supported by the project 21-10775S of the Czech Science Foundation (GA\v CR) and by the long-term strategic development financing of the Institute of Computer Science (RVO: 67985807).
	\end{minipage}$\qquad$
\begin{minipage}{.15\textwidth}
	\phantom{.}\hfill\includegraphics[height=9mm]{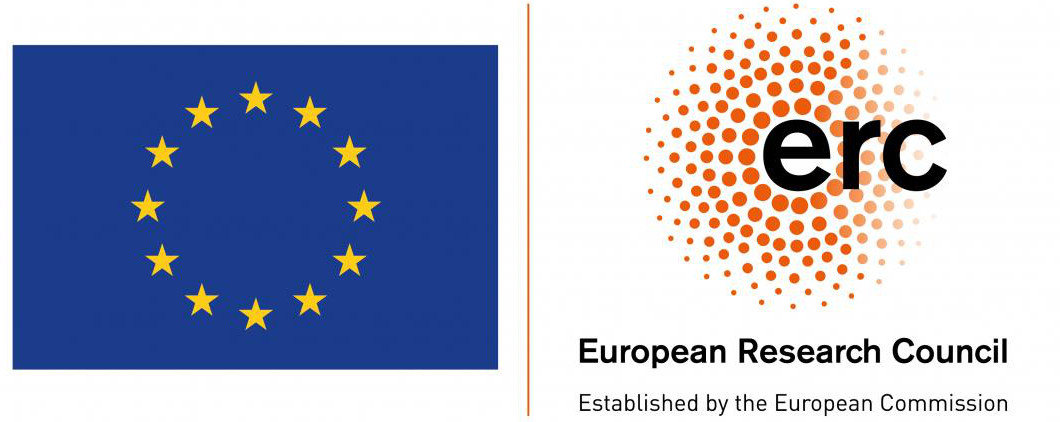}\\
	\phantom{.}\hfill\includegraphics[height=9mm]{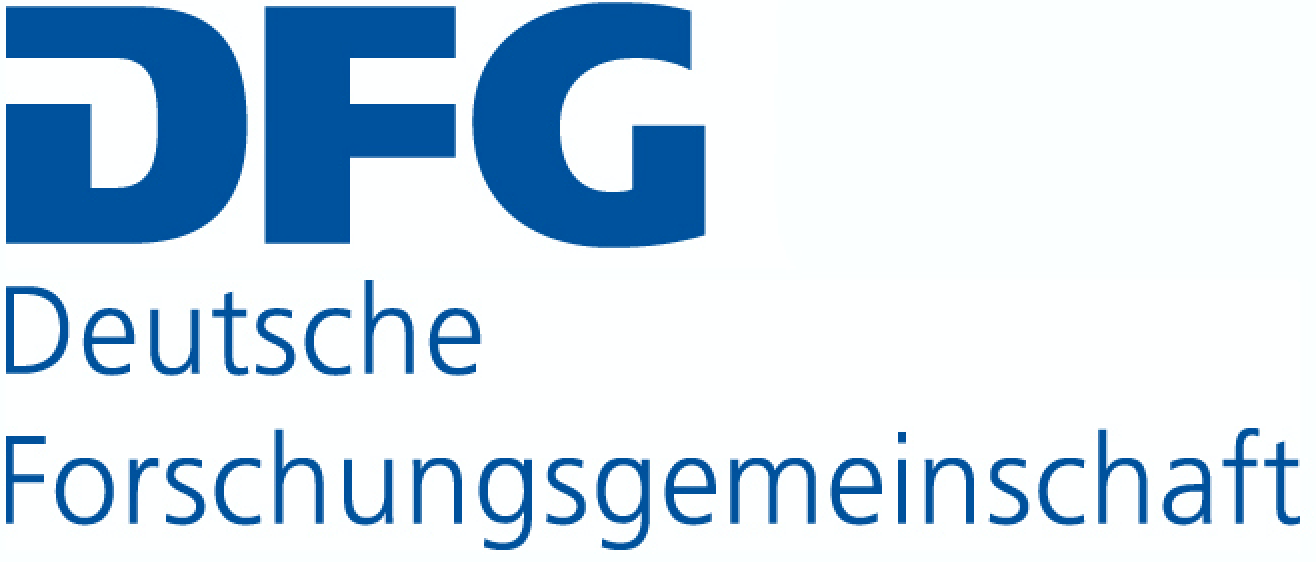}
\end{minipage}
\end{absolutelynopagebreak}
}
\title{On  first-order transductions 
	of  classes of graphs} 
\author[S.~Braunfeld]{Samuel Braunfeld\lmcsorcid{0000-0003-3531-9970}}[a]
\address{Computer Science Institute of Charles University (IUUK), Praha, Czech Republic, and The Czech Academy of Sciences, Institute of Computer Science, Pod Vod\'{a}renskou v\v{e}\v{z}\'{\i} 2, 182 00 Prague, Czech Republic}
\email{sbraunfeld@iuuk.mff.cuni.cz}
\author[J.~Ne\v set\v ril]{Jaroslav Ne\v set\v ril\lmcsorcid{0000-0002-5133-5586}}[b]
\address{Computer Science Institute of Charles University (IUUK), Praha, Czech Republic}
\email{nesetril@iuuk.mff.cuni.cz}
\author[P.~{Ossona de Mendez}]{Patrice {Ossona de Mendez}\lmcsorcid{0000-0003-0724-3729}}[c]
\address{Centre d'Analyse et de Math\'ematiques Sociales (CNRS, UMR 8557),
Paris, France and\\
Computer Science Institute of Charles University,
  Praha, Czech Republic}
\email{pom@ehess.fr}
\author[S.~Siebertz]{Sebastian Siebertz\lmcsorcid{0000-0002-6347-1198}}[d]
\address{University of Bremen, Bremen, Germany}
\email{siebertz@uni-bremen.de}
\keywords{Finite model theory, first-order transductions, structural graph theory} 
\begin{document}
\begin{abstract}
	We study various aspects of the first-order transduction quasi-order on graph classes, which provides a way of measuring the relative complexity of graph classes based on whether one can encode the other using a formula of first-order (FO) logic. In contrast with the conjectured simplicity of the transduction quasi-order for monadic second-order logic, the FO-transduction quasi-order is very complex, and many standard properties from structural graph theory and model theory naturally appear in it. We prove a local normal form for transductions among other general results and constructions, which we illustrate via several examples and via the characterizations of the transductions of some simple classes. We then turn to various aspects of the quasi-order, including the (non-)existence of minimum and maximum classes for certain properties, the strictness of the pathwidth hierarchy, the fact that the quasi-order is not a lattice, and the role of weakly sparse classes in the quasi-order.
	 	\end{abstract}

\maketitle
\ERCagreement
\setcounter{tocdepth}{1}
\newpage

\clearpage
\tableofcontents

\part*{Introduction}
A leitmotif in mathematics is to compare objects using structure-preserving maps.
In combinatorics, this has led to the study of the quasi-orders defined by homomorphisms, by the minor relation, by the topological minor relation, etc. This approach can also be applied to classes of structures (and their relationships) instead of individual structures. {\em Transductions} describe whether one class can be encoded in another using formulas from a given logic, and thus provide a way of ordering classes by their logical complexity. Perhaps surprisingly, these transduction (quasi-)orderings highlight many classes of combinatorial interest, and provide a scheme bringing together the views of model theory and structural graph theory on the complexity of graph classes.

One motivating example of this interaction is the sparsity theory for monotone graph classes (that is, classes of graphs closed under taking subgraphs), where the most general sparsity property of a monotone class is \emph{nowhere denseness} \cite{Sparsity}. This property provides a dividing line between tame and wild behavior for numerous problems; in particular, nowhere dense classes admit nearly-linear-time parameterized algorithms for many problems, and the nowhere dense/somewhere dense dichotomy matches the tractable/intractable dichotomy for the parameterized complexity of first-order model checking~\cite{grohe2014deciding}. Although originally defined combinatorially via forbidden shallow minors, nowhere denseness on monotone classes turns out to coincide with two of the best-studied properties in model theory: \emph{stability} and \emph{dependence}~\cite{Podewski1978, adler2014interpreting}. 
The notions of stability and dependence were introduced by Shelah in his seminal work on classification theory. These are dividing lines between tame and wild first-order theories defined by the presence or absence of different combinatorial configurations (See \cite[Chapters 11 and 12]{poizat2012course}).
Closely related to this, nowhere dense classes can also be characterized in terms of transductions. 
Within monotone classes, nowhere dense classes are precisely the classes that do not encode the class of all graphs by any transduction in first-order logic, and thus do not belong to the maximum class of the first-order transduction quasi-order, showing they coincide with those monotone classes that are dependent (actually {\em monadically dependent}). Additionally, (within monotone classes) the nowhere dense classes are precisely the  classes that satisfy the stronger condition of not encoding all linear orders (or equivalently, the class of all half-graphs) by any transduction in first-order logic, showing they coincide with those monotone classes that are stable (actually {\em monadically stable}). 

A recent trend in the sparsity program has focused on trying to extend nowhere denseness and related notions to the setting of hereditary classes of graphs (that is, classes closed under taking  induced subgraphs), which would yield tameness notions also capturing well-structured dense classes, such as the class of all complete graphs. The characterizations via transductions in first-order logic, and via the model-theoretic properties corresponding to these characterizations, have guided much of this work (for example, \cite{gajarsky2020new, SBE_TOCL}). 

Another motivating line for studying transductions begins with the notion of treewidth, where parallel developments involve transductions in monadic second-order logic (\MSO). This includes the initial combinatorial definition of treewidth \cite{robertson1986graph}, its algorithmic applications and Courcelle's theorem that on monotone classes bounded/unbounded treewidth (almost) corresponds to a dichotomy of fixed-parameter tractability/intractability of \MSO$_2$ model checking \cite{courcelle1990monadic}, and the characterization of bounded treewidth in terms of the incidence graph admitting an \MSO-transduction from the class of trees \cite{courcelle1992monadic}. These results were then generalized to dense classes using the property of bounded cliquewidth \cite{courcelle2000upper}, which can be characterized via admitting an \MSO-transduction from the class of trees and is conjectured  to coincide with monadic dependence in {\MSO} \cite{blumensath10}.

The quasi-order $\sqsubseteq_\MSO$ defined by {\MSO}-transductions is well studied. 
Since {\MSO} is stronger than first-order logic, it is easier for one class to encode (i.e., transduce) another in {\MSO} and so the quasi-order is simpler. In particular, it is conjectured \cite[Open Problem~9.3]{blumensath10} that this quasi-order is the chain 
$$\Ee\cover_\MSO
\Tt_2\cover_\MSO\ldots\cover_\MSO\Tt_n\cover_\MSO\ldots\sqsubseteq_\MSO\Pp\cover_\MSO\Tt\cover_\MSO~\Gg,$$
where $\cover_\MSO$ denotes the covering relation (i.e., immediate successor) of the quasi-order, $\Ee$ is the class of edgeless graphs, $\Tt_n$ is the class of forests with depth $n$ (Hence, $\Ee=\Tt_1$), $\Pp$ is the class of all paths, $\Tt$ is the class of all trees, and $\Gg$ is the class of all graphs.

In this paper, we initiate a systematic study of 
 the analogous
 quasi-order $\sqsubseteq_\FO$ defined by first-order~(\FO) transductions on infinite classes of finite graphs. This is a vastly more complex quasi-order. It clarifies the relation of various graph-theoretic properties to each other and to model-theoretic properties, and  brings new properties of  interest (such as the new notion of \emph{straightness}) to light. An overview of these graph-theoretic and model-theoretic dividing lines is presented in \Cref{fig:div_lines}, and is filled out with a skeleton of the \FO-transduction quasi-order in \Cref{fig:hierarchy}.

\begin{figure}[h]
	\centering
	\includegraphics[width=.7\textwidth]{}
	\caption{Prominent dividing lines and how they relate. An extended  version of this figure with some examples of classes in each region and their comparability by the transduction quasi-order $\sqsubseteq_\FO$ is given in \Cref{fig:hierarchy}.}
	\label{fig:div_lines}
\end{figure}

\begin{figure}[p]
	\hfill
	\includegraphics[width=\textwidth]{}
	\hfill\hfill
	\caption{Partial outline of the {\FO}-transduction quasi-order. The
		special subdivided cubic trees are those subdivisions of binary
		trees that are subgraphs of the grid.    Fat lines correspond to covers, normal
		lines correspond to strict containment $\sqsubset$, dotted lines correspond to
		containment $\sqsubseteq$ (with a possible collapse).   
	}
	\label{fig:hierarchy}
\end{figure}

This paper is split into four parts.
First, we recall some relevant notions of graph theory and model theory (\Cref{sec:prelims}). Then, we formally define first-order transductions, study their basic properties and show how they can be decomposed (\Cref{part:trans}).  Then,  in \Cref{part:examples}, we illustrate these notions and properties by several examples, some of which will be used later.
Finally, in \cref{part:applications} we give applications to the local versions of properties, to order-theoretic aspects of the transduction quasi-order, and to a recipe for generalizing properties from sparsity theory to hereditary classes.

\part{Preliminaries}\label{sec:prelims}
In this part, we recall some basic definitions and notations from graph theory and (finite) model theory.  
We assume familiarity with first-order logic and graph theory and
refer e.g.\ to \cite{diestel2012graph,hodges1993model} for background.

\section{Graph theory}
\subsection{Basic graph theory}
\label{sec:GT}

The vertex set of a graph $G$ is
denoted as~$V(G)$ and its edge set as $E(G)$. Unless states otherwise, all graphs considered
in this paper are finite, undirected, and loopless. 
We denote by $K_t$ the complete graph on $t$
vertices and by $K_{s,t}$ the complete bipartite graph (or \emph{biclique}) with parts of size $s$ and $t$.
The \emph{clique number} 
$\omega(G)$ of a graph~$G$ is the maximum integer $t$ such that $K_t$ is a subgraph of  $G$. The \emph{maximum degree} $\Delta(G)$ of a graph $G$ is the maximum number of neighbors of vertices of $G$.  The \emph{girth} $\mathrm{girth}(G)$ of a graph $G$ is the minimum length of a cycle in $G$ (or $\infty$ if $G$ is a forest).
The {\em complement} of a
graph $G$ is the graph $\overline{G}$ with the same vertex set in
which two vertices are adjacent if they are not adjacent in $G$. The
\emph{disjoint union} of two graphs $G$ and $H$ is denoted by~$G\union H$, the disjoint union of $n$ copies of a graph $G$ is denoted by~$nG$, and the
 {\em complete join} $\overline{\overline{G}\union \overline{H}}$ of two graphs $G$ and $H$ is denoted by  $G\join H$. 
Hence, $G\join K_1$ is obtained from~$G$ by adding a 
new vertex, called an \emph{apex}, that is connected to all vertices of 
$G$. 
We write $G^k$ for the {\em $k$-th
	power} of $G$, which has the same vertex set as $G$ and two vertices
are connected if their distance in~$G$ is at most $k$. 
The {\em lexicographic product} $G\bullet H$ of two
graphs $G$ and $H$ is the graph with vertex set $V(G)\times V(H)$, in
which~$(u,v)$ is adjacent to $(u',v')$ if either $u$ is adjacent 
to~$u'$ in~$G$, or~$u=u'$ and~$v$ is adjacent to~$v'$ in $H$.  
We denote by $H\subseteq G$ the property that $H$ is a subgraph of $G$.
For a subset $S$ of vertices of a graph $G$, we denote by $G-S$ the graph obtained from $G$ by deleting the vertices in $S$. When $S$ is a singleton $\{v\}$ we use the notation $G-v$ instead of $G-\{v\}$. We further denote
by $G[A]$ the subgraph of~$G$ induced by the subset $A$ of vertices of $G$ and, more generally, 
for two (non-necessarily disjoint) subsets of vertices $A$ and $B$ of $G$, we denote by $G[A,B]$ the subgraph of $G$ with vertex set $A\cup B$, where $u\in A$ is adjacent to $v\in B$ if $u$ and $v$ are adjacent in $G$. (Note that~$G[A,A]$ is nothing but~$G[A]$.)
An \emph{interval graph} is an intersection graph of intervals of the real line.
The \newdef{pw}{pathwidth} ${\rm pw}(G)$ of a graph~$G$ is equal to one less than
the smallest clique number of an interval graph that contains $G$ as a
subgraph, that is,
\[{\rm pw}(G)=\min\{\omega(H)-1: H\text{ is an interval graph 
  with }H\supseteq G\}.\]

 Connected graphs of pathwidth $1$ are exactly the \emph{caterpillars}, i.e., trees for which removing the leaves results in a path.
A \emph{chordal graph} is a graph in which every cycle of four or more vertices has a chord.
The {\em treewidth} ${\rm tw}(G)$ of a
graph~$G$ is equal to one less than the smallest clique number of a
chordal graph that contains~$G$ as a subgraph, that is,
\[
{\rm tw}(G)=\min\{\omega(H)-1: H \text{ is a chordal graph with }H\supseteq G\}.\]

A {\em trivially perfect graph} is a comparability graph of  a forest order, i.e., a partial order where every downset is linear.
The {\em treedepth} ${\rm td}(G)$ of a graph $G$ is equal to  the smallest clique number of a
trivially perfect graph that contains~$G$ as a subgraph, that is,
\[
{\rm td}(G)=\min\{\omega(H): H \text{ is a trivially perfect graph with }H\supseteq G\}.\]

   The {\em
  bandwidth} of a graph $G$ is defined as
\[
{\rm bw}(G)=\min\{\ell:\text{there is a path } P\text{ with }P^\ell\supseteq
G \}.\]

For us, a \emph{class of graphs} (or a \emph{graph property}) means a set of unlabeled graphs, that is, of graphs considered up to isomorphism. Similarly, a \emph{class property} is a set of classes of graphs.
We extend graph operations to graph classes naturally. For example,
for a class~$\Cc$ of graphs, we denote by $\Cc\join K_1$ the 
class obtained from $\Cc$ by adding an apex to each graph of $\Cc$; also, for two classes of graphs $\Cc_1$ and $\Cc_2$, we denote by $\Cc_1+\Cc_2$ the class $\{G_1\union G_2\colon G_1\in\Cc_1, G_2\in \Cc_2\}$. 
Note that we refrain to using the notation $\Cc_1\union\Cc_2$, as this would better denote the disjoint union of the classes $\Cc_1$ and $\Cc_2$.
If $f$ is a graph parameter (such as $\omega$ or ${\rm pw}$) and $\Cc$ is a class of graphs, then 
$f(\Cc)$ denotes $\sup\{f(G)\colon G\in\Cc\}$.
A class~$\Cc$ of graphs  is \emph{monotone} if every subgraph of a graph in $\Cc$ belongs to $\Cc$; it is \emph{hereditary} if every induced subgraph of a graph in $\Cc$ belongs to $\Cc$.
The {\em monotone closure} of a class $\Cc$ is the class of all the subgraphs of graphs in $\Cc$, while the \emph{hereditary closure} of $\Cc$ is the class of all the induced subgraphs of graphs in $\Cc$.

A class of graph $\Cc$ is \emph{weakly sparse} if there exists an integer $t$ such that no graph in~$\Cc$ contains~$K_{t,t}$ as a subgraph. A class of graphs $\Cc$ is \emph{addable} if the disjoint union of any two graphs in~$\Cc$ is also in~$\Cc$, that is:
$G_1,G_2\in\Cc\Rightarrow G_1\union G_2\in\Cc$. 
Every class of graphs we consider in this paper contains the \emph{empty graph}, that is, the graph with no vertex and no edge. Thus, a class of graphs $\Cc$ is addable if and only if $\Cc=\Cc+\Cc$.

\subsection{Sparse classes}
\label{sec:sparse}
The study of the structural and algorithmic properties of classes of sparse graphs recently gained much interest. In this context, two of the authors introduced in 2006 \cite{ICGT05,Taxi_stoc06} the notion of \emph{classes with bounded expansion},  which extends the notions of classes excluding a topological minor (such as classes with bounded degree or classes excluding a minor, like planar graphs) and, soon after \cite{ECM2009,Nevsetvril2010a}, the nowhere dense vs somewhere dense dichotomy for classes of graphs. 
Although this dichotomy might look arbitrary at a first glance, it appears to be very robust, expressible in a number of different (non-obviously) equivalent ways, and to reveal both a profound structural dichotomy (roughly speaking, between branching and homogeneous structures) and a profound algorithmic complexity dichotomy (for model checking, particularly).

We recall some definitions related to graph sparsity. The interested reader is referred to the monograph \cite{Sparsity}.

A  \emph{depth-$r$ (shallow) minor} of a graph $G$ is a graph obtained from $G$ by contracting vertex-disjoints subgraphs with radius at most $r$ (i.e., containing a vertex $v$ such that all other vertices are reachable by a path of length at most $r$ from $v$) and deleting a subset of edges and vertices. For a class~$\Cc$, we denote by $\Cc\,\nabla\, r$ the class of all depth-$r$ shallow minors of graphs in $\Cc$.
Note that the classes $\Cc\,\nabla\, r$ are monotone by construction, and that $\Cc\,\nabla\,0$ is nothing but the monotone closure of $\Cc$.
A class $\Cc$ of graphs has \emph{bounded expansion}
 if there exists a function $f\colon \mathbb N\rightarrow\mathbb N$ such that, for every integer $r$, all graphs in $\Cc\,\nabla\, r$ have average degree at most~$f(r)$.

\begin{exa}
The class $\Pl$ of all planar graphs has bounded expansion as every (shallow) minor of a planar graph is planar, hence has average degree less than $6$ (by Euler's formula). 
Also, for every integer $D$, the class of all graphs $G$ with $\Delta(G)\leq D$ has bounded expansion since depth-$r$ shallow minors of graphs with maximum degree at most $D$ have average degree at most $D^{r+1}$.
\end{exa}

A class $\Cc$ of graphs is  \emph{nowhere dense}
 if there exists a function $g\colon \mathbb N\rightarrow\mathbb N$ such that, for every integer $r$, all graphs in $\Cc\,\nabla\, r$ have clique number at most $g(r)$. Equivalently, a class $\Cc$ of graphs is nowhere dense if there is no integer $r$ such that  $\Cc\,\nabla\, r$  is the class of all finite graphs.

A  \emph{depth-$r$ (shallow) topological minor} of a graph $G$ is a graph $H$ obtained from $G$ by contracting vertex-disjoints paths with length at most $2r$ and deleting a subset of edges and vertices,  that is, if a \emph{$\leq 2r$-subdivision} of $H$ is a subgraph of $G$.
For a class~$\Cc$, we denote by~$\Cc\,\widetilde\nabla\, r$ the class of all depth-$r$ shallow topological minors of graphs in $\Cc$. It follows from~\cite{POMNI} that nowhere dense classes (resp.\ bounded expansion classes) can be alternatively defined as those classes 
$\Cc$ such that for each integer $r$ the class $\Cc\,\widetilde\nabla\, r$ does not contain all finite graphs (resp.\  such that for each integer $r$ the class $\Cc\,\widetilde\nabla\, r$  has bounded average degree).

\begin{exa}
	The class of all graphs $G$ such that $\Delta(G)\leq {\rm girth}(G)$ is nowhere dense but does not have bounded expansion.
	Indeed, assume that $\Cc\,\widetilde\nabla\, r$ contains all graphs. In particular, $K_{6r+5}\in \Cc\,\widetilde\nabla\, r$. This means that there exists a graph $G\in\Cc$ such that a $\leq 2r$-subdivision of $K_{6r+5}$ is a subgraph of $G$. This implies that $G$ has maximum degree at least $6r+4$ and girth at most $3(2r+1)$, contradicting the definition of $\Cc$.
\end{exa}
\section{Model theory}
\subsection{Basic model theory}
Throughout this section, the graphs (and more general structures) we consider may be either finite or infinite, and logical notions such as formula and theory will be with respect to first-order logic. In this paper we consider either graphs or {\em $\Uu$-colored
  graphs}, that is, graphs with additional unary relations in $\Uu$
(for a set $\Uu$ of unary relation symbols). We usually denote
graphs by $G,H,\ldots$ and $\Uu$-colored graphs by
$G^+,H^+,G^*,H^*,\ldots$, but sometimes we will use $G,H,\ldots$ for
$\Uu$-colored graphs as well. We shall often use the term ``colored graph'' instead of $\Uu$-colored graph.
If $G^+$ is a $\Uu$-colored graphs and $G$ is the graph obtained from $G^+$ by forgetting the colors, we say that the graph $G$ is the \emph{reduct} of $G^+$ and that $G^+$ is a \emph{monadic expansion} of $G$.

 In formulas, the adjacency relation
will be denoted as $E(x,y)$. For each non-negative integer $r$ we can
write a formula $\delta_{\leq r}(x,y)$ such that for every graph~$G$
and all $u,v\in V(G)$ we have $G\models\delta_{\leq r}(u,v)$ if and
only if the distance between~$u$ and~$v$ in~$G$ is at most $r$. For
improved readability we write $\dist(x,y)\leq r$
for~$\delta_{\leq r}(x,y)$.  The \emph{open neighborhood} 
$N^G(v)$ of a vertex $v$ is the set of neighbors of $v$. 
For $U\subseteq V(G)$ we write~$B_r^G(U)$
for the subgraph of $G$ induced by the vertices at distance at most~$r$ from some vertex of~$U$. For the sake of
simplicity, for $v\in V(G)$ we write $B_r^G(v)$ instead of $B_r^G(\{v\})$ and, if~$G$ is clear from the context, we drop the superscript $G$. For a class~$\Cc$ and an integer $r$, we denote by
$\rloc{r}{\Cc}$ the class of all the balls of radius~$r$ of graphs in~$\Cc$: $ \rloc{r}{\Cc}=\{B_r^G(v)\mid G\in\Cc\text{ and }v\in V(G)\}$.
For a formula $\phi(x_1,\ldots, x_k)$ and a graph (or a
$\Uu$-colored graph) $G$ we define
\[\phi(G)\coloneqq \{(v_1,\ldots, v_k)\in
V(G)^k~:~G\models\phi(v_1,\ldots, v_k)\},\]
where $G\models\phi(v_1,\ldots, v_k)$ means that the formula $\phi(x_1,\dots,x_k)$ is satisfied in $G$ when interpreting the free variable $x_i$ as the vertex $v_i$. The notation $\models\psi(x_1,\dots,x_k)$ means that 
every graph satisfies $\psi(x_1,\dots,x_k)$ for every possible interpretation of $x_1,\dots,x_k$.

The \emph{(complete) theory} ${\rm Th}(G)$ of a (colored) graph $G$ (or more generally of a structure) is the set of all the sentences satisfied by $G$. We define the \emph{theory} of a class $\Cc$ of (colored) graphs as the set of all the sentences satisfied by all the graphs in $\Cc$, that is, ${\rm Th}(\Cc)=\bigcap_{G\in\Cc}{\rm Th}(G)$. A \emph{model} of~${\rm Th}(\Cc)$ is a (colored) graph that satisfies all formulas of ${\rm Th}(\Cc)$. Note that a sentence $\theta$ is satisfied by some model of ${\rm Th}(\Cc)$ if and only if it is satisfied by some graph in~$\Cc$.\footnote{
If $\theta$ is satisfied by a model $G$ of ${\rm Th}(\Cc)$, then $\neg\theta\notin{\rm Th}(\Cc)$. Hence, not every graph in $\Cc$ satisfies $\neg\theta$, that is, some graph in $\Cc$ satisfies $\theta$. Conversely, if some graph in $\Cc$ satisfies $\theta$, then $\neg\theta\notin{\rm Th}(\Cc)$. It follows that ${\rm Th}(\Cc)\cup\{\theta\}$ is consistent, thus has a (possibly infinite) model.}
\subsection{Locality}
Let $r$ be a non-negative integer.  A formula $\phi(x_1,\ldots, x_k)$
is \emph{$r$-local} if for every \mbox{(colored)} graph $G$ and all
$v_1,\ldots, v_k\in V(G)$ we have
$G\models\phi(v_1,\ldots, v_k) \iff B_r^G(\{v_1,\ldots, v_k\})\models\phi(v_1,\ldots, v_k)$.
The importance of local formulas is witnessed by the following classical result.
\begin{lem}[Gaifman's Locality Theorem~{\cite[Theorem 4.22]{Libkin04}}]\label{lem:gaifman}
	\hfill\phantom{.}\linebreak
	Every formula 
	$\phi(x_1,\ldots, x_m)$ is equivalent (for some integers $r,t$) to a Boolean
	combination of $t$-local formulas 	and so-called \emph{basic local sentences} of the form
	\[\exists y_1\ldots\exists y_k \big(\bigwedge_{1\leq i\leq k}\chi(y_i)
	\wedge\bigwedge_{1\leq i<j\leq k}\dist(y_i,y_j)>2r\big)\quad 
	\text{(where $\chi$ is $r$-local).}\] 
	Furthermore, if the quantifier-rank of $\phi$ is $q$, then 
	$r\leq 7^{q-1}$, $t\leq 7^{q-1}/2$, and $k\leq q+m$. 
\end{lem}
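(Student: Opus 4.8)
The plan is to prove the theorem by induction on the quantifier rank $q$ of $\phi$, using Ehrenfeucht--Fra\"iss\'e games as the main tool. Fix the signature and the number $m$ of free variables. Since there are only finitely many formulas of quantifier rank at most $q$ up to logical equivalence, each $\equiv_q$-class (agreement on all sentences of quantifier rank $\le q$, with the free variables interpreted as parameters) is itself definable. It therefore suffices to isolate a finite package of \emph{local data} that (i) determines the $\equiv_q$-class of a pair $(G,\bar a)$ and (ii) is expressible as a Boolean combination of local formulas and basic local sentences of the required shape. The parameters $r$, $t$, $k$ will then be read off from the radii and thresholds occurring in this data.

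The local data I would attach to a tuple $\bar a$ in $G$ consists of two parts. First, the isomorphism type of the ball $B_r^G(\bar a)$ with the $a_i$ marked; this is captured by a single $t$-local formula $\theta_{\bar a}(\bar x)$ for a suitable $t\approx r/2$, so these contribute the local part of the Boolean combination. Second, for each $r$-local $1$-type $\chi$, the number of pairwise $>2r$-separated elements realizing $\chi$ and lying far from $\bar a$, counted up to a threshold; this count is exactly what a basic local sentence $\exists y_1\dots\exists y_k(\bigwedge_i \chi(y_i)\wedge\bigwedge_{i<j}\dist(y_i,y_j)>2r)$ expresses.

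The heart of the argument is a \emph{locality of the game} lemma: if $(G,\bar a)$ and $(H,\bar b)$ agree on this local data at radius $r=r(q)$ and with threshold $N=N(q)$, then Duplicator wins the $q$-round game, hence $(G,\bar a)\equiv_q(H,\bar b)$. I would prove this by induction on the number of remaining rounds. When Spoiler plays an element $c$, Duplicator splits into cases: if $c$ lies close to the already pebbled points, Duplicator answers inside the matching local neighborhood using the maintained agreement of balls; if $c$ lies far from all pebbles, Duplicator uses the counting data to find a scattered realizer of the same local type on the other side, placing it far from everything so that all distances are preserved. The crucial quantitative point is that the radius must shrink geometrically from round to round: to guarantee that the $r'$-balls of two newly chosen points are disjoint one needs to have started with radius roughly $3r'$, so after $q$ rounds the initial radius is of order $7^{q-1}$. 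Tracking this recursion yields $r\le 7^{q-1}$, $t\le 7^{q-1}/2$, and $k\le q+m$.

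The main obstacle I anticipate is the bookkeeping in this game induction: one must simultaneously (a) shrink the radius at a controlled rate so that ``close'' points can always be matched by a local isomorphism while ``far'' points stay far enough apart for the scattered-count argument, and (b) choose the counting threshold $N$ large enough to survive all $q$ rounds even though the final normal form only needs counts up to $k\le q+m$. Verifying that these two requirements can be met at once, with the stated numerical bounds, is the delicate part; once the game lemma is in place, the passage from ``agreement on local data'' to the Boolean-combination normal form for $\phi$ is routine, since each piece of local data is by construction a local formula or a basic local sentence.
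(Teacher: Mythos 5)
The paper offers no proof of this lemma: it is imported as a classical result with a citation to Libkin's textbook, so there is no in-paper argument to compare against. Your plan is essentially the standard game-theoretic proof from that source: attach to $(G,\bar a)$ a finite package of local data (types of balls around the parameters plus threshold counts of scattered realizations of local $1$-types), prove a ``locality of the game'' lemma by induction on the remaining rounds, and then pass to the Boolean-combination normal form using the fact that there are only finitely many formulas of quantifier rank $\le q$ up to equivalence. The strategy is the right one.

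Two points would fail if executed literally. First, you take as local data ``the isomorphism type of the ball $B_r^G(\bar a)$ with the $a_i$ marked'' and claim it is ``captured by a single $t$-local formula.'' It is not: balls can be arbitrarily large, there are infinitely many isomorphism types, and no first-order formula pins down the isomorphism type of an unbounded structure. The correct invariant is the quantifier-rank-$\le q'$ first-order type of the marked ball (of which there are finitely many in a finite relational signature), and the same replacement is needed in the counting data; without it your ``finite package of local data'' does not exist. A related shape issue: your counting condition is relativized to elements ``lying far from $\bar a$,'' which is not a sentence and hence not a basic local sentence of the required form; one must decompose it into a genuine basic local sentence (global scattered count) together with a local formula around $\bar a$ (count inside a large ball), which costs an extra enlargement of the radius. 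Second, the numerical bounds are asserted rather than derived, and your own heuristic does not support them: a radius growth of ``roughly $3r'$'' per round yields $3^{q-1}$, not $7^{q-1}$; obtaining $7^{q-1}$, $t\le 7^{q-1}/2$, and $k\le q+m$ requires tracking the exact recursion in the game argument, which you explicitly leave open as ``the delicate part.'' As it stands the proposal is a correct outline of the standard approach, but not yet a proof, and the two items above are where it currently breaks.
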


A stronger notion of locality for formulas is of particular interest.
An $r$-local formula $\phi(x_1,\ldots, x_k)$ is \emph{strongly
	$r$-local} if
$\models\phi(x_1,\ldots,x_k)\rightarrow \dist(x_i,x_j)\leq r$ for all
$1\leq i<j\leq k$, i.e., if $\phi$ holds for a tuple $\bar{c}$ in any graph, then the elements of $\bar{c}$ are pairwise at distance at most $r$. We will say that a formula is {\em (strongly) local} if it is (strongly) $r$-local for some $r$.

\newcommand{\loclimref}{\cite[Theorem 2]{Loclim}}
\begin{lem}[follows from \loclimref]
	\label{lem:sl}
Every local formula is equivalent to a Boolean combination of strongly local formulas.
\end{lem}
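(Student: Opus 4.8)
The plan is to argue via the locality of $\phi$ together with the Feferman--Vaught composition theorem, exploiting that an $r$-local formula only sees the $r$-ball around its free variables, and that this ball splits into connected components corresponding to ``clusters'' of mutually close variables. Fix $r$ so that $\phi(x_1,\dots,x_k)$ is $r$-local, put $\tau=2r+1$, and call two indices $i,j$ \emph{linked} in a tuple $\bar v$ if $\dist(v_i,v_j)\le\tau$. The first step is to observe that if $\dist(v_i,v_j)>\tau$ then no vertex of $B_r^G(v_i)$ is adjacent to any vertex of $B_r^G(v_j)$, whereas if $\dist(v_i,v_j)\le\tau$ then a shortest $v_i$--$v_j$ path lies entirely inside $B_r^G(v_i)\cup B_r^G(v_j)$. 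Hence, writing $\sim^*$ for the transitive closure of the linking relation, the graph $B_r^G(\bar v)$ decomposes as the disjoint union (with no edges in between) of the connected pieces $B_r^G(\bar v_B)$ indexed by the $\sim^*$-classes $B$ of $\{1,\dots,k\}$.

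Next I would make this cluster pattern definable. For each partition $P=\{B_1,\dots,B_p\}$ of $\{1,\dots,k\}$ let $\sigma_P(\bar x)$ be the conjunction of (i) a formula saying each block $B_\ell$ is internally linked (a disjunction, over the possible chain/tree patterns on $B_\ell$, of conjunctions of formulas $\dist(x_i,x_j)\le\tau$), and (ii) $\neg(\dist(x_i,x_j)\le\tau)$ for all cross-block pairs. Each tuple realizes exactly one such pattern, so $\phi\equiv\bigvee_P(\sigma_P\wedge\phi)$, the $\sigma_P$ being pairwise exclusive. Crucially, every conjunct in (i) forces all variables of its block to lie within distance $R:=\tau(k-1)$ and is a conjunction of formulas $\dist(x_i,x_j)\le\tau$, hence is \emph{strongly} local on the variables of its block, while the conjuncts in (ii) are negations of the strongly local formulas $\dist(x_i,x_j)\le\tau$; thus $\sigma_P$ is already a Boolean combination of strongly local formulas.

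It remains to rewrite $\sigma_P\wedge\phi$. Under $\sigma_P$ the ball $B_r^G(\bar v)$ is the disjoint union $\bigsqcup_\ell C_\ell$ with $C_\ell=B_r^G(\bar v_{B_\ell})$, so by the Feferman--Vaught composition theorem (provable by an Ehrenfeucht--Fra\"iss\'e argument over the fixed number $p\le k$ of parts) whether $B_r^G(\bar v)\models\phi(\bar v)$ depends only on the quantifier-rank-$q$ types of the pairs $(C_\ell,\bar v_{B_\ell})$, where $q$ is the quantifier rank of $\phi$. As there are finitely many such types over the finite signature used by $\phi$, each captured by a characteristic formula, there is a fixed Boolean combination of statements ``$C_\ell$ realizes type $s$'' equivalent to $\phi$ on the union. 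Relativizing each such statement to the $r$-ball of $\{x_i:i\in B_\ell\}$ turns it into an $r$-local formula $\chi_{\ell,s}(\bar x_{B_\ell})$ with $G\models\chi_{\ell,s}(\bar v_{B_\ell})$ iff $C_\ell$ realizes that type. Finally, conjoining $\chi_{\ell,s}$ with the block-$\ell$ connectivity conjunct of $\sigma_P$ (true under $\sigma_P$, hence harmless) makes it strongly $R$-local, since it is then $R$-local and forces all pairs of $\bar x_{B_\ell}$ to be within $R$. Substituting these strongly local versions exhibits $\sigma_P\wedge\phi$ as a Boolean combination of strongly local formulas, and taking the disjunction over $P$ finishes the proof.

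I would expect the main obstacle to be the Feferman--Vaught step and the attendant relativization: one must check that the type of the disjoint union is genuinely determined by the component types once the free variables are pinned to their blocks, and that relativizing the characteristic type-formulas to the $r$-ball preserves both their meaning on the component and their $r$-locality. The combinatorics of the clusters (that ``$\dist>\tau$ across blocks'' really yields a disjoint union with no connecting edges, and that within-block links keep each piece connected) is the other point needing care, though it becomes routine once $\tau=2r+1$ is fixed.
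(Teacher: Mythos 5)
Your proof is correct. The paper does not prove this lemma itself (it attributes it to the cited reference), but the strategy it sketches immediately afterwards---splitting tuples according to which free variables lie within distance $2r+1$ of one another, treating those distance constraints as strongly local formulas, and applying Feferman--Vaught together with $t$-localization to the resulting disjoint union of balls (exactly \Cref{cor:loc_dec} in the two-variable case)---is precisely your argument, so you have in effect written out the general proof the paper defers to its reference.
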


Let us give an illustration of how \Cref{lem:sl} proceeds.
\begin{exa}
	Consider the local formula $\varphi(x,y)$ asserting that $x$ and $y$ are adjacent to distinct marked vertices $u$ and $v$:
\[
\varphi(x,y):=\exists u\exists v \big(M(u)\wedge M(v)\wedge (u\neq v)\wedge E(u,x)\wedge E(v,y)\big).
\]

Now let $\psi(z)$ be the (strongly) local formula asserting that $z$ is adjacent to a marked vertex:
\[
\psi(z):=\exists w \big(M(w)\wedge E(w,z)\big).
\]

Then, the formula $\zeta(x,y):=\neg \big(\phi(x,y)\leftrightarrow (\psi(x)\wedge \psi(y))\big)$ is strongly local, since if $x$ and~$y$ are at distance more than $2$ and are each adjacent to a marked vertex, these marked vertices must be distinct.
Hence, $\phi(x,y)$ is equivalent to a Boolean combination of the strongly local formulas $\psi(x),\psi(y)$, and $\zeta(x,y)$, precisely,
$\phi(x,y)$ is equivalent to  the formula 
$\neg \zeta(x,y)\leftrightarrow(\psi(x)\wedge \psi(y))$.

\medskip
The following classical result will be useful in further strengthening these locality results.

\begin{thm}[Feferman and Vaught \cite{FV} (see also \cite{makowsky2004algorithmic})]
	\label{thm:FV}
	For every relational signature~$\mathcal L$ and every first-order formula $\phi(\bar x,\bar y)$ 	over the language of \mbox{$\mathcal L$-structures} there exists a sequence of formulas
	$(\psi_1^A(\bar x),\dots,\psi_m^A(\bar x),\psi_1^B(\bar y),\dots,\psi_m^B(\bar y))$ 	over the language of $\mathcal L$-structures and a Boolean function \mbox{$B_\phi:\{0,1\}^{2m}\rightarrow\{0,1\}$} such that, for every two disjoint $\mathcal L$-structures $\mathbf A$ and $\mathbf B$, and tuples $\bar u\in V(\mathbf{A})^{|\bar x|}$ and $\bar v\in V(\mathbf{B})^{|\bar y|}$ we have
\[
		\mathbf A \union\mathbf B\models \phi(\bar u,\bar v)\quad
		\text{iff}\quad
		B_\phi(b_1^A,\dots,b_m^A,b_1^B,\dots,b_m^B)=1,
		\]
	where $b_j^A=1$ iff $\mathbf A\models \psi_j^A(\bar u)$ and 
	$b_j^B=1$ iff $\mathbf B\models\psi_j^B(\bar v)$.
\end{thm}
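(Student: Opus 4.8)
The plan is to prove the statement by induction on the structure of $\phi$, after strengthening the induction hypothesis to allow the free variables to be distributed arbitrarily between the two structures. Concretely, I would prove: for every formula $\phi(\bar z)$ and every partition of its free variables $\bar z$ into a block $\bar z_A$ to be interpreted in $\mathbf A$ and a block $\bar z_B$ to be interpreted in $\mathbf B$, there is a finite family of formulas $\psi_1^A,\dots,\psi_m^A$ in the variables $\bar z_A$, a finite family $\psi_1^B,\dots,\psi_m^B$ in the variables $\bar z_B$, and a Boolean function $B_\phi$ such that the truth of $\phi$ in $\mathbf A\union\mathbf B$ on such a split assignment equals $B_\phi$ applied to the truth values of the $\psi_j^A$ in $\mathbf A$ and of the $\psi_j^B$ in $\mathbf B$. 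The theorem is the special case $\bar z_A=\bar x$, $\bar z_B=\bar y$ (padding the shorter family with a trivially true formula so that both sides use the same $m$). The entire argument rests on the defining feature of the disjoint union of relational structures: the two universes are disjoint and every tuple satisfying a relation lies entirely inside one of the two parts.

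The base and Boolean cases are routine. For an atomic formula $R(z_{i_1},\dots,z_{i_k})$ or an equality, if all the occurring variables lie in the same block, the formula reduces to the corresponding atomic formula evaluated in the relevant structure (so $m=1$ and $B_\phi$ is a projection); if the variables are split across both blocks, then in $\mathbf A\union\mathbf B$ the formula is simply false and $B_\phi$ is constant. Negation is handled by negating $B_\phi$; conjunction (hence disjunction) is handled by concatenating the two families of formulas and letting the new Boolean function evaluate the two old ones on their respective coordinates and take the conjunction.

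The crux is the existential step $\phi=\exists w\,\theta(w,\bar z)$. Since the universe of $\mathbf A\union\mathbf B$ is the disjoint union of the two universes, a witness $w$ lies in $V(\mathbf A)$ or in $V(\mathbf B)$, so $\phi$ holds iff $\exists w\in V(\mathbf A)\,\theta$ or $\exists w\in V(\mathbf B)\,\theta$. Consider the first disjunct. Apply the induction hypothesis to $\theta$ with $w$ added to the $A$-block, obtaining families $(\alpha_i)_i$ on the $A$-side (in $w,\bar z_A$), $(\beta_i)_i$ on the $B$-side, and a Boolean function $B_\theta$. Now $\mathbf B$ together with the assignment $\bar z_B$ already determines the tuple $\bar b=(b_1,\dots,b_m)$ of truth values of the $\beta_i$, which does not depend on $w$. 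Hence $\exists w\in V(\mathbf A)\,\theta$ holds iff there is a $w$ in $\mathbf A$ whose $\alpha$-pattern $\bar a\in\{0,1\}^m$ satisfies $B_\theta(\bar a,\bar b)=1$. For each fixed $\bar b$ the admissible patterns form a fixed subset of $\{0,1\}^m$, and the existence of such a $w$ is expressed by the single $A$-side formula
\[
\gamma_{\bar b}(\bar z_A):=\exists w \bigvee_{\bar a\,:\,B_\theta(\bar a,\bar b)=1}\ \bigwedge_{i=1}^m \alpha_i(w,\bar z_A)^{a_i},
\]
where $\alpha_i^{1}:=\alpha_i$ and $\alpha_i^{0}:=\neg\alpha_i$. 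We add the $2^m$ formulas $\gamma_{\bar b}$ to the $A$-side family, keep the $\beta_i$ on the $B$-side, and define a new Boolean function that first reads off $\bar b$ from the $\beta_i$-values and then returns the value of $\gamma_{\bar b}$. The symmetric construction handles the witness-in-$\mathbf B$ disjunct, and the final Boolean function is the disjunction of the two. The universal quantifier follows by duality.

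The only delicate point, and where the bookkeeping is heaviest, is exactly this quantifier step: pushing an existential inside one structure requires enumerating over the finitely many truth-value patterns achievable on the opposite side and absorbing them into the Boolean function, which is precisely what keeps the two families of formulas finite. A less constructive alternative would be an Ehrenfeucht--Fra\"iss\'e argument: one shows that winning strategies for Duplicator in the $q$-round game on $\mathbf A_1$ versus $\mathbf A_2$ and on $\mathbf B_1$ versus $\mathbf B_2$ combine---by always responding in whichever part Spoiler plays in---into a winning strategy on the disjoint unions, so the quantifier-rank-$q$ type of $\mathbf A\union\mathbf B$ with parameters is determined by the rank-$q$ types of $(\mathbf A,\bar u)$ and $(\mathbf B,\bar v)$; taking the $\psi_j$ to be the finitely many Hintikka formulas of these types and reading $\phi$ off the resulting pair of types yields $B_\phi$. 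This route needs finiteness of the set of rank-$q$ types, i.e.\ a finite relational signature, which is the setting relevant here.
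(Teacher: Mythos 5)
The paper does not prove this statement at all: it is imported as a classical result, with citations to Feferman--Vaught and to Makowsky's survey, so there is no in-paper argument to compare against. Judged on its own, your proof is the standard compositional (``reduction sequence'') argument for disjoint unions and is correct. The strengthening of the induction hypothesis to an arbitrary split of the free variables is exactly what is needed, the atomic cases correctly exploit that in $\mathbf A\union\mathbf B$ every related tuple and every equality lies inside a single part, and the quantifier step is handled at the right level of care: since the $\beta$-pattern $\bar b$ computed in $\mathbf B$ is independent of the witness $w\in V(\mathbf A)$, you may enumerate the finitely many possible values of $\bar b$ and package, for each one, the set of admissible $\alpha$-patterns into a single existential $A$-side formula $\gamma_{\bar b}$, letting the Boolean function dispatch on $\bar b$. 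This is precisely the point where naive attempts fail, and you get it right. The padding to equalize the two families, the duality for $\forall$, and the alternative Ehrenfeucht--Fra\"iss\'e route (with the correct caveat that it needs finitely many rank-$q$ types, hence a finite relational signature, which is the setting of the paper) are all in order.
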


Let $\phi$ be a formula and let $t$ be an integer. The \emph{$t$-localization} of $\phi$ at $x$ is the formula $\widehat\phi$  defined inductively as follows:
\begin{itemize}
	\item if $\phi$ is quantifier-free, then $\widehat\phi:=\phi$;
		\item if $\phi=\neg\psi$, then $\widehat\phi:=\neg\widehat\psi$;
	\item if $\phi=\phi_1\vee\phi_2$, then $\widehat\phi:=\widehat\phi_1\vee\widehat\phi_2$;
	\item if $\phi=\exists z\, \theta$, then $\widehat \phi:=\exists z \big({\rm dist}(x,z)\leq t\wedge\theta(z)\big)$.
\end{itemize}

It is immediate from the definition that for every first-order formula $\phi(x)$ with a single free variable $x$ and for every integer $t$, the $t$-localization $\widehat{\phi}(x)$ of $\phi(x)$ is $t$-local and that, for every $\mathcal U$-structure $G$ and every vertex $u$ of $G$ we have
\[
G\models\widehat\phi(u)\quad\iff\quad B_t^G(u)\models\phi(u).
\]

With this definition in hand, we can prove the following consequence of \Cref{thm:FV}.

\begin{cor}
\label{cor:loc_dec}
	For every integer $t$, every signature $\mathcal U$ of colored graphs,  and every $t$-local first-order formula $\phi(x,y)$, there
	exist $t$-local formulas $\zeta_1,\dots,\zeta_m$ with a single free variable and a Boolean combination $\zeta(x,y)$ of  $\zeta_1(x),\dots,\zeta_m(x),\zeta_1(y),\dots,\zeta_m(y)$ such that
	\[
	\models ({\rm dist}(x,y)>2t)\rightarrow (\phi(x,y)\leftrightarrow\zeta(x,y)).
	\]
\end{cor}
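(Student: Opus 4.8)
The plan is to combine three ingredients already available: the $t$-locality of $\phi$ (which lets us replace $G$ by a pair of balls), the Feferman--Vaught theorem (\Cref{thm:FV}, which splits a formula over a disjoint union), and the $t$-localization operation $\psi\mapsto\widehat\psi$ introduced above (which converts satisfaction in a ball back into a single $t$-local formula evaluated in $G$).

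First I would record the geometric fact that drives everything. Fix a colored graph $G$ and vertices $u,v$ with $\dist(u,v)>2t$. Then no vertex lies within distance $t$ of both $u$ and $v$, since such a vertex would witness $\dist(u,v)\le 2t$; hence the vertex sets of $B_t^G(u)$ and $B_t^G(v)$ are disjoint, and $B_t^G(\{u,v\})$ is the disjoint union $B_t^G(u)\union B_t^G(v)$. Because $\phi$ is $t$-local, this gives
\[
G\models\phi(u,v)\iff B_t^G(\{u,v\})\models\phi(u,v)\iff B_t^G(u)\union B_t^G(v)\models\phi(u,v).
\]

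Next I would apply \Cref{thm:FV} to $\phi(x,y)$, assigning $x$ to the left-hand structure and $y$ to the right-hand one. This yields single-variable colored-graph formulas $\psi_1^A,\dots,\psi_p^A$ and $\psi_1^B,\dots,\psi_q^B$ together with a Boolean function $B_\phi$ such that, for disjoint structures $\mathbf A\ni u$ and $\mathbf B\ni v$,
\[
\mathbf A\union\mathbf B\models\phi(u,v)\iff B_\phi\big(\dots,[\mathbf A\models\psi_i^A(u)],\dots,[\mathbf B\models\psi_j^B(v)],\dots\big)=1.
\]
Taking $\mathbf A=B_t^G(u)$ and $\mathbf B=B_t^G(v)$ reduces evaluating $\phi(u,v)$ in $G$ to evaluating the $\psi_i^A$ in the ball around $u$ and the $\psi_j^B$ in the ball around $v$. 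Finally I would turn ``satisfaction in the ball'' into a $t$-local formula on $G$ via localization: by the property recorded before this corollary, $G\models\widehat\psi(w)\iff B_t^G(w)\models\psi(w)$, and $\widehat\psi$ is $t$-local. Letting $\zeta_1,\dots,\zeta_m$ be the $t$-localizations of all formulas in the combined family $\{\psi_i^A\}\cup\{\psi_j^B\}$, and $\zeta(x,y)$ the Boolean combination obtained by substituting $\zeta_i(x)$ for $[\mathbf A\models\psi_i^A(u)]$ and $\zeta_j(y)$ for $[\mathbf B\models\psi_j^B(v)]$ into $B_\phi$ (padding $B_\phi$ to ignore the side it does not use), chaining the equivalences yields, for all $u,v$ with $\dist(u,v)>2t$,
\[
G\models\phi(u,v)\iff B_\phi\big(\dots,[G\models\zeta_i(u)],\dots,[G\models\zeta_j(v)],\dots\big)=1\iff G\models\zeta(u,v),
\]
which is exactly $\models(\dist(x,y)>2t)\rightarrow(\phi(x,y)\leftrightarrow\zeta(x,y))$, with each $\zeta_i$ a $t$-local formula in a single free variable.

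The step I expect to be most delicate is the geometric one: Feferman--Vaught requires the two balls to form a genuine \emph{disjoint union}, i.e.\ to share no vertices \emph{and} have no edge between them, and one must check that the distance hypothesis is strong enough to guarantee both. Vertex-disjointness follows immediately from $\dist(u,v)>2t$, but ruling out a cross edge $ww'$ with $w\in B_t^G(u)$, $w'\in B_t^G(v)$ is what pins down the exact threshold, since such an edge would satisfy $\dist(u,v)\le 2t+1$. Once the decomposition $B_t^G(\{u,v\})=B_t^G(u)\union B_t^G(v)$ is secured, the remainder is the routine bookkeeping of merging the two Feferman--Vaught families into the single common family $\zeta_1,\dots,\zeta_m$ demanded by the statement.
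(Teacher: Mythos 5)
Your proposal follows the paper's argument essentially step for step: $t$-locality reduces $\phi(u,v)$ to the ball around $\{u,v\}$, \Cref{thm:FV} splits $\phi$ over the disjoint union of the two balls into unary conditions on each side, and the $t$-localization $\psi\mapsto\widehat\psi$ converts those conditions back into $t$-local unary formulas $\zeta_i$ evaluated in $G$, with the same Boolean bookkeeping at the end. In approach there is nothing separating the two proofs.

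The one point where you go beyond the paper is the step you yourself flag as delicate, and there your argument does not close. You are right that Feferman--Vaught needs $B_t^G(u)$ and $B_t^G(v)$ to form a genuine disjoint union with no cross edges (the paper silently identifies $B_t^G(\{u,v\})$ with $B_t^G(u)\union B_t^G(v)$, even though $t$-locality only delivers the former, which is an \emph{induced} subgraph of $G$). But an edge $ww'$ with $w\in B_t^G(u)$ and $w'\in B_t^G(v)$ only forces $\dist(u,v)\le 2t+1$, which is perfectly compatible with the hypothesis $\dist(u,v)>2t$; so at distance exactly $2t+1$ the decomposition you assert can fail --- take $u,v$ to be the endpoints of a path of length $2t+1$, where $p_t p_{t+1}$ is a cross edge. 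The gap is not patchable at this threshold: the $t$-local formula $\phi(x,y):=\exists z\exists w\,\bigl(E(z,w)\wedge\dist(x,z)\le t\wedge\dist(y,w)\le t\bigr)$ distinguishes the endpoints of a path of length $2t+1$ from those of a path of length $2t+2$, although all unary $t$-local data about the endpoints agree in the two graphs, so no Boolean combination $\zeta$ of unary $t$-local formulas can match $\phi$ there. The statement really needs $\dist(x,y)>2t+1$, and with that hypothesis your argument (and the paper's) is correct; the change is harmless downstream, since replacing the guard $\dist(x,y)\le 2t$ by $\dist(x,y)\le 2t+1$ in the proof of \Cref{thm:normal} keeps $\psi$ strongly local. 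So this is a defect of the stated threshold that your proof inherits rather than a new error, but since you explicitly claim to have verified that $\dist(x,y)>2t$ rules out cross edges, that verification as written is incorrect.
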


\begin{proof}
	According to \Cref{thm:FV} there exists a sequence of formulas
	$(\psi_1^0(x),\dots,\psi_m^0(x)$, $\psi_1^1(y),\dots,\psi_m^1(y))$ and a Boolean function $B_\phi:\{0,1\}^{2m}\rightarrow\{0,1\}$ such that, for every two disjoint $\mathcal U$-colored graphs~$G_0$ and $G_1$, and $u\in V(G_0)$ and $v\in V(G)_1)$, we have
	\[
	G_0 \union G_1\models \phi(u,v)\quad
	\text{iff}\quad
	B_\phi(b_1^0,\dots,b_m^0,b_1^1,\dots,b_m^1)=1,
	\]
	where $b_j^0=1$ iff $G_0\models \psi_j^0(u)$ and 
	$b_j^1=1$ iff $G_1\models\psi_j^1(v)$.
	
	Let $G$ be a $\mathcal U$-colored graph and let $u,v$ be vertices of $G$ with ${\rm dist}(u,v)>2t$.
	Then,
	\begin{align*}
		G\models \phi(u,v)\quad&\iff\quad B_t^{G}(u)\union B_t^{G}(v)\models \phi(u,v)&\text{(by  $t$-locality)}\\
		&\iff\quad
			B_\phi(b_1^0,\dots,b_m^0,b_1^1,\dots,b_m^1)=1,&
	\end{align*}
	where $b_j^0=1$ iff $B_t^{G}(u)\models \psi_j^0(u)$ and 
	$b_j^1=1$ iff $B_t^{G}(v)\models\psi_j^1(v)$.
	Now let $\zeta_j(x)$ be the $t$-localization at $x$ of the formula $\psi_j^0(x)$. Then 
	\[
	G\models\zeta_j(u)\iff B_t^G(u)\models\psi_j^0\iff b_j^0=1.
	\]

	Similarly,  if $\psi_j^1$ involves only $B$ and $\zeta_{m+j}(y)$ is the $t$-localization at $y$ of the formula obtained from $\psi_j^0$ by replacing all the occurrences of $b$ by the free variable $y$ we have
	\[
	G\models\zeta_j(v)\iff B_t^G(v)\models\psi_j^1\iff b_j^1=1.
	\]

	Thus, we can translate the Boolean combination $B_\phi$ of $b_1^0,\dots,b_m^0,b_1^1,\dots,b_m^1$ into a Boolean combination $\zeta(x,y)$ of $\zeta_1(x),\dots,\zeta_m(x),\zeta_{m+1}(y),\dots,\zeta_{2m}(y)$, and we have
	\[
	G\models \phi(u,v)\quad\iff\quad G\models\zeta(u,v).
	\]	

 As this holds for every $\mathcal U$-structure $G$ and all vertices $u,v$ of $G$ with ${\rm dist}(u,v)>2t$, and as the formula $\zeta$ does not depend on the particular choices of  $G$, $u$, and $v$, the corollary follows.
\end{proof}

\end{exa}
\subsection{Dividing lines}\label{sec:dividing-lines}
Stability theory in mathematical logic emerged in the 1960s and 1970s, focusing on classifying mathematical structures based on the complexity of their definable sets and types. 
Building on a foundational result of Morley,  Shelah developed a comprehensive classification theory. 
He introduced the notion of stability and dependence as key dividing lines between well-behaved and wild theories~\cite{shelah1990classification}. 

\begin{defi}
	Given a structure $M$ and an integer $n$, a partitioned formula $\phi(\xbar;\bar y)$ has the {\em $n$-independence property} in $M$ if there exist tuples $(\bar a_i\in M^{|\bar x|} : i\in [n]$) and 
	$(\bar b_J\in M^{|\bar y|} : J\subseteq [n]$), such that for all $i, J$
	\begin{equation}
		M\models\phi(\bar a_i;\bar b_J)\quad\iff\quad i\in J.
	\end{equation}

	The structure $M$ is {\em dependent} (or NIP) if for every partitioned formula $\phi$ there is an integer~$n_\phi$ such that $\phi$ does not have the $n_\phi$-independence property in $M$.
\end{defi}

\begin{defi}
	Given a structure $M$ and an integer $n$, a partitioned formula $\phi(\xbar;\bar y)$ has the \emph{$n$-order property} in $M$ if there exist tuples $(\bar a_i\in M^{|\bar x|} : i\in [n]$) and 
	 $(\bar b_j\in M^{|\bar y|} : j\in [n]$), such that for all $i, j$
	 \begin{equation}
	 	M\models\phi(\bar a_i;\bar b_j)\quad\iff\quad i\leq j.
	 \end{equation}

	 The structure $M$ is {\em stable} if for every partitioned formula $\phi$ there is an integer $n_\phi$ such that $\phi$ does not have the $n_\phi$-order property in $M$.
\end{defi}

\begin{defi} \label{def:halfgraph}
	The {\em half-graph} is a bipartite graph $H_n = (A, B; E)$ where $A = \{a_i : i \in [n]\}$, $B = \{b_i : i \in [n] \}$, and $E(a_i, b_j)$ holds if and only if $i \leq j$.
	\end{defi}

Thus, a partitioned formula $\phi(\bar x; \bar y)$ has the $n$-order property if and only if there are tuples on which $\phi$ defines a half-graph (only considering edges between tuples in different parts of the partition).

The derived notions of monadic dependence and monadic stability will be introduced and discussed in \Cref{sec:monadic-dependence-and-monadic-stability} in the context of \FO-transductions.

Nowhere dense classes define a natural dividing line on monotone classes of graphs, which is consistent with model theory, as shown by the next result.

\begin{thmC}[\cite{adler2014interpreting}]
	\label{thm:adler}
	Let $\Cc$ be a monotone class of graphs. The following are equivalent:
	\begin{itemize}
		\item $\Cc$ is nowhere dense,
		\item $\Cc$ is stable
		\item $\Cc$ is monadically stable
		\item $\Cc$ is dependent,
		\item $\Cc$ is monadically dependent.
	\end{itemize}
\end{thmC}
(Note that it follows from \cite{DVORAK2018143} that the assumption that $\Cc$ is monotone can be replaced by the assumption that $\Cc$ is weakly sparse.)

In this paper, we consider classes of finite (colored) graphs. We consequently adapt these notions as follows.
A hereditary class $\Cc$ of (colored) graphs is \emph{dependent} (resp.\ \emph{stable}) if every model of $\Th(\Cc)$ is dependent (resp.\ stable).
Thus, a hereditary class $\Cc$ is 
dependent (resp.\ stable) if, for every partitioned formula $\phi(\bar x;\bar y)$, there is some integer $n_\phi$ such that~$\phi$ does not have the $n$-independence property (resp.\ the $n_\phi$-order property) in every graph of~$\Cc$.

\part{Transductions}
\label{part:trans}

In this part, we introduce transductions and various helpful constructions and results concerning them. 
FO-transductions is a first-order counterpart of the MSO-transductions introduced by Courcelle and Engelfriet \cite{courcelle2012graph}. This notion implicitely appears in \cite{baldwin1985second}, where it is used to characterize monadic stability and monadic dependence (See Theorem~\ref{thm:BS}).

After reading the definition in the following section, the reader may wish to skip ahead to \cref{part:examples} for concrete examples and then refer back to this part as needed.
\section{Definition} \label{sec:transDef}
For a positive integer $k$, the \emph{$k$-copy operation}
$\mathsf{C}_k$ maps a graph $G$ to the graph~$\mathsf{C}_k(G)$
consisting of $k$ copies of $G$ where the copies of each vertex are
made adjacent (that is, the copies of each vertex induce a clique and
there are no other edges between the copies of~$G$). The copies of a given vertex are called \emph{clones}. 
Note that for $k=1$, $\mathsf C_k$ maps each graph $G$ to itself, thus, $\mathsf C_1$ is the identity mapping.

For a set ${\mathcal U}$ of unary relations, the \emph{coloring operation} $\Gamma_{\mathcal U}$ maps a graph $G$ to the set~$\Gamma_{\mathcal U}(G)$ of all its ${\mathcal U}$-expansions.

A \emph{simple  interpretation} $\mathsf I$ of graphs in \mbox{$\Uu$-colored}
graphs is a pair $(\nu(x), \eta(x,y))$ consisting of two formulas (in
the first-order language of $\Uu$-colored graphs), where $\eta$ is symmetric
and anti-reflexive (i.e.,
\mbox{$\models \eta(x,y)\leftrightarrow\eta(y,x)$} and
\mbox{$\models \eta(x,y)\rightarrow\neg(x=y)$}). If $G^+$ is a
$\Uu$-colored graph, then $H=\mathsf I(G^+)$ is the graph with
vertex set $V(H)=\nu(G^+)$ and edge set $E(H)=\eta(G)\cap \nu(G)^2$.
For a set $\Gamma$ of $\Uu$-colored graphs we let $\mathsf{I}(\Gamma)=\bigcup_{G^+\in \Gamma}\mathsf{I}(G^+)$. 

A \emph{transduction} $\mathsf T$ is the composition
$\mathsf I\circ\Gamma_{\mathcal U}\circ \mathsf C_k$ of a copy operation $\mathsf C_k$, a coloring operation~$\Gamma_{\mathcal U}$, and a simple interpretation $\mathsf I$ of graphs in $\Uu$-colored graphs.
In other words, for every graph $G$ we have
$\mathsf T(G)=\{\mathsf I(G^+): G^+\in\Gamma_{\mathcal U}(\mathsf C_k(G))\}$, see \Cref{fig:trans}. We remark that in $\mathsf{T}(G)$ we distinguish between isomorphic graphs because we will frequently use the identification of the vertex set of the image and a subset of the vertex set of the source. Later, when applying transductions on the class level, we will not distinguish isomorphic graphs. 

\begin{figure}[p]
	\begin{center}
		\includegraphics[width=\textwidth]{}
		\vspace{5mm}
		
		\includegraphics[width=\textwidth]{}
	\end{center}
	\caption{Two examples of transductions. The top transduction $\mathsf T=\mathsf I\circ\Gamma\circ\mathsf C_2$ includes copying. The coloring transduction $\Gamma$ maps a graph to the set of all its expansions by a single unary relation (shown in red); the interpretation $\mathsf I$ complements the subset marked by the added unary relation (i.e., the subset of all red vertices) and removes the red marks. Here we have not illustrated the final step of identifying isomorphic graphs. \\
		The bottom transduction $\mathsf H$ is non-copying. Its associated interpretation extracts the subgraph induced by the red vertices. When we apply a transduction to a class, we consider the graphs up to isomorphism.}
	\label{fig:trans}
\end{figure}

A transduction $\mathsf T$ is \emph{non-copying} if it is the composition of a coloring operation and a simple interpretation, that is, if it can  be written as 
$\mathsf I\circ\Gamma_{\mathcal U}\circ \mathsf C_1$ ($=\mathsf I\circ\Gamma_{\mathcal U}$). We denote by $\mathsf{Id}$ the \emph{identity transduction}, which is such that $\mathsf{Id}(\mathbf A)=\{\mathbf A\}$ for every structure $\mathbf A$.

Let us give some easy examples of transductions:
\begin{exa}
	\label{ex:hered}
The \emph{hereditary transduction} $\mathsf H$ is the non-copying transduction extracting induced subgraphs that is defined by the interpretation $(\nu,\eta)$ with $\nu(x):=M(x)$ and $\eta(x,y):=E(x,y)$, for a fixed unary relation $M$. So, for every graph $G$, $\mathsf H(G)$ is the set of all induced subgraphs of $G$. We fix the name $\mathsf H$ for the hereditary transduction for the rest of the paper. 

The \emph{pair hereditary transduction} $\mathsf H_2$ is the non-copying transduction extracting the subgraphs $G[A,B]$ for subsets of vertices $A$ and $B$. Using new colors $A$ and $B$, it is defined by the interpretation $(\nu,\eta)$ with $\nu(x):=A(x)\vee B(x)$ and
$\eta(x):=E(x,y)\wedge 
(A(x)\vee A(y))\wedge (B(x)\vee B(y))$. So, for every graph $G$, $\mathsf H_2(G)$ is the set of all the graphs $G[A,B]$, where $A$ and $B$ are subsets of vertices of $G$. 
\end{exa}

We call a transduction $\mathsf T$ {\em immersive} if it is
non-copying and the formulas in the interpretation associated to
$\mathsf T$ are strongly local.

A {\em subset complementation} transduction is defined by the
quantifier-free interpretation on a ${\mathcal U}$-expansion (with
${\mathcal U}=\{M\}$) by
$\eta(x,y):=(x\neq y)\wedge \neg \bigl(E(x,y)\leftrightarrow(M(x)\wedge M(y)\bigr)$.
In other words, the subset complementation transduction complements
the adjacency inside the subset of the vertex set defined by~$M$. We
denote by $\comp M$ the subset complementation defined by the unary
relation~$M$. A {\em perturbation} is a composition of (a bounded
number of) subset complementations.

For a class of graphs~$\Dd$ and a transduction $\mathsf T$ we define $\mathsf T(\Dd)$ as the set of all the graphs in 
$\bigcup_{G\in\Dd}\mathsf T(G)$ up to isomorphism, and 
we say that a class~$\Cc$ is a  {\em $\mathsf T$-transduction} of $\Dd$ if $\Cc\subseteq\mathsf T(\Dd)$. 
We also say that $\mathsf{T}$ \emph{encodes} $\Cc$ in $\Dd$.

For instance, for a class $\Cc$, the class $\mathsf H(\Cc)$ is the \emph{hereditary closure} of $\Cc$, that is, the class of all the induced subgraphs of graphs in $\Cc$.
A class~$\Cc$ of graphs is a \emph{(non-copying) transduction} of a class $\Dd$ of
graphs if it is a $\mathsf T$-transduction of~$\Dd$ for some (non-copying) transduction $\mathsf T$.

We denote by $\Cc\sqsubseteq_\FO\Dd$ (resp.\ $\Cc\sqsubseteq^{\circ}_\FO\Dd$) the property that the class $\Cc$ is an FO transduction (resp.\ a non-copying FO transduction) of the class $\Dd$. Note that if $\Cc\subseteq \Dd$, then $\Cc\sqsubseteq_\FO^{\circ} \Dd$. 
It is easily checked that the composition of two (non-copying) transductions is a (non-copying) transduction (see, for instance \cite[Theorem 7.14]{courcelle2012graph}). Thus, the relations
$\Cc\sqsubseteq_\FO\Dd$ and 
$\Cc\sqsubseteq^{\circ}_\FO\Dd$ are quasi-orders
on classes of graphs.
Intuitively, if $\Cc\sqsubseteq_\FO \Dd$, then~$\Cc$ is at most as complex as~$\Dd$.
If $\Cc\sqsubseteq_\FO \Dd$ and $\Dd\sqsubseteq_\FO \Cc$, we will say that $\Cc$ and $\Dd$ are {\em transduction-equivalent}; this will be denoted by $\equiv_\FO$, and $\equiv_\FO^\circ$ is defined similarly.

\section{Transduction subsumption} \label{sec:subsump}

We say that a transduction $\mathsf T'$ \emph{subsumes} a transduction~$\mathsf T$ if for every graph $G$ we have the inclusion of graph classes
$\mathsf T'(G)\supseteq\mathsf T(G)$. We denote by
$\mathsf T'\geq\mathsf T$ the property that $\mathsf T'$ subsumes
$\mathsf T$.
We furthermore define a notion of transduction subsumption relative to a sequence of relevant classes.
Let $\Cc_0,\dots,\Cc_k$ be classes, and let $\mathsf T_1',\dots,\mathsf T_k',\mathsf T$ be transductions with $\Cc_i\subseteq\mathsf T_i'(\Cc_{i-1})$ for $i\in [k]$ and
$\Cc_k\subseteq\mathsf T(\Cc_{0})$. Note that $\mathsf T_i'$ is a transduction used to construct the graphs in $\mathscr C_i$.
We say that $\mathsf T_k'\circ\dots\circ\mathsf T_1'$   \emph{subsumes} $\mathsf T$
\emph{on the chain} $(\Cc_0,\dots,\Cc_k)$ if, for every $G_0\in\Cc_0$ and every 
$G_k\in\mathsf T(G_0)\cap\Cc_k$, there exist \mbox{$G_1\in\Cc_1,\dots,G_{k-1}\in\Cc_{k-1}$}  with 
$G_i\in \mathsf T_i'(G_{i-1})\cap\Cc_i$ for all $i\in [k]$.
Equivalently (by considering all $G_k\in\mathsf T(G_0)$)  we have that $\mathsf T_k'\circ\dots\circ\mathsf T_1'$  subsumes $\mathsf T$
on the chain $(\Cc_0,\dots,\Cc_k)$ if, for every $G_0\in\Cc_0$ we have the inclusion
\[
\mathsf T(G_0)\cap \mathscr C_k\subseteq \mathsf T_k'(\mathsf T_{k-1}'(\dots (\mathsf T_2'(\mathsf T_1'(G_0)\cap \mathscr C_1)\cap \mathscr C_2) \dots)\cap\mathscr C_{k-1})\cap \mathscr C_k.\]
This situation is graphically depicted in \Cref{fig:trsub}. 
\begin{figure}[h!]
\centering
\includegraphics[width=\textwidth]{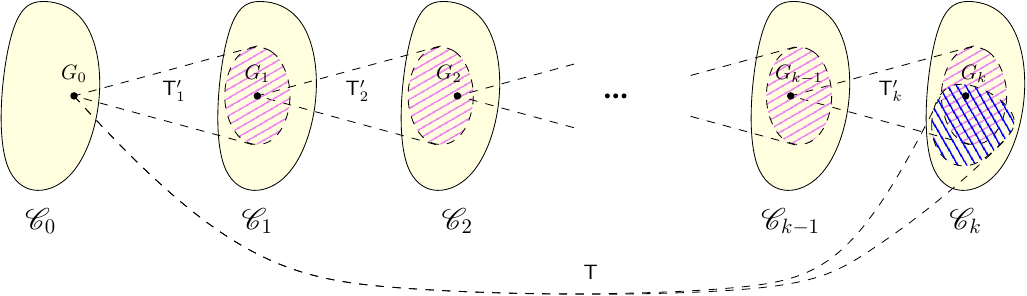}
\caption{Transduction subsumption. For every $G_0\in\Cc_0$ and every $G_k\in \mathsf T(G_0)\cap \Cc_k$, there exist $G_1\in \Cc_1,\dots, G_{k-1}\in\Cc_{k-1}$ with $G_i\in \mathsf T_i'(G_{i-1})$ for all $i=1,\dots,k$.}
\label{fig:trsub}
\end{figure}

As a particular case, a transduction $\mathsf T'$ subsumes a transduction $\mathsf T$ if it subsumes it on the chain
$(\Gg,\mathsf T(\Gg))$.

A \emph{transduction pairing} of two classes~$\mathscr C$
and~$\mathscr D$ is a pair $(\mathsf D,\mathsf C)$ of  transductions,
such that 
\[
\forall\mathbf A\in\mathscr C\quad\exists \mathbf B\in\mathsf D(\mathbf A)\cap\mathscr D\quad \mathbf A\in\mathsf C(\mathbf B)\quad\text{and}\quad
\forall\mathbf B\in\mathscr D\quad\exists \mathbf A\in\mathsf C(\mathbf B)\cap\mathscr C\quad \mathbf B\in\mathsf D(\mathbf A).
\]

In other words,  $(\mathsf D,\mathsf C)$ is a transduction pairing of $\mathscr C$
and~$\mathscr D$ if
$\mathsf C\circ \mathsf D$ subsumes $\mathsf{Id}$ on the chain $(\Cc,\Dd,\Cc)$ and $\mathsf D\circ \mathsf C$ subsumes $\mathsf{Id}$ on the chain $(\Dd,\Cc,\Dd)$.
Note that transduction pairing is a strong form of transduction equivalence.

\begin{exa}
	Let $\mathsf C_2$ be the $2$-copy transduction, let $\mathsf H$ be the hereditary closure transduction,
	let $\Cc$ be a class of graphs, and 
	let $\mathsf C_2(\Cc)=\{\mathsf C_2(G)\colon G\in\Cc\}$.
	
	Then, 
	$(\mathsf C_2,\mathsf H)$ is a transduction pairing of $\Cc$ and
	$\mathsf C_2(\Cc)$.
\end{exa}

A transduction pairing $(\mathsf D,\mathsf C)$ of two classes $\Cc$ and $\Dd$ is \emph{non-copying} if none of $\mathsf D$ and $\mathsf C$ is copying. 

\section{Local normal forms}
\label{sec:norm}
We now establish a normal form for first-order transductions that
captures the local character of first-order logic, and we further
study the properties of immersive transductions. The normal form is
based on Gaifman's Locality Theorem and uses only a copy operation, strongly local
formulas, and perturbations. 
This normal form will be one of the main tools to 
establish results in the paper. Based on Gaifman's Locality Theorem we first show that we may assume that all formulas in transductions are local. 

\begin{lem}
	\label{lem:normal0}
	Every non-copying transduction $\mathsf T$ is subsumed by a non-copying transduction~$\mathsf T'$ with associated interpretation $\mathsf I'=(M(x),\eta'(x,y))$, where $M$ is a unary relation and $\eta'$ is a local formula.
\end{lem}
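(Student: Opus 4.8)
The plan is to exploit the fact that the coloring operation $\Gamma_{\mathcal U}$ produces \emph{all} expansions, so we are free to precompute any global information needed to make the interpretation local and to store it in new colors; since subsumption only demands $\mathsf T'(G)\supseteq\mathsf T(G)$, we may harmlessly \emph{guess} this information by coloring, using the ``correct guess'' to recover $\mathsf T(G)$ and discarding the remaining guesses as extra output. Write $\mathsf T=\mathsf I\circ\Gamma_{\mathcal U}$ with $\mathsf I=(\nu(x),\eta(x,y))$, where we may assume $\mathsf C_1$ is suppressed since it is the identity.

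First I would replace the domain formula by a relation. Introduce a fresh unary symbol $M$ and, for each $\mathcal U$-expansion $G^+$, consider the expansion $G^{++}$ that additionally interprets $M$ as the set $\nu(G^+)$. As the coloring operation ranges over all expansions, $G^{++}$ is among them, and taking $M(x)$ as the domain formula recovers exactly the vertex set $\nu(G^+)$. Next, to make the edge relation local, apply Gaifman's Locality Theorem (\Cref{lem:gaifman}) to $\eta(x,y)$, obtaining an equivalent Boolean combination of $t$-local formulas $\lambda_1(x,y),\dots,\lambda_p(x,y)$ and basic local sentences $\sigma_1,\dots,\sigma_q$. The sentences have no free variables, so in a fixed $G^+$ each is simply true or false. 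I would encode these truth values by introducing, for each $\sigma_i$, a fresh unary symbol $Q_i$ and considering the expansion in which $Q_i$ is interpreted as all of $V(G)$ when $G^+\models\sigma_i$ and as the empty set otherwise. Since such a $Q_i$ is all-or-nothing, the $0$-local atom $Q_i(x)$ has the same truth value at every vertex and records whether $\sigma_i$ holds. Replacing each $\sigma_i$ by $Q_i(x)$ in the Boolean combination yields a formula that is a Boolean combination of the $t$-local $\lambda_j$ and the $0$-local $Q_i(x)$, hence is itself $t$-local; after symmetrizing and conjoining $x\neq y$ (which does not alter the defined relation on the correct coloring, since the true $\eta$ is already symmetric and anti-reflexive), I take the result as the edge formula $\eta'(x,y)$ of $\mathsf I'=(M(x),\eta'(x,y))$.

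Finally I would verify subsumption. Given $H\in\mathsf T(G)$, fix a $\mathcal U$-expansion $G^+$ with $H=\mathsf I(G^+)$, and form the expansion $G^{++}$ over $\mathcal U\cup\{M\}\cup\{Q_1,\dots,Q_q\}$ that keeps the colors of $G^+$, marks $\nu(G^+)$ by $M$, and sets each $Q_i$ to its correct value. Because every $\lambda_j$ and $\sigma_i$ mentions only the original signature, they evaluate in $G^{++}$ exactly as in $G^+$, while each $Q_i(x)$ agrees with $\sigma_i$; hence $\eta'$ defines on $V(G)$ the same relation as $\eta$, the domain $M(x)$ defines $\nu(G^+)$, and therefore $\mathsf I'(G^{++})=H$. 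Thus $H\in\mathsf T'(G)$, giving $\mathsf T'(G)\supseteq\mathsf T(G)$, as required; and $\mathsf T'=\mathsf I'\circ\Gamma_{\mathcal U\cup\{M\}\cup\{Q_1,\dots,Q_q\}}$ is manifestly non-copying.

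The only genuine obstacle is the presence of the basic local sentences, which are not local and cannot be evaluated within a bounded ball. The key observation that removes it is that, being sentences, each contributes a single bit per colored graph; the liberty afforded by quantifying over all colorings, together with the one-sided inclusion in the definition of subsumption, lets us move that bit into a unary color read by the $0$-local atom $Q_i(x)$. The domain replacement is then a routine application of the same ``mark-a-definable-set'' idea.
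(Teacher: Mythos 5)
Your proposal is correct and follows essentially the same route as the paper's proof: introduce a unary mark $M$ for the domain set $\nu(G^+)$, apply Gaifman's theorem to $\eta$, and replace each basic local sentence by a $0$-local atom on a fresh all-or-nothing unary predicate (your $Q_i$, the paper's $T_i$) so that the resulting Boolean combination is $t$-local, with subsumption witnessed by the correctly guessed coloring. The only differences are cosmetic: you spell out the symmetrization/anti-reflexivity adjustment and the final verification of subsumption, which the paper leaves implicit.
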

\begin{proof}
	Let $\mathsf T=\mathsf I_{\mathsf T}\circ\Gamma_{{\mathcal U}_{\mathsf T}}$ be a non-copying transduction.
	We define ${\mathcal U}_{\mathsf T'}$ as the disjoint union of ${\mathcal U}_{\mathsf T}$, the singleton $\{M\}$ (where $M$ is a unary relation used to define the domain), and  a set ${\mathcal U}'=\{T_i\mid 1\leq i\leq n_1\}$ of unary relations used to define global first-order properties, for some integer $n_1$ we shall specify later.

	Let $q$ be the quantifier rank of $\eta(x,y)$.  According to
	\Cref{lem:gaifman}, $\eta$ is logically equivalent to a formula in 
	Gaifman normal form, that is, to a Boolean
	combination of $t$-local formulas and
	basic local sentences~$\theta_1,\dots,\theta_{n_1}$, for some $t<7^q$.  To each
	$\theta_i$ we associate a unary predicate $T_i\in {\mathcal U}'$. 
	The idea is to mark every vertex satisfying $\nu(x)$ by $M$  and to mark all the vertices
	by $T_i$ in graphs satisfying $\theta_i$. 
	We consider the
	formula ${\eta}'(x,y)$ obtained from the Gaifman normal form of $\eta(x,y)$ 
	by replacing the sentence $\theta_i$ by the atomic formula $T_i(x)$ (that is: by testing $T_i$ on the vertex corresponding to the first free variable of ${\eta}'(x,y)$).
	Note that, whatever the interpretation of $x$, $T_i(x)$ is equivalent to $\theta_i$ if the marking $T_i$ has been done as described above. However,  considering $T_i(x)$ instead of $(\exists z)\, T_i(z)$ allows  
	ensuring that ${\eta}'$ is $t$-local. 
\end{proof}

We will next separate the local formulas into strongly local formulas followed by perturbations. For this, we first give a practical characterization of perturbations.
For any $k\in \N$, and $\mathbf x$ and~$\mathbf y$ in $\mathbb{F}_2^k$, we denote by  $\langle\mathbf x,\mathbf y\rangle$ the inner product of  $\mathbf x$ and $\mathbf y$.
(Thus, $\langle\mathbf x,\mathbf y\rangle=1$  if $\mathbf x$ and $\mathbf y$ have an odd number of common coordinates equal to $1$, and $\langle\mathbf x,\mathbf y\rangle=0$ otherwise.) In the following lemma in a partition we allow parts to be empty.

\begin{lem}
	\label{lem:perturb}
	For every perturbation $\mathsf P=\oplus Z_1\oplus\dots\oplus Z_k$ and for every graph $G$, $H\in{\mathsf P}(G)$ if and only if there exists a partition $(V_\mathbf x)_{\mathbf x\in \mathbb{F}_2^k}$ of $V(G)$ such that $H$ is obtained from $G$ by flipping the edges $uv$ where $u\in V_{\mathbf x}, v\in V_{\mathbf y}$, and $\langle\mathbf x,\mathbf y\rangle=1$.
\end{lem}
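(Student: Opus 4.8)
The plan is to follow how the successive subset complementations act on an individual pair of vertices, and then to repackage the membership information as a partition indexed by $\mathbb{F}_2^k$. The first observation is that every subset complementation leaves the vertex set unchanged; hence, in $\mathsf P=\comp Z_1\circ\dots\circ\comp Z_k$, the subsets $Z_1,\dots,Z_k$ successively chosen by the coloring operations can all be regarded as subsets of the single vertex set $V(G)$, and applying $\comp Z_i$ flips the edge $uv$ (for $u\neq v$) exactly when $u,v\in Z_i$. Since flipping an edge is an involution on $\{0,1\}$ (addition of $1$ over $\mathbb{F}_2$), the adjacency of a pair $uv$ in $H$ is obtained from its adjacency in $G$ by flipping it once for each index $i$ with $u,v\in Z_i$; thus $uv$ is flipped overall if and only if the number of such $i$ is odd. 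In particular this net effect depends only on the parity and not on the order in which the complementations are performed.

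Next I would encode membership by signatures: to each vertex $v$ associate the vector $\mathbf x(v)\in\mathbb{F}_2^k$ with $\mathbf x(v)_i=1\iff v\in Z_i$, and set $V_{\mathbf x}=\{v\in V(G):\mathbf x(v)=\mathbf x\}$. This yields a partition $(V_{\mathbf x})_{\mathbf x\in\mathbb{F}_2^k}$ of $V(G)$ (some parts possibly empty). For $u\in V_{\mathbf x}$ and $v\in V_{\mathbf y}$, the number of indices $i$ with $u,v\in Z_i$ is the number of coordinates where $\mathbf x$ and $\mathbf y$ are both equal to $1$, whose parity is by definition $\langle\mathbf x,\mathbf y\rangle$. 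Combined with the previous paragraph, the edge $uv$ is flipped precisely when $\langle\mathbf x,\mathbf y\rangle=1$, which establishes the ``only if'' direction.

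For the ``if'' direction I would run the argument backwards: given a partition $(V_{\mathbf x})_{\mathbf x\in\mathbb{F}_2^k}$ of $V(G)$, recover the sets $Z_i:=\bigcup\{V_{\mathbf x}:\mathbf x_i=1\}$ and feed them to the successive coloring operations of $\mathsf P$. The computation above shows that applying $\comp Z_1\circ\dots\circ\comp Z_k$ with these choices flips exactly the pairs $uv$ with $u\in V_{\mathbf x}$, $v\in V_{\mathbf y}$, and $\langle\mathbf x,\mathbf y\rangle=1$, producing the prescribed graph $H\in\mathsf P(G)$.

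I do not expect a genuine obstacle here; the only step requiring a little care is the bookkeeping that the order of the subset complementations does not affect the final graph, which follows from the commutativity and associativity of addition over $\mathbb{F}_2$ applied independently to each pair of vertices. The identity ``number of common $1$-coordinates mod $2$ equals $\langle\mathbf x,\mathbf y\rangle$'' is immediate from the definition of the inner product recalled just before the lemma.
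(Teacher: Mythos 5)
Your proposal is correct and follows essentially the same route as the paper: group vertices by their membership signature in $\mathbb{F}_2^k$ to get the partition $(V_{\mathbf x})$, observe that the parity of the number of indices $i$ with $u,v\in Z_i$ equals $\langle\mathbf x,\mathbf y\rangle$, and in the converse direction recover $Z_i$ as the union of the parts $V_{\mathbf x}$ with $\mathbf x_i=1$. The only difference is that you spell out the bookkeeping (order-independence of the flips, and the fact that the successive colorings can all be viewed as subsets of the fixed vertex set $V(G)$) which the paper leaves implicit.
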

\begin{proof}
	Let $G^+$ be an expansion of $G$ by relations $Z_1,\dots,Z_k$ and let $H$ be obtained from~$G$ by complementing the subsets $Z_1(G^+),\dots,Z_k(G^+)$.
	For $\mathbf x\in \mathbb{F}_2^k$ we define $V_{\mathbf x}$ as the subset of all vertices $v$ such that $v\in Z_i(G^+)$ if and only if $\mathbf x_i=1$. For $u\in V_{\mathbf x}$ and $v\in V_{\mathbf y}$, the number of times an edge $uv$ of $G$ is flipped when applying the subset complementations $Z_1,\dots,Z_k$ is the number of common coordinates of $\mathbf x$ and $\mathbf y$ that are equal to $1$. It follows that $H$ is obtained from $G$ by flipping the edges $uv$ where $u\in V_{\mathbf x}, v\in V_{\mathbf y}$, and $\langle\mathbf x,\mathbf y\rangle=1$, 
	see \Cref{fig:perturb}.
	
	Conversely, assume $H$ is obtained from $G$ by flipping the edges $uv$ where $u\in V_{\mathbf x}, v\in V_{\mathbf y}$, and $\langle\mathbf x,\mathbf y\rangle=1$, where $(V_\mathbf x)_{\mathbf x\in \mathbb{F}_2^k}$ is a partition of $V(G)$. Define the expansion $G^+$ of $G$ by relations $Z_1,\dots,Z_k$ where $Z_i(G^+)$ is the union of the sets $V_{\mathbf x}$ with $\mathbf x_i=1$.
	Then, it is immediate that $H$ is obtained from $G^+$ by complementing the subsets $Z_1(G^+),\dots,Z_k(G^+)$. In particular, $H\in\mathsf P(G)$.
\end{proof}
\begin{figure}[h]
	\centering
	\includegraphics[width=\textwidth]{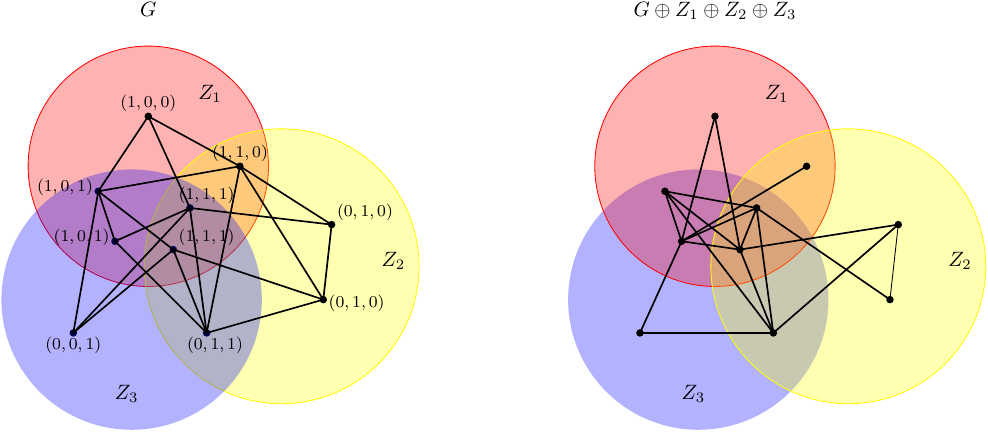}
	\caption{Illustration of the proof of \Cref{lem:perturb}}
	\label{fig:perturb}
\end{figure}

Finally, we show the main theorem of this section, which boils down to subsuming local formulas into strongly local formulas and perturbations. 

\pagebreak
\begin{thm}
	\label{thm:normal}
	Every non-copying transduction $\mathsf T$ is subsumed by the composition of an immersive transduction~$\mathsf T_{\rm imm}$ and a perturbation~$\mathsf P$, that is, $\mathsf T\leq \mathsf P\circ\mathsf T_{\rm imm}$.
	
	Consequently, every transduction $\mathsf T$ is subsumed by the composition of a
	copying operation~$\mathsf C$, an immersive transduction $\mathsf T_{\rm imm}$ and a perturbation~$\mathsf P$, that is, $\mathsf T\leq \mathsf P\circ\mathsf T_{\rm imm}\circ\mathsf C$.
\end{thm}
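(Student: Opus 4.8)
The plan is to prove the first (non-copying) statement and then read off the consequence for general transductions by pre-composing with the copy operation that $\mathsf{T}$ contains. So let $\mathsf T$ be non-copying. First I would invoke \Cref{lem:normal0} to replace $\mathsf T$ (up to subsumption) by a non-copying transduction whose interpretation is $(M(x),\eta(x,y))$ with $\eta$ a $t$-local formula for some fixed $t$; it then suffices to subsume this by $\mathsf P\circ \mathsf T_{\rm imm}$. The governing idea is to split the edge relation $\eta$ into a short-range part, realised exactly by a strongly local formula, and a long-range part, realised by a perturbation. For the short range I would use that a $t$-local formula is in particular $2t$-local, so $\eta(x,y)\wedge (x\neq y)\wedge \dist(x,y)\le 2t$ is strongly $2t$-local and captures $\eta$ correctly on all pairs at distance at most $2t$.

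For the long range, the key tool is \Cref{cor:loc_dec}: on pairs with $\dist(x,y)>2t$, $\eta$ is equivalent to a Boolean combination $\zeta(x,y)$ of $t$-local unary formulas $\zeta_1,\dots,\zeta_m$ evaluated at $x$ and at $y$. Crucially, a $t$-local formula with a single free variable is automatically strongly local (the pairwise-distance condition is vacuous), so the $\zeta_i$ are admissible in an immersive transduction, and $\zeta$ depends on $(x,y)$ only through the \emph{types} $\tau(x)=(\zeta_1(x),\dots,\zeta_m(x))$ and $\tau(y)$. Since $\dist$ is symmetric we may symmetrise (replace $\zeta(x,y)$ by $\zeta(x,y)\wedge\zeta(y,x)$) so that $\zeta$ induces a \emph{symmetric} function $f$ of the two types agreeing with $\eta$ on far pairs. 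The heart of the argument is then the algebraic claim that any such symmetric pattern is realisable by a perturbation: by \Cref{lem:perturb} a perturbation with subsets $Z_1,\dots,Z_k$ flips exactly the pairs $uv$ with $\langle \mathbf x_u,\mathbf x_v\rangle=1$, where $\mathbf x_u\in\mathbb F_2^k$ records the memberships of $u$. I would therefore exhibit, for the symmetric matrix $F=(f(\sigma,\tau))_{\sigma,\tau}$ indexed by the at most $2^m$ types, a map $\Phi$ with $\langle\Phi(\sigma),\Phi(\tau)\rangle=F_{\sigma\tau}$: every symmetric matrix over $\mathbb F_2$ is a Gram matrix $AA^\top$, seen by taking one column $e_i+e_j$ for each off-diagonal $1$ (which fixes all off-diagonal entries) and then one column $e_i$ for each diagonal entry that still needs correcting (which does not disturb the off-diagonal entries). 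Setting $Z_i$ to be the union of the types $\sigma$ with $\Phi(\sigma)_i=1$ — a Boolean combination of the $\zeta_j$ — yields a perturbation $\mathsf P=\oplus Z_1\oplus\cdots\oplus Z_k$ realising $f$, hence $\eta$, on every far pair.

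The remaining obstacle — and the step requiring care — is that a perturbation acts \emph{globally}: $\mathsf P$ will also flip the short-range pairs according to $\langle\Phi(\tau(\cdot)),\Phi(\tau(\cdot))\rangle$, corrupting the edges that $\mathsf T_{\rm imm}$ already got right. I would neutralise this by pre-compensating inside the immersive interpretation: let $\gamma(x,y)$ be the $t$-local Boolean combination of the $\zeta_j(x),\zeta_j(y)$ expressing $\langle\Phi(\tau(x)),\Phi(\tau(y))\rangle=1$, and take as the immersive edge formula
\[
\eta_{\rm imm}(x,y):=(x\neq y)\wedge(\dist(x,y)\le 2t)\wedge\neg\big(\eta(x,y)\leftrightarrow\gamma(x,y)\big),
\]
with domain formula $M(x)$; conjoining with the distance bound keeps this strongly $2t$-local and symmetric. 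Then for a near pair the graph produced by $\mathsf T_{\rm imm}$ contains the edge $xy$ precisely when exactly one of $\eta(x,y)$ and $\gamma(x,y)$ holds, and applying $\mathsf P$ flips this edge again whenever $\gamma(x,y)$ holds, restoring the truth value of $\eta(x,y)$; for a far pair $\mathsf T_{\rm imm}$ produces no edge and $\mathsf P$ installs $\gamma=f=\eta$. Concretely, given any $H\in\mathsf T(G)$, witnessed by a $\mathcal U$-expansion $G^+$, I would choose for $\mathsf T_{\rm imm}$ the same colouring $G^+$ (so that $M$, $\eta$ and the $\zeta_j$ are evaluated correctly) and for $\mathsf P$ the colouring assigning each domain vertex $u$ to the sets $Z_i$ determined by its type $\tau(u)$ in $G^+$; by the computation above this produces exactly $H$. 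Hence $\mathsf T(G)\subseteq(\mathsf P\circ\mathsf T_{\rm imm})(G)$ for every $G$, i.e.\ $\mathsf T\le \mathsf P\circ\mathsf T_{\rm imm}$.

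Finally, for an arbitrary transduction $\mathsf T=\mathsf I\circ\Gamma_{\mathcal U}\circ\mathsf C_k$, write $\mathsf T=\mathsf T_0\circ\mathsf C_k$ with $\mathsf T_0=\mathsf I\circ\Gamma_{\mathcal U}$ non-copying. Applying the first part gives $\mathsf T_0\le \mathsf P\circ\mathsf T_{\rm imm}$, and since $\mathsf T_0\le\mathsf T_0'$ implies $\mathsf T_0\circ\mathsf C_k\le\mathsf T_0'\circ\mathsf C_k$ (evaluate both at $\mathsf C_k(G)$), we obtain $\mathsf T\le \mathsf P\circ\mathsf T_{\rm imm}\circ\mathsf C_k$, which is the claimed form with $\mathsf C=\mathsf C_k$.
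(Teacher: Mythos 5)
Your proof is correct and follows the same architecture as the paper's: reduce to a local edge formula via \Cref{lem:normal0}, use \Cref{cor:loc_dec} to describe the long-range behaviour by unary $t$-local formulas, take as the immersive edge relation the symmetric difference $\neg(\eta\leftrightarrow\gamma)$ restricted to pairs at distance at most $2t$, and let a perturbation install the type-determined pattern globally so that near pairs are flipped back to $\eta$ and far pairs receive exactly $\eta$. The one place you genuinely diverge is in how the perturbation realising the symmetric type-pattern is produced. The paper first arranges the $\zeta_i$ to define mutually exclusive local types (so the sets $Z_i$ are pairwise disjoint) and then writes down an explicit gadget, namely $\comp Z_i$ for diagonal pairs and $\comp Z_i\comp Z_j\comp(Z_i\cup Z_j)$ for off-diagonal pairs in the symmetric set $\mathcal F$, checking by hand that the third complementation's spurious flips inside $Z_i$ and $Z_j$ are undone by the first two. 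You instead keep the $\zeta_i$ arbitrary, encode each vertex by its full type vector, and observe that any symmetric matrix over $\mathbb F_2$ is a Gram matrix $AA^{\top}$ (columns $e_i+e_j$ for off-diagonal ones, then columns $e_a$ to fix diagonals), which plugs directly into the characterisation of perturbations in \Cref{lem:perturb}. Your route buys uniformity and avoids the disjointness reduction; the paper's gadget is more concrete and makes the individual subset complementations visible. Both are complete, and your handling of the remaining points (symmetrising $\zeta$ using the symmetry of $\eta$ on far pairs, choosing the perturbation colouring from types computed in $G^+$, and pushing the copy operation through at the end) matches the paper's.
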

\begin{proof}
	Let $\mathsf T$
	be a non-copying transduction.
	According to \Cref{lem:normal0}, the transduction $\mathsf T$ is subsumed by a transduction with associated interpretation $(V(x),\widetilde{\eta}(x,y))$, where $V$ is a unary relation and $\widetilde{\eta}(x,y)$ is a $t$-local formula, for some $t\in\mathbb N$.
	
	Since every Boolean combination can be written in disjunctive normal form, according to \Cref{cor:loc_dec},  under the assumption ${\rm dist}(x,y)>2t$ the
	formula $\widetilde{\eta}$ is equivalent to a formula~$\widetilde{\phi}(x,y)$ of the form
	$\bigvee_{(i,j)\in\mathcal F}\zeta_i(x)\wedge\zeta_j(y)$, where
	$\mathcal F\subseteq [n_2]\times[n_2]$ for some integer~$n_2$ and the
	formulas $\zeta_i$ ($1\leq i\leq n_2$) are $t$-local. 
	By considering appropriate 
	Boolean combinations (or, for those familiar with model theory, by assuming that the $\zeta_i$ define local types)
	we may assume that \mbox{$\models \forall x \bigwedge_{i\neq j} \neg(\zeta_i(x)\wedge
		\zeta_j(x))$}, that is, every element of a graph satisfies at most one of the $\zeta_i$. 
	Note also that by this assumption $\mathcal F$ is symmetric, as $\eta$ (hence $\widetilde{\eta}$ and $\widetilde{\phi}$) is symmetric.
	
	We define 
	\mbox{$\psi(x,y):=\neg(\widetilde{\eta}(x,y)\leftrightarrow\widetilde{\phi}(x,y))\wedge ({\rm dist}(x,y)\leq 2t)$}, which is $2t$-strongly local, 
	and we define~$\mathsf I_{\mathsf T_\psi}$ as the interpretation of graphs in ${\mathcal U}_{\mathsf T_\psi}$-structures by using the same definitions as in $\mathsf I_{\mathsf T}$ for the domain, then defining the adjacency relation by $\psi(x,y)$.  
	To each formula~$\zeta_i$ we associate a unary 
	predicate $Z_i\in {\mathcal U}_{\mathsf P}$. We define the perturbation~$\mathsf{P}$
	as the sequence of subset complementations $\comp Z_i$ 
	(for~$(i,i)\in\mathcal F$) and of \mbox{$\comp Z_i\comp Z_j\comp (Z_i\cup Z_j)$} 
	(for~$(i,j)\in\mathcal F$ and~$i<j$). Denote by $\phi(x,y)$ the
	formula defining the edges in the interpretation~$I_\mathsf{P}$. Note that
	when the $Z_i$ are pairwise disjoint, then $\mathsf{P}$ complements
	exactly the edges of $Z_i$ or between~$Z_i$ and~$Z_j$, respectively. 
	The operation $\comp (Z_i\cup Z_j)$ complements all edges between
	$Z_i$ and $Z_j$, but also inside $Z_i$ and $Z_j$, which is undone by
	$\comp Z_j$ and~$\comp Z_i$.

	Now assume that a graph $H$ is a $\mathsf T$-transduction of a graph $G$, and let $G^+$ be a \mbox{${\mathcal U}_{\mathsf T}$-expansion} of $G$ such that $H=\mathsf I_{\mathsf T}(G^+)$.
	We define the ${\mathcal U}_\psi$-expansion $G^*$ of $G^+$ (which is thus a ${\mathcal U}_{\mathsf T_{\psi}}$-expansion of $G$)
	by defining, for each $i\in[n_1]$, $T_i(G^*)=V(G)$ if
	$G^+\models \theta_i$ and $T_i(G^*)=\emptyset$ otherwise.
	Let $K=\mathsf I_{\mathsf T_\psi}(G^*)$. We define the ${\mathcal U}_{\mathsf P}$-expansion $K^+$ of $K$ by defining, for each $j\in [n_2]$, 
	$Z_j(K^+)=\zeta_j(G^+)$. By the assumption that $\models \forall x \bigwedge_{i\neq j} \neg(\zeta_i(x)\wedge
	\zeta_j(x))$ the~$Z_j$ are pairwise disjoint. 
	Now, when $\dist(x,y)>2t$ there is no edge between~$x$ and~$y$ in~$K$, 
	hence $\phi$ on $K^+$ is equivalent to~$\widetilde{\phi}$ on $G^*$, which 
	in turn in this case is equivalent to~$\widetilde{\eta}(x,y)$ on~$G^*$. 
	On the other hand, when $\dist(x,y)\leq 2t$, then the perturbation 
	is applied to the edges defined by $\neg(\widetilde{\eta}(x,y)\leftrightarrow\widetilde{\phi}(x,y)$, which yields exactly the edges defined by $\widetilde{\eta}$
	on~$G^*$. Thus we have $\eta(G^+)=\widetilde{\eta}(G^*)=\phi(K^+)$, 
	hence    $\mathsf I_{\mathsf P}(K^+)=H$.

	It follows that the transduction $\mathsf T$ is subsumed by the composition
	of the immersive transduction $\mathsf T_\psi$ and a sequence of 
	subset complementations, the perturbation $\mathsf P$.
\end{proof}

\begin{cor}
	\label{cor:norm}
	For every immersive transduction $\mathsf T$ and every perturbation $\mathsf P$, there exist
	an immersive transduction $\mathsf T'$ and a perturbation $\mathsf P'$, such that 
	$\mathsf P'\circ\mathsf T'$ subsumes $\mathsf T\circ\mathsf P$.
\end{cor}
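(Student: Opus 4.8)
The plan is to observe that the composite $\mathsf T\circ\mathsf P$ is itself a single non-copying transduction, and then to apply \Cref{thm:normal} to it directly. First I would verify that $\mathsf T\circ\mathsf P$ is indeed non-copying. The perturbation $\mathsf P$ is a composition of subset complementations, each defined by a quantifier-free interpretation on a one-color expansion, i.e.\ each of the form $\mathsf I\circ\Gamma_{\mathcal U}\circ\mathsf C_1$; since $\mathsf C_1$ is the identity, each subset complementation is non-copying, and hence so is $\mathsf P$. The transduction $\mathsf T$, being immersive, is non-copying by definition. As the composition of two non-copying transductions is again a non-copying transduction (as recalled in \Cref{sec:transDef}), the transduction $\mathsf S:=\mathsf T\circ\mathsf P$ is non-copying.

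Next I would apply the first assertion of \Cref{thm:normal} to $\mathsf S$: every non-copying transduction is subsumed by the composition of an immersive transduction and a perturbation. This yields an immersive transduction $\mathsf T'$ and a perturbation $\mathsf P'$ with $\mathsf S\leq\mathsf P'\circ\mathsf T'$, that is, $\mathsf P'\circ\mathsf T'$ subsumes $\mathsf T\circ\mathsf P$, which is exactly the desired conclusion. Conceptually, the corollary records that a perturbation occurring \emph{before} an immersive transduction can always be pushed \emph{past} it, at the cost of replacing both factors; this is the form in which the normal form is convenient to use later.

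Since essentially all of the work is already packaged inside \Cref{thm:normal}, there is no substantial obstacle here: the only points requiring (routine) care are checking that prepending a perturbation keeps us within the class of non-copying transductions and that the subsumption produced has the required shape. It is worth noting why the packaged route is the right one. A direct attempt to commute $\mathsf P$ past $\mathsf T$ by hand runs into the fact that $\mathsf P$ alters distances, so the strong locality of $\mathsf T$ is measured in the \emph{perturbed} graph rather than in the source, and the source-distance locality needed for the final immersive factor is not transparent; one would have to re-examine the interaction between the flips described by \Cref{lem:perturb} and the strongly local edge formulas of $\mathsf T$. Treating $\mathsf T\circ\mathsf P$ as a single non-copying transduction and re-deriving its normal form through \Cref{thm:normal} (which rests on Gaifman's theorem and \Cref{cor:loc_dec}) sidesteps this entirely, so this is the approach I would take.
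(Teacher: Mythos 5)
Your proposal is correct and matches the paper's (implicit) argument: the corollary is stated without a separate proof precisely because it follows by observing that $\mathsf T\circ\mathsf P$ is a single non-copying transduction and applying the first assertion of \Cref{thm:normal} to it. Your additional checks (that perturbations and immersive transductions are non-copying and that compositions of non-copying transductions are non-copying) are the only points needing verification, and you handle them correctly.
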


\Cref{thm:normal} can be strengthened if we only require a relative subsumption and if the target class $\Dd$ is weakly sparse.

\begin{thm}
	\label{thm:rel-normal}
	Let  $\mathsf T$ be a non-copying transduction and let a  weakly sparse  class $\Dd$ be a $\mathsf T$-transduction  of a class $\Cc$.
	
	Then, there exist an immersive transduction $\mathsf T_{\rm imm}$, a perturbation $\mathsf P$, and a weakly sparse (hereditary)  class $\Ss$ such that 
	$\mathsf P\circ\mathsf T_{\rm imm}$ subsumes $\mathsf T$ on the chain $(\Cc,\Ss,\Dd)$.
\end{thm}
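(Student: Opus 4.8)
The plan is to start from the unconditional local normal form (\Cref{thm:normal}) and then \emph{repair} the intermediate class using the weak sparsity of the target $\Dd$. First I would apply \Cref{thm:normal} to the non-copying transduction $\mathsf T$, obtaining an immersive transduction $\mathsf T_0$ (whose edge formula is strongly $2t$-local for some $t$) and a perturbation $\mathsf P_0=\comp Z_1\cdots\comp Z_k$ with $\mathsf T\leq\mathsf P_0\circ\mathsf T_0$. Fix $G_0\in\Cc$ and $H\in\mathsf T(G_0)\cap\Dd$, together with a witness $K_0\in\mathsf T_0(G_0)$ such that $H\in\mathsf P_0(K_0)$. By \Cref{lem:perturb}, $K_0$ and $H$ differ exactly on the complemented blocks $(V_{\mathbf x},V_{\mathbf y})$ (those with $\langle\mathbf x,\mathbf y\rangle=1$) of a partition $(V_{\mathbf x})_{\mathbf x\in\mathbb F_2^k}$ of the common vertex set. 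Since $\mathsf T_0$ is immersive, every edge of $K_0$ joins two vertices at distance at most $2t$ in $G_0$; consequently every \emph{long-range} edge of $H$ (endpoints at distance $>2t$) is a complemented pair, and conversely every long-range complemented pair is an edge of $H$.

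The second step exploits weak sparsity. Let $d$ be such that $\Dd$ is $K_{d,d}$-free. Inside each complemented block the far pairs are all edges of $H$, so the bipartite \emph{far graph} on $(V_{\mathbf x},V_{\mathbf y})$ is $K_{d,d}$-free. I would now run a metric argument: if $A\subseteq V_{\mathbf x}$ and $B\subseteq V_{\mathbf y}$ were both $4t$-scattered (pairwise distance $>4t$), then by the triangle inequality each vertex on one side is close (distance $\leq 2t$) to at most one vertex on the other, so the close pairs between $A$ and $B$ form a partial matching and the far graph between them is complete bipartite minus a matching, which contains $K_{d,d}$ once $|A|,|B|\geq 2d$. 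Hence at least one side of each complemented block has $4t$-scattered number $<2d$, and is therefore covered by fewer than $2d$ balls $B^{G_0}_{4t}(c)$; marking one center per ball marks only a bounded set $C$ of centers overall (at most $2d\cdot 4^k$ across all blocks).

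For the repair itself, note that the density of $K_0$ arises \emph{only} from complemented close pairs (far pairs contribute no $K_0$-edge), and these lie inside balls of radius $\leq 4t$, hence are local. I would fold this local complementation into the interpretation: using the marking of $C$ and strongly $O(t)$-local formulas, define a new immersive transduction $\mathsf T_{\rm imm}$ that outputs \emph{directly} the short edges of $H$ (a weakly sparse graph, being a subgraph of $H$) and no long-range edge, producing an intermediate $K'$ that is $K_{d,d}$-free. The long-range edges of $H$ are then restored by a perturbation $\mathsf P$; here the center set $C$ is used to recolor the vertices by their local type relative to $C$, so that the associated inner-product pattern of \Cref{lem:perturb} complements exactly the far pairs that are $H$-edges while confining the complemented far pairs on each block to the boundedly many balls around $C$ (keeping the pre-image weakly sparse). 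Finally I would take $\Ss$ to be the hereditary closure of all intermediates obtained in this way; it is weakly sparse by construction, and by the above witnessing $\mathsf P\circ\mathsf T_{\rm imm}$ subsumes $\mathsf T$ on the chain $(\Cc,\Ss,\Dd)$.

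The hard part is the last coloring step, and it is the genuine obstacle. A perturbation is a \emph{uniform} block operation and cannot by itself separate close pairs from far pairs: complementing a block to create the required long-range edges also flips the short edges that $\mathsf T_{\rm imm}$ has already set correctly, while complementing so as to protect those short edges re-introduces density, since the bipartite complement of a $K_{d,d}$-free block is dense. The whole difficulty is thus to leverage the bounded-scattered structure and the bounded center set $C$ to engineer a \emph{bounded} recoloring whose inner-product form vanishes on all short pairs yet realizes the $K_{d,d}$-free far graph, so that the perturbation touches only honestly long-range pairs and leaves a weakly sparse pre-image. I expect the delicate bookkeeping to lie in verifying that this recoloring has bounded dimension and that the resulting subset complementations indeed reconstruct $H$ on every block simultaneously.
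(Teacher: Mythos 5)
Your first two paragraphs are sound and track the paper's strategy closely: you invoke \Cref{thm:normal}, observe via strong locality that the long-range edges of $H$ are exactly the long-range pairs inside complemented blocks, and derive from $K_{d,d}$-freeness of $\Dd$ a bounded-covering statement for those blocks (your scattered-set argument is a legitimate variant of the paper's \Cref{claim:cc}, which instead shows that each ``problematic'' $B_{\mathbf x,\mathbf y}$ has at most $2t$ connected components, each of diameter at most $6t$). The problem is that the argument stops exactly where the theorem lives. Your third paragraph asserts that one can fold the local complementation into the interpretation and then restore the long edges by a perturbation whose inner-product form ``vanishes on all short pairs yet realizes the $K_{d,d}$-free far graph,'' but no such perturbation is constructed, and your fourth paragraph explicitly concedes that this recoloring is ``the genuine obstacle'' whose existence you only ``expect.'' Note also that the naive version of the plan collapses: if $\mathsf P$ flips the whole block pairs $V_{\mathbf x}\times V_{\mathbf y}$ of the original partition, then $H\in\mathsf P(K')$ forces $K'$ to agree with the edge-complement of $H$ on the flipped block pairs and with $H$ elsewhere, which (since $H\in\mathsf P_0(K_0)$) gives $K'=K_0$ --- precisely the graph whose possible density was the issue. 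So what is missing is not bookkeeping; it is the content of the theorem.

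The paper closes this gap by an explicit refinement of the partition. It calls $(\mathbf x,\mathbf y)$ \emph{problematic} when $B_{\mathbf x,\mathbf y}$ contains $K_{t,t}$, proves \Cref{claim:cc}, refines each $V_{\mathbf x}$ by the component index in every problematic $B_{\mathbf x,\mathbf y}$ (at most $(2t)^{2^k}$ pieces), and lets $\mathsf P_1$ flip exactly the same-component sub-block pairs of problematic pairs. Because these flips occur within diameter $6t$, the same map is realized by an immersive transduction $\mathsf T_1$, so $K_H'\in\mathsf T_1(K_H)\cap\mathsf P_1(K_H)$; a four-case analysis shows $K_H'$ is $K_{t',t'}$-free, and one concludes with $\mathsf T_{\rm imm}=\mathsf T_1\circ\mathsf T_0$ and $\mathsf P=\mathsf P_0\circ\mathsf P_1$, using that perturbations are involutions. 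Your ball-covering data could plausibly be used to the same end --- refine one side of each complemented block into the $<2d$ balls of radius $4t$ and the other side into the atoms of the corresponding $6t$-balls, so that every refined block pair is either entirely far (flip it; it contributes only $H$-edges) or entirely contained in a ball of radius $O(t)$ (hand it to the immersive part) --- but you would still need to write down that partition, bound its size, check that the resulting flip pattern is realizable by boundedly many subset complementations, and verify weak sparsity of the intermediate class. None of that is in the proposal, so as written it has a genuine gap at its central step.
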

\begin{proof}
	As the class $\Dd$ is weakly sparse, there exists an integer $t$ such that $K_{t,t}$ is not a subgraph of any graph in $\Dd$.
	
	By \Cref{thm:normal}, the transduction $\mathsf T$ is subsumed by the composition of an immersive transduction $\mathsf T_0$ and a perturbation $\mathsf P_0$.
	Thus, $\Dd\subseteq \mathsf P_0\circ\mathsf T_0(\Cc)$, which rewrites as the property that for every $H\in\Dd$ there exists $G_H\in\Cc$ and $K_H\in\mathsf T_0(G_H)$ such that
	$H\in \mathsf P_0(K_H)$. 
	The problem is that the class $\mathscr K:=\{K_H\colon H\in\Dd\}$ needs not to be weakly sparse.
	
	However, we shall prove that there exists a perturbation $\mathsf P_1$ and an immersive transduction~$\mathsf T_1$ such that, for every $H\in\Dd$, we can choose 
	$K_H'\in\mathsf P_1(K_H)\cap \mathsf T_1(K_H)$ in such a way that the class 
	$\Ss:=\{K_H'\colon H\in\Dd\}$ is weakly sparse.
	Note that $K_H'\in\mathsf P_1(K_H)$ is equivalent to $K_H\in\mathsf P_1(K_H')$, as every perturbation is undone by carrying it out again.
	Then, if we let $\mathsf P=\mathsf P_0\circ\mathsf P_1$ and $\mathsf T_{\rm imm}=\mathsf T_1\circ\mathsf T_0$, the transduction $\mathsf P$ is a perturbation, 
	the transduction $\mathsf T_{\rm imm}$ is an immersive transduction, and, for every $H\in\Dd$, we have
	$K_H'\in \mathsf T_1(K_H)\subseteq \mathsf T_1\circ\mathsf T_0(G_H)=\mathsf T_{\rm imm}(G_H)$ and
	$H\in \mathsf P_0(K_H)\subseteq \mathsf P_0\circ\mathsf P_1(K_H')=\mathsf P(K_H')$.
	Hence, $\mathsf P\circ\mathsf T_{\rm imm}$ subsumes $\mathsf T$ on the chain $(\Cc,\Ss,\Dd)$.
	
	Toward this end, consider  $H\in\Dd$ and let $G_H$ and $K_H$ be such that $G_H\in\Cc$ and 
	$K_H\in\mathsf{T}_0(G_H)\cap\mathsf P_0(H)$.
	Note that this last condition is equivalent to the conditions  
	$K_H\in\mathsf T_0(G_H)$ and $H\in\mathsf P_0(K_H)$.
	
	We analyze how the perturbation $\mathsf P_0$ transforms $K_H$ into $H$.
	According to \Cref{lem:perturb}, there exists a partition $(V_{\mathbf x})_{\mathbf x\in\mathbb F_2^k}$, such that the edges $uv$ of $K_H$ that are flipped to get $H$ are those where $u\in V_{\mathbf x}$, $v\in V_{\mathbf y}$, and $\langle\mathbf{x},\mathbf{y}\rangle=1$.
	For $\mathbf x,\mathbf y\in\mathbb{F}_2^k$, we denote by $B_{\mathbf x,\mathbf y}$ the graph with vertex set $V_{\mathbf x}\cup V_{\mathbf y}$, whose edges are the pairs $uv$ with $u\neq v$, $u\in V_{\mathbf x}$, $v\in V_{\mathbf y}$ and $uv\in E(K_H)$. We say that the pair $(\mathbf x,\mathbf y)$ is \emph{problematic} if $B_{\mathbf x,\mathbf y}$ contains $K_{t,t}$ as a subgraph. Note that if $(\mathbf x,\mathbf y)$ is problematic, then the edges of $B_{\mathbf x,\mathbf y}$  are flipped when transforming $K_H$ into $H$, as this latter graph excludes $K_{t,t}$ as a subgraph. Thus $\langle\mathbf{x},\mathbf{y}\rangle=1$ if $(\mathbf x,\mathbf y)$ is problematic.
	The next claim gives some basic structural properties of problematic pairs.
	
	\begin{claim}
		\label{claim:cc}
		If $(\mathbf x,\mathbf y)$ is problematic, then 
		$B_{\mathbf x,\mathbf y}$ has $c(\mathbf x,\mathbf y)\leq 2t$ connected components, each of diameter at most $6t$.
	\end{claim}
	\begin{claimproof}
		We first consider the case where $\mathbf y=\mathbf x$.
		If $B_{\mathbf x,\mathbf x}$ has $2t$ connected components; then by picking one vertex in each connected component, we get an independent set of size at least $2t$.
		Also, if some connected component of $B_{\mathbf x,\mathbf x}$ has diameter at least $4t$, then $B_{\mathbf x,\mathbf x}$ contains an induced path with $4t$ vertices hence (by picking one vertex over two in this path) an independent set of size $2t$. In both cases, after flipping the edges in $V_{\mathbf x}$ we get a clique of size $2t$, contradicting the assumption that $H$ does not contain $K_{t,t}$ as a subgraph. 
		
		Now consider the case where $\mathbf y\neq\mathbf x$.
		As above, no connected component of $(\mathbf x,\mathbf y)$ has diameter greater than $6t$, as if $v_0,\dots,v_{6t-3}$ is an induced path
		of $B_{\mathbf x,\mathbf y}$, then $\{v_0,v_6,\dots,v_{6t-6}\}$ and 
		$\{v_3,v_9,\dots,v_{6t-3}\}$ define, after flipping, a $K_{t,t}$ subgraph in $H$.	As $(\mathbf x,\mathbf y)$ is problematic, one connected  component $C_0$ of $B_{\mathbf x,\mathbf y}$ contains a $K_{t,t}$ subgraph. Assume for contradiction that $B_{\mathbf x,\mathbf y}$  has $2t-1$ other connected components. Then at least $t$ of them contain a vertex in a same part, say $V_{\mathbf x}$. The edges flipped between these vertices and the vertices in $C_0\cap V_{\mathbf y}$ then contain $K_{t,t}$ as a subgraph, contradicting our assumption.
		\end{claimproof}
	
	We number the connected components of $B_{\mathbf x,\mathbf y}$ from $1$ to $c(\mathbf x,\mathbf y)$ in an arbitrary way (but consistently, when  considering that $B_{\mathbf x,\mathbf y}$ and  $B_{\mathbf y,\mathbf x}$ are the same graph).
	
	We now refine the partition $(V_{\mathbf x})_{\mathbf x\in\mathbb F_2^k}$,
	by partitioning each $V_{\mathbf x}$ into at most $(2t)^{2^k}$ parts~$W_{\mathbf x,f}$, where $f$ maps each vector $\mathbf y$ such that $(\mathbf x,\mathbf y)$ is problematic to $c(\mathbf x,\mathbf y)$ and each vector $\mathbf y$ such that $(\mathbf x,\mathbf y)$ is not problematic to $0$.  
	Note that, for every pair  $(\mathbf x,\mathbf y)$ of vectors, if  
	$W_{\mathbf x,f}$ is any subpart of $V_{\mathbf x}$ and 
	$W_{\mathbf y,g}$ is any subpart of $V_{\mathbf y}$, we have
	\[
	f(\mathbf y)=0\quad\iff\quad(\mathbf x,\mathbf y)\text{ is not problematic}\quad\iff\quad g(\mathbf x)=0.\]
	
	We define a perturbation $\mathsf P_1$ that flips the edges with one end in 
	$W_{\mathbf x,f}$ and one end in~$W_{\mathbf y,g}$ if
	$\langle\mathbf{x},\mathbf{y}\rangle=1$ and
	$f(\mathbf y)\neq 0$ and $f(\mathbf y)=g(\mathbf x)$ (meaning that 	 $(\mathbf x,\mathbf y)$ is a problematic pair, but 	$W_{\mathbf x,f}$ and $W_{\mathbf y,g}$  are in the same connected component of $B_{\mathbf x,\mathbf y}$).
	
	Now, let $K_H'\in\mathsf P_1(K_H)$ be the graph obtained by applying the perturbation with the marking defined above.
	Let $B'_{(\mathbf x,f),(\mathbf y,g)}$ be the subgraph of $K_H'$
	with vertex set  $W_{\mathbf x,f}\cup W_{\mathbf y,g}$ whose edges are those edges of $K_H'$ with one endpoint in  $W_{\mathbf x,f}$ and the other endpoint in~$W_{\mathbf y,g}$.
	\begin{itemize}
		\item If $\langle\mathbf x,\mathbf y\rangle=0$, then $B'_{(\mathbf x,f),(\mathbf y,g)}$ is a subgraph of $H$, thus it does not contain $K_{t,t}$ as a subgraph;
		\item otherwise, if $f(\mathbf y)=0$, then $B'_{(\mathbf x,f),(\mathbf y,g)}$ is a subgraph of $B_{\mathbf x,\mathbf y}$, thus it does not contain $K_{t,t}$ as a subgraph since $f(\mathbf y)=0$ implies that $(\mathbf x,\mathbf y)$ is not problematic;
		\item otherwise, if $f(\mathbf y)=g(\mathbf y)$, then $B'_{(\mathbf x,f),(\mathbf y,g)}$ is a subgraph of $H$, thus it does not contain $K_{t,t}$ as a subgraph;
		\item otherwise,  $B'_{(\mathbf x,f),(\mathbf y,g)}$ is edgeless, as $W_{\mathbf x,f}$ and $W_{\mathbf y,g}$ are included in two different connected components of $B_{\mathbf x,\mathbf y}$.
	\end{itemize}
	As the number of parts is at most $(2^t)^{2^k}$, we deduce that
	$K_H'$ contains no $K_{t',t'}$ as a subgraph, where
	$t'=(2^t)^{2^k}t$.
	
	We now define the immersive transduction $\mathsf T_1$ from the perturbation $\mathsf P_1$ by conditioning the flip of an edge $uv$ to the further condition that the distance between $u$ and $v$ is at most~$6t$. Because the maximum diameter of a connected component of $B_{\mathbf x,\mathbf y}$ is at most $6t$ (by \Cref{claim:cc}), we get $K_H'\in\mathsf T_1(K_H)$ as desired.
\end{proof}

A stronger result can be obtained when we assume that the class $\Dd$ is addable, instead of weakly sparse. This will be established in \Cref{crl:slunion}.

\section{Immersive transductions}
\label{sec:immersive}

Intuitively, copying operations and perturbations are simple operations. 
The main complexity of a transduction is captured by its immersive part, which consists just of an interpretation using strongly local formulas. 
The strongly local character of immersive transductions is the key tool in 
our further analysis. It will be very useful to give another (seemingly) 
weaker property for the existence of an immersive transduction in a
class, 
which is the existence of a transduction that does not shrink the distances too much, as we prove now.

\begin{lem}
	\label{lem:epsilon}
	Assume a graph class $\Dd$ is a $\mathsf T$-transduction  of a graph class $\Cc$, where $\mathsf T$ is non-copying.
	Assume that there exists a positive real $1\geq \epsilon>0$ with the property that 
	for every $G\in\Cc$ and every $H\in \mathsf T(G)\cap\Dd$ we have
	${\rm dist}_{H}(u,v)\geq \epsilon\,{\rm dist}_G(u,v)$ (for all $u,v\in V(H)$).
	Then there exists an immersive transduction $\mathsf T'$
	that subsumes $\mathsf T$ on the chain $(\Dd,\Cc)$.
\end{lem}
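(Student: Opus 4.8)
The plan is to leave the coloring part of $\mathsf T$ essentially untouched and to force its edge formula to be strongly local by conjoining a bound on the distance between its two free variables; the distance hypothesis is exactly what guarantees that this extra conjunct deletes no edge of a target graph lying in $\Dd$.

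First I would invoke \Cref{lem:normal0} to obtain a non-copying transduction $\mathsf T_0\ge\mathsf T$ whose interpretation has the form $(M(x),\eta(x,y))$ with $M$ a unary relation and $\eta$ an $r_0$-local formula for some $r_0\in\N$. Set $d:=\lfloor 1/\epsilon\rfloor$ (so $d\ge 1$, since $\epsilon\le 1$) and define
\[
\eta'(x,y):=\eta(x,y)\wedge\bigl(\dist(x,y)\le d\bigr).
\]
Since $\eta$ is $r_0$-local and $\dist(x,y)\le d$ is $d$-local, $\eta'$ is $\max(r_0,d)$-local, and trivially $\models\eta'(x,y)\to\dist(x,y)\le d\le\max(r_0,d)$, so $\eta'$ is strongly $\max(r_0,d)$-local. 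As the domain formula $M(x)$ is (vacuously) strongly local, the transduction $\mathsf T'$ obtained from $\mathsf T_0$ by keeping its coloring operation and replacing its interpretation by $(M(x),\eta'(x,y))$ is immersive.

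It then remains to verify the inclusion $\mathsf T(G)\cap\Dd\subseteq\mathsf T'(G)$ for every $G\in\Cc$, which is the asserted subsumption on the chain. Fix such a $G$ and some $H\in\mathsf T(G)\cap\Dd$. As $\mathsf T_0\ge\mathsf T$ we have $H\in\mathsf T_0(G)$, so there is a colored expansion $G^+$ of $G$ with $\mathsf I_{\mathsf T_0}(G^+)=H$; thus $E(H)$ is exactly the set of pairs $\{u,v\}$ with $u,v\in M(G^+)$ and $G^+\models\eta(u,v)$. Colors do not affect adjacency, so $\dist$ in $G^+$ agrees with $\dist_G$. For every edge $uv$ of $H$, the hypothesis applied to $H\in\mathsf T(G)\cap\Dd$ gives $1=\dist_H(u,v)\ge\epsilon\,\dist_G(u,v)$, whence $\dist_G(u,v)\le 1/\epsilon$ and, distances being integers, $\dist_G(u,v)\le d$. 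Hence every pair satisfying $\eta$ inside $M(G^+)$ already satisfies $\dist(x,y)\le d$, so $\eta$ and $\eta'$ define the same edge relation on $G^+$ within the domain; therefore $\mathsf I_{\mathsf T'}(G^+)=H$ and $H\in\mathsf T'(G)$.

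The step I expect to be the most delicate is this bookkeeping: one must apply the distance hypothesis to $H$ as an element of $\mathsf T(G)\cap\Dd$ (not of the possibly larger $\mathsf T_0(G)\cap\Dd$, for which it is not assumed), while still using a $\mathsf T_0$-expansion to read off $E(H)$ from the local formula $\eta$. I would also emphasize that full subsumption $\mathsf T\le\mathsf T'$ is not to be expected: for $H\in\mathsf T(G)\setminus\Dd$ the distance bound may fail and conjoining $\dist\le d$ could delete genuine edges, which is precisely why the statement is only relative to the chain.
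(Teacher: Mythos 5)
Your proposal is correct and follows essentially the same route as the paper: invoke \Cref{lem:normal0} to make the edge formula local, conjoin the distance bound $\dist(x,y)\le \lfloor 1/\epsilon\rfloor$ to make it strongly local, and use the hypothesis on edges of $H\in\mathsf T(G)\cap\Dd$ (which have $\dist_H=1$, hence $\dist_G\le 1/\epsilon$) to see that no edge of such an $H$ is lost. Your additional bookkeeping about which graphs the distance hypothesis applies to, and why only relative subsumption can be expected, is a correct and somewhat more careful rendering of what the paper leaves implicit.
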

\begin{proof}
	Let $\mathsf T$ be a non-copying transduction.
	According to \Cref{lem:normal0}, $\mathsf T$ is subsumed by a transduction $\mathsf T'$ with associated interpretation $\mathsf I'=(M(x),\eta'(x,y))$, where $M$ is a unary relation and $\eta'$ is a local formula. Let $G$ and $H$ be as in the statement. Note that by assumption if $\rm{dist}_G(x,y)>1/\epsilon$, then~$x$ and~$y$ are not connected in $H$. Hence, we define
	$\widehat{\mathsf I}=(M(x),\eta'(x,y)\wedge {\rm dist}(x,y)\leq 1/\epsilon)$. Then, the transduction defined by $\widehat{\mathsf I}$ is immersive and subsumes the transduction $\mathsf T$ on the chain~$(\Dd,\Cc)$.
\end{proof}

The property of an immersive transduction not to shrink distances too much implies that the sizes of balls cannot increase too much. Formally,
for a class $\mathscr C$ define the \emph{dilation function} $\Upsilon_{\mathscr C}:\mathbb N\rightarrow\mathbb N\cup\{\infty\}$ by
\[
\Upsilon_{\mathscr C}(r)=\adjustlimits\sup_{G\in \mathscr C}\max_{v\in V(G)}|B_r^G(v)|.
\] 
Then, the following fact directly follows from the strong locality of immersive transductions.
\begin{fact}
	\label{fact:dilation}
	Assume there exists an immersive  transduction encoding a class $\mathscr D$ in a class $\mathscr C$. Then, there exists a constant $c$ such that for every $r\in\mathbb N$ we have
	$\Upsilon_{\mathscr D}(r)\leq \Upsilon_{\mathscr C}(cr)$.
\end{fact}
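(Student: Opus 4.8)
The plan is to use the defining property of immersive transductions --- strong locality --- to show that the transduction cannot shrink distances by more than a constant factor, which is exactly the phenomenon announced in the paragraph preceding the statement. Concretely, I expect to establish $\dist_G(u,v) \le \ell\,\dist_H(u,v)$ for a source graph $G$ and its image $H$, where $\ell$ is the locality radius; the ball bound then follows by comparing neighborhoods and taking suprema.

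First I would unwind the definitions. Fix an immersive transduction $\mathsf T = \mathsf I \circ \Gamma_{\mathcal U}$ encoding $\mathscr D$ in $\mathscr C$ (so $\mathscr D \subseteq \mathsf T(\mathscr C)$), with interpretation $\mathsf I = (\nu(x),\eta(x,y))$, and let $\ell$ be an integer for which both formulas are strongly $\ell$-local. Given $H \in \mathscr D$, choose $G \in \mathscr C$ and a $\mathcal U$-expansion $G^+ \in \Gamma_{\mathcal U}(G)$ with $H = \mathsf I(G^+)$. Since $\mathsf T$ is non-copying, $V(H) = \nu(G^+) \subseteq V(G)$, and since the coloring operation adds only unary relations, the adjacency of $G^+$ equals that of $G$, so $\dist_{G^+} = \dist_G$ on $V(G)$.

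The heart of the argument is the distance comparison. If $uv \in E(H)$, then $G^+ \models \eta(u,v)$, whence strong $\ell$-locality gives $\dist_G(u,v) = \dist_{G^+}(u,v) \le \ell$. Chaining this estimate along a shortest path of $H$ joining arbitrary $u,v \in V(H)$ and using the triangle inequality in $G$ yields $\dist_G(u,v) \le \ell\,\dist_H(u,v)$. Consequently, any $w$ with $\dist_H(v,w) \le r$ satisfies $\dist_G(v,w) \le \ell r$, so $V(B_r^H(v)) \subseteq V(B_{\ell r}^G(v))$ and therefore $|B_r^H(v)| \le |B_{\ell r}^G(v)|$.

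It remains to take suprema: for every $H \in \mathscr D$ and every $v \in V(H)$ we obtain $|B_r^H(v)| \le |B_{\ell r}^G(v)| \le \Upsilon_{\mathscr C}(\ell r)$, and taking the supremum over $v$ and over $H \in \mathscr D$ gives $\Upsilon_{\mathscr D}(r) \le \Upsilon_{\mathscr C}(\ell r)$, so the claim holds with $c = \ell$. I do not anticipate a genuine obstacle here; the only points deserving a line of justification are that $\mathsf T$ being non-copying forces $V(H) \subseteq V(G)$ and that the coloring step preserves graph distances, both immediate from the definitions. Note this also re-proves, with $\epsilon = 1/\ell$, that an immersive transduction meets the non-shrinking hypothesis of \Cref{lem:epsilon}.
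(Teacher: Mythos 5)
Your argument is correct and is precisely the reasoning the paper has in mind: the paper offers no written proof, merely asserting that the fact ``directly follows from the strong locality of immersive transductions,'' and your chain (edge of $H$ implies $\dist_G\le\ell$, hence $\dist_G(u,v)\le\ell\,\dist_H(u,v)$ by the triangle inequality, hence $B_r^H(v)\subseteq B_{\ell r}^G(v)$, then take suprema) is the standard unwinding of that assertion. The two points you flag as needing justification --- that non-copying forces $V(H)\subseteq V(G)$ and that coloring preserves distances --- are indeed immediate, so nothing is missing.
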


Recall that $G\join K_1$ is obtained from $G$ by adding a 
new vertex, called an \emph{apex}, that is connected to all vertices of 
$G$. Of course, by adding an apex we shrink all distances in $G$. The
next lemma shows that when we can transduce $\Cc\join K_1$ in a 
class $\Ff$ with an immersive transduction, then we can in fact transduce 
$\Cc$ in the local balls of 
$\Ff$.

\begin{lem}
	\label{cor:apex}
	Let $\Cc,\Ff$ be graph classes, and let $\mathsf T$ be an 
	immersive transduction encoding a class $\Dd$ in $\Ff$ with
	$\Dd\supseteq \Cc\join K_1$.
	Then there exists an integer $r$ such that $\Cc\sqsubseteq_\FO^\circ\rloc{r}{\Ff}$.
\end{lem}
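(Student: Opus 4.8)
The plan is to exploit the strong locality of $\mathsf T$ to show that each $C\in\Cc$ already lives inside a single bounded-radius ball of a graph in $\Ff$, so that a fixed non-copying interpretation can read $C$ off from that ball. First I would record that, since $\mathsf T$ is immersive, it is non-copying and its interpretation $\mathsf I=(\nu(x),\eta(x,y))$ uses strongly $r_0$-local formulas for some fixed $r_0$; in particular $\models\eta(x,y)\rightarrow\dist(x,y)\le r_0$. I set $r:=2r_0$.

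Fix $C\in\Cc$ and put $D:=C\join K_1\in\Dd\subseteq\mathsf T(\Ff)$. Then there is $F\in\Ff$ and a $\Uu$-expansion $F^+$ of $F$ with $\mathsf I(F^+)=D$; let $w\in V(F)$ be the vertex corresponding to the apex of $D$. Because the apex is adjacent in $D$ to every vertex of $C$, we have $F\models\eta(w,v)$ for each $v\in V(C)$, and strong locality forces $\dist_F(w,v)\le r_0$. Hence $V(C)\cup\{w\}\subseteq B_{r_0}^F(w)$, and so setting $F':=B_r^F(w)=B_{2r_0}^F(w)\in\rloc{r}{\Ff}$ captures all relevant vertices. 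The key step is to verify that the edge relation can be recomputed inside $F'$: for any $u,v\in B_{r_0}^F(w)$ one has $B_{r_0}^F(\{u,v\})\subseteq B_{2r_0}^F(w)=V(F')$, and since deleting vertices creates no shortcuts, every length-$\le r_0$ path witnessing membership in $B_{r_0}^F(\{u,v\})$ already lies in $F'$. Thus $B_{r_0}^{F'}(\{u,v\})=B_{r_0}^F(\{u,v\})$ as colored graphs, and $r_0$-locality of $\eta$ gives $F'\models\eta(u,v)\iff F\models\eta(u,v)$. Applying this to all pairs in $V(C)$, and recalling that $C=D[V(C)]$, shows that the graph read off from $F'$ on the vertex set $V(C)$ is exactly $C$.

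It then remains to package this into one fixed non-copying transduction $\mathsf T'$, independent of $C$. I would let $\mathsf T'$ color an input graph with the relations of $\Uu$ together with one new unary relation $P$, and interpret via $\mathsf I'=(P(x),\,P(x)\wedge P(y)\wedge\eta(x,y))$; this is again strongly local, hence immersive and non-copying. For the given $C$, color $F'$ by the restriction of $F^+$ to $V(F')$ and mark $V(C)$ by $P$ (a marking available since $\Gamma_{\Uu\cup\{P\}}$ ranges over all expansions); the previous paragraph yields $\mathsf I'(F'^+)=C$. As the same $r$ and the same $\mathsf T'$ work for every $C\in\Cc$, we obtain $\Cc\subseteq\mathsf T'(\rloc{r}{\Ff})$, that is, $\Cc\sqsubseteq_\FO^\circ\rloc{r}{\Ff}$.

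The hard part is the edge-recomputation step: one must check that passing to the ball $F'$ neither loses edges of $C$ nor creates spurious ones. Both directions hinge on the two flavors of locality working together. Strong locality is what collapses all of $V(C)$ into $B_{r_0}^F(w)$ in the first place, while ordinary $r_0$-locality handles the recomputation of each pair, which is precisely why the radius must be doubled to $2r_0$: this guarantees that the full $r_0$-ball around every relevant pair survives intact inside $F'$, so that no truncation of neighborhoods can change the truth value of $\eta$.
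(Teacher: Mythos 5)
Your proof is correct and follows essentially the same route as the paper: strong locality of $\eta$ pulls the whole preimage of $C\join K_1$ into a bounded ball around the preimage of the apex, and then a fixed non-copying interpretation reads $C$ off that ball. You are in fact more careful than the paper's own write-up on one point — doubling the radius to $2r_0$ so that the full $r_0$-ball around every pair survives inside the induced subgraph, which is exactly what is needed for the locality-based edge recomputation to go through.
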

\begin{proof}
	Let $\mathsf T=\mathsf I\circ\Gamma_{\mathcal U}$ be an immersive transduction encoding $\Dd$ in $\Ff$, say $\mathsf{I}=(\vartheta(x), \eta(x,y))$. By \Cref{lem:normal0} we may assume that $\vartheta(x)=M(x)$. Furthermore, as $\mathsf{T}$ is immersive, $\eta(x,y)$ is strongly local, say $\models \eta(x,y)\rightarrow \mathrm{dist}(x,y)\leq r$. 
	For every graph $G\in\Cc$ there exists a graph $F\in\Ff$ such that $G\join K_1=\mathsf I(F^+)$, where~$F^+$ is a ${\mathcal U}$-expansion of $F$. Let $v$ be the apex of $G\join K_1$.
	By the strong locality of $\eta$ we get $\mathsf I(F^+)=\mathsf I(B_r^{F^+}(v))$. 
	Recall that by $\mathsf{H}$ we denote the hereditary transduction allowing to take induced subgraphs. Then $G$ can be encoded 
	in the class $\rloc{r}{\Cc}$ by the non-copying transduction $\mathsf H\circ\mathsf T$. 
\end{proof}

Finally, we show that when we encode an addable class, we do not need perturbations at all. Recall that a class $\Cc$ is addable if the disjoint union of any two graphs in $\Cc$ is also in $\Cc$.

\begin{lem}
	\label{lem:slunion}
	Let $\Cc$ be an addable  class and let $\Dd$ be a class with $\Cc\sqsubseteq_\FO^\circ\Dd$.  Then there exists an immersive transduction encoding $\Cc$ in $\Dd$.
\end{lem}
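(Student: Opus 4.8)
The plan is to start from the local normal form of \Cref{thm:normal} and then exploit addability to convert the (globally acting) perturbation into bounded-distance, hence strongly local, edges. First I would apply \Cref{thm:normal} to a non-copying transduction witnessing $\Cc\sqsubseteq_\FO^\circ\Dd$, obtaining an immersive transduction $\mathsf T_{\rm imm}$ (with strongly $r$-local edge formula $\eta_{\rm imm}$) and a perturbation $\mathsf P=\comp Z_1\comp\dots\comp Z_k$ with $\Cc\subseteq\mathsf P(\mathsf T_{\rm imm}(\Dd))$; by \Cref{lem:normal0} I may assume the domain of $\mathsf T_{\rm imm}$ is a unary predicate. The aim is a single immersive $\mathsf T'$ with $\Cc\subseteq\mathsf T'(\Dd)$, producing each $G\in\Cc$ exactly (so no final $\mathsf H$ is needed).

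Fix $G\in\Cc$, put $m:=|V(G)|$ and $M:=3\cdot(2^{k})^{m}$. Since $\Cc$ is addable, $MG\in\Cc$, so there are $D\in\Dd$ and a coloring $D^+$ with $K:=\mathsf T_{\rm imm}(D^+)$ and $MG\in\mathsf P(K)$; as a perturbation is undone by applying it again (\Cref{lem:perturb}), this means $K=\mathsf P(MG)$, and $V(K)=V(MG)\subseteq V(D)$ because $\mathsf T_{\rm imm}$ is non-copying. Writing $u^i_w\in V(D)$ for the copy of $w\in V(G)$ in the $i$-th summand of $MG$, the coloring assigns each summand a \emph{type} $\tau_i\colon V(G)\to\mathbb F_2^k$ (its $Z$-membership vectors). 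There are at most $(2^k)^m$ types, so by pigeonhole three summands, relabelled $1,2,3$, share a common type $\tau$; I mark summand $1$, the \emph{primary} copy, by a fresh unary relation $P$. By \Cref{lem:perturb}, inside the primary copy $u^1_w\sim_K u^1_{w'}$ iff $e(w,w')\oplus\beta(w,w')$, while between two of these three equal-type copies $u^i_w\sim_K u^j_{w'}$ iff $\beta(w,w')$, where $e$ is adjacency in $G$ and $\beta(w,w'):=\langle\tau(w),\tau(w')\rangle$.

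The crux, which I expect to be the main obstacle, is to bound the distance in $D$ between the endpoints of an edge of $G$ realized on the primary copy: when $\beta(w,w')=1$ the primary $K$-edge is \emph{absent}, so strong locality of $\eta_{\rm imm}$ gives no bound. Here the extra copies rescue locality: if $\beta(w,w')=1$ then the cross-copy edges form the $K$-path $u^1_w\sim u^2_{w'}\sim u^3_w\sim u^1_{w'}$ (each edge present since $\beta(w,w')=\beta(w',w)=1$), whence $\dist_K(u^1_w,u^1_{w'})\le 3$ and $\dist_D(u^1_w,u^1_{w'})\le 3r$; if $\beta(w,w')=0$ the edge is already present in $K$, giving $\dist_D\le r$. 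Thus \emph{every} edge of $G$ lands on a primary pair at $D$-distance at most $3r$. I then let $\mathsf T'$ have domain $P(x)$ and edge formula
\[
\eta'(x,y):=(x\neq y)\wedge P(x)\wedge P(y)\wedge(\dist(x,y)\le 3r)\wedge\neg\Bigl(\eta_{\rm imm}(x,y)\leftrightarrow\bigoplus_{i=1}^{k}\bigl(Z_i(x)\wedge Z_i(y)\bigr)\Bigr),
\]
where the last conjunct is the quantifier-free predicate $\langle\mathrm{col}(x),\mathrm{col}(y)\rangle=1$; this $\eta'$ is symmetric, anti-reflexive, and strongly $3r$-local (the explicit distance bound forces strong locality). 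A direct check of the four cases $(e,\beta)\in\{0,1\}^2$ shows $\eta'(u^1_w,u^1_{w'})$ holds exactly when $ww'\in E(G)$, so $\mathsf T'$ maps this colored $D$ to a graph isomorphic to $G$. Since $\mathsf T'$ is a fixed immersive transduction (only $D$, the number of copies, and the witnessing coloring vary with $G$), we conclude $\Cc\subseteq\mathsf T'(\Dd)$, as required.
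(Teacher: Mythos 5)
Your proof is correct and follows essentially the same route as the paper's: addability plus pigeonhole yields three identically colored copies, and the length-3 cross-copy path bounds the source distance of perturbation-created edges by $3r$. The only (cosmetic) difference is that you write the strongly $3r$-local edge formula explicitly via the XOR with the inner-product predicate, whereas the paper deduces $\dist_K(u,v)\geq \dist_G(u,v)/(3r)$ and invokes \Cref{lem:epsilon}.
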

\begin{proof}
	According to \Cref{thm:normal}, the transduction of $\Cc$ in $\Dd$ is subsumed by the composition of an immersive transduction $\mathsf T$ (with associated interpretation $\mathsf I=(\nu,\eta)$) and a perturbation (with associated interpretation~$\mathsf I_P$). As $\eta$ is strongly local there exists 
	$r$ such that for all $G\in \Dd$ and ${\mathcal U}_\mathsf{T}$-expansions 
	$G^+$ and all $u,v\in \mathsf{I}(G^+)$ we have $\dist_{\mathsf{I}(G^+)}(u,v)\geq
	\dist_G(u,v)/r$. 
	Let $c$ be the number of unary relations used in the perturbation. 
	Let $H$ be a graph in~$\Cc$, let $n>3\cdot c^{|H|}$ and let $K=nH$ ($n$ disjoint copies of $H$). By assumption there exists an expansion~$G^+$ of a graph~$G$ in $\Dd$ with $K=\mathsf I_P\circ\mathsf I(G^+)$. By the choice of $n$, at least $3$ copies $H_1,H_2$, and~$H_3$ of $H$ in~$K$ satisfy the same unary predicates at the same vertices. 
	For $a\in\{1,2,3\}$ and $v\in V(H_1)$, we denote by~$\tau_a(v)$ the vertex of $H_a$ corresponding to the vertex~$v$ of $H_1$ ($\tau_1(v)$ being the 
	vertex~$v$ itself).	 
	Let $u,v$ be adjacent vertices of $H_1$. Assume that $u$ and $v$
	have distance greater than~$r$ in $G$. Then $u$ and $v$ are made
	adjacent in $K$ by the perturbation $\mathsf{P}$ (the edge cannot have
	been created by $\eta$ as it is strongly $r$-local). As $\tau_a(u)$ is not
	adjacent with $\tau_b(v)$ for $b\neq a$ there must be paths of length
	at most $r$ linking~$\tau_a(u)$ with~$\tau_b(v)$ in $G$ for $a\neq b$ (the 
	interpretation $\mathsf{I}$ must have introduced an edge that the
	perturbation removed again). This however implies that there is a 
	path of length at most $3r$ between $u$ and $v$ in $G$ (going 
	from $u$ to $\tau_2(v)$ to $\tau_3(u)$ to $v$). It follows that for all 
	$u,v\in V(K)$ we have $\dist_K(u,v)\geq \dist_G(u,v)/(3r)$. 
	Hence the transduction obtained by composing $\mathsf T$ with, the extraction of the induced subgraph $H_1$ implies the existence of an immersive transduction encoding $\Cc$ in $\Dd$, according to \Cref{lem:epsilon}. 
\end{proof}
\begin{cor}[Elimination of the perturbation]\label{crl:slunion}
	Let $\Cc$ be an addable class with  $\Cc\sqsubseteq_\FO\Dd$.  Then there exists a copy operation $\mathsf C$ and an immersive transduction~$\mathsf T_{\rm imm}$ such that $\mathsf T_{\rm imm}\circ\mathsf C$ is a transduction encoding $\Cc$ in $\Dd$. 
\end{cor}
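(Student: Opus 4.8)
The plan is to commute the copy operation to the front of the transduction and then invoke \Cref{lem:slunion}, which already disposes of the perturbation in the non-copying case; the addability of $\Cc$ is exactly the hypothesis that lemma needs.

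First I would unfold the hypothesis $\Cc\sqsubseteq_\FO\Dd$. By the definition of a transduction there is $\mathsf T=\mathsf I\circ\Gamma_{\mathcal U}\circ\mathsf C_k$ with $\Cc\subseteq\mathsf T(\Dd)$. Set $\mathsf C:=\mathsf C_k$ and let $\mathsf T_0:=\mathsf I\circ\Gamma_{\mathcal U}$ be its non-copying part, so that $\mathsf T=\mathsf T_0\circ\mathsf C$. Introduce the auxiliary class $\Dd':=\mathsf C(\Dd)=\{\mathsf C_k(G):G\in\Dd\}$, considered up to isomorphism. Since the copy operation is a single-valued map, $\mathsf T(\Dd)=\mathsf T_0(\mathsf C(\Dd))=\mathsf T_0(\Dd')$, and therefore $\Cc\subseteq\mathsf T_0(\Dd')$ with $\mathsf T_0$ non-copying; that is, $\Cc\sqsubseteq^{\circ}_\FO\Dd'$.

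Second I would apply \Cref{lem:slunion} to the pair $(\Cc,\Dd')$: as $\Cc$ is addable and $\Cc\sqsubseteq^{\circ}_\FO\Dd'$, there exists an immersive transduction $\mathsf T_{\rm imm}$ encoding $\Cc$ in $\Dd'$, i.e.\ $\Cc\subseteq\mathsf T_{\rm imm}(\Dd')$. Finally, using again that $\mathsf C$ is single-valued, $\mathsf T_{\rm imm}(\Dd')=\mathsf T_{\rm imm}(\mathsf C(\Dd))=(\mathsf T_{\rm imm}\circ\mathsf C)(\Dd)$, whence $\Cc\subseteq(\mathsf T_{\rm imm}\circ\mathsf C)(\Dd)$. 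Thus $\mathsf T_{\rm imm}\circ\mathsf C$—a copy operation followed by an immersive transduction, which is again a transduction since $\mathsf T_{\rm imm}=\mathsf I'\circ\Gamma_{{\mathcal U}'}$—encodes $\Cc$ in $\Dd$, as required.

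The genuine content is entirely inside \Cref{lem:slunion} (the distance–dilation estimate together with the pigeonhole argument on $n>3\cdot c^{|H|}$ copies of $H$, which lets one read off an induced copy whose edges were not touched by the perturbation). Once that lemma is in hand there is no real obstacle here: the only thing to verify is that a leading copy operation commutes past the reduction, which follows immediately from the fact that $\mathsf C_k$ is deterministic and that non-copying transductions compose with copying to yield transductions.
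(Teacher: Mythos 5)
Your proof is correct and is exactly the intended derivation: peel off the copy operation, observe $\Cc\sqsubseteq_\FO^\circ\mathsf C_k(\Dd)$, apply \Cref{lem:slunion} to the class $\mathsf C_k(\Dd)$ using addability of $\Cc$, and recompose with $\mathsf C_k$. The paper states this as an immediate corollary of \Cref{lem:slunion} without spelling out these steps, and your write-up supplies precisely that routine bookkeeping.
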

We also deduce the following (by \Cref{fact:dilation}).
\begin{cor}\label{cor:dil_union}
	Let $\Cc$ be an addable class with  $\Cc\sqsubseteq_\FO\Dd$.  Then there exist constants $c_1,c_2$ such that for every $r\in\mathbb N$ we have
	\[
	\Upsilon_{\mathscr D}(r)\leq c_1\Upsilon_{\mathscr C}(c_2r).
	\]
\end{cor}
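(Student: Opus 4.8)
The two results that do the work here are \Cref{crl:slunion} and \Cref{fact:dilation}; the only genuinely new ingredient is understanding how the copy operation affects ball sizes. Since $\Cc$ is addable and $\Cc\sqsubseteq_\FO\Dd$, \Cref{crl:slunion} provides a copy operation $\mathsf C_k$ and an immersive transduction $\mathsf T_{\rm imm}$ with $\Cc\subseteq\mathsf T_{\rm imm}(\mathsf C_k(\Dd))$. Setting $\mathscr E:=\mathsf C_k(\Dd)=\{\mathsf C_k(G):G\in\Dd\}$, this says exactly that there is an immersive transduction encoding $\Cc$ in $\mathscr E$, so $\Cc$ and $\mathscr E$ are in the situation covered by \Cref{fact:dilation}.

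Next I would record how $\Upsilon$ behaves under $\mathsf C_k$. Identifying $V(\mathsf C_k(G))$ with $V(G)\times[k]$, the only edges joining distinct copies are the clique edges among the clones of a single vertex, so every walk changes the underlying vertex only by steps taken inside one copy. Hence $\dist_{\mathsf C_k(G)}((u,i),(v,j))\ge \dist_G(u,v)$, while distance inside a fixed copy equals $\dist_G$. It follows that a radius-$r$ ball of $\mathsf C_k(G)$ around a clone of $v$ projects into $B_r^G(v)$ and meets each of the $k$ copies in at most this projection, giving $\Upsilon_{\Dd}(r)\le\Upsilon_{\mathscr E}(r)\le k\,\Upsilon_{\Dd}(r)$; that is, copying perturbs the dilation function only by the bounded factor $k$.

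Finally I would feed the first paragraph into \Cref{fact:dilation}: it yields a constant $c$ with $\Upsilon_{\Cc}(r)\le\Upsilon_{\mathscr E}(cr)$ for every $r$, and combining with the copy bound of the second paragraph gives a comparison between $\Upsilon_{\Cc}$ and $\Upsilon_{\Dd}$ up to the multiplicative constant $k$ and the radius dilation $c$, which is the content of the corollary. The step I expect to be delicate — and the one deserving the most care — is the orientation of the inequality. An immersive transduction is built from strongly local formulas, so it can only expand distances and therefore shrink balls; consequently \Cref{fact:dilation} controls the dilation of the encoded class by that of its source, and one must track carefully which class plays which role (with the copy operation $\mathsf C_k$ contributing the factor $k$ in passing from $\mathscr E$ back to $\Dd$) in order to land on the displayed bound rather than its reverse.
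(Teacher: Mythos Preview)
Your approach is exactly the one the paper intends: it deduces the corollary ``by \Cref{fact:dilation}'' after invoking \Cref{crl:slunion}, and your analysis of how $\mathsf C_k$ inflates ball sizes by at most a factor $k$ is the missing bookkeeping needed to pass from $\mathscr E=\mathsf C_k(\Dd)$ to $\Dd$.

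Your hesitation at the end is well founded, but the resolution is not that you must ``track more carefully'': the inequality as printed has $\Cc$ and $\Dd$ swapped. Your chain gives
\[
\Upsilon_{\Cc}(r)\ \le\ \Upsilon_{\mathscr E}(cr)\ \le\ k\,\Upsilon_{\Dd}(cr),
\]
i.e.\ $\Upsilon_{\Cc}(r)\le c_1\Upsilon_{\Dd}(c_2 r)$, which is the statement consistent with \Cref{fact:dilation} (the encoded class has no larger balls than the source). The printed direction $\Upsilon_{\Dd}(r)\le c_1\Upsilon_{\Cc}(c_2 r)$ is false in general: take $\Dd$ the class of paths and $\Cc$ the class of edgeless graphs (addable, and $\Cc\sqsubseteq_\FO\Dd$); then $\Upsilon_{\Dd}(r)=2r+1$ while $\Upsilon_{\Cc}(r)=1$. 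So do not try to massage your argument into the displayed form --- you have proved the intended inequality.
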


\begin{lem}
	\label{lem:imm_con}
	Let $\Cc$ be a class of connected graphs and let $\Dd$ be an addable class.
	Assume there exists an immersive transduction $\mathsf T$ of $\Cc$ in $\Dd$. Then, $\mathsf T(\Dd)$ includes the closure of $\Cc$ by disjoint union, that is, the class of all graphs whose connected components belong to $\Cc$.
\end{lem}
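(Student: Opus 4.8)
The plan is to exploit strong locality directly: because $\mathsf T$ is immersive, its interpretation cannot create an edge between vertices lying in different connected components of its input, nor can the domain formula ``see'' across components, so feeding $\mathsf T$ a disjoint union of inputs yields exactly the disjoint union of the corresponding outputs. Concretely, I would write $\mathsf T=\mathsf I\circ\Gamma_{\mathcal U}$ with $\mathsf I=(\nu,\eta)$ and fix $r$ so that $\nu$ is $r$-local (a single-variable strongly local formula is just local) and $\eta$ is strongly $r$-local.

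First I would fix a graph $F$ in the disjoint-union closure of $\Cc$, say $F=G_1\uplus\dots\uplus G_m$ with each $G_i$ a connected graph in $\Cc$. Since $\Cc\subseteq\mathsf T(\Dd)$, for each $i$ there is some $D_i\in\Dd$ and a $\mathcal U$-expansion $D_i^+$ of $D_i$ with $G_i=\mathsf I(D_i^+)$. As $\Dd$ is addable, $D\coloneqq D_1\uplus\dots\uplus D_m\in\Dd$ (iterating pairwise addability, and using that the empty graph lies in $\Dd$ for the degenerate case). Since all the $D_i^+$ use the same signature $\mathcal U$, their disjoint union $D^+\coloneqq D_1^+\uplus\dots\uplus D_m^+$ is a legitimate $\mathcal U$-expansion of $D$, i.e.\ $D^+\in\Gamma_{\mathcal U}(D)$.

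Then I would verify $\mathsf I(D^+)=F$ by localization bookkeeping. For the domain: for $u\in V(D_i)$ the ball $B_r^{D^+}(u)$ is contained in the connected component of $u$, hence in $D_i^+$, so it coincides with $B_r^{D_i^+}(u)$; by $r$-locality of $\nu$ we get $D^+\models\nu(u)$ iff $D_i^+\models\nu(u)$, whence $\nu(D^+)=\bigsqcup_i\nu(D_i^+)=\bigsqcup_i V(G_i)$. For the edges: if $D^+\models\eta(u,v)$ then $\dist_{D^+}(u,v)\leq r$ by strong $r$-locality, so $u,v$ lie in a common $D_i$, and again $B_r^{D^+}(\{u,v\})=B_r^{D_i^+}(\{u,v\})$ gives $D^+\models\eta(u,v)$ iff $D_i^+\models\eta(u,v)$. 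Consequently $\mathsf I(D^+)$ has no edge between distinct parts and restricts on each part to $\mathsf I(D_i^+)=G_i$, i.e.\ $\mathsf I(D^+)=\biguplus_i G_i=F$. Since $D\in\Dd$ and $D^+\in\Gamma_{\mathcal U}(D)$, this yields $F\in\mathsf T(D)\subseteq\mathsf T(\Dd)$.

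The only real content is the third paragraph, and the step I expect to need the most care with is the claim that balls of radius $r$ in the disjoint union stay inside a single $D_i^+$: this is precisely where immersiveness is essential, since a general transduction (allowing a perturbation) could introduce cross-component edges and the argument would collapse. I do not anticipate any further obstacle.
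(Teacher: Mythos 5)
Your proposal is correct and follows essentially the same route as the paper: take the disjoint union of the witnessing expansions, use addability to stay in $\Dd$, and use the strong locality of the immersive interpretation to show it commutes with disjoint union. The paper states the key identity $\mathsf I(H_1^+\union\dots\union H_n^+)=\mathsf I(H_1^+)\union\dots\union\mathsf I(H_n^+)$ without spelling out the ball-containment bookkeeping that you make explicit, but the argument is the same.
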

\begin{proof}
	Let $G=G_1\union\dots\union G_n$, where $G_1,\dots,G_n\in\Cc$. 
	As $\Cc\in\mathsf T(\Dd)$ there are $H_1,\dots,H_n$ in $\Dd$ with
	$G_i\in\mathsf T(H_i)$ (for every $i\in[n]$). Let $\mathsf I$ be the interpretation part of $\mathsf T$. There exist unary expansions $H_1^+,\dots,H_n^+$ of $H_1,\dots,H_n$ with $G_i=\mathsf I(H_i^+)$ 
	(for every $i\in[n]$).
	As $\mathsf T$ is immersive, $\mathsf I(H_1^+\union\dots\union H_n^+)=\mathsf I(H_1^+)\union\dots\union\mathsf I^+(H_n^+)$.  As $\Dd$ is addable and
	$H_1,\dots,H_n$ belong to $\Dd$,  so does their disjoint union $H$. As  the transduction $\mathsf T$ is immersive,  $\mathsf T(H)=\mathsf T(H_1)+\cdots+\mathsf T(H_n)$, where
	$\Cc_1+\Cc_2=\{X_1\union X_2\colon X_1\in\Cc_1\text{ and }X_2\in\Cc_2\}$.
	Hence,  $G\in\mathsf T(H)$.
\end{proof}

\section{Gluing transductions, with applications to\except{toc}{\texorpdfstring{\linebreak}{}} orientation and monotone closure}
While any graph class $\Cc$ can transduce its hereditary closure, the class of cliques witnesses that not every class can transduce its monotone closure. In \cref{lem:monotone}, we give a sufficient condition for $\Cc$ to transduce its monotone closure. We take this as an opportunity to illustrate the following useful construction of gluing, which was implicit in \cite{shrubbery}. It allows us to break the problem of defining  a 
transduction into easier subproblems, as we shall illustrate with some examples.

\begin{defi}
	Let $n$ be a positive integer, 
	let $\mathsf T_{i,j}$ be non-copying transductions indexed by $1\leq i\leq j\leq n$ defined
	by interpretations $\mathsf I_{i,j}=(\top,\eta_{i,j}(x,y))$ of graphs in  
	$\Uu_{i,j}$-colored graphs. 
	By renaming the symbols in $\Uu_{i,j}$ if necessary, we assume that these sets of symbols are all disjoint, and that they are disjoint from another set $\{V_1,\dots,V_n\}$.
	For $1\leq i\leq j\leq n$ we call  $\mathsf S_{i,j}$ the interpretation associated to the pair hereditary transduction extracting the subgraph defined by sets $A_i$ and $A_j$, where $A_k = V_k\setminus \bigcup_{k'<k}V_{k'}$.

	The \emph{gluing} of the (indexed) set $\{\mathsf T_{i,j}\colon 1\leq i\leq j\leq n\}$ of  non-copying transductions is the  non-copying transduction $\mathsf T$ defined by the interpretation $\mathsf I=(\nu(x),\eta(x,y))$ of $\widehat\Uu$-colored graphs, where 
	$\widehat\Uu:=\{V_1,\dots,V_n\}\cup\bigcup_{1\leq i\leq j\leq n} \widehat\Uu_{i,j}$, the domain
	of $\mathsf I(G)$ is defined by the formula $\bigvee_{1\leq i\leq n}V_i(x)$, and the edge set is the union of the edge sets of $\mathsf I_{i,j}\circ S_{i,j}(G)$.
\end{defi}

\begin{figure}[h!t]
	\centering\includegraphics[width=\textwidth]{}
	\caption{Gluing transductions. $H[A_i,A_j]\in\mathsf T_{i,j}(G[A_i,A_j])$ for $1 \leq i \leq j \leq 3$.}
	\label{fig:gluing}
\end{figure}

It is direct from the definition that this transduction satisfies the following property: for every graph $G$, we have
\[
\mathsf T(G)=\{H\colon \exists A_1,\dots,A_n\subseteq V(G)\text{ disjoints }\quad\forall i\leq j \in [n]\quad H[A_i,A_j]\in\mathsf T_{i,j}(G[A_i,A_j])\}.
\]

As an application, we have the following property.
\begin{lem}
	\label{lem:gluing}
 Let $\Cc, \Dd$, and $\Dd'$ be classes of graphs and let $n$ be an integer.
 Assume that~$(\mathsf S,\mathsf T)$ is a  non-copying pairing of $\Dd$ and $\Dd'$, and 
that every graph $G\in\Cc$ has a vertex partition $V_1,\dots,V_n$ with
$G[V_i,V_j]\in\Dd$ for all $1\leq i\leq j\leq n$.

Then, there exists a class $\Cc'$ and a non-copying pairing $(\widehat{\mathsf S},\widehat{\mathsf T})$ of $\Cc$ and $\Cc'$, such that 
every graph $G\in\Cc'$ has a vertex partition $V_1,\dots,V_n$ with
$G[V_i,V_j]\in\Dd'$ for all $1\leq i\leq j\leq n$.
\end{lem}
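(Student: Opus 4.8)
The plan is to realise both directions of the desired pairing by \emph{gluing} together one copy of $\mathsf S$ and one of $\mathsf T$ for each pair $(i,j)$ of parts. Let $\widehat{\mathsf S}$ be the gluing of the family $\{\mathsf S_{i,j}\}_{1\le i\le j\le n}$ in which each $\mathsf S_{i,j}$ is a domain-$\top$ variant of $\mathsf S$, and let $\widehat{\mathsf T}$ be the gluing of the family in which each $\mathsf T_{i,j}$ is the corresponding variant of $\mathsf T$. Being gluings of non-copying transductions, both $\widehat{\mathsf S}$ and $\widehat{\mathsf T}$ are non-copying. I would then define $\Cc'$ to be the class of all graphs obtained as follows: start from some $G\in\Cc$ with partition $V_1,\dots,V_n$, choose for each pair $i\le j$ a graph $B_{i,j}\in\mathsf S(G[V_i,V_j])\cap\Dd'$ with $G[V_i,V_j]\in\mathsf T(B_{i,j})$ (these exist since $(\mathsf S,\mathsf T)$ pairs $\Dd$ and $\Dd'$), and glue the $B_{i,j}$ into the graph $G'$ whose bipartite piece between parts $i$ and $j$ is $B_{i,j}$. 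By construction every $G'\in\Cc'$ carries the partition $W_i=V_i$ with $G'[W_i,W_j]=B_{i,j}\in\Dd'$, so $\Cc'$ has the required partition property.

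The one point that needs care is that the pieces $B_{i,j}$ must glue into a single well-defined graph: since the part $V_i$ is shared by all pairs $(i,j)$, each $B_{i,j}$ must live on the \emph{full} vertex set $V_i\cup V_j$ rather than on some pair-dependent subset selected by the domain formula of $\mathsf S$. This is where the non-copying hypothesis is essential. If $B\in\mathsf S(A)\cap\Dd'$ and $A\in\mathsf T(B)$, then $V(B)\subseteq V(A)$ because $\mathsf S$ is non-copying, while $V(A)\subseteq V(B)$ because $\mathsf T$ is non-copying; hence $V(A)=V(B)$. Taking $A=G[V_i,V_j]$ shows each good image $B_{i,j}$ spans all of $V_i\cup V_j$; equivalently the domain formula of $\mathsf S$ holds at every vertex for the colouring producing $B_{i,j}$, so replacing that domain formula by $\top$ leaves $B_{i,j}$ unchanged. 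We may therefore use in the gluing the domain-$\top$ interpretations $\mathsf S^{\top},\mathsf T^{\top}$ (which have the syntactic form the gluing requires, after normalising via \Cref{lem:normal0}) without losing any good image.

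It then remains to verify the two pairing conditions, each following from the defining property of the gluing, namely that $H\in\widehat{\mathsf S}(G)$ iff $G$ has disjoint colour classes $A_1,\dots,A_n$ with $H[A_i,A_j]\in\mathsf S^{\top}(G[A_i,A_j])$ for all $i\le j$, and symmetrically for $\widehat{\mathsf T}$. For the first condition, given $G\in\Cc$ I take the good images $B_{i,j}$ and the colour classes $A_i=V_i$, colouring each piece $G[V_i,V_j]$ independently so that $\mathsf S^{\top}$ outputs $B_{i,j}$; this is possible because the colour sets used by distinct pairs are disjoint. The resulting glued graph is exactly the element $G'\in\Cc'$ constructed above, so $G'\in\widehat{\mathsf S}(G)\cap\Cc'$; and since each $G[V_i,V_j]\in\mathsf T(B_{i,j})$ again spans the full $V_i\cup V_j$ and hence lies in $\mathsf T^{\top}(B_{i,j})$, while the union over $i\le j$ of the pieces $G[V_i,V_j]$ reconstitutes $G$, we also obtain $G\in\widehat{\mathsf T}(G')$. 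The second condition is immediate from the definition of $\Cc'$: any $G'\in\Cc'$ is the gluing of good images $B_{i,j}$ of some $G\in\Cc$, and the same computation yields $G\in\widehat{\mathsf T}(G')\cap\Cc$ together with $G'\in\widehat{\mathsf S}(G)$.

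The main obstacle is precisely the vertex-set bookkeeping of the second paragraph: without the non-copying hypothesis the images of $\mathsf S$ on overlapping bipartite pieces need not agree on which vertices of a shared part survive, and the gluing would fail to produce a single graph. Once the equality $V(A)=V(B)$ for paired graphs is established, the rest is a routine unwinding of the gluing's defining property, using that a graph equals the union of its bipartite pieces $G[V_i,V_j]$ over a partition.
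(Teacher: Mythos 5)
Your proof takes essentially the same route as the paper's: the paper's entire proof consists of setting $\mathsf S_{i,j}:=\mathsf S$ and $\mathsf T_{i,j}:=\mathsf T$ for all $1\leq i\leq j\leq n$ and declaring $\widehat{\mathsf S}$ and $\widehat{\mathsf T}$ to be the corresponding gluings, which is exactly your construction. You additionally supply the verification the paper leaves implicit --- in particular the observation that a non-copying pairing forces $V(A)=V(B)$ for paired graphs, so the domain formulas may be normalised to $\top$ as the gluing construction requires --- and this is a correct elaboration of the intended one-line argument.
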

\begin{proof}
	We let $\mathsf S_{i,j}:=\mathsf S$ and  $\mathsf T_{i,j}=\mathsf T$ for all $1\leq i\leq j\leq n$, and define $\widehat{\mathsf S}$ (resp.~ $\widehat{\mathsf T}$) as the gluing of the transductions
	$\mathsf S_{i,j}$ (resp.~of the transductions $\mathsf T_{i,j}$). 
\end{proof}

Because of the above construction, it will be useful to consider graph colorings, where any two classes induce a graph in an ``easy'' class. An  example of such a coloring is the star coloring.
A {\em star coloring} of a graph $G$ is  a proper coloring of $V(G)$ (i.e. with no two adjacent vertices receiving the same color) such that any two color classes induce a star forest (or, equivalently, such that no path of length three is $2$-colored). (See~\Cref{fig:starcol}.)

\begin{figure}[ht]
	\centering\includegraphics[width=.5\textwidth]{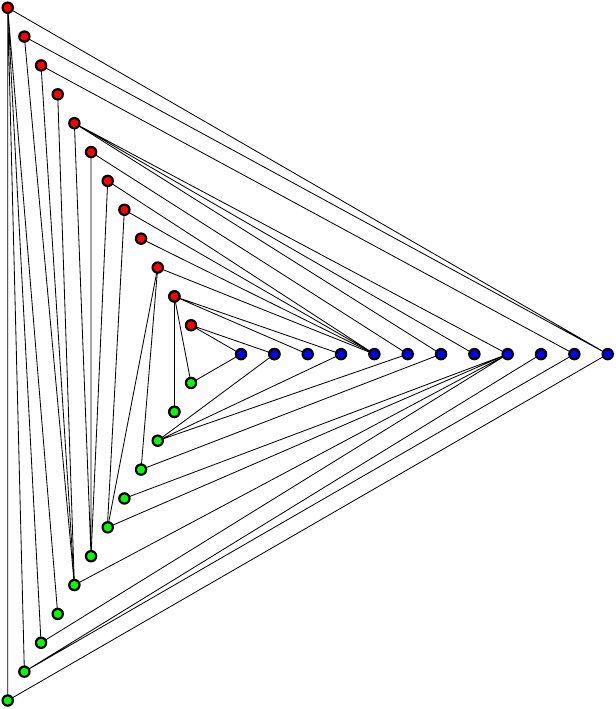}
	\caption{Example of a star coloring using $3$ colors. Every color class is independent, and every two color classes induce a star forest.}
	\label{fig:starcol}
\end{figure}

The star chromatic number $\chi_{\rm st}(G)$ of $G$ is the minimum number of colors in a star coloring of $G$~(see \cite{alon2}). The star chromatic number is bounded on classes of graphs excluding a minor~\cite{Taxi_jcolor} and, more generally, on every class with bounded expansion~\cite{nevsetvril2008grad}.

The following corollaries are simple applications of transduction gluing. A slightly more general statement (about Gaifman graphs) was proved in a much more complicated way in~\cite{TWWP-arxiv}. 

\begin{cor}
	\label{lem:monotone}
	Let $\Cc$ be a class of graphs with bounded star chromatic number. Then the
	monotone closure of $\Cc$ is a non-copying immersive transduction of $\Cc$.
\end{cor}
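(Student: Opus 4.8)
The plan is to combine the gluing construction with a star coloring, reducing the whole problem to the single task of immersively transducing every subgraph of a star forest. Fix $k=\chi_{\rm st}(\Cc)$, which is finite by hypothesis. Given $G\in\Cc$ and a target subgraph $H\subseteq G$, I would first pass to the induced subgraph $G[V(H)]$, which still has star chromatic number at most $k$ since a star coloring restricts to any induced subgraph, and fix a star coloring of $G[V(H)]$ with color classes $V_1,\dots,V_k$. By the defining properties of a star coloring, each $G[V_i]$ is edgeless and each $G[V_i,V_j]$ (for $i<j$) is a star forest, and every edge of $H$ lies in exactly one $G[V_i,V_j]$ with $i<j$. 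Thus it suffices to produce, for each pair, the spanning subgraph $H[V_i,V_j]$ of the star forest $G[V_i,V_j]$ and then to glue these together; the vertices of $G$ outside $V(H)$ are left uncolored and hence deleted, so that the domain of the glued transduction is exactly $V(H)$.

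The key step is the following claim: there is a non-copying immersive transduction $\mathsf T_\star$ such that, for every star forest $S$, the set $\mathsf T_\star(S)$ contains every spanning subgraph of $S$. To prove it I would introduce a single unary color $K$ and set $\eta_\star(x,y):=E(x,y)\wedge(K(x)\vee K(y))$, which is strongly $1$-local and hence defines an immersive transduction. In each star component of $S$ choose as \emph{center} any vertex incident to all edges of that component, and leave all centers uncolored; to realise a prescribed edge set $F\subseteq E(S)$, mark a leaf $\ell$ with $K$ exactly when its unique incident edge $c\ell$ belongs to $F$. Since each edge of $S$ joins a center to a leaf and centers are never marked, the edge $c\ell$ survives in $\eta_\star$ if and only if $K(\ell)$ holds, i.e.\ if and only if $c\ell\in F$; and because $\eta_\star$ demands $E(x,y)$, no new edges are ever created. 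Hence every spanning subgraph of $S$ is obtained.

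Finally, I would assemble the transduction as the gluing of the family $\{\mathsf T_{i,j}\}_{1\le i\le j\le k}$, where $\mathsf T_{i,j}:=\mathsf T_\star$ (with its own private copy of the color $K$, so that the color sets are disjoint as the gluing requires) for $i<j$, and $\mathsf T_{i,i}$ is the empty-edge transduction, which is correct since $G[V_i]$ is edgeless. The gluing $\mathsf T$ is non-copying by construction, and it is immersive because its edge formula is a finite disjunction of the strongly local formulas $\eta_{i,j}$ relativized to the quantifier-free membership conditions $x\in A_i,\ y\in A_j$ (with $A_i=V_i$, since the $V_i$ are disjoint), and a finite disjunction of strongly local formulas is again strongly local. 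By the defining property of the gluing, $\mathsf T(G)$ contains every graph admitting a vertex partition into colored classes with $H[V_i,V_j]$ a spanning subgraph of $G[V_i,V_j]$ for all $i\le j$, and the previous two paragraphs show that every subgraph $H$ of every $G\in\Cc$ arises in this way. Therefore $\mathsf T$ is a non-copying immersive transduction with the monotone closure of $\Cc$ contained in $\mathsf T(\Cc)$. The only genuinely delicate points are the leaf-marking trick, which crucially uses that each edge of a star forest has a canonical leaf endpoint so that edges can be toggled independently, and the bookkeeping ensuring that vertex deletion (via uncolored vertices) and edge deletion (via the color $K$) together yield exactly the arbitrary subgraph, with no spurious edges within or between color classes.
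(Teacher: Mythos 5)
Your proposal is correct and follows essentially the same route as the paper: reduce via the gluing construction and a star coloring to the single case of extracting an arbitrary spanning subgraph of a star forest, which is then handled by a quantifier-free (hence strongly local) marking scheme. The only difference is cosmetic — the paper marks all vertices incident to a kept edge and keeps edges with both endpoints marked, whereas you mark only the leaves of kept edges and keep edges with at least one marked endpoint — and both markings work for the same reason, namely that every edge of a star forest has a canonical leaf endpoint.
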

\begin{proof}
	According to \Cref{lem:gluing}, it is sufficient to prove the results for the class $\Tt_2$ of star forests. So let $G = (V, E) \in \Tt_2$ and let $H = (V_H, E_H) \subseteq G$ be a subgraph of $G$. In $G$, we can color the vertices in $V_H$ by the unary predicate $A$, and color the vertices in $V_H$ incident to an edge in $E_H$ by the unary predicate $B$.
	The interpretation part  $\mathsf I=(\nu(x),\eta(x,y))$ of the transduction defines the vertex set as the set of all vertices marked $A$ ($\nu(x):=A(x)$), and the edge set as the set of all the pairs of adjacent vertices in $B$ ($\eta(x,y):=B(x)\wedge B(y)\wedge E(x,y)$).
	That the transduction is non-copying and  immersive is obvious.
\end{proof}

Similarly, we have
\begin{cor}
	\label{lem:orient}
	Let $\Cc$ be a class of graphs with bounded star chromatic number. Then the class $\vec\Cc$ of all the orientations of graphs in  $\Cc$ is a non-copying immersive transduction of $\Cc$.
\end{cor}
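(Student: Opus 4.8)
The plan is to mimic the proof of \Cref{lem:monotone}, replacing ``taking a subgraph'' by ``choosing an orientation.'' First I would invoke \Cref{lem:gluing} to reduce to the single class $\Tt_2$ of star forests: since $\Cc$ has bounded star chromatic number, say at most $n$, every $G\in\Cc$ has a proper vertex partition $V_1,\dots,V_n$ with $G[V_i]$ edgeless and $G[V_i,V_j]$ a star forest for all $i<j$. Every orientation $\vec G$ of $G$ decomposes accordingly: each edge of $G$ lies between two distinct classes $V_i,V_j$ (the coloring being proper), so orienting each bipartite star forest $G[V_i,V_j]$ independently produces exactly all orientations of $G$, and conversely. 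Since the simple interpretations involved only ever read the adjacency and the unary marks, the gluing construction applies verbatim when the target is a directed graph, the only change being that the edge formula $\eta$ is no longer required to be symmetric and the glued arc set is the union of the arc sets of the parts. Thus it suffices to exhibit a non-copying immersive transduction producing all orientations of star forests from star forests.

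For the star-forest case I would exploit the fact that in a star forest every edge has an endpoint of degree one, and that two distinct edges never share such an endpoint. This lets us store the one bit of orientation data of each edge at a chosen degree-one endpoint. Given $G\in\Tt_2$ and a target orientation $\vec G$, I would color $G$ with two unary predicates $L$ and $T$ as follows: for each edge choose one endpoint of degree one (for a genuine star, its leaf; for a single-edge component, either endpoint) and put it in $L$, so that each edge has exactly one $L$-marked endpoint; then put that chosen endpoint into $T$ iff the edge is oriented towards it. The interpretation $\mathsf I=(\nu,\eta)$ keeps all vertices, $\nu(x):=\top$, and defines the arc relation by the quantifier-free formula $\eta(x,y):=E(x,y)\wedge\big((L(y)\wedge T(y))\vee(L(x)\wedge\neg T(x))\big)$. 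A short case check on an edge $xy$ with distinguished endpoint $y$ gives $\eta(x,y)\equiv T(y)$ and $\eta(y,x)\equiv\neg T(y)$, so exactly one arc is produced and it points towards the $T$-marked endpoint; hence $\mathsf I$ applied to this coloring yields precisely $\vec G$. As $\vec G$ was arbitrary, $\vec\Tt_2\subseteq\mathsf I\circ\Gamma_{\{L,T\}}(\Tt_2)$.

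Finally I would check the transduction is of the required type and assemble the pieces. The formula $\eta$ uses only $E(x,y)$ and unary predicates, so $\models\eta(x,y)\rightarrow\dist(x,y)\le 1$; hence it is strongly local, and with no copy operation the transduction is non-copying and immersive. Feeding this into \Cref{lem:gluing} yields a non-copying transduction of $\vec\Cc$ from $\Cc$, and since gluing preserves strong locality of the individual edge formulas it remains immersive. I expect the only genuine obstacle to be the bookkeeping around degree-one endpoints in the star-forest step, in particular ensuring a clean, uniform choice of the distinguished endpoint $L$ for single-edge components, where ``leaf'' and ``center'' coincide; this is harmless because we only need the containment $\vec\Cc\subseteq\mathsf T(\Cc)$, so we are free to fix one endpoint per edge by the coloring.
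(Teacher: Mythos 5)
Your proof is correct and follows essentially the route the paper intends: the paper gives no explicit argument for this corollary (it only says ``Similarly'' after \Cref{lem:monotone}), and the expected proof is exactly your reduction via \Cref{lem:gluing} to the class $\Tt_2$ of star forests followed by a quantifier-free, strongly $1$-local encoding that stores each edge's orientation bit at its unique degree-one endpoint via unary marks. Your case check of $\eta$ and your remark that the edge formula must be allowed to be asymmetric when the target is a directed graph are both sound, so nothing is missing.
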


We can also use the same techniques to color the edges in a bounded number $k$ of colors\footnote{Formally, this amounts to define $k$ adjacency relations $E_1,\dots,E_k$ partitioning $E$.}.

\begin{cor}
	\label{lem:color}
	Let $\Cc$ be a class of graphs with bounded star chromatic number. Then the class of all the edge-colorations of  graphs in  $\Cc$ (using a bounded number of colors) is a non-copying immersive transduction of $\Cc$.
\end{cor}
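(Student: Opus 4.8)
The plan is to imitate exactly the proofs of \Cref{lem:monotone} and \Cref{lem:orient}: reduce to star forests by gluing, then realize the edge-coloring on a single star forest by marking leaves. Since $\Cc$ has bounded star chromatic number, say $\chi_{\rm st}(\Cc)\le n$, every $G\in\Cc$ carries a star coloring with $n$ colors, giving a vertex partition $V_1,\dots,V_n$ with $G[V_i,V_j]$ a star forest for all $1\le i\le j\le n$ (for $i=j$ it is even edgeless, since the classes are independent). Any edge-coloring of $G$ with $k$ colors restricts to an edge-coloring of each $G[V_i,V_j]$, so by \Cref{lem:gluing} it suffices to produce all $k$-edge-colorings of the class $\Tt_2$ of star forests. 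The only adaptation is that the output signature now carries $k$ adjacency relations $E_1,\dots,E_k$ rather than a single one; the gluing then takes, for each color $\ell$, the union over the pairs $(i,j)$ of the color-$\ell$ edge sets of the per-pair interpretations.

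For the base case I would fix $G=(V,E)\in\Tt_2$ together with an edge-coloring $c\colon E\to[k]$. In a star forest every edge joins a center to one of its leaves, and each leaf (a degree-$1$ vertex) is incident to a unique edge; for a component isomorphic to $K_2$ one of the two endpoints is arbitrarily declared the leaf. Using $k$ fresh unary predicates $C_1,\dots,C_k$, I would color a leaf $u$ by $C_\ell$ exactly when its incident edge has color $\ell$, leaving all centers and isolated vertices uncolored. The interpretation keeps every vertex ($\nu(x):=(x=x)$) and sets, for each $\ell\in[k]$,
\[
E_\ell(x,y):=E(x,y)\wedge\bigl(C_\ell(x)\vee C_\ell(y)\bigr).
\]
Because no two leaves of a star forest are adjacent, exactly one endpoint of each edge is a marked leaf, and it carries the color of that edge; hence $E_\ell$ holds precisely for the edges colored $\ell$, and the relations $E_1,\dots,E_k$ partition $E$ as required.

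That the resulting transduction is non-copying is immediate, and it is immersive because the interpretation is quantifier-free and each $E_\ell(x,y)$ implies $E(x,y)$, hence $\dist(x,y)\le 1$, so every defining formula is strongly $1$-local. Every $k$-edge-coloring of a star forest arises from the leaf-marking just described, so the class of all edge-colorations of star forests is contained in the image, and \Cref{lem:gluing} lifts this containment to $\Cc$. The only step needing care is the book-keeping in the gluing once the output signature has several binary relations; this is a routine extension of the construction already used for \Cref{lem:monotone}, since the per-pair interpretations never introduce edges between $A_i$ and $A_{j'}$ belonging to distinct pairs, so the color classes of different pairs combine without interference. I expect this mild generalization of gluing to be the only mildly delicate point; the core argument is the observation that a leaf uniquely indexes its edge and can therefore record its color.
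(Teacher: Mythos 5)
Your proof is correct and is exactly the argument the paper has in mind: the paper gives no separate proof of this corollary, merely remarking that "the same techniques" as in \Cref{lem:monotone} apply with $k$ adjacency relations $E_1,\dots,E_k$ partitioning $E$, and your reduction to star forests via \Cref{lem:gluing} followed by marking each leaf with the color of its unique incident edge is the intended instantiation. The two points you flag as delicate (the gluing with several output binary relations, and the fact that per-pair color predicates are disjoint after renaming so they cannot interfere) are handled correctly.
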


The next corollary is an extension of 
\Cref{lem:monotone}, as $\Cc\nabla 0$ is nothing but the monotone closure of $\Cc$.

\begin{cor}
	For every class $\Cc$ with bounded star chromatic number and every integer $r$, the class
	$\Cc\nabla r$ is a non-copying immersive transduction of $\Cc$. 
\end{cor}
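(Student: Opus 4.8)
The plan is to factor the desired transduction as a composition of colorings of $\Cc$---which are available as non-copying immersive transductions by \Cref{lem:color}, since $\Cc$ has bounded star chromatic number---followed by a single strongly local interpretation that reads off a shallow minor. Since the edge-colorings of \Cref{lem:color} leave the underlying graph (hence all distances) unchanged, composing them with a strongly local interpretation again yields an immersive transduction, so it suffices to describe, for an arbitrary $G\in\Cc$ and an arbitrary $H\in\Cc\nabla r$, a colored expansion $G^+$ of $G$ (by boundedly many vertex colors and boundedly many edge colors) together with a strongly local interpretation $\mathsf I=(\nu,\eta)$ with $\mathsf I(G^+)=H$. Recall that such an $H$ is obtained by choosing pairwise disjoint connected subgraphs (branch sets) of $G$, each of radius at most $r$, contracting them, and then deleting some vertices and some edges of the resulting graph; so the interpretation must simultaneously implement the contraction and the two kinds of deletion.

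I would record the contraction by marking in $G$ the center of each surviving branch set with a unary predicate $S$, labeling every vertex of a surviving branch set by its distance $d\in\{0,\dots,r\}$ to the center of its branch set (a bounded set of vertex colors, freely available through the coloring operation), and marking the parent edges of the associated depth-$\le r$ breadth-first trees with an edge color $T$ (a valid edge-coloring by \Cref{lem:color}). With these markings the relation ``the center of the branch set containing $u$ is $w$'' is expressed by the existence of a $T$-colored path $u=v_0,\dots,v_k=w$ with $k\le r$, $d(v_{i+1})=d(v_i)-1$, and $d(w)=0$; call this formula $\rho(u,w)$. Since it forces $\dist(u,w)\le r$, it is strongly $r$-local. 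Edge deletions are the delicate point: I would handle them by additionally marking, for each surviving adjacency $ww'$ of $H$, a single \emph{witness edge} between the two branch sets with an edge color $W$ (disjoint from $T$, as witness edges join distinct branch sets and tree edges stay within one, so $T$ and $W$ together form a genuine $3$-part edge-coloring), and marking no edge for the adjacencies that are deleted. The interpretation is then $\nu(w):=S(w)$, which realizes the vertex deletions by restricting the domain to surviving centers, and $\eta(w,w'):=(w\neq w')\wedge\exists u\,\exists u'\,\bigl(W(u,u')\wedge\rho(u,w)\wedge\rho(u',w')\bigr)$, whose satisfaction forces $\dist(w,w')\le 2r+1$, so that $\eta$ is strongly $(2r+1)$-local and the whole interpretation is immersive.

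It remains to check correctness. The domain of $\mathsf I(G^+)$ is exactly the set of surviving centers, which is $V(H)$. For the edges, if $ww'$ survives in $H$ then its witness edge $uu'$ yields $W(u,u')$, $\rho(u,w)$, and $\rho(u',w')$, so $w\sim w'$; conversely, a $W$-colored edge $uu'$ with $\rho(u,w)$ and $\rho(u',w')$ must be the witness of the pair $(w,w')$, because the branch sets are pairwise disjoint and hence $u$ determines $w$ and $u'$ determines $w'$, so $(w,w')$ is a pair we deliberately marked, i.e.\ a surviving adjacency. Thus no deleted adjacency is ever recreated and $\mathsf I(G^+)=H$. The main obstacle is precisely the need to reproduce an \emph{arbitrary} subset of the contracted adjacencies while keeping the edge formula strongly local; the witness-edge marking resolves it, using the disjointness of the branch sets to rule out false positives and \Cref{lem:color} to guarantee that the (arbitrary) markings $T$ and $W$ are themselves produced by an immersive transduction of $\Cc$. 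Composing the coloring transductions with $\mathsf I$ gives the required non-copying immersive transduction, and taking $r=0$ (so that each branch set is a single vertex, $\rho(u,w)$ reduces to $u=w$, and $W$ marks the surviving edges) recovers \Cref{lem:monotone}.
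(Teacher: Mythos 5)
Your proof is correct and follows essentially the same route as the paper: use \Cref{lem:color} to mark the BFS trees of the branch sets and the surviving representatives, then recover adjacencies by a strongly local short-path formula between marked centers. The only differences are implementation details (the paper colors the \emph{deleted} crossing edges and tests for a path of length at most $4r$ with exactly one uncolored edge, whereas you mark one surviving witness edge per adjacency and add distance labels to disambiguate the tree paths), and your variant is, if anything, a more carefully verified version of the same argument.
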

\begin{proof}
	For each connected subgraph to be contracted, we color the edges in a spanning breadth-first-search tree with color $1$, we color the edges to be deleted with color $2$, and we color the set of vertices to be kept in the minor (including one vertex in each contracted subgraph) with color~$3$. 
	This can be done via a transduction, according to \Cref{lem:color}. Then, the shallow minor is obtained by an easy transduction: its vertex set is the set of all the vertices colored~$3$, and two vertices colored $3$ are adjacent in the minor if there is in $G$ a path of length at most $4r$ linking them with all its edges colored $1$ but one edge that is uncolored.
\end{proof}

A special case of gluing concerns the case where all the transductions $\mathsf T_{i,j}$ with $i\neq j$ are defined by the interpretation $(\top,\bot)$, which means that no edges will exist  between the parts in the gluing of the transductions.
We call such a gluing a \emph{trivial gluing} (see \Cref{fig:gluing_triv}).
The trivial gluing $\mathsf T$ of $\mathsf T_1,\dots, \mathsf T_n$ is such that, for every graph $H$, we have
\[
T(G)=\{H=H_1\union\dots\union H_n\colon H_i\in T_i(G[A_i])\text{ with }A_1,\dots,A_n\text{ disjoint subsets of }G\}.
\]
Note that trivial gluing obviously extends to the case where (some of) the transductions $\mathsf T_1,\dots,\mathsf T_n$ are copying.

\begin{figure}[h!t]
	\centering\includegraphics[width=\textwidth]{}
	\caption{Trivial gluing of transductions. $H[A_i]\in\mathsf T_{i,j}(G[A_i])$.}
	\label{fig:gluing_triv}
\end{figure}

\begin{fact}
	\label{fact:triv_gluing}
	Let $\Cc_1,\Cc_2,\Dd_1,\Dd_2$ be classes of graphs with $\Dd_1\sqsubseteq \Cc_1$ and $\Dd_2\sqsubseteq \Cc_2$, where $\sqsubseteq$ is one of $\sqsubseteq_\FO^\circ$ and $\sqsubseteq_\FO$. 
	Then, $\Dd_1+\Dd_2\sqsubseteq\Cc_1+\Cc_2$.
\end{fact}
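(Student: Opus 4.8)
The plan is to realize the desired transduction as the \emph{trivial gluing} of two transductions witnessing the hypotheses. First I would fix a transduction $\mathsf{T}_1$ witnessing $\Dd_1\sqsubseteq\Cc_1$ and a transduction $\mathsf{T}_2$ witnessing $\Dd_2\sqsubseteq\Cc_2$, both taken in the relevant category (non-copying when $\sqsubseteq$ denotes $\sqsubseteq_\FO^\circ$, arbitrary when $\sqsubseteq$ denotes $\sqsubseteq_\FO$); thus $\Dd_1\subseteq\mathsf{T}_1(\Cc_1)$ and $\Dd_2\subseteq\mathsf{T}_2(\Cc_2)$. Let $\mathsf{T}$ be the trivial gluing of $\mathsf{T}_1$ and $\mathsf{T}_2$ (the case $n=2$). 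Since the gluing of non-copying transductions is again non-copying by construction, and since trivial gluing extends to copying transductions as noted just above, $\mathsf{T}$ is a transduction in the same category as $\mathsf{T}_1$ and $\mathsf{T}_2$.

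It then remains to verify the inclusion $\Dd_1+\Dd_2\subseteq\mathsf{T}(\Cc_1+\Cc_2)$. I would take an arbitrary $H\in\Dd_1+\Dd_2$, say $H=H_1\union H_2$ with $H_1\in\Dd_1$ and $H_2\in\Dd_2$. By the choice of $\mathsf{T}_1$ and $\mathsf{T}_2$ there are graphs $G_1\in\Cc_1$ and $G_2\in\Cc_2$ with $H_1\in\mathsf{T}_1(G_1)$ and $H_2\in\mathsf{T}_2(G_2)$. Setting $G:=G_1\union G_2$, we have $G\in\Cc_1+\Cc_2$. I would then apply the explicit description of the trivial gluing, choosing the disjoint subsets $A_1:=V(G_1)$ and $A_2:=V(G_2)$ of $V(G)$. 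Because $G$ is a disjoint union, the induced subgraphs satisfy $G[A_1]=G_1$ and $G[A_2]=G_2$, so $H_1\in\mathsf{T}_1(G[A_1])$ and $H_2\in\mathsf{T}_2(G[A_2])$ while $H=H_1\union H_2$. By the displayed formula for the trivial gluing this yields $H\in\mathsf{T}(G)\subseteq\mathsf{T}(\Cc_1+\Cc_2)$, as required.

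The construction is essentially forced, so there is no genuine obstacle; the only points needing care are bookkeeping ones. The first is to make sure the gluing lands in the correct category, which is precisely why both the non-copying and the copying versions of trivial gluing were recorded above, allowing the two cases $\sqsubseteq_\FO^\circ$ and $\sqsubseteq_\FO$ to be treated uniformly. The second is the observation that selecting $A_i=V(G_i)$ recovers $G_i$ exactly as an induced subgraph of the disjoint union $G_1\union G_2$, with no edges crossing between the two parts; this is what guarantees that the two component transductions act independently and that their outputs reassemble into the disjoint union $H_1\union H_2$ rather than interacting. With these two points checked, the argument goes through verbatim in both categories.
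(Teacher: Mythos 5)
Your proof is correct and follows essentially the same route as the paper's: both take the trivial gluing of the two witnessing transductions and verify membership on a disjoint union $G_1\union G_2$ by selecting $A_i=V(G_i)$. Your version merely spells out the bookkeeping (the category of the gluing and the induced-subgraph identification) that the paper leaves implicit in the phrase ``by construction.''
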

\begin{proof}
		Let $\Dd_1$ be a $\mathsf T_1$-transduction of $\Cc_1$, let $\Dd_2$ be a $\mathsf T_2$-transduction of $\Cc_2$, and let $\mathsf T$ be the trivial gluing of $\mathsf T_1$ and $\mathsf T_2$.
		Let $H_1\in\Dd_1$ and $H_2\in \Dd_2$. By assumption, there exist $G_1\in\Cc_1$ and $G_2\in\Cc_2$ with
		 $H_1\in \mathsf T_1(G_1)$  and $H_2\in \mathsf T_2(G_2)$. By construction,  $H_1\union H_2\in \mathsf T(G_1\union G_2)$. Thus, 
		$\Dd_1+\Dd_2$ is a $\mathsf T$-transduction of $\Cc_1+\Cc_2$.
\end{proof}

In this setting, the following property,
 which generalizes the notion of addable class, will be of particular interest. 
 Let $\equiv$ be one of $\equiv_\FO^\circ$ and $\equiv_\FO$.
 A class $\Cc$ is \emph{$\equiv$-additive} (or simply \emph{additive} when  $\equiv$ is clear from the context) if $\Cc+\Cc\equiv\Cc$.
 Note that $\equiv_\FO^\circ$-additivity implies 
 $\equiv_\FO$-additivity.
As a class $\Cc$ is addable if
$\Cc+\Cc=\Cc$, we deduce that
if a class $\Cc$ is addable and a class $\Dd$ is such that $\Dd\equiv\Cc$, then $\Dd$ is $\equiv$-additive.
The significance of additivity is witnessed by the following fact.
\begin{fact}
	\label{fact:add_gluing}
	Let $\sqsubseteq$ be one of $\sqsubseteq_\FO^\circ$ and $\sqsubseteq_\FO$ and let $\equiv$ be the associated equivalence relation. 
	Let $\Cc$ be an $\equiv$-additive class and let $\Dd_1,\Dd_2$ be classes of graphs with $\Dd_1\sqsubseteq \Cc$ and $\Dd_2\sqsubseteq \Cc$.
	Then, $\Dd_1+\Dd_2\sqsubseteq\Cc$.
\end{fact}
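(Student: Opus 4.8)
The plan is to combine the trivial-gluing bound from \Cref{fact:triv_gluing} with the additivity hypothesis, chaining them through transitivity of the quasi-order $\sqsubseteq$. First I would apply \Cref{fact:triv_gluing} in the special case $\Cc_1=\Cc_2=\Cc$: since by hypothesis $\Dd_1\sqsubseteq\Cc$ and $\Dd_2\sqsubseteq\Cc$ (both with respect to the same flavor of transduction, copying or non-copying), that fact immediately yields
\[
\Dd_1+\Dd_2\sqsubseteq\Cc+\Cc.
\]

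Next I would invoke the assumption that $\Cc$ is $\equiv$-additive, i.e.\ $\Cc+\Cc\equiv\Cc$. Only one direction of this equivalence is needed here, namely $\Cc+\Cc\sqsubseteq\Cc$. Finally, since $\sqsubseteq$ is a quasi-order (transductions compose), transitivity gives
\[
\Dd_1+\Dd_2\sqsubseteq\Cc+\Cc\sqsubseteq\Cc,
\]
which is the desired conclusion.

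The only point requiring a small amount of care, rather than a genuine obstacle, is keeping the flavor of the ordering fixed throughout: the statement pins $\sqsubseteq$ to be either $\sqsubseteq_\FO$ or $\sqsubseteq^{\circ}_\FO$ and takes $\equiv$ to be the associated equivalence, while \Cref{fact:triv_gluing} is stated uniformly for both. Since the trivial gluing of two non-copying transductions is again non-copying, the non-copying case of \Cref{fact:triv_gluing} applies when $\sqsubseteq\,=\,\sqsubseteq^{\circ}_\FO$ and the general case applies when $\sqsubseteq\,=\,\sqsubseteq_\FO$; in both cases the additivity hypothesis is expressed with the matching $\equiv$, so the chain above is internally consistent. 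Thus the proof is just a two-step composition and carries no real difficulty beyond this bookkeeping.
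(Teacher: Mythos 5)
Your proof is correct and is essentially identical to the paper's: the paper also applies \Cref{fact:triv_gluing} with $\Cc_1=\Cc_2=\Cc$ to get $\Dd_1+\Dd_2\sqsubseteq\Cc+\Cc$ and then uses additivity and transitivity. Your extra remark on keeping the copying/non-copying flavor consistent is sound bookkeeping that the paper leaves implicit.
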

\begin{proof}
According to \Cref{fact:triv_gluing}, we have  $\Dd_1+\Dd_2\sqsubseteq \Cc+\Cc\sqsubseteq\Cc$.
\end{proof}

\section{About copying}

The copying step of transductions makes them more involved to reason about, and is often not needed. Thus, many papers consider non-copying transductions instead. In this section, we record various results about when the copying step can be safely ignored.

\begin{fact}
	\label{fact:copy_equiv}
	For every class $\Cc$ and integer $k$ we have
	$\Cc\equiv_\FO	\mathsf C_k(\Cc)$.
		Consequently,
\[	\Cc\sqsubseteq_\FO\Dd\quad\Longrightarrow\quad \mathsf C_k(\Cc)\sqsubseteq_\FO\mathsf C_k(\Dd).\]
\end{fact}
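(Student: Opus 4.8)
The plan is to establish the transduction-equivalence $\Cc\equiv_\FO\mathsf C_k(\Cc)$ by proving its two constituent inequalities separately, and then to read off the displayed consequence from the fact (already recorded) that $\sqsubseteq_\FO$ is a quasi-order, hence transitive. Both inequalities are essentially immediate from the definitions of the copy operation and of the hereditary transduction; the only point that carries any content is the observation that a single copy of $G$ sits inside $\mathsf C_k(G)$ as an induced subgraph, so that $G$ can be recovered from $\mathsf C_k(G)$.

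For the direction $\mathsf C_k(\Cc)\sqsubseteq_\FO\Cc$, I would simply use the transduction $\mathsf T$ whose copy operation is $\mathsf C_k$ itself, equipped with the trivial coloring and the identity interpretation $(\top,E(x,y))$. Applied to $G$, this transduction produces $\mathsf C_k(G)$ among its outputs, so $\mathsf C_k(\Cc)\subseteq\mathsf T(\Cc)$ and hence $\mathsf C_k(\Cc)\sqsubseteq_\FO\Cc$.

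For the converse direction $\Cc\sqsubseteq_\FO\mathsf C_k(\Cc)$ (in fact even $\Cc\sqsubseteq_\FO^\circ\mathsf C_k(\Cc)$), I would invoke the hereditary transduction $\mathsf H$ of \Cref{ex:hered}. Writing $G^{(1)},\dots,G^{(k)}$ for the $k$ copies constituting $\mathsf C_k(G)$, the edges of $\mathsf C_k(G)$ are exactly the within-copy edges (each $G^{(i)}$ being an isomorphic copy of $G$) together with the clone-cliques joining the $k$ copies of each single vertex; crucially, there are no edges between \emph{distinct} vertices lying in \emph{distinct} copies. Consequently the vertex set of any single copy $G^{(i)}$ induces a subgraph isomorphic to $G$, so that $G\in\mathsf H(\mathsf C_k(G))$ and therefore $\Cc\subseteq\mathsf H(\mathsf C_k(\Cc))$. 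This verification that one copy induces \emph{precisely} $G$, which rests on the absence of spurious cross edges in $\mathsf C_k(G)$, is the only genuinely content-bearing step and is the closest thing to an obstacle here; everything else is bookkeeping.

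Finally, for the displayed consequence I would just chain the equivalence with the hypothesis. From $\mathsf C_k(\Cc)\sqsubseteq_\FO\Cc$ (one half of the equivalence), the assumption $\Cc\sqsubseteq_\FO\Dd$, and $\Dd\sqsubseteq_\FO\mathsf C_k(\Dd)$ (one half of the equivalence applied to $\Dd$), transitivity of $\sqsubseteq_\FO$ gives $\mathsf C_k(\Cc)\sqsubseteq_\FO\mathsf C_k(\Dd)$, as required.
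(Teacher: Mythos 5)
Your proof is correct and follows essentially the same route as the paper: $\mathsf C_k(\Cc)\sqsubseteq_\FO\Cc$ because $\mathsf C_k$ is itself a transduction, $\Cc\sqsubseteq_\FO\mathsf C_k(\Cc)$ because each graph $G$ is an induced subgraph of $\mathsf C_k(G)$ (recovered via the hereditary transduction $\mathsf H$), and the consequence follows by chaining these equivalences with transitivity. Your extra verification that a single copy induces exactly $G$ is a fine, slightly more explicit rendering of the same observation.
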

\begin{proof}
	As $\mathsf C_k$ is a transduction, we have $\mathsf C_k(\Cc)\sqsubseteq_\FO	\mathsf\Cc$ and, as every graph $G$ is an induced subgraph of
	$\mathsf C_k(G)$ we have
	$\Cc\subseteq\mathsf H(\mathsf C_k(\Cc))$, hence $\Cc\sqsubseteq_\FO	\mathsf C_k(\Cc)$.
	
		Assume $\Cc\sqsubseteq_\FO\Dd$.
	Then,  $\mathsf C_k(\Cc)\equiv_\FO\Cc\sqsubseteq_\FO\Dd\equiv_\FO \mathsf C_k(\Dd)$.
\end{proof}

\begin{fact}
	For every class $\Cc$ and integers $k\leq\ell$ we have
	$C_k(\Cc)\sqsubseteq_\FO^\circ	\mathsf C_\ell(\Cc)$.
\end{fact}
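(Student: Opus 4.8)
The plan is to observe that $\mathsf C_k(G)$ is literally an induced subgraph of $\mathsf C_\ell(G)$ whenever $k\leq\ell$, and then to invoke the hereditary transduction $\mathsf H$, which is non-copying, to extract it. Since $\mathsf H$ produces \emph{all} induced subgraphs of its input, no copying step is needed, so the target relation is $\sqsubseteq_\FO^\circ$ rather than merely $\sqsubseteq_\FO$.

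First I would unwind the definition of $\mathsf C_\ell(G)$. Its vertex set is $V(G)\times\{1,\dots,\ell\}$, where $(v,i)$ is adjacent to $(w,i)$ exactly when $vw\in E(G)$, each set of clones $\{(v,1),\dots,(v,\ell)\}$ induces a clique, and there are no further edges. Restricting to the subset $V(G)\times\{1,\dots,k\}$ consisting of the first $k$ copies, every adjacency is inherited: the $i$-th copy of $G$ is preserved for each $i\leq k$, and the clones of each vertex $v$ now form a clique of size $k$. Hence the induced subgraph $\mathsf C_\ell(G)[V(G)\times\{1,\dots,k\}]$ is isomorphic to $\mathsf C_k(G)$.

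Then I would conclude using $\mathsf H$. Recalling that $\mathsf H$ is a non-copying transduction with $\mathsf H(F)$ equal to the set of all induced subgraphs of $F$, the previous paragraph gives $\mathsf C_k(G)\in\mathsf H(\mathsf C_\ell(G))$ for every $G\in\Cc$. Passing to the class level yields $\mathsf C_k(\Cc)\subseteq\mathsf H(\mathsf C_\ell(\Cc))$, which is exactly the assertion $\mathsf C_k(\Cc)\sqsubseteq_\FO^\circ\mathsf C_\ell(\Cc)$. There is no real obstacle here: the statement reduces entirely to the trivial verification of the induced-subgraph claim in the second paragraph, since copying more times simply contains copying fewer times as an induced subgraph.
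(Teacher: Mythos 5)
Your proof is correct and follows exactly the paper's argument: observe that $\mathsf C_k(G)$ is an induced subgraph of $\mathsf C_\ell(G)$ and apply the non-copying hereditary transduction $\mathsf H$. The only difference is that you spell out the induced-subgraph verification, which the paper leaves implicit.
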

\begin{proof}
	Indeed, for every graph $G$, $C_k(G)$ is an induced subgraph of $C_\ell(G)$. Hence, the non-copying transduction $\mathsf H$ witnesses $C_k(\Cc)\sqsubseteq_\FO^\circ	\mathsf C_\ell(\Cc)$.
\end{proof}

\begin{fact}
	\label{fact:red_copy}
	For every class $\Cc$ and integers $k,\ell$ we have
	$\mathsf C_{k\ell}(\Cc)\equiv_\FO^\circ	\mathsf C_k(\mathsf C_\ell(\Cc))$.
\end{fact}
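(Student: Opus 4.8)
The plan is to observe that, after fixing a bijection identifying the index set $[k\ell]$ with $[\ell]\times[k]$, the two graphs $\mathsf C_{k\ell}(G)$ and $\mathsf C_k(\mathsf C_\ell(G))$ have literally the same vertex set $V(G)\times[\ell]\times[k]$ and differ only in how the clones of a single vertex are joined. Unfolding the definitions, in both graphs an edge inherited from $G$ joins $(u,a,b)$ to $(v,a,b)$ exactly when $uv\in E(G)$, that is, between clones carrying the same coordinate $(a,b)$; meanwhile the clones $\{(u,a,b):a\in[\ell],\,b\in[k]\}$ of a fixed vertex $u$ induce a clique $K_{k\ell}$ in $\mathsf C_{k\ell}(G)$, whereas in $\mathsf C_k(\mathsf C_\ell(G))$ they induce only the ``rook graph'', in which $(u,a,b)$ and $(u,a',b')$ are adjacent iff $a=a'$ or $b=b'$. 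Hence the sole difference between the two graphs is the presence or absence of the \emph{diagonal} clone edges, those joining $(u,a,b)$ to $(u,a',b')$ with $a\neq a'$ and $b\neq b'$. I would first record this comparison precisely, since it makes both inclusions transparent. In each direction I will color the common vertex set by the finitely many unary predicates $C_{a,b}$ (indexed by $(a,b)\in[\ell]\times[k]$, a bounded number since $k,\ell$ are fixed) recording the coordinate of each clone; this is a legitimate choice of $\Uu$-expansion, and the same interpretation works for every $G$, so the class-level equivalence follows.

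For the inclusion $\mathsf C_k(\mathsf C_\ell(\Cc))\sqsubseteq_\FO^\circ\mathsf C_{k\ell}(\Cc)$, I start from $\mathsf C_{k\ell}(G)$ with this coordinate coloring and apply the quantifier-free non-copying interpretation $(\nu,\eta)$ with $\nu:=\top$ and $\eta(x,y):=(x\neq y)\wedge E(x,y)\wedge\neg\bigvee_{a\neq a',\,b\neq b'}\bigl(C_{a,b}(x)\wedge C_{a',b'}(y)\bigr)$. Since in $\mathsf C_{k\ell}(G)$ any adjacent pair carrying two distinct colors is necessarily a clone edge, this formula keeps every $G$-edge and every clone edge that shares a coordinate, while deleting exactly the diagonal clone edges; it thus turns each clone clique into its rook graph and produces $\mathsf C_k(\mathsf C_\ell(G))$. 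Symmetry of $\eta$ is clear (the condition $a\neq a',\,b\neq b'$ is symmetric) and anti-reflexivity is forced by the conjunct $x\neq y$.

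The reverse inclusion $\mathsf C_{k\ell}(\Cc)\sqsubseteq_\FO^\circ\mathsf C_k(\mathsf C_\ell(\Cc))$ is the substantive one: here I start from $\mathsf C_k(\mathsf C_\ell(G))$ and must \emph{add} the diagonal clone edges, which are missing and, crucially, non-adjacent in the source. I resolve this with a common-neighbor witness: if $x=(u,a,b)$ and $y=(u,a',b')$ form a diagonal clone pair, then $z=(u,a,b')$ is a clone of $u$ adjacent (by a rook edge) to both $x$ and $y$; conversely any vertex $z$ of color $(a,b')$ adjacent to both $x$ and $y$ is forced to be the clone $(u,a,b')$, because differing colors can arise only on clone edges, which pins $z$ down to the correct underlying vertex and then forces $y$ to be a clone of the same $u$. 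Accordingly I would take $\nu:=\top$ and
\[
\eta(x,y):=(x\neq y)\wedge\Bigl(E(x,y)\vee\bigvee_{a\neq a',\,b\neq b'}\bigl(C_{a,b}(x)\wedge C_{a',b'}(y)\wedge\exists z\,(C_{a,b'}(z)\wedge E(x,z)\wedge E(z,y))\bigr)\Bigr),
\]
which keeps every source edge (the $G$-edges and the rook clone edges) and adds precisely the diagonal clone edges, yielding $\mathsf C_{k\ell}(G)$. The disjunction ranges over both orderings of a color pair, so $\eta$ is symmetric, and $x\neq y$ makes it anti-reflexive.

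The main obstacle is exactly this last step, namely defining the diagonal clone edges without recourse to the copy operation: it works because the rook graph on the clones of a single vertex has diameter two, so a single intermediate clone suffices as a first-order witness, and because two vertices bearing distinct colors can be adjacent only through a clone edge, which forces the witness onto the correct underlying vertex. Combining the two non-copying inclusions gives $\mathsf C_{k\ell}(\Cc)\equiv_\FO^\circ\mathsf C_k(\mathsf C_\ell(\Cc))$, as claimed.
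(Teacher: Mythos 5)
Your proof is correct and follows essentially the same route as the paper's: both identify the two graphs on the common vertex set $V(G)\times[\ell]\times[k]$, observe that they differ only in the diagonal clone edges, record the coordinates with unary marks, delete those edges by a quantifier-free coordinate test in one direction, and add them in the other via the mixed-colour common-neighbour witness $(u,a,b')$. If anything, your explicit restriction of the big disjunction to colour pairs differing in \emph{both} coordinates is the careful reading of the paper's formula, which as written ranges over all distinct colour pairs and needs that restriction to avoid spurious edges between clones of adjacent vertices.
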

\begin{proof}
	Let $G\in\Cc$.
	Let $v_{i,j}$ be the $j$th clone
	in $\mathsf C_k(\mathsf C_\ell(G))$ of the $i$th clone in 
	$\mathsf C_\ell(G)$ of $v\in V(G)$.
	The neighbors of $v_{i,j}$ in $\mathsf C_k(\mathsf C_\ell(G))$ are the vertices $v_{i,j'}$ (for $j'\in [k]\setminus\{j\}$), the vertices 
	$v_{i',j}$ (for $i'\in [\ell]\setminus\{i\}$), and the vertices $u_{i,j}$ for $u$ neighbor of $v$ in $G$. Note that $\mathsf C_k(\mathsf C_\ell(G))$ and 
	$\mathsf C_\ell(\mathsf C_k(G))$ are isomorphic.
	
	Consider a set $\mathcal U=\{M_{i,j}\colon (i,j)\in [\ell]\times [k]\}$ of unary relations and let $G\in \Cc$.
	Consider the non-copying transduction $\mathsf T$ defined by the formulas $\nu(x)=\top$ and 
	\[
	\phi(x,y):=E(x,y)\vee \bigvee_{((i,j),(i',j'))\in\binom{[\ell]\times[k]}{2}} 
	M_{i,j}(x)\wedge M_{i',j'}(y)\wedge \exists z 
	(M_{i,j'}(z)\wedge E(x,z)\wedge E(y,z)).
	\]
	
	By marking by $M_{i,j}$ the vertices $v_{i,j}$ of  $\mathsf C_k(\mathsf C_\ell(G))$, we get that $\mathsf C_{k\ell}(G)\in \mathsf T(\mathsf C_\ell(\mathsf C_k(G))$. Hence,
	$\mathsf C_{k\ell}(\Cc)\sqsubseteq_\FO^\circ \mathsf C_\ell(\mathsf C_k(\Cc))$.
	
	Conversely, by considering the transduction defined by
	the formulas $\nu(x)=\mathsf T$ and 
	\[
	\phi(x,y):=E(x,y)\wedge \bigvee_{i,i'\in [\ell]}\bigvee_{j,j'\in [k]}
	M_{i,j}(x)\wedge M_{i',j'}(y)\wedge ((i=i')\vee (j=j')),\] 
	we get (using an adequate marking of $\mathsf C_{k\ell}(G)$)
	that $ \mathsf C_\ell(\mathsf C_k(\Cc))\sqsubseteq_\FO^\circ \mathsf C_{k\ell}(\Cc)$.
\end{proof}

An observation is that the copying transduction $\mathsf C_k$ transports the quasi-orders $\sqsubseteq_\FO$ and~$\sqsubseteq_\FO^\circ$. (For $\sqsubseteq_\FO$ this was stated in \Cref{fact:copy_equiv}.)
\begin{fact}
	\label{fact:copymove}
	For every non-copying transduction $\mathsf T$ and every integer $k$, there is a  non-copying transduction $\mathsf T'$ with
	\[\mathsf T'\circ\mathsf C_k\geq \mathsf C_k\circ\mathsf T.\]
	
	Consequently,
\[
		\Cc\sqsubseteq_\FO^\circ\Dd\quad\Longrightarrow\quad \mathsf C_k(\Cc)\sqsubseteq_\FO^\circ\mathsf C_k(\Dd).\]
\end{fact}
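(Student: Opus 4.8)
The plan is to make the copy operation and the coloring/interpretation commute by replicating the coloring identically on all $k$ copies. Write $\mathsf T=\mathsf I\circ\Gamma_{\mathcal U}$ with $\mathsf I=(\nu(x),\eta(x,y))$ and fix a graph $G$. I describe the vertex set of $\mathsf C_k(G)$ as $V(G)\times[k]$, writing $v^{(i)}$ for the $i$-th clone of $v$. The only edges of $\mathsf C_k(G)$ are the within-copy edges $u^{(i)}v^{(i)}$ (for $uv\in E(G)$) and the clone edges $v^{(i)}v^{(j)}$ (for $i\neq j$); in particular there are no edges between clones of \emph{distinct} vertices, and this is the observation that makes everything definable. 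The graph I must reproduce is $\mathsf C_k(\mathsf I(G^+))$ for each $G^+\in\Gamma_{\mathcal U}(G)$: its vertex set is $\nu(G^+)\times[k]$, its within-copy edges are the edges of $\mathsf I(G^+)$ replicated in each copy, and its clone edges join $v^{(i)}$ to $v^{(j)}$ exactly when $v\in\nu(G^+)$.

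The transduction $\mathsf T'$ will use the colors of $\mathcal U$ together with fresh unary relations $C_1,\dots,C_k$ marking the copies. Given a target $G^+$, I would color $\mathsf C_k(G)$ by setting $C_i(v^{(j)})$ true iff $i=j$, and for each $M\in\mathcal U$ setting $M(v^{(j)})$ true iff $v\in M(G^+)$, so that each copy carries a faithful image of $G^+$. For any formula $\psi$ let $\psi^{(i)}$ be its relativization to $C_i$ (all quantifiers bounded to $C_i$, all free variables asserted to lie in $C_i$); since $v\mapsto v^{(i)}$ is an isomorphism from $G^+$ onto the induced colored subgraph on $C_i$ (there are no clone edges inside a single copy), one has $\psi^{(i)}(\bar v^{(i)})\Leftrightarrow\psi(\bar v)$. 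I then set $\mathsf I'=(\nu',\eta')$ with $\nu'(x):=\bigvee_i\nu^{(i)}(x)$ and $\eta'(x,y):=\bigvee_i\eta^{(i)}(x,y)\;\vee\;\bigl(E(x,y)\wedge\bigvee_{i\neq j}(C_i(x)\wedge C_j(y))\wedge\nu'(x)\wedge\nu'(y)\bigr)$. The first disjunct recreates the within-copy edges inside each $C_i$; the second recreates the clone edges, using that a cross-copy edge of $\mathsf C_k(G)$ can only be a clone edge, so $E(x,y)$ with $C_i(x)\wedge C_j(y)$ ($i\neq j$) pins down precisely the pairs $v^{(i)},v^{(j)}$, which are kept only when $v\in\nu(G^+)$. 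Both formulas are first-order, and $\eta'$ is symmetric and anti-reflexive.

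A short verification then shows $\mathsf I'$ applied to this expansion of $\mathsf C_k(G)$ yields exactly $\mathsf C_k(\mathsf I(G^+))$: the domain is $\nu(G^+)\times[k]$ by soundness of relativization, the two edge types are produced by the two disjuncts, and no spurious edges arise because distinct-vertex cross-copy pairs are non-adjacent in $\mathsf C_k(G)$. Ranging over $G^+\in\Gamma_{\mathcal U}(G)$ gives $\mathsf C_k(\mathsf T(G))\subseteq\mathsf T'(\mathsf C_k(G))$, i.e.\ $\mathsf T'\circ\mathsf C_k\geq\mathsf C_k\circ\mathsf T$. The main obstacle is exactly that $\nu$ and $\eta$ cannot be reused verbatim on $\mathsf C_k(G)$, whose extra clone edges would corrupt their evaluation; relativizing to a single marked copy is what neutralizes this, and one may first invoke \Cref{lem:normal0} to take $\nu(x)=M(x)$, slightly simplifying $\nu'$ and the clone-edge guard. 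For the consequence, if $\Cc\sqsubseteq_\FO^\circ\Dd$ is witnessed by a non-copying $\mathsf T$ with $\Cc\subseteq\mathsf T(\Dd)$, then for each $C\in\Cc$ we have $\mathsf C_k(C)\in\mathsf C_k(\mathsf T(\Dd))\subseteq\mathsf T'(\mathsf C_k(\Dd))$, whence $\mathsf C_k(\Cc)\sqsubseteq_\FO^\circ\mathsf C_k(\Dd)$ via the non-copying transduction $\mathsf T'$.
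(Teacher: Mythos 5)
Your argument is correct. For the first (and only nontrivial) assertion the paper gives no proof at all --- it says only that moving the copying to the front is ``not difficult to prove, though a bit technical'' and defers to an external reference --- so your relativization construction is a genuine, self-contained substitute rather than a restatement of what is in the text. The construction is sound: marking the copies by $C_1,\dots,C_k$, replicating the $\mathcal U$-coloring of $G^+$ on every copy, and relativizing $\nu$ and $\eta$ to each $C_i$ works precisely because the induced colored subgraph on a single copy of $\mathsf C_k(G)$ is isomorphic to $G^+$ and the only cross-copy edges are clone edges, which your second disjunct $E(x,y)\wedge\bigvee_{i\neq j}(C_i(x)\wedge C_j(y))\wedge\nu'(x)\wedge\nu'(y)$ exploits to recover exactly the clone edges of $\mathsf C_k(\mathsf I(G^+))$ restricted to the new domain; symmetry and anti-reflexivity of $\eta'$ are inherited, and the vertex-set identification needed for subsumption ($\nu(G^+)\times[k]\subseteq V(G)\times[k]$) is respected. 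Your derivation of the consequence $\Cc\sqsubseteq_\FO^\circ\Dd\Rightarrow\mathsf C_k(\Cc)\sqsubseteq_\FO^\circ\mathsf C_k(\Dd)$ is then exactly the paper's. Two minor cosmetic remarks: the guard $\nu'(x)\wedge\nu'(y)$ in the clone-edge disjunct is redundant, since the interpretation already intersects $\eta'$ with $\nu'^2$; and the appeal to \Cref{lem:normal0} is unnecessary, as the relativization handles an arbitrary $\nu$.
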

\begin{proof}
	The possibility to move the copying first is not difficult to prove, though a bit technical. For a formal proof of this fact, we refer to \cite{SBE_TOCL}.
	
		Assume $\Cc\sqsubseteq_\FO^\circ\Dd$.
		Then there exists a non-copying transduction $\mathsf T$ such that $\Cc\subseteq\mathsf T(\Dd)$.
		According to the first statement there exists a non-copying transduction $\mathsf T'$ with $\mathsf T'\circ\mathsf C_k\geq \mathsf C_k\circ\mathsf T$. In particular, $\mathsf C_k(\Cc)\subseteq \mathsf T'(\mathsf C_k(\Dd))$. As $\mathsf T'$ is non-copying, we deduce
		$\mathsf C_k(\Cc)\sqsubseteq_\FO^\circ\mathsf C_k(\Dd)$.
\end{proof}

The next technical lemma is very useful. It shows that copying is only needed if the cloned vertices actually appear in final graph, and are not needed, for example, just to witness existential quantifiers in the interpretation.
\begin{lem}[Elimination of the copying]
	\label{lem:elim_copy}
	Let $\Cc,\Dd$ be graph classes, and let $\mathsf T=\mathsf I\circ\Gamma_{\mathcal U}\circ C_k$ be a transduction of $\Cc$ from $\Dd$.
	Assume that for every $G\in\Cc$ there exists a graph $H\in\Dd$ such that $G\in\mathsf T(H)$ and $V(G)$ does not contain two clones of any vertex of $H$.
	
	Then, $\Cc\sqsubseteq_\FO^\circ\Dd$.
\end{lem}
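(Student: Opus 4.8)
The plan is to trade the $k$-fold blow-up $\mathsf C_k$ for a ``blown-up coloring'' of the source, reproducing the whole colored graph $\mathsf C_k(H)^+$ at the level of formulas over a single colored copy of $H$. Write $\mathsf T=\mathsf I\circ\Gamma_{\mathcal U}\circ\mathsf C_k$ with $\mathsf I=(\nu(x),\eta(x,y))$, and recall that the vertices of $\mathsf C_k(H)$ are the pairs $(v,i)$ with $v\in V(H)$ and $i\in[k]$, where $(v,i)$ and $(w,j)$ are adjacent precisely when either $i=j$ and $v\sim_H w$, or $v=w$ and $i\neq j$. First I would set up the non-copying transduction $\mathsf T'=\mathsf I'\circ\Gamma_{\mathcal U'}$ with color set $\mathcal U'=\{U^{(i)} : U\in\mathcal U,\ i\in[k]\}$, the intended meaning being that the relation $U^{(i)}$ on the vertex $v\in V(H)$ records whether the clone $(v,i)$ carries color $U$ in the expansion $H^+$. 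In this way a $\mathcal U'$-coloring $H^\#$ of $H$ encodes an arbitrary $\mathcal U$-expansion $H^+$ of $\mathsf C_k(H)$.

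Next, by induction on formulas, I would define for every formula $\phi(x_1,\dots,x_m)$ over $\mathcal U$-colored graphs and every tuple $(i_1,\dots,i_m)\in[k]^m$ a formula $\phi^{(i_1,\dots,i_m)}(x_1,\dots,x_m)$ over $\mathcal U'$-colored graphs such that
\[
\mathsf C_k(H)^+\models\phi\bigl((v_1,i_1),\dots,(v_m,i_m)\bigr)
\quad\Longleftrightarrow\quad
H^\#\models\phi^{(i_1,\dots,i_m)}(v_1,\dots,v_m).
\]
The atomic cases read off the structure of $\mathsf C_k(H)$: the color atom $U(x_l)$ becomes $U^{(i_l)}(x_l)$; the equality $x_l=x_{l'}$ becomes $x_l=x_{l'}$ if $i_l=i_{l'}$ and $\bot$ otherwise; the edge atom $E(x_l,x_{l'})$ becomes $E(x_l,x_{l'})$ if $i_l=i_{l'}$ and becomes $x_l=x_{l'}$ if $i_l\neq i_{l'}$. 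Boolean connectives translate componentwise, and a quantifier is handled by also ranging over the copy index, so that $\exists z\,\psi$ becomes $\bigvee_{j\in[k]}\exists w\,\psi^{(i_1,\dots,i_m,j)}(x_1,\dots,x_m,w)$. Because $k$ is fixed these are finite disjunctions, so each $\phi^{(i_1,\dots,i_m)}$ is a genuine first-order formula.

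I would then take $\mathsf I'=(\nu'(x),\eta'(x,y))$ with
\[
\nu'(x):=\bigvee_{i\in[k]}\nu^{(i)}(x),
\qquad
\eta'(x,y):=(x\neq y)\wedge\bigvee_{i,j\in[k]}\bigl(\nu^{(i)}(x)\wedge\nu^{(j)}(y)\wedge\eta^{(i,j)}(x,y)\bigr),
\]
and verify correctness using the hypothesis. Fix $G\in\Cc$ and let $H\in\Dd$ and $H^+$ be the witnesses supplied by the assumption, so that $G=\mathsf I(H^+)$ and for each $v\in V(H)$ at most one index $i$ satisfies $\mathsf C_k(H)^+\models\nu((v,i))$. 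Let $H^\#$ encode $H^+$ as above. By the translation, $H^\#\models\nu^{(i)}(v)$ iff $(v,i)\in V(G)$; hence by the hypothesis the formulas $\nu^{(i)}$ are pairwise mutually exclusive on $H^\#$, $\nu'(v)$ holds iff some (necessarily unique) clone $(v,i(v))$ of $v$ lies in $V(G)$, and $v\mapsto(v,i(v))$ is a bijection from $\nu'(H^\#)$ onto $V(G)$. On a pair $v\neq w$ of such vertices the disjunction in $\eta'$ collapses to the single surviving term with $i=i(v)$, $j=i(w)$, which by the translation holds iff $\mathsf C_k(H)^+\models\eta((v,i(v)),(w,i(w)))$, i.e.\ iff these clones are adjacent in $G$. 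Thus $\mathsf I'(H^\#)\cong G$, so $G\in\mathsf T'(H)$.

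Since $\mathsf T'$ is a single fixed non-copying transduction producing every $G\in\Cc$ from a graph of $\Dd$, this gives $\Cc\subseteq\mathsf T'(\Dd)$ up to isomorphism, that is $\Cc\sqsubseteq_\FO^\circ\Dd$. It remains only to note that $\eta'$ is anti-reflexive, which is forced by the conjunct $x\neq y$, and symmetric, which follows from the symmetry of $\eta$ together with the symmetric form of the disjunction (or may simply be imposed by replacing $\eta'$ with $\eta'(x,y)\vee\eta'(y,x)$). The one genuinely delicate point, and precisely where the hypothesis is indispensable, is that the correct copy indices $i(v),i(w)$ are selected by the $\nu^{(i)}$ guards in $\eta'$: without the ``at most one clone'' assumption several terms of the disjunction could fire simultaneously and $\eta'$ would no longer compute the intended adjacency. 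Everything else is the routine inductive translation of first-order formulas through the blow-up, so I expect this selection issue, rather than any calculation, to be the crux.
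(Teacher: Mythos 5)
Your proposal is correct and follows essentially the same route as the paper's proof: a blown-up color set recording the colors of each clone, the same inductive translation of atomic formulas (with quantifiers becoming finite disjunctions over copy indices), and a final interpretation whose guards select the unique surviving clone, which is exactly where the ``no two clones'' hypothesis is used. The only cosmetic difference is that the paper introduces explicit unary marks $V_1,\dots,V_k$ for the domain predicate where you use the translated formulas $\nu^{(i)}$ directly.
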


\begin{proof}
	Let $\nu(x)$ and $\eta(x,y)$ be the formulas defining the interpretation $\mathsf I$. We identify the vertex set of $K=\mathsf C_k(H)$ with $V(H)\times [k]$.  Let $K^+\in\Gamma_{\mathcal U}(K)$ be such that $G=\mathsf I(K^+)$.
	Let~${\mathcal U}'$ be the signature with, for each  unary relation $P\in{\mathcal U}$, $k$ unary relations $P_1,\dots,P_k$, as well as $k$ additional unary relations $V_1,\dots,V_k$.
	We define $H^+\in\Gamma_{{\mathcal U}'}(H)$ as follows:
	for each vertex $(v,i)\in V(K^+)$ we let 
	$H^+\models P_i(v)\ \iff\ K^+\models P((v,i))$. 
	Additionally, we let
	$H^+\models V_i(v)\ \iff\ K^+\models \nu((v,i))$. Note that, by assumption, no vertex can have marks $V_i$ and $V_j$ for $i\neq j$.
	
	We now prove by induction on the complexity of formulas that, for every formula $\phi(x_1,\dots,x_p)$ and every $i_1,\dots,i_p\in [k]$ there exists a formula $\hat\phi_{i_1,\dots,i_p}(x_1,\dots,x_p)$ such that for every $v_1,\dots,v_p\in V(H)$ we have
	\begin{equation}
		\label{eq:trsp}
		K^+\models \phi((v_1,i_1),\dots,(v_p,i_p))\quad\iff\quad H^+\models\hat\phi_{i_1,\dots,i_p}(v_1,\dots,v_p).
	\end{equation}

	For $\phi(x)=P(x)$ for any fixed unary relation $P\in{\mathcal U}$, we have
	$\hat\phi_i(x):=P_i(x)$. For $\phi(x,y)=E(x,y)$, we have
	$\hat\phi_{i,j}(x,y):=(x=y)$ if $i\neq j$ and $\hat\phi_{i,i}(x,y):=E(x,y)$.
	For $\phi(x,y)=(x=y)$ we have $\hat\phi_{i,j}:=\bot$ if $i\neq j$ and 
	$\hat\phi_{i,i}(x,y):=(x=y)$.
	
	The definitions of $\hat\phi$ easily extend when we consider Boolean combinations of formulas $\psi$ for which $\hat\psi$ is already defined.
	
	If $\phi(x_1,\dots,x_p):=\exists y\ \psi(y,x_1,\dots,x_p)$, then we define
	\[\hat\phi_{i_1,\dots,i_p}(x_1,\dots,x_p):=\bigvee_{j=1}^k \bigl(\exists y\ 
	\hat\psi_{j,i_1,\dots,i_p}(y,x_1,\dots,x_p)\bigr).\]
	
	Let $\mathsf I'$ be the simple interpretation defined by $\nu'(x):=\bigvee_{i=1}^k V_i(x)$ and 
	\[
	\eta'(x,y):=\bigvee_{i_1=1}^k\bigvee_{i_2=1}^k V_{i_1}(x)\wedge V_{i_2}(y)\wedge\hat\eta_{i_1,i_2}(x,y).
	\]

	Then, by construction, $G=\mathsf I'(H^+)$.  Considered as a transduction, 
	the simple interpretation~$\mathsf I'$ is a non-copying transduction (which we  denote $\mathsf T'$ to stress that it is a transduction) with 
	$G\in\mathsf T'(H)$. Hence,
	the  transduction $\mathsf T'$ 	 witnesses $\Cc\sqsubseteq_\FO^\circ\Dd$.
\end{proof}

\begin{lem}
	\label{lem:cancel_copy}
	For every two classes $\Cc$ and $\Dd$ and every positive integer $k$ we have
	\[
	\Cc\sqsubseteq_\FO^\circ\Dd\quad\iff\quad\mathsf C_k(\Cc)\sqsubseteq_\FO^\circ\mathsf C_k(\Dd).
	\]
\end{lem}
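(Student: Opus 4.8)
The forward implication is exactly \Cref{fact:copymove}, so the plan is to establish the converse. Assuming $\mathsf C_k(\Cc)\sqsubseteq_\FO^\circ\mathsf C_k(\Dd)$, I would fix a non-copying transduction $\mathsf T=\mathsf I\circ\Gamma_{\mathcal U}$ witnessing $\mathsf C_k(\Cc)\subseteq\mathsf T(\mathsf C_k(\Dd))$ and aim to build a \emph{copying} transduction of $\Cc$ from $\Dd$ to which \Cref{lem:elim_copy} applies. Concretely, for $G\in\Cc$ I would fix $H\in\Dd$ and a $\mathcal U$-expansion with $\mathsf C_k(G)=\mathsf I(\mathsf C_k(H)^+)$; writing the identification of $V(\mathsf C_k(G))=V(G)\times[k]$ with a subset of $V(\mathsf C_k(H))=V(H)\times[k]$ as an injective map $\phi$, let $h_i(v)\in V(H)$ be the vertex of which $\phi(v,i)$ is a clone. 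The goal is to re-encode a single copy of $G$ so that its vertices use at most one clone of each vertex of $H$, which is precisely the hypothesis of \Cref{lem:elim_copy}.

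The combinatorial heart is to choose, for each $v\in V(G)$, a copy index $\gamma(v)\in[k]$ so that $v\mapsto h_{\gamma(v)}(v)$ is injective. This is a system of distinct representatives for the sets $S_v=\{h_i(v):i\in[k]\}$, and Hall's condition holds for a simple reason: since $\phi$ is injective and each vertex of $H$ has exactly $k$ clones, every fixed $w\in V(H)$ equals $h_i(v)$ for at most $k$ pairs $(v,i)$; hence any $t$ of the sets $S_v$ contribute $k t$ pairs and therefore meet at least $kt/k=t$ distinct vertices of $H$. Fixing such a $\gamma$, the representatives $r_v:=\phi(v,\gamma(v))$ lie on pairwise distinct vertices of $H$.

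It then remains to recover the adjacency of $G$ from the representatives by a \emph{fixed} formula. Here I would use the coloring to record, on each image vertex of $\mathsf C_k(H)$, the copy index $i$ of the corresponding $\phi(v,i)$, and to mark the representatives $\{r_v\}$. The key observation is that the copy structure of $\mathsf C_k(G)$ is now first-order definable: by construction of $\mathsf C_k$, for $j\neq i$ the only neighbour of $(v,i)$ carrying copy index $j$ is its clone $(v,j)$. Thus from $r_u$ and the recorded index $j=\gamma(v)$ one can define $\phi(u,j)$ (as the unique $\eta$-neighbour of $r_u$ of copy index $j$) and test the $\mathsf C_k(G)$-adjacency $\eta(\phi(u,j),\phi(v,j))$, which holds iff $u\sim_G v$. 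This produces a fixed interpretation $\mathsf I''=(\nu'',\eta'')$, with $\nu''$ selecting the marked representatives, such that $G=\mathsf I''(G^{++})$ for a suitable expansion $G^{++}$ of $\mathsf C_k(H)$ whose vertex set uses at most one clone of each vertex of $H$. The transduction $\mathsf R=\mathsf I''\circ\Gamma\circ\mathsf C_k$ then meets the hypotheses of \Cref{lem:elim_copy}, yielding $\Cc\sqsubseteq_\FO^\circ\Dd$.

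I expect the main obstacle to be exactly the selection of clone-disjoint representatives. A naive attempt to extract one of the $k$ copies of $G$ sitting inside $\mathsf C_k(G)$ fails, because a single copy may be forced onto repeated vertices of $H$: for instance, the two triangular faces of $\mathsf C_2(K_3)$ can be realized so that each face lands on only two vertices of $H$ (a clone pair plus one more), so neither face projects injectively. The Hall/SDR argument is what rescues this, and the second delicate point is that, having broken the copies apart across different copy indices, adjacency must still be recovered uniformly — which succeeds only because the copy relation of $\mathsf C_k$ becomes first-order definable once the copy index is colored.
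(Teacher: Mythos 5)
Your proof is correct and follows essentially the same route as the paper's: the forward direction via \Cref{fact:copymove}, then Hall's theorem applied to the clone-projection (the paper sets up the same bipartite graph between $V(G)$ and $V(H)$ and verifies the same $|N^B(A)|\geq \tfrac{1}{k}\cdot k|A|$ bound) to select representatives lying on distinct vertices of $H$, followed by a clone-hopping interpretation and an appeal to \Cref{lem:elim_copy}. The only cosmetic difference is that the paper routes adjacency tests through the fixed index-$1$ copies via a mark $M$, whereas you color each image vertex with its copy index and hop to a common index.
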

\begin{proof}
	The left-to-right implication follows from \Cref{fact:copymove}.
	For the right-to-left implication, assume $\mathsf T$ is a non-copying transduction with $\mathsf C_k(\Cc)\subseteq\mathsf T(\mathsf C_k(\Dd))$.
	Let $G\in\Cc$ and $H\in\Dd$ be such that $\mathsf C_k(G)\in\mathsf T(\mathsf C_k(H))$. We identify the vertex sets of $\mathsf C_k(G)$ (resp.\ $\mathsf C_k(H)$) with $V(G)\times [k]$ (resp.\ $V(H)\times [k]$). Note that there is some injection $f:V(G)\times [k]\rightarrow V(H)\times[k]$ that maps each vertex of $\mathsf C_k(G)$ to the vertex of $\mathsf C_k(H)$ it comes from.
	We define the bipartite graph $B$ with parts $V(H)$ and $V(G)$, where $u\in V(G)$ is adjacent to $v\in V(H)$ if there exist $i,j\in[k]$ with $f(u,i)=(v,j)$. For every subset $A$ of $V(G)$, we have the following bound for the size of the neighborhood $N^B(A)$ of $A$ in the bipartite graph $B$:
	\begin{align*}
	|N^B(A)|&=|\{v\in V(H)\colon \exists u\in A\ \exists i,j\in [k]\text{ with }(v,j)=f(u,i)\}|\\
	&\geq\frac{1}{k}|\{f(u,i)\colon u\in A, i\in [k]\}|=|A|&\text{(as $f$ is injective)}.
	\end{align*}

	That is, in the bipartite graph $B$, $A$ has at least $|A|$ neighbors. According to Hall's marriage theorem~\cite[Theorem 2.1.2]{diestel2012graph}, there exists a matching of $B$ saturating $V(G)$ (that is, a set of disjoint edges of $B$ that covers every vertex in $V(G)$). It follows that there exists an injection $g:V(G)\rightarrow V(H)$ and a mapping $\ell:V(G)\rightarrow[k]$ such that for every $u\in V(G)$ there exists $j\in[k]$ with $f(u,\ell(u))=(g(u),j)$. 
	
	Now, the idea is to mark by a unary relation $M$ the vertices of the form $(u,1)$ in $\mathsf C_k(G)$, and by $L$ the vertices of the form $(u,\ell(u))$. Then, each vertex in $L$ that is not in $M$ is adjacent to a unique vertex in $M$, which is its clone.
	It follows that the adjacency relation~$E$ between the vertices in $M$ can be transported to define (via the formula $\eta$ below) an adjacency relation between the vertices in $L$, so that the graph so defined on $L$ is isomorphic to $G$.
	
	Formally, we consider the transduction $\mathsf S=\mathsf I'\circ\Gamma_{{\mathcal U}'}$ with ${\mathcal U}'=\{M,L\}$, where 
	$\mathsf I'$ is defined by $(L(x),\eta(x,y))$ with 
	\[
	\begin{split}
	\eta(x,y):= \exists x'\ \exists y'\ \Bigl(\bigl((x'=x)\vee (\neg M(x)\wedge E(x',x)\bigr)
	\wedge \bigl((y'=y)\vee (\neg M(x)\wedge E(y',y)\bigr)\\
	\wedge M(x')\wedge M(y')\wedge E(x',y')\Bigr).
	\end{split}
	\]
	
	Considering the coloring operation that
 marks  in $\mathsf C_k(G)$ the vertices of the form $(u,1)$ by $M$ and the vertices of the form $(u,\ell(u))$ by $L$, 
	we get that $\mathsf S\circ\mathsf T\circ\mathsf C_k(H)$ contains a copy of $G$, where no two vertices come from  clones in $\mathsf C_k(H)$. According to \Cref{lem:elim_copy}, we infer that
	$\Cc\sqsubseteq_\FO^\circ\Dd$.
\end{proof}

We say that a class $\Cc$ \emph{\dnnc} if for every
integer $k$ the class $\mathsf{C}_k(\Cc)$ is a non-copying
transduction of $\Cc$; otherwise, we say that $\Cc$ \emph{needs copying}. For example, as the class of all matchings cannot be transduced
from the class of edgeless graphs without copying, the class of edgeless graphs needs
copying. 
\begin{exa} \label{ex:selfcopy}
	The class $\Pp$ of paths \dnnc: Let $k\in\mathbb N$.
	Consider  a set $\mathcal U$ consisting of a single unary relation symbol $M$ and the non-copying transduction $\mathsf T$ defined by the formulas
	$\nu(x):=\top$ and
	$\phi(x,y):={\rm dist}_k(x,y)\vee
	({\rm  dist}_{<k}(x,y)\wedge (M(x)\leftrightarrow M(y)))$, where
	${\rm dist}_k(x,y)$ (resp.\  ${\rm  dist}_{<k}(x,y)$) is a formula asserting that the distance between $x$ and $y$ is exactly $k$ (resp.\ less than $k$). Then, for every integer $n$, the application of the simple interpretation defined by $\nu$ and $\phi$ on a colored path $P_{nk}$ with $nk$ vertices $v_0,\dots,v_{nk-1}$ where 
	we have  $M(v_i)$ if $\lfloor i/k\rfloor$ is odd produces $\mathsf C_k(P_n)$ (See \Cref{fig:sc_path}.
	In particular, $\mathsf C_k(\Pp)\subseteq \mathsf T(\Pp)$. Thus, the class $\Pp$ \dnnc.
\end{exa}

\begin{figure}[h!t]
\centering\includegraphics[width=.5\textwidth]{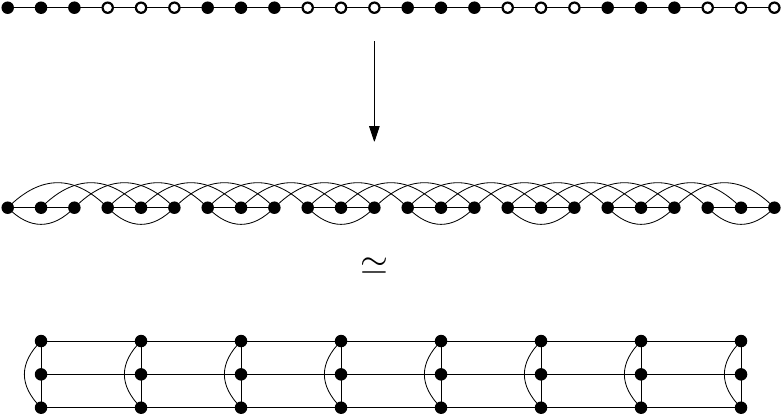}
\caption{The class of paths \dnnc: An illustration of \cref{ex:selfcopy} for $k=3$. The white vertices are those belonging to $M$.}
\label{fig:sc_path}
\end{figure}

\pagebreak
We take time for some observations.

\begin{lem}
	If $\Cc$ \dnnc and $\Cc\equiv_\FO\Dd$, then $\Cc\equiv_\FO^\circ\Dd$ and $\Dd$ \dnnc.
\end{lem}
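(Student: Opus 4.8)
The plan is to reduce everything to the cancellation law for copying (\Cref{lem:cancel_copy}) together with the self-copying hypothesis. The starting observation is that, by the very definition of a transduction as a non-copying transduction precomposed with a copy operation $\mathsf C_k$, one has $\Cc\sqsubseteq_\FO\Dd$ if and only if there is an integer $k$ with $\Cc\sqsubseteq_\FO^\circ\mathsf C_k(\Dd)$. Applying this to the hypothesis $\Cc\equiv_\FO\Dd$ yields integers $k_0,\ell_0$ with $\Cc\sqsubseteq_\FO^\circ\mathsf C_{k_0}(\Dd)$ and $\Dd\sqsubseteq_\FO^\circ\mathsf C_{\ell_0}(\Cc)$. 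I will also use repeatedly that, since $G$ is an induced subgraph of $\mathsf C_k(G)$, the relation $\Cc\sqsubseteq_\FO^\circ\mathsf C_k(\Cc)$ always holds; combined with the self-copying hypothesis $\mathsf C_k(\Cc)\sqsubseteq_\FO^\circ\Cc$, this gives $\Cc\equiv_\FO^\circ\mathsf C_k(\Cc)$ for every $k$.

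First I would establish $\Cc\equiv_\FO^\circ\Dd$. For the direction $\Dd\sqsubseteq_\FO^\circ\Cc$, I simply chain $\Dd\sqsubseteq_\FO^\circ\mathsf C_{\ell_0}(\Cc)\sqsubseteq_\FO^\circ\Cc$ using self-copying and transitivity. The direction $\Cc\sqsubseteq_\FO^\circ\Dd$ is the crux: here I would feed the self-copying of $\Cc$ into the cancellation law. Concretely, from $\Cc\equiv_\FO^\circ\mathsf C_{k_0}(\Cc)$ and $\Cc\sqsubseteq_\FO^\circ\mathsf C_{k_0}(\Dd)$ I obtain $\mathsf C_{k_0}(\Cc)\sqsubseteq_\FO^\circ\mathsf C_{k_0}(\Dd)$, and then \Cref{lem:cancel_copy} cancels the common copy to yield $\Cc\sqsubseteq_\FO^\circ\Dd$. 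Together these two directions give $\Cc\equiv_\FO^\circ\Dd$.

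Finally I would prove that $\Dd$ is self-copying. Fix $k$. From $\Dd\sqsubseteq_\FO^\circ\Cc$ and \Cref{fact:copymove} (the easy monotonicity direction) I get $\mathsf C_k(\Dd)\sqsubseteq_\FO^\circ\mathsf C_k(\Cc)$; the self-copying of $\Cc$ gives $\mathsf C_k(\Cc)\sqsubseteq_\FO^\circ\Cc$; and the already-proved $\Cc\sqsubseteq_\FO^\circ\Dd$ closes the loop. Chaining these three yields $\mathsf C_k(\Dd)\sqsubseteq_\FO^\circ\Dd$, so $\Dd$ is self-copying. The main obstacle is the single step $\Cc\sqsubseteq_\FO^\circ\Dd$: the naive chain only produces $\Cc\sqsubseteq_\FO^\circ\mathsf C_{k_0}(\Dd)$, and the key idea that makes the argument work is to pre-inflate the left-hand side by $\mathsf C_{k_0}$ using self-copying, so that the cancellation law of \Cref{lem:cancel_copy} can strip the copy operation from both sides at once.
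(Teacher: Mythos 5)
Your proof is correct and follows essentially the same route as the paper's: both arguments obtain $\Dd\sqsubseteq_\FO^\circ\Cc$ by absorbing the copy operation into the self-copying of $\Cc$, and both get the crucial $\Cc\sqsubseteq_\FO^\circ\Dd$ by deriving $\mathsf C_{k_0}(\Cc)\sqsubseteq_\FO^\circ\mathsf C_{k_0}(\Dd)$ from self-copying and then cancelling via \Cref{lem:cancel_copy}. The only cosmetic difference is that for the self-copying of $\Dd$ you invoke \Cref{fact:copymove} directly where the paper cites \Cref{lem:cancel_copy}, but that is the same easy direction of the same statement.
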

\begin{proof}
	There is a non-copying transduction $\mathsf T$ and a copying transduction $\mathsf C_p$ with $\Dd\subseteq\mathsf T\circ\mathsf C_p(\Cc)$. 
	As $\Cc$ \dnnc, there is a non-copying transduction $\mathsf T'$ with $\mathsf C_p(\Cc)\subseteq\mathsf T'(\Cc)$. Hence, $\Dd\subseteq \mathsf T\circ\mathsf T'(\Cc)$ and thus $\Dd\sqsubseteq_\FO^\circ\Cc$. 
	
	On the other hand, there exists   a non-copying transduction $\mathsf X$ and a copying transduction~$\mathsf C_q$ with $\Cc\subseteq\mathsf X\circ\mathsf C_q(\Dd)$. It follows that for every integer $k$ we have
	$\mathsf C_k(\Cc)\sqsubseteq_\FO^\circ(\mathsf C_q(\Dd))$.
	In particular, $\mathsf C_q(\Cc)\sqsubseteq_\FO^\circ(\mathsf C_q(\Dd))$. Hence, according to \Cref{lem:cancel_copy},  $\Cc\sqsubseteq_\FO^\circ\Dd$.
	
	Consequently, $\Cc\equiv_\FO^\circ\Dd$ and, according to \Cref{lem:cancel_copy},
	for every integer $k$ we have
	$\mathsf C_k(\Cc)\equiv_\FO^\circ \mathsf C_k(\Dd)$.
	Thus, for every integer $k$ we have
	$\mathsf C_k(\Dd)\equiv_\FO^\circ \mathsf C_k(\Cc)
	\sqsubseteq_\FO^\circ \Cc\equiv_\FO^\circ\Dd$. Thus, $\Dd$ \dnnc.
\end{proof}

\begin{fact}
	A class $\Cc$ \dnnc if and only if $\mathsf C_2(\Cc)\equiv_\FO^\circ \Cc$.
\end{fact}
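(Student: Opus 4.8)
The plan is to observe first that one direction is almost immediate, so that all the content lies in bootstrapping from $k=2$ to arbitrary $k$. Recall that $\mathsf C_1$ is the identity mapping and that $\mathsf C_k(\Cc)\sqsubseteq_\FO^\circ\mathsf C_\ell(\Cc)$ whenever $k\le\ell$ (witnessed by the hereditary transduction $\mathsf H$, since $\mathsf C_k(G)$ is an induced subgraph of $\mathsf C_\ell(G)$). Taking $k=1$, $\ell=2$ gives $\Cc\sqsubseteq_\FO^\circ\mathsf C_2(\Cc)$ for \emph{every} class $\Cc$. Consequently $\mathsf C_2(\Cc)\equiv_\FO^\circ\Cc$ is equivalent to the single inequality $\mathsf C_2(\Cc)\sqsubseteq_\FO^\circ\Cc$, and the whole fact reduces to proving that this inequality is equivalent to $\Cc$ being self-copying.

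For the forward implication, if $\Cc$ \dnnc then by definition $\mathsf C_k(\Cc)\sqsubseteq_\FO^\circ\Cc$ for every $k$; specializing to $k=2$ gives $\mathsf C_2(\Cc)\sqsubseteq_\FO^\circ\Cc$, whence $\mathsf C_2(\Cc)\equiv_\FO^\circ\Cc$ by the previous paragraph.

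The substance is the backward implication. Assuming $\mathsf C_2(\Cc)\sqsubseteq_\FO^\circ\Cc$, I would first prove by induction on $n$ that $\mathsf C_{2^n}(\Cc)\sqsubseteq_\FO^\circ\Cc$. The base case $n=1$ is the hypothesis. For the inductive step, \Cref{fact:red_copy} gives $\mathsf C_{2^{n+1}}(\Cc)\equiv_\FO^\circ\mathsf C_2(\mathsf C_{2^n}(\Cc))$; applying $\mathsf C_2$ to the inductive hypothesis $\mathsf C_{2^n}(\Cc)\sqsubseteq_\FO^\circ\Cc$ via \Cref{fact:copymove} yields $\mathsf C_2(\mathsf C_{2^n}(\Cc))\sqsubseteq_\FO^\circ\mathsf C_2(\Cc)$, and composing with the hypothesis $\mathsf C_2(\Cc)\sqsubseteq_\FO^\circ\Cc$ together with transitivity of $\sqsubseteq_\FO^\circ$ gives $\mathsf C_{2^{n+1}}(\Cc)\sqsubseteq_\FO^\circ\Cc$. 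Finally, for an arbitrary integer $k$, choose $n$ with $k\le 2^n$; then $\mathsf C_k(\Cc)\sqsubseteq_\FO^\circ\mathsf C_{2^n}(\Cc)\sqsubseteq_\FO^\circ\Cc$, so $\mathsf C_k(\Cc)\sqsubseteq_\FO^\circ\Cc$ for every $k$, i.e.\ $\Cc$ \dnnc.

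I expect no genuine obstacle, since all the ingredients are already recorded: monotonicity in the number of copies, moving a copy operation past a non-copying transduction (\Cref{fact:copymove}), and collapsing an iterated copy $\mathsf C_2\circ\mathsf C_{2^n}$ into a single copy $\mathsf C_{2^{n+1}}$ (\Cref{fact:red_copy}). The only real idea is the dyadic induction, and the one point requiring care is to invoke \Cref{fact:copymove} in exactly its stated consequence form $\Cc\sqsubseteq_\FO^\circ\Dd\Rightarrow\mathsf C_k(\Cc)\sqsubseteq_\FO^\circ\mathsf C_k(\Dd)$, applied with $k=2$, rather than trying to reprove monotonicity by hand.
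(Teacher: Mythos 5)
Your proof is correct and follows essentially the same route as the paper's: both reduce the statement to the backward implication and bootstrap from $k=2$ to all $k$ by a dyadic induction establishing $\mathsf C_{2^n}(\Cc)\sqsubseteq_\FO^\circ\Cc$, then appeal to monotonicity in the number of copies. The only cosmetic difference is in the inductive step, where the paper invokes an ``easily checked'' subsumption $\mathsf T_k\circ\mathsf C_k\circ\mathsf C_2\geq\mathsf C_{2k}$ for a suitable non-copying $\mathsf T_k$, while you instead combine \Cref{fact:red_copy} with \Cref{fact:copymove}, which is arguably the cleaner bookkeeping.
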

\begin{proof}
	By definition, if $\Cc$ \dnnc, then $\mathsf C_2(\Cc)\equiv_\FO^\circ \Cc$.
	
	Now assume $\mathsf C_2(\Cc)\equiv_\FO^\circ \Cc$.
	It is easily checked that for every positive integer $k$ there is a non-copying transduction~$\mathsf T_k$ such that $\mathsf T_k\circ\mathsf C_k\circ\mathsf C_2$ subsumes $\mathsf C_{2k}$. 	
	Thus, for every integer~$k$, if $\mathsf C_k(\Cc)\sqsubseteq_\FO^\circ \Cc$, we have
\begin{align*}
	\mathsf C_{2k}(\Cc)&\subseteq \mathsf T_k\circ\mathsf C_k\circ\mathsf C_2(\Cc)\\
	&\sqsubseteq_\FO^\circ \mathsf C_k\circ\mathsf C_2(\Cc)&\text{(as $\mathsf T_k$ is non-copying)}\\
	&\sqsubseteq_\FO^\circ \mathsf C_k(\Cc)&\text{(as $\mathsf C_2(\Cc)\equiv_\FO^\circ\Cc$)}\\
	&\sqsubseteq_\FO^\circ\Cc&\text{(as $\mathsf C_k(\Cc)\sqsubseteq_\FO^\circ \Cc$)}.
\end{align*}	

By induction over $k$ (starting from the base case $k=2$) we deduce that, for every integer~$k$ we have $\mathsf C_{2^k}(\Cc)\sqsubseteq_\FO \Cc$. According to \Cref{fact:red_copy}, we get
\[\Cc\sqsubseteq_\FO^\circ \mathsf C_k(\Cc)\sqsubseteq_\FO^\circ 
C_{2^k}(\Cc)\sqsubseteq_\FO^\circ \Cc.\]

Thus, $\mathsf C_k(\Cc)\equiv_\FO^\circ \Cc$. In other words, 
$\Cc$ \dnnc.
\end{proof}
\begin{fact}\label{fact:non-copy}
	A class $\Dd$ \dnnc if and only if, for every class $\Cc$,
	$\Cc\sqsubseteq_\FO\Dd$ is equivalent to $\Cc\sqsubseteq_\FO^\circ\Dd$.
\end{fact}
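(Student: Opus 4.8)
The plan is to reduce the claimed equivalence to a single implication. Since $\mathsf C_1$ is the identity, every non-copying transduction is in particular a transduction, so $\Cc\sqsubseteq_\FO^\circ\Dd$ always entails $\Cc\sqsubseteq_\FO\Dd$; hence for each fixed $\Cc$ the equivalence $\Cc\sqsubseteq_\FO\Dd\iff\Cc\sqsubseteq_\FO^\circ\Dd$ holds exactly when $\Cc\sqsubseteq_\FO\Dd$ implies $\Cc\sqsubseteq_\FO^\circ\Dd$. So I only need to characterize when this implication holds for all $\Cc$.

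For the forward direction I would assume $\Dd$ \dnnc and take any $\Cc$ with $\Cc\sqsubseteq_\FO\Dd$, witnessed by a transduction $\mathsf T=\mathsf I\circ\Gamma_{\mathcal U}\circ\mathsf C_k$. The key move is to split off the copying operation at the front: writing $\mathsf T_0=\mathsf I\circ\Gamma_{\mathcal U}$ for its non-copying part gives $\Cc\subseteq\mathsf T_0(\mathsf C_k(\Dd))$, that is $\Cc\sqsubseteq_\FO^\circ\mathsf C_k(\Dd)$. The hypothesis that $\Dd$ \dnnc supplies $\mathsf C_k(\Dd)\sqsubseteq_\FO^\circ\Dd$, and since the composition of two non-copying transductions is non-copying, these chain to $\Cc\sqsubseteq_\FO^\circ\Dd$.

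For the backward direction I would instantiate the assumed implication at the single class $\Cc=\mathsf C_k(\Dd)$, for an arbitrary integer $k$. By \Cref{fact:copy_equiv} we have $\mathsf C_k(\Dd)\equiv_\FO\Dd$, so in particular $\mathsf C_k(\Dd)\sqsubseteq_\FO\Dd$; the implication then yields $\mathsf C_k(\Dd)\sqsubseteq_\FO^\circ\Dd$. As $k$ was arbitrary, this is precisely the statement that $\Dd$ \dnnc.

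I do not expect a genuine obstacle here: the argument is purely formal, built from the definition of a transduction as $\mathsf I\circ\Gamma_{\mathcal U}\circ\mathsf C_k$, the closure of (non-)copying transductions under composition, and \Cref{fact:copy_equiv}. The only point needing a little care is the bookkeeping that lets the copying operation be pulled to the front so that the self-copying hypothesis can absorb it, but this is immediate from the normal-form shape of transductions.
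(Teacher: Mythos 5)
Your proposal is correct and matches the paper's proof essentially step for step: the same reduction to the single implication $\Cc\sqsubseteq_\FO\Dd\Rightarrow\Cc\sqsubseteq_\FO^\circ\Dd$, the same splitting of $\mathsf T$ into $\mathsf C_k$ followed by its non-copying part so that self-copying can absorb the copying, and the same instantiation at $\Cc=\mathsf C_k(\Dd)$ for the converse. No issues.
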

\begin{proof}
	Obviously, $\Cc\sqsubseteq_\FO^\circ\Dd$ implies $\Cc\sqsubseteq_\FO\Dd$.
	Consequently,  	$\Cc\sqsubseteq_\FO\Dd$ is equivalent to $\Cc\sqsubseteq_\FO^\circ\Dd$ if and only if $\Cc\sqsubseteq_\FO\Dd$ implies $\Cc\sqsubseteq_\FO^\circ\Dd$.

	Assume that $\Dd$ \dnnc.
	Assume that $\Cc$ is a class with $\Cc\sqsubseteq_\FO\Dd$. Then, there exists a transduction $\mathsf T=\mathsf I\circ\Gamma_{\mathcal U}\circ\mathsf C_k$ such that 
	$\Cc\subseteq \mathsf T(\Dd)$.
	Let $\mathsf T_1=\mathsf I\circ\Gamma_{\mathcal U}$.
	As $\Dd$ \dnnc there exists a non-copying transduction $\mathsf T'$ with $\mathsf C_k(\Dd)\subseteq \mathsf T'(\Dd)$.
	Hence, the non-copying transduction  $\mathsf T_1\circ\mathsf T'$ satisfies $\Cc\subseteq \mathsf T_1\circ\mathsf T'(\Dd)$, witnessing $\Cc\sqsubseteq_\FO^\circ\Dd$.
	Thus, if $\Dd$ \dnnc, then
	$\Cc\sqsubseteq_\FO^\circ\Dd$ implies $\Cc\sqsubseteq_\FO^\Dd$.
	
	Conversely, assume that for every class $\Cc$,  $\Cc\sqsubseteq_\FO\Dd$ implies 
	$\Cc\sqsubseteq_\FO^\circ\Dd$. Then, letting $\Cc=\mathsf C_k(\Dd)$, we get that $\mathsf C_k(\Dd)$ is a non-copying transduction of $\Dd$. Hence, $\Dd$ \dnnc.
\end{proof}

\begin{fact}\label{fact:non-copy-classes}
	If a class $\Cc$ is closed under adding pendant vertices (that is, if $G\in \Cc$
	and $v\in V(G)$, then $G'$, which is obtained from~$G$ by adding a new
	vertex adjacent only to $v$, is also in $\Cc$), then $\Cc$ \dnnc.
\end{fact}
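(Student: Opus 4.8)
The plan is to invoke the characterization established just above (that $\Cc$ \dnnc if and only if $\mathsf C_2(\Cc)\equiv_\FO^\circ\Cc$), so that it suffices to prove $\mathsf C_2(\Cc)\equiv_\FO^\circ\Cc$. The inequality $\Cc\sqsubseteq_\FO^\circ\mathsf C_2(\Cc)$ is free, since each $G$ is an induced subgraph of $\mathsf C_2(G)$ and the hereditary transduction $\mathsf H$ is non-copying. The whole content is therefore the reverse inequality $\mathsf C_2(\Cc)\sqsubseteq_\FO^\circ\Cc$, and the idea is to use the closure under pendant vertices to manufacture, inside a single graph of $\Cc$, the extra copy that $\mathsf C_2$ would otherwise require copying to produce.

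Concretely, given $G\in\Cc$, I would form $G'$ by attaching exactly one new pendant vertex $p_v$ to each original vertex $v$ of $G$; applying the closure hypothesis one vertex at a time (always attaching to an original vertex) shows $G'\in\Cc$. I then identify the vertex $(v,1)$ of $\mathsf C_2(G)$ with $v$ and the vertex $(v,2)$ with $p_v$, and I color $G'$ with two unary predicates $C_1,C_2$ marking the original (``center'') vertices and the pendant vertices, respectively. The decisive structural feature is that in $G'$ each pendant $p_v$ has $v$ as its \emph{unique} neighbor. Hence two pendants share a common neighbor exactly when they hang off the same center, and the unique neighbor of a pendant is exactly its center; this is what lets a first-order formula separate the two copies without any copying step.

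Using this, I would take the non-copying transduction $\mathsf T=\mathsf I\circ\Gamma_{\{C_1,C_2\}}$ with $\mathsf I=(\nu,\eta)$, $\nu:=\top$, and
\[
\eta(x,y):=\big(C_1(x)\wedge C_1(y)\wedge E(x,y)\big)\vee\Big(\big((C_1(x)\wedge C_2(y))\vee(C_2(x)\wedge C_1(y))\big)\wedge E(x,y)\Big)\vee\big(C_2(x)\wedge C_2(y)\wedge\exists a\,\exists b\,(E(a,x)\wedge E(b,y)\wedge E(a,b))\big).
\]
The first disjunct recovers the copy-$1$ edges (two centers adjacent in $G'$, i.e.\ an edge of $G$); the second recovers the clone edges $(v,1)\sim(v,2)$ (a center adjacent to its own pendant in $G'$); and the third recovers the copy-$2$ edges, because for pendants $x=p_u$, $y=p_v$ the witnesses $a,b$ are forced to be the unique centers $u,v$ and $E(a,b)$ then encodes $uv\in E(G)$. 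Applying $\mathsf I$ to the coloring of $G'$ described above yields exactly $\mathsf C_2(G)$, so $\mathsf C_2(G)\in\mathsf T(G')\subseteq\mathsf T(\Cc)$, giving $\mathsf C_2(\Cc)\sqsubseteq_\FO^\circ\Cc$.

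The step requiring the most care, and the main obstacle, is verifying that $\eta$ creates \emph{no spurious edges}: a center $u$ and a foreign pendant $p_v$ with $u\neq v$ are non-adjacent in $G'$, so the clone disjunct does not fire on them; pendants of distinct centers have no common neighbor, so the clone and copy-$2$ cases cannot be confused; and the existential disjunct fires only when the two (uniquely determined) centers are themselves adjacent, i.e.\ precisely on $E(G)$. All three checks rest on the degree-one property of pendants. Since the argument is completely uniform, it extends verbatim to $\mathsf C_k$ for every $k$ (attach $k-1$ pendants per vertex, use $k$ colors $C_1,\dots,C_k$, detect clones via a common neighbor and copy-$i$ edges via adjacency of the unique centers), so the reduction to $k=2$ is a convenience rather than a necessity, but it keeps the formulas shortest.
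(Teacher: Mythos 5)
Your proof is correct and follows essentially the same approach as the paper: both realize the clones of each vertex as pendant vertices attached to it (legal by the closure hypothesis) and then recover $\mathsf C_k(G)$ by a non-copying interpretation exploiting that each pendant has a unique neighbour. The only cosmetic difference is that the paper attaches $k$ pendants per vertex and interprets $\mathsf C_k(G)$ on the pendants alone via distance-exactly-$2$ (clone edges) and distance-exactly-$3$-with-equal-mark (copy edges), whereas you keep the original vertices as copy $1$; your reduction to $k=2$ via the preceding characterization is also fine, though, as you note, unnecessary.
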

\begin{proof}[Proof]
	Let $k\in\mathbb N$. Consider a set $\mathcal U=\{M_1,\dots, M_k\}$ consisting of $k$ unary relations and the non-copying transduction $\mathsf T$ defined by $\nu(x):=\bigvee_{i=1}^k M_i(x)$ and 
	\[\phi(x,y):= {\rm dist}_{2}(x,y)\vee \bigvee_{i=1}^k (M_i(x)\wedge M_i(y)\wedge {\rm dist}_3(x,y)),\]
	where ${\rm dist}_2(x,y)$ (resp.\ ${\rm dist}_3(x,y)$) is a formula asserting that the distance between~$x$ and~$y$ is exactly $2$ (resp.\ exactly $3$).
	For every $G\in\Cc$, let $G^+$ be the $\mathcal U$-colored graph obtained from~$G$ by adding  $k$ pendant vertices respectively marked $M_1,\dots,M_k$  to each vertex $v$. Then, applying the simple interpretation to $G^+$ we get $\mathsf C_k(G)$. In particular, $\mathsf C_k(\Cc)\subseteq \mathsf T(\Cc)$. Thus, $\Cc$ \dnnc.
\end{proof}
\section{Nearly $\Cc$-classes}

The results of this section will in some sense let us eliminate perturbations when considering weakly sparse classes. The main definition is inspired by structural graph theory.
Given two classes $\Cc$ and $\Dd$, we say that $\Cc$ is \emph{nearly} $\Dd$ if there exists a constant $h$ such that each graph $G \in \Cc$ can be obtained from a graph $H \in \Dd$ by adding at most $h$ vertices and connecting them in any way to each other and to $H$.

\begin{lem}
	\label{lem:perws}
	Let $\Cc$ and $\Dd$ be graph classes. If $\Cc$ is a weakly sparse perturbation of a weakly sparse class $\Dd$ and $\Dd$ is hereditary, then $\Cc$ is nearly $\Dd$.
\end{lem}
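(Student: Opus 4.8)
The plan is to reduce everything to the combinatorial description of perturbations given by \Cref{lem:perturb}, and then to use Ramsey-type arguments to isolate a bounded set of ``bad'' vertices. Fix an integer $t$ such that no graph in $\Cc$ or in $\Dd$ contains $K_{t,t}$ as a subgraph, and let $\mathsf P$ be a perturbation, say a composition of $k$ subset complementations, with $\Cc\subseteq\mathsf P(\Dd)$. Given $G\in\Cc$, pick $H\in\Dd$ with $G\in\mathsf P(H)$, and note that $V(G)=V(H)=:V$. By \Cref{lem:perturb} there is a partition $(V_{\mathbf x})_{\mathbf x\in\mathbb F_2^k}$ of $V$ such that $G$ is obtained from $H$ by flipping exactly those edges $uv$ with $u\in V_{\mathbf x}$, $v\in V_{\mathbf y}$ and $\langle\mathbf x,\mathbf y\rangle=1$. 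Set a threshold $N:=\max\{R(2t,2t),\,N_{\mathrm{bip}}(t)\}$, where $R(2t,2t)$ is the diagonal Ramsey number and $N_{\mathrm{bip}}(t)$ is a bipartite Ramsey number (so that every $2$-colouring of the edges of $K_{N_{\mathrm{bip}}(t),N_{\mathrm{bip}}(t)}$ contains a monochromatic $K_{t,t}$). Call a part $V_{\mathbf x}$ \emph{big} if $|V_{\mathbf x}|\ge N$ and \emph{small} otherwise, and let $T$ be the set of indices $\mathbf x$ with $V_{\mathbf x}$ big.

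The heart of the argument is to show that distinct big parts never interact under $\mathsf P$, i.e.\ that $\langle\mathbf x,\mathbf y\rangle=0$ for all $\mathbf x,\mathbf y\in T$. First, a part of odd weight ($\langle\mathbf x,\mathbf x\rangle=1$) must be small: on such a part $\mathsf P$ flips all internal edges, so $G[V_{\mathbf x}]=\overline{H[V_{\mathbf x}]}$; if $|V_{\mathbf x}|\ge R(2t,2t)$, then $G[V_{\mathbf x}]$ contains either a $K_{2t}$ (hence a $K_{t,t}$ in $G$) or an independent set of size $2t$ (a $K_{2t}$, hence a $K_{t,t}$, in $H[V_{\mathbf x}]$), contradicting weak sparseness of $\Cc$ or of $\Dd$. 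In particular every big part has even weight, so $\langle\mathbf x,\mathbf x\rangle=0$ for $\mathbf x\in T$. Second, if $\mathbf x\ne\mathbf y$ with $\langle\mathbf x,\mathbf y\rangle=1$, then $\mathsf P$ flips all edges between $V_{\mathbf x}$ and $V_{\mathbf y}$, so the bipartite graphs $G[V_{\mathbf x},V_{\mathbf y}]$ and $H[V_{\mathbf x},V_{\mathbf y}]$ are bipartite complements; if both parts had size at least $N_{\mathrm{bip}}(t)$, the induced $2$-colouring of $K_{N_{\mathrm{bip}}(t),N_{\mathrm{bip}}(t)}$ would yield a monochromatic $K_{t,t}$, that is, a $K_{t,t}$ in $G$ or in $H$, again a contradiction. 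Hence at most one of $V_{\mathbf x},V_{\mathbf y}$ is big, so $\mathbf x,\mathbf y$ are not both in $T$. Combining the two points gives $\langle\mathbf x,\mathbf y\rangle=0$ for all $\mathbf x,\mathbf y\in T$.

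To finish, let $W:=\bigcup_{\mathbf x\in T}V_{\mathbf x}$ and $S:=V\setminus W$, the union of the small parts. Since there are at most $2^k$ parts and each small part has fewer than $N$ vertices, $|S|<2^kN=:h$, a constant depending only on $\mathsf P$ and $t$. By the previous paragraph, no edge with both endpoints in $W$ is flipped by $\mathsf P$, whence $G[W]=H[W]$. As $\Dd$ is hereditary, $H':=H[W]=H-S$ lies in $\Dd$, and $G[W]=H'$. Thus $G$ is obtained from $H'\in\Dd$ by adding back the at most $h$ vertices of $S$ (with whatever adjacencies $G$ has inside $S$ and between $S$ and $W$), which is precisely the statement that $\Cc$ is nearly $\Dd$, with constant $h$.

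I expect the main obstacle to be correctly extracting the orthogonality of the big-part types: the key realisation is that \emph{both} classes being weakly sparse forbids a large ``flipped'' interaction, whether internal or bipartite, since such an interaction would force either $G$ or its pre-image $H$ to contain $K_{t,t}$. Once this Ramsey dichotomy is in place, the bookkeeping of part sizes and the use of heredity of $\Dd$ are routine.
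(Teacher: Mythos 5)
Your proof is correct and follows essentially the same route as the paper's: apply \Cref{lem:perturb} to get the partition into types, use Ramsey (diagonal for a type against itself, bipartite for two distinct types) together with weak sparseness of both $\Cc$ and $\Dd$ to show that any two parts interacting under the perturbation cannot both be large, delete the union $S$ of small parts, and conclude $G-S=H-S\in\Dd$ by heredity. The only cosmetic difference is that the paper packages the two Ramsey cases into a single threshold $R(t)$, whereas you keep $R(2t,2t)$ and $N_{\mathrm{bip}}(t)$ separate.
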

\begin{proof}
	As $\Cc$ and $\Dd$ are weakly sparse there is an integer $t$ such that $K_{t,t}$ is a subgraph of no graph in $\Cc$ or $\Dd$. According to Ramsey's theorem \cite[Theorem 9.1.]{diestel2012graph} there is an integer~$R(t)$ such that for any coloring of the edges of $K_{R(t),R(t)}$ or $K_{R(t)}$ in two colors, there is a monochromatic~$K_{t,t}$ subgraph.
	
	Let $\mathsf P=\oplus Z_1\oplus\dots\oplus Z_k$ be the perturbation that yields $\Cc$ when applied to $\Dd$.
	According to \Cref{lem:perturb}, for each graph $H\in\Cc$ there is a graph $G\in\Dd$ and a partition $(V_{\mathbf x})_{\mathbf x\in\mathbb{F}_2^k}$ such that $H$ is obtained from $G$ by flipping the edges $uv\in E(G)$ with 
	$u\in V_{\mathbf x}, v\in V_{\mathbf y}$, and $\langle\mathbf{x},\mathbf{y}\rangle=1$.
	
	Assume for contradiction that there are $\mathbf x$ and $\mathbf y$ such that $V_{\mathbf x}$ and $V_{\mathbf y}$ contain both more than $R(t)$ vertices and $\langle\mathbf{x},\mathbf{y}\rangle=1$.
	Consider a graph $B$ with vertex set $V_{\mathbf x}\cup V_{\mathbf y}$, whose edges are the pairs $uv$ with $u\neq v$, $u\in V_{\mathbf x}$ and $v\in V_{\mathbf y}$. Color $uv\in E(B)$ with color $1$ if $uv\in E(G)$ and with color $2$ otherwise, that is, if $uv\in E(H)$.
	By the definition of $R(t)$, either $G$ or $H$ contains a subgraph isomorphic to $K_{t,t}$, contradicting our choice of $t$.
	
	Let $S$ be the union of all the sets $V_{\mathbf x}$ with at most $R(t)$ vertices. Then, no edge $uv$ of $G$ with $u,v\in V(G)\setminus S$ is flipped by $\mathsf P$. Thus, $G-S=H-S$. 
	As $\Dd$ is hereditary, $G-S\in\Dd$. 
	It follows that every graph in $\Cc$ can be obtained by adding at most $2^kR(t)$ vertices to a graph in $\Dd$. Hence, $\Cc$ is nearly $\Dd$.
\end{proof}

Thus, as a corollary of \Cref{thm:rel-normal} we have
\begin{cor}
	\label{cor:rel-normal}
	Let $\Cc$ and $\Dd$ be graph classes. Assume $\Cc\sqsubseteq_\FO^\circ\Dd$ and  $\Cc$ is weakly sparse.
	Then there exists a weakly sparse hereditary  class~$\Ss$ such that 
	$\Cc$ is nearly $\Ss$ and $\Ss$ is an immersive transduction of~$\Dd$.
\end{cor}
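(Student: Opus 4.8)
The plan is to derive the corollary by combining the relative normal form of \Cref{thm:rel-normal} with the near-containment lemma \Cref{lem:perws}; almost all the work is done by those two results, so the proof is essentially bookkeeping. First I would fix a non-copying transduction $\mathsf T$ witnessing $\Cc\sqsubseteq_\FO^\circ\Dd$, so that $\Cc\subseteq\mathsf T(\Dd)$. The key observation is that, since $\Cc$ is weakly sparse, we are exactly in the situation of \Cref{thm:rel-normal}, but with the roles of source and target swapped relative to how that theorem is phrased: there the weakly sparse class is the \emph{target} of the transduction, whereas here the weakly sparse class $\Cc$ is the transduced class obtained from the source $\Dd$. Applying \Cref{thm:rel-normal} with its $\Dd$ instantiated to $\Cc$ and its $\Cc$ instantiated to $\Dd$ then yields an immersive transduction $\mathsf T_{\rm imm}$, a perturbation $\mathsf P$, and a weakly sparse hereditary class $\Ss$ such that $\mathsf P\circ\mathsf T_{\rm imm}$ subsumes $\mathsf T$ on the chain $(\Dd,\Ss,\Cc)$.

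From the chain conditions in the definition of subsumption, I would immediately read off $\Ss\subseteq\mathsf T_{\rm imm}(\Dd)$; since $\mathsf T_{\rm imm}$ is immersive, this is precisely the assertion that $\Ss$ is an immersive transduction of $\Dd$, which is one of the two conclusions.

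For the second conclusion I would translate the relative subsumption into a plain perturbation inclusion. Let $H\in\Cc$ be arbitrary. Because $\Cc\subseteq\mathsf T(\Dd)$, there is some $G_0\in\Dd$ with $H\in\mathsf T(G_0)\cap\Cc$; the subsumption on the chain $(\Dd,\Ss,\Cc)$ then provides an intermediate $G_1\in\Ss$ with $H\in\mathsf P(G_1)$. As $H$ was arbitrary, this gives $\Cc\subseteq\mathsf P(\Ss)$, i.e.\ $\Cc$ is a perturbation of $\Ss$. Now $\Cc$ is weakly sparse by hypothesis, while $\Ss$ is weakly sparse and hereditary by \Cref{thm:rel-normal}, so the hypotheses of \Cref{lem:perws} are all met, and we conclude that $\Cc$ is nearly $\Ss$, completing the proof.

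I expect the only genuine (and minor) obstacle to be the correct swap of the roles of $\Cc$ and $\Dd$ when invoking \Cref{thm:rel-normal}, together with the step of extracting the global inclusion $\Cc\subseteq\mathsf P(\Ss)$ from the chain-relative subsumption. This last step is where one must be careful: the definition of subsumption on a chain only quantifies over graphs $G_k\in\mathsf T(G_0)\cap\Cc$, so one needs the inclusion $\Cc\subseteq\mathsf T(\Dd)$ to guarantee that \emph{every} graph of $\Cc$ actually arises as a $\mathsf T$-image of some graph in $\Dd$, and hence is covered by the subsumption.
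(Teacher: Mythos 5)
Your proof is correct and follows essentially the same route as the paper: apply \Cref{thm:rel-normal} with the roles of source and target swapped to obtain the chain $(\Dd,\Ss,\Cc)$, read off that $\Ss$ is an immersive transduction of $\Dd$ and that $\Cc$ is a perturbation of $\Ss$, and conclude via \Cref{lem:perws}. Your explicit extraction of the inclusion $\Cc\subseteq\mathsf P(\Ss)$ from the chain-relative subsumption is a welcome bit of extra care that the paper leaves implicit.
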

\begin{proof}
	According to \Cref{thm:rel-normal}, 
	 there exist an immersive transduction $\mathsf T_{\rm imm}$, a perturbation~$\mathsf P$, and a weakly sparse hereditary  class $\Ss$ such that 
	$\mathsf P\circ\mathsf T_{\rm imm}$ subsumes $\mathsf T$ on the chain $(\Dd,\Ss,\Cc)$.
	In particular, $\Ss$ is an immersive transduction of $\Dd$ and, as	$\Cc$ is a weakly sparse perturbation of the weakly sparse and hereditary class $\Ss$, it follows from \Cref{lem:perws} that  $\Cc$ is nearly $\Dd$.
\end{proof}

\part{Examples in context}
\label{part:examples}
We now provide some examples of transductions and characterize the transductions of certain simple classes. Some of these examples will be used in \Cref{part:applications}. While the initial examples can be read knowing only the definition of a transduction, the characterizations will make use of (the statement of) the local normal form given in \cref{thm:normal}.

\section{Encoding graphs in interval graphs}
\begin{exa}
	\label{exa:interv}
	The class of all graphs is a non-copying transduction of the class of all interval graphs.  \end{exa}
\begin{proof}
	We consider  $\mathcal U$-expansions of interval graphs, where 
	$\mathcal U=\{M\}$ and $M$ is a unary relation symbol.
	Let $\mathsf T=\mathsf I\circ\Gamma_{\mathcal U}$ with $I=(M(x),\eta(x,y))$, where 
\[
\eta(x,y):=\exists z\, \biggl(E(x,z)\wedge E(y,z)\wedge
\bigl(\exists t\,(E(x,t)\wedge \neg E(z,t)\bigr)\wedge
\bigl(\exists t\,(E(y,t)\wedge \neg E(z,t)\bigr)\biggr).
\]

The interpretation $\mathsf I$ defines a graph whose vertex set is the set of all marked vertices, where two marked vertices $u$ and $v$ are adjacent if, in the source graph, they have a common neighbor $w$ with the property that both $u$ and $v$ have a  neighbor non-adjacent to $w$.

Let us prove that the class of all graphs is a $\mathsf T$-transduction of the class of all interval graphs.
Let $G$ be a graph with vertices $v_1,\dots,v_n$.
We define an interval graph $H$ defined as the intersection graph of the following family of intervals:
\begin{itemize}
	\item for each $i\in [n]$, $L_i=[4i-4,4i-3]$, $R_i=[4i-2,4i-1]$, and
	$I_i=[4i-4, 4i+3]$;
	\item for each edge $v_iv_j$ in $G$ with $i<j$, the interval $E_{i,j}=[4i-2,4j-3]$.	
\end{itemize}

The marked vertices are the intervals $I_1,\dots,I_n$. Then, every interval $E_{i,j}$ will define an edge linking $I_i$ and $I_j$ as 
$E_{i,j}$ is adjacent to both $I_i$ and $I_j$, $I_i$ is adjacent to $L_i$ (which is not adjacent to $E_{i,j}$) and $I_j$ is adjacent to 
$R_j$ (which is not adjacent to $E_{i,j}$); see \Cref{fig:interval} for an illustration.
\end{proof}

\begin{figure}[ht]
	\centering
	\includegraphics[width=.75\textwidth]{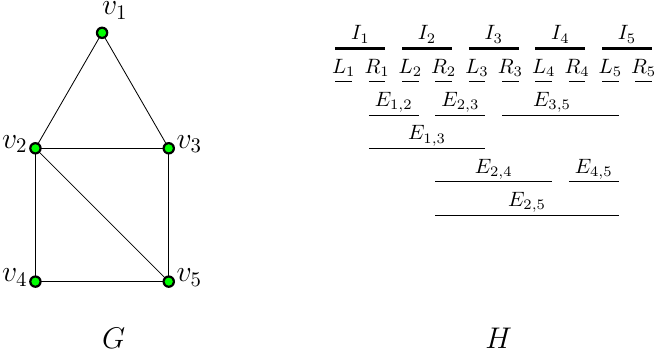}
	\caption{Encoding a graph $G$ in an interval graph $H$. The marked vertices are the intervals $I_1,\dots,I_5$.}
	\label{fig:interval}
\end{figure}
\section{Encoding grids in unit interval graphs}
\begin{exa}
	\label{exa:gr2ui}
	The class of all grids is a non-copying transduction of the class of all unit interval graphs.
\end{exa}
\begin{proof}
	Denote by $H_{n,m}$ the graph with $nm$ vertices that can be partitioned into $n$ cliques
	$V_1 =\{v_{0,0},\dots,v_{0,m-1}\}, \dots, V_n =\{v_{n,1},\dots,v_{n,m}\}$
	so that for each $i=1,\dots,n-1$ and for each $j=1,\dots,m$, vertex $v_{i,j}$ is adjacent to vertices $v_{i+1, 1}, v_{i+1, 2},\dots, v_{i+1, j}$ and there are no other edges in the graph between the classes $V_1,\dots,V_n$. An example of the graph $H_{5,5}$ is given in~\Cref{fig:H55}. 
	
	We consider a set $\mathcal U=\{M_0,M_1,M_2\}$ of $3$ unary relations and the transduction $\mathsf T$ defined by the formula $\nu(x):=\mathsf T$ and the formula $\phi(x,y)$ asserting that 
	\begin{itemize}
		\item either $x$ and $y$ are in some $M_i$ and the neighborhood in $M_{i-1}$ differ by exactly one vertex (where $i-1$ is meant modulo $3$),
		\item or $x$ is in $M_i$, $y$ is in $M_j$ with $i\neq j$, and there exists a vertex $x'$ in $M_i$ such that the symmetric difference between the neighborhoods of $x$ and $x'$ in $M_j$ is exactly $\{y\}$.
	\end{itemize}
	Then marking cyclically $V_1,\dots,V_n$ by $M_0,M_1,M_2$ witnesses that the $n\times m$ grid belongs to~$\mathsf T(H_{n,m})$.
\end{proof}

\begin{figure}[ht]
	\centering
	\includegraphics[width=\textwidth]{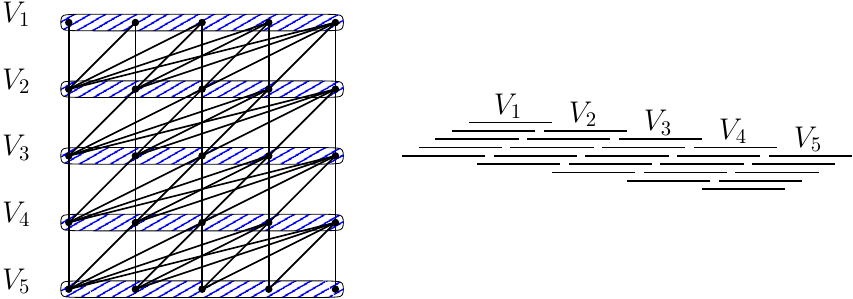}
	\caption{The unit interval graph $H_{5,5}$ and an interval representation of it.}
	\label{fig:H55}
\end{figure}
\section{Encoding classes with bounded pathwidth in the class of planar graphs}\label{sec:encoding-classes-with-bounded-pathwidth-in-the-class-of-planar-graphs}

Our last example is more involved.
Not only will this example show that a first-order transduction can be based on a non-trivial encoding, but it will also be used later to prove that the first-order transduction quasi-order is not a lattice (\Cref{thm:nolattice}).

\begin{lem}
	\label{lem:pl2pw}
	Every class of graphs with bounded \recalldef{pw}{pathwidth} is a non-copying transduction of the class  of planar graphs.
\end{lem}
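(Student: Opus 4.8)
The plan is to encode a graph $G$ of \recalldef{pw}{pathwidth} at most $p$ into a planar host built from a grid of bounded height. Starting from a path decomposition of $G$ of width $p$, I would pass to the standard interval/track representation: assign to each vertex $v$ an interval $[a_v,b_v]$ of consecutive ``columns'' and a track $k(v)\in\{0,\dots,p\}$ so that vertices sharing a track have pairwise disjoint intervals, and so that $uv\in E(G)$ forces the intervals of $u$ and $v$ to overlap (this is just a proper colouring of the interval graph of clique number $p+1$ that contains $G$). Ordering the vertices by left endpoint (their ``introduction'' order), each vertex has at most $p$ earlier neighbours; since overlapping earlier intervals are still active at column $a_v$ and at most one interval per track is active at any column, these earlier neighbours lie in $\le p$ \emph{distinct} tracks and are all active at $a_v$. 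The host graph $H$ will be (a subdivision of) the $(p+1)\times N$ grid $\Gamma$, which is planar and itself of pathwidth $p+1$: row $k$ carries the intervals of track $k$ as runs of consecutive cells, and the vertical grid edges let us move, at any fixed column, between the at most $p+1$ simultaneously active cells at bounded distance.

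Next I would fix the colouring. Mark the leftmost cell $r_v=(k(v),a_v)$ of each run as the \emph{representative} of $v$, so the vertex set of the transduction becomes the (FO-definable) set of representatives. At the introduction column $a_v$, each earlier neighbour of $v$ occupies the cell $(k,a_v)$ in its track $k$, which is at vertical distance $|k-k(v)|\le p$ from $r_v$; since $v$ has at most $p$ earlier neighbours, the subset $S_v\subseteq\{0,\dots,p\}$ of their tracks is one of at most $2^{p+1}$ possibilities, and I record $S_v$ by colouring $r_v$ from a fixed finite palette (and symmetrically for later neighbours). Because at column $a_v$ the colour $S_v$ singles out, track by track, exactly the active cells that correspond to genuine edges of $G$, the edge relation of $G$ is determined locally at each introduction column. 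Invoking the local normal form \Cref{thm:normal}, it then suffices to realise this edge relation as a perturbation composed with a strongly local interpretation of the coloured planar host: the strongly local part reads off, from the bounded neighbourhood of $r_v$ in $\Gamma$, the active cells flagged by $S_v$, and the perturbation handles the boundedly many colour-class interactions.

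The main obstacle, and the real content of the proof, is the \emph{identification} step: a cell $(k,a_v)$ of $u$'s run detected at column $a_v$ lies far from $u$'s representative $r_u=(k(u),a_u)$, and ``travelling along a run to its left end'' is a long-range, order-dependent operation not expressible by a strongly local formula — indeed the linear order along a row of $\Gamma$ is not FO-definable, so neither is ``the nearest introduction cell to the left''. I would resolve this by augmenting $\Gamma$, within each row, by a planar gadget that makes every active cell adjacent, through a bounded-length connection, to a canonical marker of its run; the key point permitting this is that the intervals of a single track are pairwise disjoint and hence nest without interleaving inside the horizontal strip of that row, so the extra incidences can be routed (after a subdivision) without creating crossings and $H$ remains planar. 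Once each detected active neighbour is matched to its representative within a bounded ball, the strongly local interpretation recovers exactly the edges of $G$, and \Cref{thm:normal} packages everything as one non-copying transduction, giving $\Cc\sqsubseteq_\FO^\circ\Pl$. I expect the delicate points to be verifying planarity of this run-marking augmentation against the vertical grid edges, and confirming that the finite palette recording the $S_v$ genuinely separates true edges from spurious overlaps.
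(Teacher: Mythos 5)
Your overall architecture (a bounded-height planar host carrying an interval representation of $G$, one representative per interval, and a strongly local interpretation packaged via \Cref{thm:normal}) matches the spirit of the paper's proof, but the step you yourself flag as delicate is a genuine gap, and it is the entire content of the lemma. The ``run-marking gadget'' would have to connect every cell of an arbitrarily long run in row $k$ to that run's single marker by a path of bounded length, while the host simultaneously carries, at unboundedly many intermediate columns $j$ of that run, vertical connections from $(k,j)$ to the rows above and below --- these are exactly the connections your interpretation needs at the introduction columns of the run's neighbours, and a high-degree vertex of $G$ forces unboundedly many of them inside one run. In any planar embedding, a bounded-length path from $(k,j)$ to the marker $(k,a)$ must pass either above or below row $k$ across the intermediate columns, where it crosses the upward or downward vertical connections anchored there. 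Your justification (``intervals of a single track nest without interleaving'') only rules out conflicts between collapse paths of the \emph{same} row; it says nothing about the conflict between a row's horizontal collapse paths and the inter-track vertical structure, which is where the crossings actually arise. So the host you describe has not been shown to be planar, and I do not believe it can be made planar as described.

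The paper sidesteps this by choosing a different host geometry: each interval becomes a V-shape, the representative (a white vertex) sits at the apex, black vertices are placed at crossings of V-shapes, and because at most $t$ intervals contain any point, the vertices fall into fewer than $2t$ layers. Adjacency in the interval supergraph $K$ is then witnessed by \emph{monotone} paths, whose length is automatically bounded by the number of layers, so the interpretation is strongly local and no long-range ``collapse along a run'' is ever needed; the passage from $K$ down to its spanning subgraph $G$ is then handled by the monotone-closure transduction of \Cref{lem:monotone} (bounded star chromatic number) rather than by your colour sets $S_v$. If you want to salvage your write-up, the thing to replace is precisely the grid-with-gadgets host, not the surrounding logic.
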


\begin{proof}
	Before describing the transduction, we explain how we associate a host planar graph~$H$ to a graph $G$ with bounded pathwidth.
	
	Let $G$ be a graph with $n$ vertices and pathwidth $t$. By definition, $G$ is a spanning subgraph of an interval graph $K$ with $\omega(K)=t$.
	
	Consider an interval representation of $K$ where the endpoints of all the intervals are distinct and have coordinates $1,\dots,2n$.
	To each interval $[i,j]$ we associate a V-shape whose extremities are the endpoints of the interval. In the drawing formed by all the V-shapes, we put a black vertex at each intersection point of two V-shapes and a white vertex at the bottom of each V-shape. 
	Hence, each vertex $u$ of $K$ corresponds to a V-shape $\vee_u$ and a white vertex $\hat u$.
	We also add a segment joining two white vertices $\hat u$ and $\hat v$ if the V-shape $\vee_u$ encloses the V-shape $\vee_v$ and the segment joining $\hat u$ and $\hat v$ does not cross any V-shape. This drawing defines a planar graph $H$, whose vertex set is the set of all black and white vertices of the drawing, which we call a host planar graph of $G$. (See  \Cref{fig:pl2pw1} for an illustration of the construction.)

\begin{figure}[h!t]
	\centering
	\includegraphics[width=.7\textwidth]{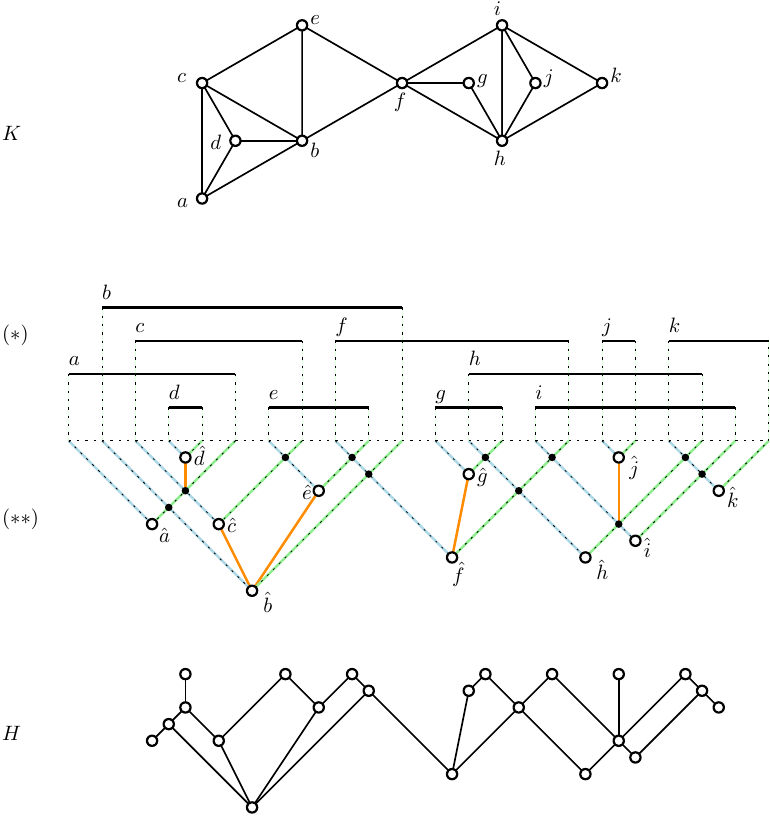}
	\caption{Host planar graph $H$ of a graph $G$ with bounded pathwidth, from an interval supergraph $K$ of $G$ with $\omega(K)={\rm pw}(G)$. The figure presents the computation of~$H$ from the interval graph $K$.}
	\label{fig:pl2pw1}
\end{figure}

We shall now describe a transduction $\mathsf T_t$ with the property that if $H$ is a host planar graph of a graph $G$ with pathwidth at most $t$, then $G\in\mathsf T_t(H)$.
By assumption, $H$ has been obtained from a drawing derived from an interval representation of an interval graph~$K$, which is a supergraph of $G$ with clique-number $t$. Because of this, the vertices of $H$ are naturally organized in layers, depending on the distance from the top of the V-shape drawing. These layers we mark by unary relations $M_0,\dots,M_{s}$ (with $s<2t$) and we further mark the vertices that were white in the V-shape drawing.
We denote by $H^+$ the obtained monadic expansion of $H$.
We say that a path in $H^+$ is monotone if the sequence of the layers of the vertices in the path (ordered naturally) is strictly monotone. It is easily checked that two vertices of $H^+$ corresponding to white vertices in the drawing (that is, to vertices of $K$) are adjacent in $K$ if and only if they are either joined by a monotone path, or  both joined to a third vertex by a monotone path.
(See \Cref{fig:pl2pw2} for an illustration.)

\begin{figure}[h!t]
	\centering
	\includegraphics[width=.7\textwidth]{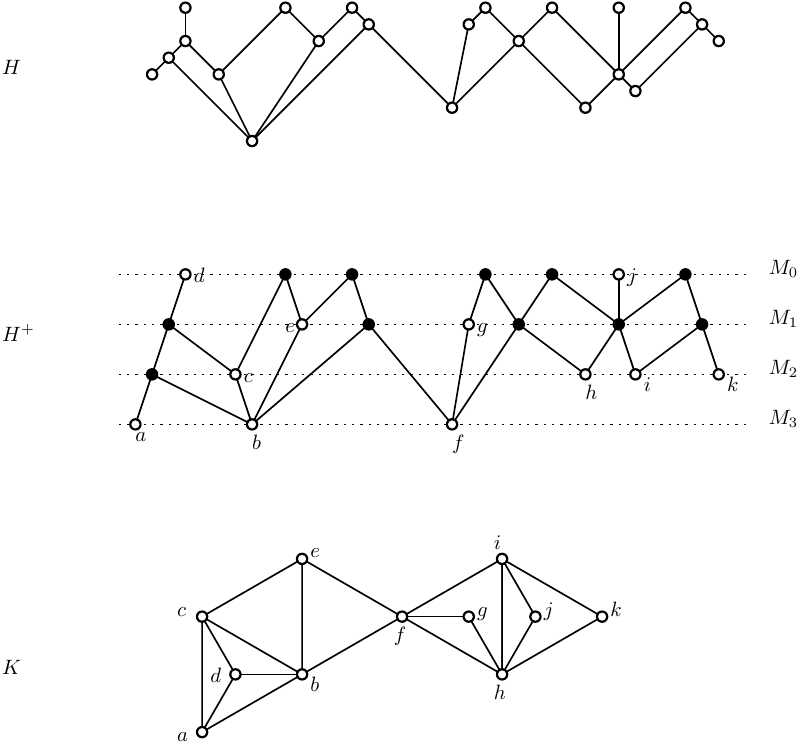}
	\caption{Encoding an interval graph $K$ with bounded clique number  in its host planar graph. Two vertices are adjacent if either they are joined by a monotone path, or they are both joined to a third vertex by a monotone path.}
	\label{fig:pl2pw2}
\end{figure}

Thus, there is a transduction $\mathsf P_t$, which allows obtaining, from a host planar graph of a graph $G$ with pathwidth $t$, a supergraph of $G$ that is an interval graph $K$ with clique-number~$t$. As interval graphs with clique-number $t$ have pathwidth $t$ and as the class of graphs with pathwidth at most $t$ has bounded expansion (hence bounded star chromatic number), we deduce from  \Cref{lem:monotone} that there exists a transduction $\mathsf S_t$ such that if $K$ has pathwidth at most $t$ then $\mathsf S_t(K)$ includes all the subgraphs of $K$. Thus, $\mathsf T_t=\mathsf S_t\circ\mathsf P_t$ is a transduction that witnesses that  the class of all graphs with pathwidth at most $t$ is a non-copying transduction of the class of all planar graphs.
\end{proof}

\section{The transductions of edgeless graphs}
\label{symb:Ee}
We now turn to characterizing the transductions of certain simple classes. The first class we consider is the class $\Ee$ of edgeless graphs. It is clear that this class is the minimum of the transduction quasi-order (as we consider only infinite classes of graphs). Note that this class is addable.

\begin{prop}
	\label{lem:Ee}
	A class $\Cc$ is a transduction of $\Ee$ (or, equivalently, $\Cc\equiv_\FO\Ee$) if and only if $\Cc$ is a perturbation of a class whose members have connected components of bounded size. 
	Furthermore, a class $\Cc$ is a non-copying transduction of $\Ee$ if and only if $\Cc$ is a perturbation of $\Ee$.
\end{prop}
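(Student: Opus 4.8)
The plan is to reduce both equivalences to the local normal form of \Cref{thm:normal}, together with the elementary observation that a strongly local edge formula creates no edges across the cliques produced by copying an edgeless graph. Since $\Ee$ is the minimum of the quasi-order, $\Ee\sqsubseteq_\FO\Cc$ holds for every (infinite) class $\Cc$, so being a transduction of $\Ee$ is the same as $\Cc\equiv_\FO\Ee$; I therefore only need to characterize the classes with $\Cc\sqsubseteq_\FO\Ee$ and those with $\Cc\sqsubseteq_\FO^\circ\Ee$.

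For the two forward directions I would invoke \Cref{thm:normal}. If $\Cc\sqsubseteq_\FO\Ee$, then the witnessing transduction is subsumed by $\mathsf P\circ\mathsf T_{\rm imm}\circ\mathsf C_k$ for some $k$, an immersive transduction $\mathsf T_{\rm imm}$, and a perturbation $\mathsf P$, so $\Cc\subseteq\mathsf P(\mathsf T_{\rm imm}(\mathsf C_k(\Ee)))$. Now $\mathsf C_k(E)$ for an edgeless graph $E$ is a disjoint union of cliques $K_k$ (one per vertex of $E$), so vertices lying in different copies are at infinite distance. Writing $\mathsf T_{\rm imm}$ as $(\nu,\eta)$ with $\eta$ strongly $r$-local, $\eta$ can hold only on pairs at distance at most $r$, hence only on pairs inside a common $K_k$, while $\nu$ merely selects a subset of vertices. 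Thus every connected component of a graph in $\Dd:=\mathsf T_{\rm imm}(\mathsf C_k(\Ee))$ is contained in one $K_k$ and has at most $k$ vertices, and $\Cc\subseteq\mathsf P(\Dd)$ exhibits $\Cc$ as a perturbation of a class with components of bounded size. The non-copying case is the same argument without $\mathsf C_k$: if $\Cc\sqsubseteq_\FO^\circ\Ee$ then $\Cc\subseteq\mathsf P(\mathsf T_{\rm imm}(\Ee))$, and since all distinct vertices of an edgeless graph are at infinite distance, the strongly local, anti-reflexive formula $\eta$ holds of no pair at all; hence $\mathsf T_{\rm imm}(\Ee)\subseteq\Ee$ and $\Cc\subseteq\mathsf P(\Ee)$ is a perturbation of $\Ee$.

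For the converse directions it suffices that perturbations are (non-copying) transductions and that any class $\Dd$ with components of size at most $k$ satisfies $\Dd\sqsubseteq_\FO\Ee$. The latter I would witness directly: starting from an edgeless graph with one vertex per target component, I apply $\mathsf C_k$ to obtain a disjoint union of cliques $K_k$, use unary colors $P_1,\dots,P_k$ to name the $k$ positions inside each clique, a color $U$ marking the positions actually used, and colors $A_{i,j}$ (for $i<j$) placed on the position-$i$ vertex to record adjacency of positions $i$ and $j$ within that component. The interpretation with domain $\nu(x):=U(x)$ and edge formula $E(x,y)\wedge\bigvee_{i<j}\big((P_i(x)\wedge P_j(y)\wedge A_{i,j}(x))\vee(P_j(x)\wedge P_i(y)\wedge A_{i,j}(y))\big)$ then reproduces each component, the conjunct $E(x,y)$ forbidding edges between distinct cliques. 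Hence if $\Cc\subseteq\mathsf P(\Dd)$ with $\Dd$ of bounded component size, then $\Cc\sqsubseteq_\FO\Dd\sqsubseteq_\FO\Ee$ and so $\Cc\sqsubseteq_\FO\Ee$; and if $\Cc\subseteq\mathsf P(\Ee)$, then, as $\mathsf P$ is non-copying, $\Cc\sqsubseteq_\FO^\circ\Ee$.

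The only genuinely delicate point is the pair of forward directions, and there the crux is precisely the interaction of strong locality with the component structure of $\mathsf C_k(\Ee)$: once one observes that strongly local edges cannot cross the cliques created by copying (and, in the non-copying case, cannot appear at all over an edgeless graph), both characterizations drop out of \Cref{thm:normal}. I expect no real obstacle beyond routine bookkeeping in verifying the explicit transduction used for the converse.
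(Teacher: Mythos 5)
Your proof is correct and follows essentially the same route as the paper: both directions reduce to the local normal form of \Cref{thm:normal}, with the forward implication resting on the observation that a strongly local edge formula cannot connect distinct cliques of $\mathsf C_k(\Ee)$ (and creates no edges at all over $\Ee$ itself in the non-copying case). The only difference is cosmetic: for the converse you encode each bounded component with position marks $P_i$ and adjacency marks $A_{i,j}$ on the cliques of $\mathsf C_k(\Ee)$, whereas the paper enumerates all connected graphs on $[n]$ and uses one family of marks per isomorphism type; both are routine colorings of the same copied structure.
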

\begin{proof}
	Assume that $\Cc$ is a perturbation of a class $\Dd$ and that all graphs in $\Dd$ have connected components of size at most $n$. Let $\Dd'$ be the class of all graphs whose connected components have exactly $n$ vertices.
	Note that every graph in $\Dd$ is an induced subgraph of a graph in~$\Dd'$, so that $\Dd\subseteq\mathsf H(\Dd')$, where $\mathsf H$ is the hereditary transduction. 
	Let $\mathscr K$ be the class of all graphs whose connected components are cliques of order $n$. Note that $\mathscr K=\mathsf C_n(\Ee)$, where~$\mathsf C_n$ is the $n$-copy transduction.
	Let $F_1,\dots,F_N$ be an enumeration of all the (non-isomorphic) connected graphs with vertex set $[n]$.
	Let $\mathcal U=\{M_{i,j}\colon i\in [N], j\in [n]\}$.
	We define the transduction 
	$\mathsf T_0$ by its associated interpretation $\mathsf I_0=(\top,\eta(x,y))$, where 
	\[
		\eta(x,y):=\adjustlimits\bigvee_{i\in N}\bigvee_{(j,k)\in E(F_i)}M_{i,j}(x)\wedge M_{i,k}(y)\wedge E(x,y).
	\]

	Then, if the vertices of a clique of a graph in $\mathscr K$ are marked $M_{i,1},\dots,M_{i,n}$, the interpretation will construct a copy of $F_i$ on these vertices. It follows that 
	$\Dd'\subseteq \mathsf T_0(\mathscr K)$. Hence, $\Cc$ is a $\mathsf T$-transduction of $\Ee$, where $\mathsf T$ is the composition of $\mathsf C$, $\mathsf T_0$, the hereditary transduction $\mathsf H$, and a perturbation.

	Conversely, if~$\Cc$ is a transduction of $\Ee$ then, according to \Cref{thm:normal}, $\Cc$ is a perturbation of a class $\Dd\subseteq\mathsf T_{\rm imm}\circ \mathsf C_k(\Ee)$, where $\mathsf T_{\rm imm}$ is immersive and $\mathsf C_k$ is a $k$-copy transduction for some number~$k$. 
	The connected components of the graphs in $\mathsf C_k(\Ee)$ are cliques of size at most $k$ (as $\mathsf C_k$ is a $k$-copy transduction). Hence, 
	by the strong locality of the interpretation associated to $\mathsf T_{\rm imm}$, no connected component of a graph in $\Dd$ can have size greater than $k$.
	The last statement is obvious, as every immersive transduction of $\Ee$ is included in $\Ee$.
\end{proof}
\section{The transductions of cubic graphs}
Recall that a \emph{cubic graph} is a $3$-regular graph, i.e., a graph whose vertices all have degree $3$.
Cubic graphs form an important class of graphs, which is both very simple (having bounded degree) and structurally complex (having unbounded treewidth). 
The class of all cubic graphs  is addable
(the disjoint union of two cubic graphs is cubic). 

A  structural characterization of the graph classes that are \FO-transductions of classes of 
graphs of bounded degree is given in \cite{gajarsky2020new}.  The following proposition easily follows.
\begin{prop}
\label{prop:transcubic}
The following are equivalent for a class $\Cc$:  
\begin{enumerate}
	\item $\Cc$ is an \FO-transduction of the class $\Cubic$ of all cubic graphs,  
	\item $\Cc$ is an \FO-transduction of a class with bounded degree, 
	\item $\Cc$ is a perturbation of a class with bounded degree,
\end{enumerate}
\end{prop}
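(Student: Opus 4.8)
My plan is to close the cycle of implications $(1)\Rightarrow(2)\Rightarrow(3)\Rightarrow(1)$, relying on the cited structural characterization for the one genuinely hard step and supplying short arguments for the rest. The implication $(1)\Rightarrow(2)$ is immediate, since $\Cubic$ is itself a class of bounded degree (every vertex has degree $3$). The implication $(2)\Rightarrow(3)$ is exactly the structural characterization of \FO-transductions of bounded-degree classes proved in \cite{gajarsky2020new}, so I would simply invoke it. The implication $(3)\Rightarrow(2)$ is again immediate: a perturbation is a transduction, so a perturbation of a bounded-degree class is in particular an \FO-transduction of a bounded-degree class. Thus the only point requiring real work is to reconnect the cycle, for which it suffices to prove the following key lemma: \emph{for every integer $D$, the class $\Dd_D$ of all graphs of maximum degree at most $D$ is a non-copying \FO-transduction of $\Cubic$.} Granting this, $(2)\Rightarrow(1)$ follows by transitivity of $\sqsubseteq_\FO$: if $\Cc\sqsubseteq_\FO\Dd$ with $\Dd$ of bounded degree, say $\Dd\subseteq\Dd_D$, then $\Cc\sqsubseteq_\FO\Dd_D\sqsubseteq_\FO\Cubic$.

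To prove the key lemma I would encode each $G\in\Dd_D$ into a cubic \emph{host graph} $H=H(G)$ by a gadget replacement. Each vertex $v$ of $G$ is replaced by a short cycle $C_v$ of length $\max(3,\deg_G(v))$, whose vertices serve as \emph{ports}; I mark all port vertices by a colour $P$ and one distinguished port of each gadget by a colour $R$ (the representative of $v$). Each edge $e=\{u,v\}$ of $G$ is realised by a fresh \emph{midpoint} vertex $m_e$, coloured $X$, adjacent to one free port of $C_u$ and one free port of $C_v$; every port then has degree $3$ (two cycle edges and one edge to a midpoint). The remaining degree deficiencies---at the midpoints $m_e$ (degree $2$) and at the unused ports of the padded short cycles---are repaired by attaching private copies of a fixed small cubic-completion gadget with a single degree-$2$ attachment vertex, all coloured $J$. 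This makes $H$ exactly $3$-regular, and the colour set $\{R,P,X,J\}$ depends only on $D$.

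Recovery is then a fixed \FO-interpretation on the coloured host graph. Its domain is the set of representatives ($\nu(x):=R(x)$). For a representative $r$, its gadget is the set of $P$-vertices joined to $r$ by a path through $P$-vertices only, which has length less than $D$ since $C_v$ has at most $D$ vertices; this is \FO-definable using the bounded-distance formulas $\delta_{\le k}$ relativised to $P$. Two representatives $r_u,r_v$ are declared adjacent exactly when some $X$-vertex is adjacent to a $P$-vertex in the gadget of $r_u$ and to a $P$-vertex in the gadget of $r_v$; since each midpoint $m_e$ meets precisely the two gadgets of the endpoints of $e$ (the junk connectors being coloured $J$, not $X$), this recovers the adjacency of $G$ faithfully. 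The interpretation and colour palette are uniform in $G$, so this is a single non-copying transduction, proving the lemma.

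The main obstacle, and the only part needing care, is the design of the host graph so that the notions used by the interpretation---``same gadget'' versus ``cross edge''---become \FO-definable after a bounded vertex colouring; I address this by routing every original edge through its own $X$-coloured midpoint, which cleanly separates intra-gadget (cycle) edges from inter-gadget (original) edges, and by confining all degree repairs to $J$-coloured pad gadgets so they never create spurious adjacencies. The low-degree and regularisation bookkeeping (vertices of degree $0,1,2$, and the parity of the completion gadget) is routine once the midpoint-and-pad scheme is fixed. With the lemma in hand, the three conditions are equivalent via $(1)\Rightarrow(2)$ trivial, $(2)\Rightarrow(1)$ from the lemma, and $(2)\Leftrightarrow(3)$ from \cite{gajarsky2020new} together with the trivial $(3)\Rightarrow(2)$.
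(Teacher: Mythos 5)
Your proof is correct, but it reaches the conclusion by a genuinely different route from the paper in both nontrivial steps. For $(2)\Rightarrow(3)$ you invoke the characterization from \cite{gajarsky2020new} as a black box; the paper instead derives this implication self-containedly from its local normal form (\Cref{thm:normal}): decomposing the transduction as $\mathsf P\circ\mathsf T_{\rm imm}\circ\mathsf C_k$, observing that copying raises the degree only to $D+k-1$ and that the strongly local interpretation raises it only to $(D+k-1)^{r+1}$, so that everything before the perturbation still has bounded degree. Your citation is legitimate (the paper itself notes the proposition ``easily follows'' from that reference), but the paper's version is the more informative one in context, since it showcases the normal form the rest of the paper is built on. For the key direction (bounded degree $\Rightarrow$ transduction of $\Cubic$), your gadget is different from the paper's: you replace each vertex by a port cycle with a marked representative, route each edge through a private $X$-coloured midpoint, and pad all degree deficiencies with $J$-coloured completion gadgets, recovering $G$ by a colour-guided local interpretation. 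The paper instead first embeds $G$ as an induced subgraph of a $D'$-regular graph with $D'=3\cdot 2^{p-1}$, replaces each vertex by a cubic tree with $\lfloor 3\cdot 2^{p-2}\rfloor$ leaves, distributes the edges among leaves, and recovers $G$ as an induced subgraph of the $(2p-1)$-st power of the resulting cubic graph. Your construction avoids the regularization step and the appeal to \cite{erdos1963minimal}, and makes the definability of ``same gadget'' versus ``cross edge'' very explicit via the colour palette; the paper's buys a one-line recovery (take a fixed power and restrict to marked vertices) at the cost of the tree/regularity bookkeeping. Two small points you should tidy but which do not affect correctness: the edge formula must include $x\neq y$ to be anti-reflexive as required of interpretations, and the cubic-completion pad with a single degree-$2$ attachment vertex exists only on an odd number of vertices (e.g.\ $C_5$ plus two suitable chords), which is the parity issue you defer to ``routine bookkeeping.''
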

\begin{proof}
	(1) implies (2) as the class of cubic graphs is a class with bounded degree.
	
	Assume (2), that is,  that $\Cc$ is a $\mathsf T$-transduction of a class $\Dd$ with bounded degree.
	We denote by $D$ the maximum degree of the graphs in $\Cc$, that is, $D=\sup_{G\in\Dd}\Delta(G)$.
	According to  \Cref{thm:normal}, $\mathsf T$ is subsumed by the composition of a $k$-copy transduction $\mathsf C_k$, an immersive transduction $\mathsf T_{\rm imm}$, and a perturbation $P$.
	According to the definition of a $k$-copy transduction, the maximum degree of the graphs in $\mathsf C(\Dd)$ is at most $D+k-1$.
	Let $\mathsf I$ be the interpretation associated with the  transduction $\mathsf T_{\rm imm}$. Since $\mathsf T_{\rm imm}$ is immersive, $\mathsf I$ is strongly local, thus there exists an integer $r$ such that (for every colored graph $G$) two vertices adjacent in $\mathsf I(G)$ are at distance at most $r$ in $G$. Consequently, $\Delta(\mathsf I(G))\leq \max_{v\in V(G)} |B_r^G(v)|\leq \Delta(G)^{r+1}$.
	Thus, every graph $H\in\mathsf T_{\rm imm}\circ\mathsf C_k(\Dd)$ has maximum degree at most $(D+k-1)^{r+1}$. Hence, since $\Cc\subseteq \mathsf T(\Dd)\subseteq \mathsf P\circ\mathsf T_{\rm Imm}\circ\mathsf C_k(\Dd)$, the class $\Cc$ is a perturbation of the class
	$\mathsf T_{\rm Imm}\circ\mathsf C_k(\Dd)$, which has bounded degree. Hence, (3) holds. Thus, (2) implies (3).
	
	Now assume (3), that is, that we have
	$\Cc\subseteq \mathsf P(\Dd)$, for some perturbation $\mathsf P$ and some class $\Dd$ with bounded degree.
	Let $D=\sup_{G\in\Dd}\Delta(G)$.
	We now show how to associate a  cubic graph $H_G$ with every graph $G$ with maximum degree $D$.
	Let $D'\geq D$ be of the form~$3\cdot 2^{p-1}$ for some integer $p$ and let $Y$ be the  tree with height $p$
	and $\lfloor 3\cdot 2^{p-2}\rfloor$ leaves\footnote{The floor function is used to take care of the special case where $p=1$, in which $Y$ is a single vertex tree.}, whose internal vertices have degree $3$.
	Let $G$ be a graph with maximum degree $D$.  As $D'\geq D$, it is known (see e.g. \cite{erdos1963minimal})   that there exists a $D'$-regular graph $G'$ having $G$ as an induced subgraph. The graph $H_G$ is obtained by replacing each vertex $v$ of $G'$ by a copy of~$Y_v$ of $Y$, and by distributing the edges of $G'$ as between the leaves of the trees $Y_v$ as follows:
	if $u$ and $v$ are adjacent in $G$, then some leaf of $T_u$ is made adjacent to a leaf of $T_v$, while keeping the all degrees bounded by $3$. 
	The graph $H_G$ is cubic.  (See illustration in \Cref{fig:2cubic}.)
	\begin{figure}[ht]
		\centering
		\includegraphics[width=\textwidth]{}
		\caption{Construction of a cubic graph $H_G$ such that $G$ is an induced subgraph of  $G^{2p-1}$.}
		\label{fig:2cubic}
	\end{figure}
	
	Now remark that two vertices $u$ and $v$ of $G$ are adjacent in $G$ if and only if they are linked by some path of length $(2p-1)$ in $H_G$. In other words, $G$ is an induced subgraph of the $(2p-1)$-power of $H_G$.
	It follows that $\Dd$ is a transduction of the class of all cubic graphs. As $\Cc$ is transduction of $\Dd$, we deduce that (1) holds. Hence, (3) implies (1).
\end{proof}

Some further equivalences are given in \Cref{sec:dual}.
\section{The transductions of paths}
\label{sec:paths}

We now consider the graph class $\Pp$ of all paths. This class is  equivalent 
(by $\equiv_\FO^\circ$)
to the class~$\mathscr{L\!\!F}$ of all linear forests (that is: forests whose connected components are paths).
Indeed,  linear forest being exactly the induced subgraphs of paths, we have $\Pp\subseteq \mathscr{L\!\!F}=\mathsf H(\Pp)$.
Note that~$\mathscr{L\!\!F}$  is addable (that is, closed under taking disjoint unions). 
\begin{prop}
	\label{thm:transpath}
	A class $\Cc$ is an {\FO}-transduction of  the class  $\Pp$ of all paths if and only if it is a perturbation of a class with bounded bandwidth. 
	An \FO-transduction $\Cc$ of $\Pp$  is \FO-equivalent to $\Pp$ if and only if it is a perturbation of a class with bounded bandwidth that contains graphs with arbitrarily large connected components.
\end{prop}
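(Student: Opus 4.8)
The plan is to combine the local normal form (\Cref{thm:normal}) with \Cref{lem:monotone} and the description of the transductions of $\Ee$ (\Cref{lem:Ee}). I first prove the auxiliary biconditional $\Cc\sqsubseteq_\FO\Pp \Leftrightarrow \Cc$ is a perturbation of a bounded-bandwidth class. For the backward implication, let $\Dd$ be a class of bandwidth at most $b$ with $\Cc\subseteq\mathsf P(\Dd)$. Each graph of bandwidth $\le b$ is a subgraph of $P^b$ for some path $P$, and the map sending $P$ to $P^b$ is an immersive transduction of $\Pp$ (take $\eta(x,y):=(0<\dist(x,y)\le b)$, which is strongly local). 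As $\{P^b:P\in\Pp\}$ has bounded bandwidth, hence bounded maximum degree and bounded star chromatic number, \Cref{lem:monotone} shows that its monotone closure---which is precisely the class of all graphs of bandwidth at most $b$---is a non-copying transduction of it; thus $\Dd\sqsubseteq_\FO^\circ\Pp$, and composing with the perturbation $\mathsf P$ yields $\Cc\sqsubseteq_\FO\Pp$. For the forward implication, \Cref{thm:normal} gives $\Cc\subseteq\mathsf P(\mathsf T_{\rm imm}(\mathsf C_k(\Pp)))$ with $\mathsf T_{\rm imm}$ strongly $r$-local. The graph $\mathsf C_k(P_n)$ has bandwidth at most $k$ (list the $k$ clones of $v_1$, then of $v_2$, and so on). The crucial point is that immersive transductions increase bandwidth by at most a constant factor: if $G$ has bandwidth $\le b$ with layout $u_1,\dots,u_m$, then $\dist_G(u_i,u_j)\ge |i-j|/b$, so a strongly $r$-local interpretation only joins $u_i,u_j$ with $|i-j|\le rb$, and the restricted layout witnesses bandwidth $\le rb$. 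Hence $\Dd:=\mathsf T_{\rm imm}(\mathsf C_k(\Pp))$ has bandwidth $\le rk$, and $\Cc$ is a perturbation of it.

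For the second statement, first suppose $\Cc\equiv_\FO\Pp$ and keep the witness $\Dd$ of bandwidth $\le rk$ with $\Cc\subseteq\mathsf P(\Dd)$ just produced. If $\Dd$ had connected components of bounded size, then \Cref{lem:Ee} would give $\Cc\equiv_\FO\Ee$; but $\Pp\not\sqsubseteq_\FO\Ee$ (paths have unbounded shrubdepth, whereas every transduction of $\Ee$ has bounded shrubdepth), contradicting $\Cc\equiv_\FO\Pp$. Therefore $\Dd$ has arbitrarily large connected components, exhibiting $\Cc$ as a perturbation of a bounded-bandwidth class with arbitrarily large components.

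Conversely, suppose $\Cc$ is a perturbation of a class $\Dd'$ of bandwidth $\le b$ with arbitrarily large connected components. A connected graph of bandwidth $\le b$ on $m$ vertices contains two vertices at distance $\ge (m-1)/b$, and a geodesic between them is an induced path; hence $\Dd'$ contains induced paths of unbounded length, so $\Pp\subseteq\mathsf H(\Dd')$ and $\Pp\sqsubseteq_\FO^\circ\Dd'$. Since a perturbation is involutive on graphs (\Cref{lem:perturb}), the class $\Dd'$ is itself a transduction of $\Cc$, i.e.\ $\Dd'\sqsubseteq_\FO\Cc$; combining, $\Pp\sqsubseteq_\FO\Dd'\sqsubseteq_\FO\Cc$, and together with $\Cc\sqsubseteq_\FO\Pp$ from the first biconditional we conclude $\Cc\equiv_\FO\Pp$.

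I expect the main obstacle to be exactly this last transfer. Passing from $\Cc\subseteq\mathsf P(\Dd')$ back to $\Dd'\sqsubseteq_\FO\Cc$ requires that $\Cc$ and $\Dd'$ be perturbation-equivalent, not merely that $\Cc$ be contained in $\mathsf P(\Dd')$: if $\Dd'$ were allowed to be strictly richer than the perturbation-partner of $\Cc$ (for instance $\Cc=\Ee$ with $\Dd'=\Pp\cup\Ee$ and the trivial perturbation), then $\Cc$ would not witness the long induced paths living in $\Dd'$. The statement must therefore be read with the bandwidth bound and the large-component property attached to the class obtained from $\Cc$ by the inverse perturbation, and the companion strictness $\Pp\not\sqsubseteq_\FO\Ee$ used above must be justified separately (most cleanly via the fact that bounded shrubdepth is closed under \FO-transductions while $\Pp$ has unbounded shrubdepth).
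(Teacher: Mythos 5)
Your proof is correct and follows essentially the same route as the paper's: \Cref{thm:normal} together with the observation that an immersive transduction applied to $\mathsf C_k(\Pp)$ lands inside a bounded power of paths (hence bounded bandwidth) for the forward direction, \Cref{lem:monotone} applied to the $\ell$-powers of paths for the backward direction, and the dichotomy via \Cref{lem:Ee} and $\Pp\not\sqsubseteq_\FO\Ee$ for the second statement. The subtlety you flag at the end is genuine --- the paper's own proof likewise passes from ``$\Cc$ is a perturbation of $\Dd$'' to ``$\Pp\sqsubseteq_\FO\Cc$'' without first shrinking $\Dd$ to the graphs actually used --- so your reading, attaching the bandwidth and large-component conditions to the inverse-perturbation image of $\Cc$, is the intended one and your handling of it is in fact more careful than the paper's.
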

\begin{proof}
	Assume 
	$\mathsf{T}$ is a transduction of~$\Cc$ in $\Pp$. 
	According to \Cref{thm:normal}, $\mathsf{T}\leq \mathsf P\circ\mathsf T_{\rm imm}\circ\mathsf C_k$, where $k\geq 1$, $\mathsf T_{\rm imm}$ is immersive, and $\mathsf P$ is a perturbation. 
	Observe first that $\mathsf{C}_k(\Pp)$ is included in the class of all 
	subgraphs of the~$(k+1)$-power of paths (See \Cref{fig:pathpower}). 
	
\begin{figure}[ht]
	\centering\includegraphics[width=\textwidth]{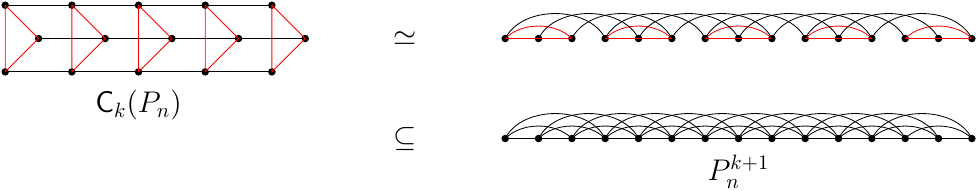}
	\caption{$\mathsf C_k(P_n)$ is a subgraph of $P_n^{k+1}$. The shown example corresponds to the case where $n=5$ and $k=3$.}
	\label{fig:pathpower}
\end{figure}	
	
	By the strong locality property of immersive transductions, 
	every class obtained from $\Pp$ by the composition of a copy operation and an immersive transduction has its image included in the class of all the  subgraphs of the $\ell$-power of paths, for some integer $\ell$ depending only on the transduction, hence, in a class of bounded bandwidth. 
	Conversely, assume that $\Cc$ is a perturbation of a class $\Dd$ containing graphs with bandwidth at most $\ell$.
	Then $\Dd$ is a subclass of the monotone closure (containing all subgraphs of the class) of the class $\mathcal P^\ell$ of $\ell$-powers of paths, which has bounded star chromatic number.
	By Corollary \ref{lem:monotone} and the 
	observation that taking the $\ell$-power is obviously a transduction, 
	we get that $\Cc\sqsubseteq_\FO\Pp$. 
	
	Assume $\Cc$ is a perturbation of a class  $\Dd$  with bandwidth at most $\ell$ that contains graphs with arbitrarily large connected components. As $\Dd$ has bounded degree thus contain graphs with arbitrarily large diameter, thus with arbitrarily long induced paths. We infer that $\Pp$ is an \FO-transduction of $\Cc$.
	
	However, if $\Cc$ is a perturbation of a class $\Dd$  with bandwidth at most $\ell$ that contains graphs with connected components with at most $k$ vertices; then $\Cc$ is a transduction of the class $\Ee$ of edgeless graphs, and it is known that $\Pp$ is not a transduction of $\Ee$ (even not an \MSO-transduction, see for instance \cite{blumensath10}).
\end{proof}

\section{The transductions of  cubic trees}
We next show that the class of all cubic trees is non-copying transduction-equivalent to the (addable) class of all subcubic forests. 

\begin{prop}
	\label{lem:BT}
	A class $\Cc$ is a transduction of  the class of cubic trees if and only if it is 
	 a perturbation of  a class with both bounded treewidth and bounded maximum degree.
\end{prop}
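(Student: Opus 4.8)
The plan is to prove both implications, using the local normal form (\Cref{thm:normal}) for necessity and an explicit encoding for sufficiency. For the \emph{necessity} direction, suppose $\Cc$ is a transduction of the class of cubic trees. By \Cref{thm:normal} the witnessing transduction is subsumed by $\mathsf P\circ\mathsf T_{\rm imm}\circ\mathsf C_k$, so $\Cc$ is a perturbation of $\Dd:=\mathsf T_{\rm imm}(\mathsf C_k(\text{cubic trees}))$, and it suffices to show that $\Dd$ has bounded treewidth and bounded maximum degree. A cubic tree $T$ has treewidth $1$ and degree $3$, and $\mathsf C_k(T)$ has degree at most $k+2$ and treewidth at most $2k-1$ (replace each bag $\{u,v\}$ of a width-$1$ tree decomposition of $T$ by the $2k$ clones of $u$ and $v$). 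Since $\mathsf T_{\rm imm}$ is immersive, its interpretation is strongly $r$-local for some fixed $r$, so every graph of $\Dd$ is a subgraph of the $r$-th power $(\mathsf C_k(T))^r$. I will use the folklore fact that ${\rm tw}(F)\le w$ and $\Delta(F)\le D$ imply ${\rm tw}(F^r)\le (w+1)(1+D+\dots+D^r)-1$: enlarge each bag $B_x$ of a width-$w$ tree decomposition of $F$ to $\bigcup_{v\in B_x}B_r^F(v)$, which is a valid tree decomposition of $F^r$, since any two vertices at distance $\le r$ lie together in some enlarged bag and the connectedness condition survives because each ball $B_r^F(v)$ is connected. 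As treewidth and degree only decrease under taking subgraphs, applying this to $F=\mathsf C_k(T)$ gives that $\Dd$ has bounded treewidth and bounded degree (the degree bound also follows directly from \Cref{fact:dilation}, exactly as in \Cref{prop:transcubic}).

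For the \emph{sufficiency} direction, suppose $\Cc\subseteq\mathsf P(\Dd)$ with $\mathsf P$ a perturbation and $\Dd$ of bounded treewidth and bounded degree. As a perturbation is itself a transduction and the quasi-order is transitive, it suffices to show that $\Dd$ is a transduction of the class of cubic trees; then $\Cc$ is a transduction of $\mathsf P(\Dd)$, which is a transduction of $\Dd$. Using the noted equivalence of cubic trees with the addable class of subcubic forests, and treating $\Dd$ component by component (subcubic forests being closed under disjoint union), I reduce to encoding a single connected $G\in\Dd$ in one subcubic tree. The main tool is the theorem of Ding and Oporowski that bounded treewidth together with bounded maximum degree yields bounded \emph{tree-partition-width}: there is a tree $T$ and a partition $(P_x)_{x\in V(T)}$ of $V(G)$ with $|P_x|\le b$ such that every edge of $G$ lies inside a bag or between bags of $T$-adjacent nodes. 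When $G$ is connected, every edge of $T$ is crossed by an edge of $G$ (cutting $T$ at an edge separates $V(G)$, and by connectivity some $G$-edge crosses the cut, which by the partition property runs between the two incident bags); hence $\deg_T(x)\le |P_x|\cdot\Delta(G)\le b\,\Delta(G)$ is bounded.

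Given such a bounded-bag, bounded-tree-degree tree-partition, I would build the host subcubic tree $H_G$ by replacing each node $x$ by a bounded-size subcubic-tree gadget $\gamma_x$ carrying $b$ ``slot'' vertices for $P_x$ together with enough ports for its $\le b\,\Delta(G)$ tree-neighbours, and inserting a bounded ``edge gadget'' on each tree-edge $xy$. Because everything is of bounded size, $H_G$ is a subcubic tree and any two vertices of $G$ whose bags coincide or are $T$-adjacent become slot vertices at bounded distance in $H_G$. A bounded palette of colours then records locally the slot indices, the internal adjacency $G[P_x]$ on each gadget, and the bipartite adjacency $G[P_x,P_y]$ on each edge gadget (each a bounded number of bits). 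An immersive interpretation recovers $G$: its domain is the slot vertices, and two slot vertices at bounded distance are declared adjacent exactly when the corresponding internal or edge-gadget bit is set. This yields $G\in\mathsf T(H_G)$ for a fixed immersive transduction $\mathsf T$, hence $\Dd$ is a transduction of the class of cubic trees.

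The main obstacle is the sufficiency direction, and specifically arranging the encoding so that adjacency is \emph{locally} recoverable: for the strongly local interpretation to work, every adjacent pair of $G$ must end up at bounded distance in $H_G$, which forces the underlying tree-partition tree to have bounded degree. The reduction to a connected graph---so that the tree-partition is automatically ``reduced'' and therefore of bounded tree-degree---is the crucial point making the bounded tree-partition-width theorem usable here; the remaining gadget construction with a bounded colour palette is routine if somewhat fiddly.
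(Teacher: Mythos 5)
Your proof is correct, but both directions take a genuinely different route from the paper's. For necessity, the paper first invokes \Cref{prop:transcubic} to reduce to a bounded-degree class $\Dd$, then argues that $\Dd$ has bounded cliquewidth (trees have cliquewidth $3$ and cliquewidth-boundedness is transduction-invariant) and concludes bounded treewidth from the Gurski--Wanke theorem that weakly sparse classes of bounded cliquewidth have bounded treewidth; you instead bound the treewidth directly by observing that every graph in the immersive image is a subgraph of $(\mathsf C_k(T))^r$ and that powers of bounded-degree, bounded-treewidth graphs have bounded treewidth (the ball-enlargement of bags). Your argument is more self-contained and purely combinatorial, at the price of carrying out the folklore power bound; the paper's is shorter given the two cited black boxes. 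For sufficiency, the paper uses the Ding--Oporowski representation of a bounded-treewidth, bounded-degree graph $G$ as a subgraph of the intersection graph of low-load, low-diameter subtrees of a cubic tree, then applies a $k$-copy operation together with a proper coloring of that intersection graph and finishes with \Cref{lem:monotone}; you instead use the Ding--Oporowski tree-partition-width theorem and a bounded-size gadget per tree node, which avoids copying altogether. Your reduction to connected $G$ so that the (pruned) partition tree has degree at most $b\Delta(G)$ is exactly the right point to isolate --- without it adjacent vertices of $G$ need not land at bounded distance in the host tree and the strongly local interpretation would fail. Two small points to tighten if you write this up: after pruning empty subtrees you should note that the cut argument forces \emph{every} remaining bag to be nonempty (an empty internal bag would disconnect $G$), and your interpretation must explicitly verify that two candidate slot vertices lie in the same or in $T$-adjacent gadgets, since slots in gadgets at tree-distance two can still fall inside the locality radius; both are routine but worth stating.
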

\begin{proof}
	Assume that $\Cc$  is a transduction of  the class of cubic trees.
According to \Cref{prop:transcubic}, the class $\Cc$ is a 
perturbation of a class $\Dd$ with bounded degree. 
Let $\mathsf P$ be this perturbation. 
If necessary, we restrict $\Dd$ to $\mathsf P(\Cc)$, so that 
$(\mathsf P,\mathsf P)$ is a pairing of $\Cc$ and $\Dd$.
The class $\Cc$, being a class of trees, has clique-width at most $3$ (folklore).  As clique-width boundedness is preserved by transductions \cite{colcombet2007combinatorial}, the class $\Dd$ has bounded clique-width. Having bounded degree, the class $\Dd$ is weakly sparse. As weakly sparse classes with bounded clique-width have bounded treewidth \cite{Gurski2000}, the class $\Dd$ has bounded treewidth.
Thus, $\Cc$ is a perturbation of a class having both bounded degree and bounded treewidth.
	
	Conversely, assume $\Cc$ is  a perturbation of a class $\Dd$ with both bounded treewidth and bounded maximum degree. 
	In \cite{DO95} it is proved that every graph $G$ with treewidth at most $w\geq 3$ and maximum degree $\Delta\geq 1$ is a subgraph of the intersection graph $I_G$ of a family $\{X_v\colon v\in V(G)\}$ of non-empty connected subtrees of a cubic tree $T_G$, where no more than   $k=48w\Delta$ subtrees $X_v$ contain a common vertex  and where all the trees $X_v$ have diameter at most $D=6\log w+12\log\Delta+30$.
	We subdivide all the edges of $T_G$ and of all the $X_v$ once, thus obtaining a subcubic tree $T_G'$ and a family of subtrees $X_v'$ with the property that if $X_u'$ and $X_v'$ contain adjacent vertices, then they contain a common vertex. Note that $I_G$ is the intersection graph of the $X_v'$, that no vertex of $G$ belongs to more than $k$ subtrees~$X_v'$, and that the diameter of all the $X_v'$ is bounded by $2D$.
	We now prove that there is a transduction that allows to obtain each $G\in\Cc$ from its associated cubic tree $T_G'$, thus proving that $\Cc$ is a transduction of the class of all subcubic trees. This transduction will be the composition of a $k$-copy transduction $\mathsf C_k$ and a transduction $\mathsf T$.
	It follows directly from the definition of treewidth that $I_G$ has treewidth at most $k-1$, hence is $k$-colorable. Let $c: V(I_G)\rightarrow [k]$ be a proper coloring of $I_G$. 
	Let $\mathsf C_k$ be the $k$-copy transduction.
	We identify the vertex set of~$\mathsf C_k(T_G')$ with $V(T_G')\times [k]$ in the natural way. We put a mark $M_i$ on $X_v\times\{i\}$ for each vertex $v$ of~$G$ with $c(v)=i$ and we mark $V$ exactly one vertex in each $X_v\times\{c(v)\}$  (for~$v$ of $G$).
   The interpretation part of the transduction $\mathsf T$ connects two vertices $u$ and $v$, for some $u$  marked $V$ and $M_i$ and some $v$ marked $V$ and $M_j$ if $j\neq i$ and there  exists 
   a path of length at most $4D$ from $u$ to $v$, where the vertices of the paths are first marked $M_i$ then~$M_j$. (Note that the last vertex marked $M_i$ has to be a clone of the first vertex marked $M_j$.)  Clearly, $I_G\in\mathsf T\circ\mathsf C_k(T_G')$.
	To end the proof, we remark that both cubic trees and graphs with treewidth at most $k$ have bounded star coloring number. Thus, according to \Cref{lem:monotone}, the class of subcubic trees is a transduction of the class of cubic trees (which we denote by $\mathsf S_1$) and the class $\Cc$ is a transduction of the class $\{I_G\colon G\in\Cc\}$ (which we denote by $\mathsf S_2$).
	Thus, $\Cc$ is a $\mathsf S_2\circ\mathsf T\circ\mathsf C_k\circ\mathsf S_1$-transduction of the class of all cubic trees.
\end{proof}

\section{The transductions of caterpillars}
A \emph{caterpillar} is a tree in which of all the vertices are within distance $1$ of a central path.
These trees have a special interest, witnessed by the property that these are exactly the connected graphs with pathwidth $1$ \cite{proskurowski1999classes}.
As every forest of caterpillars is obviously an induced subgraph of some caterpillar, the class of all caterpillars is (non-copying) transduction-equivalent to the (addable) class $\PW_1$ of all graphs with pathwidth $1$. 

The determination of which classes of graphs are transductions of $\PW_1$ is the occasion of introducing a standard model-theoretic tool, the Ehrenfeucht-Fra{\"\i}ss\'e game.

The \emph{Ehrenfeucht-Fra{\"\i}ss{\'e} game} is a well-known technique for determining whether two structures are ``equivalent''.
Suppose that we are given two graphs $G$ and $H$ and a natural number $q$. We can then define the Ehrenfeucht-Fra{\"\i}ss{\'e} game $\Game_q(G,H)$ to be a game between two players, Spoiler and Duplicator, played as follows:
Start with $A_0=B_0=\emptyset$ and let $\pi_0$ be the (empty) mapping from $A_0$ to $B_0$. Notice that $\pi_0$ is an isomorphism from $G[A_0]$ to $G[B_0]$.
For each $1\leq i\leq q$, Spoiler picks either a vertex $a$ in $G$ or a vertex $b$ in $H$.
In the first case, the Duplicator chooses a vertex $b$ in $H$; in the second case, he chooses a vertex $a$ in $G$.
Let $A_i=A_{i-1}\cup\{a\}$ and $B_i=B_{i-1}\cup\{b\}$.
If no isomorphism $\pi_i:G[A_i]\rightarrow G[B_i]$ extending $\pi_{i-1}$ exists such that $\pi(a)=b$, then Spoiler wins
the game. Otherwise, the game continues until $i=q$ (notice that $\pi_i$ is uniquely determined by $\pi_{i-1}$ and the vertices $a$ and $b$). If $i$ reaches $q$ and $\pi_q$ is an isomorphism from $G[A_q]$ to $H[B_q]$ then Duplicator wins the game.

If Duplicator has a winning strategy for $q$ then we write $G\equiv^q H$, and we
say that $G$ and $H$ are \emph{$q$-back-and-forth
	equivalent}.
The meaning of the $q$-back-and-forth equivalence is clarified by the following classical
result.

	\begin{thmC}[{\cite[Theorem 3.5]{Libkin04}}]
		Two graphs (and more generally two possibly infinite relational structures) are $q$-back and forth equivalent if and only if they satisfy the same first order sentences of quantifier rank $q$.
	\end{thmC}

Let $\mathcal U=\{M_1,\dots,M_p\}$ be a finite set of unary relations.
A \emph{compressed} $\mathcal U$-colored caterpillar is
a pair $(P,f)$, where $P$ is a $\mathcal U$-colored path
and $f: V(P)\times \mathcal P([p])\rightarrow\mathbb N$. The $\mathcal U$-colored caterpillar represented by $(P,f)$ is the caterpillar $\Upsilon(P;f)$ obtained by $P$ by adding at each  vertex $v$ of $P$, for each $I\subseteq [p]$, $f(v,I)$ vertices that belong exactly to the relations~$M_i$ with $i\in I$ (see \Cref{fig:compressed}); these vertices are called the \emph{$I$-children} of $v$ in $\Upsilon(P,f)$. We  further denote
by $L_{P,f}(v,I)$ the set of all the $I$-children of $v$ in $\Upsilon(P,f)$. 

\begin{figure}[ht]
	\centering\includegraphics[width=\textwidth]{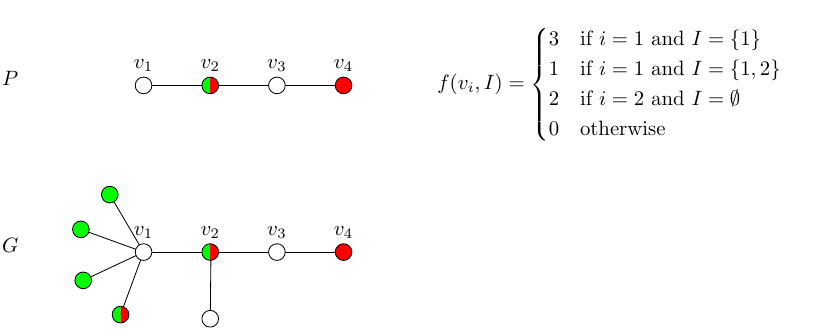}
\caption{A compressed $\{M_1,M_2\}$-colored caterpillar and 
	the caterpillar $G$ it represents. (White vertices are in no unary relations, green ones are in $M_1$, red ones are in $M_2$ and bicolored ones are both in $M_1$ and $M_2$.)}
	\label{fig:compressed}
\end{figure}

\begin{fact}
	\label{fact:auto}
		Let $(P,f)$ be a compressed caterpillar, and let $H=\Upsilon(P,f)$.
		Then, for every $v\in V(P)$ and every $I\subseteq[p]$, 
		every permutation of $L_{P,f}(v,I)$ defines an automorphism of $H$.
\end{fact}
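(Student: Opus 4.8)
The plan is to recognize the $I$-children of a fixed path vertex $v$ as mutually indistinguishable \emph{twins} in the colored graph $H=\Upsilon(P,f)$, so that permuting them merely shuffles a set of structurally identical leaves. First I would record the two defining properties of any vertex $w\in L_{P,f}(v,I)$: by the construction of $\Upsilon(P,f)$, the vertex $w$ is a pendant vertex whose unique neighbor is $v$, so $N^H(w)=\{v\}$, and $w$ belongs to exactly the relations $M_i$ with $i\in I$. In particular, all members of $L_{P,f}(v,I)$ share the same neighborhood $\{v\}$ and the same color profile, while $v$ itself, being a vertex of $P$, does not lie in $L_{P,f}(v,I)$.

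Given a permutation $\sigma$ of $L_{P,f}(v,I)$, I would extend it to a map $\pi\colon V(H)\to V(H)$ by letting $\pi$ agree with $\sigma$ on $L_{P,f}(v,I)$ and fixing every other vertex. Since $\sigma$ is a bijection of $L_{P,f}(v,I)$ and $\pi$ is the identity off this set, $\pi$ is a bijection of $V(H)$. Preservation of the unary relations is then immediate: every vertex outside $L_{P,f}(v,I)$ is fixed, and each $w\in L_{P,f}(v,I)$ and its image $\sigma(w)$ carry exactly the colors indexed by $I$, so $M_i(w)\Leftrightarrow M_i(\pi(w))$ for every $i\in[p]$.

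The remaining point is that $\pi$ preserves adjacency, which I would settle by a short case analysis on how many endpoints of a pair lie in $L_{P,f}(v,I)$. If neither endpoint lies in $L_{P,f}(v,I)$, both are fixed and nothing is to be checked. If exactly one endpoint, say $a$, lies in $L_{P,f}(v,I)$, then $a$ is pendant, so it is adjacent to the other (fixed) endpoint $b$ if and only if $b=v$; the same holds verbatim for $\sigma(a)$, and since $b$ is fixed the equivalence transfers. If both endpoints lie in $L_{P,f}(v,I)$, then—using that $v\notin L_{P,f}(v,I)$, so neither endpoint equals $v$—the two distinct pendant vertices are non-adjacent, and their distinct images are non-adjacent for the same reason. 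Hence $\pi$ preserves edges, and combined with the previous paragraph it is an automorphism of $H$. I do not anticipate any genuine obstacle: the only point needing care is the bookkeeping in the last case, where one must invoke that the children are pendant with unique neighbor $v$ and that $v$ is not among its own children, precisely ruling out spurious edges inside $L_{P,f}(v,I)$.
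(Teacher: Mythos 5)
Your proof is correct. The paper states this as a Fact without any proof, treating it as immediate from the construction of $\Upsilon(P,f)$; your argument is exactly the routine verification one would supply — the $I$-children of $v$ are pendant twins with identical neighborhood $\{v\}$ and identical color profile $I$, so extending the permutation by the identity preserves both adjacency and the unary relations.
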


\begin{cor}
	Let $\mathcal U=\{M_1,\dots,M_p\}$ be a set of unary relations,
	let $\mathsf I$ be an interpretation of graphs in $\mathcal U$-colored graphs, let
	$(P,f)$ be a compressed caterpillar, and let $H=\Upsilon(P,f)$.
	
	Then, for every $v\in V(P)$ and every $I\subseteq[p]$, 
	the vertices of $L_{P,f}(v,I)$ are clones in $\mathsf I(H)$.
	In particular, $L_{P,f}(v,I)$ is either an independent set or a clique of $\mathsf I(H)$.
\end{cor}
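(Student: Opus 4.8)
The plan is to obtain this corollary as an immediate consequence of \Cref{fact:auto}, using the standard principle that a simple interpretation commutes with automorphisms of the structure it is applied to.

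First I would record the following observation about $\mathsf I=(\nu(x),\eta(x,y))$: if $\sigma$ is any automorphism of the $\mathcal U$-colored graph $H$, then since automorphisms preserve the satisfaction of all first-order formulas, we have $H\models\nu(a)\Leftrightarrow H\models\nu(\sigma a)$ and $H\models\eta(a,b)\Leftrightarrow H\models\eta(\sigma a,\sigma b)$ for all vertices $a,b$. Hence $\sigma$ preserves the domain $\nu(H)$ and the adjacency relation of $\mathsf I(H)$, so its restriction to $\nu(H)$ is an automorphism of the interpreted graph $\mathsf I(H)$.

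Next I would fix $v\in V(P)$ and $I\subseteq[p]$ and abbreviate $L:=L_{P,f}(v,I)$. By \Cref{fact:auto}, every permutation of $L$ (extended by the identity on the remaining vertices) is an automorphism of $H$, and therefore, by the previous paragraph, descends to an automorphism of $\mathsf I(H)$ fixing every vertex outside $L$. Since automorphisms preserve $\nu(H)$, either all elements of $L$ are vertices of $\mathsf I(H)$ or none are; in the latter case there is nothing to prove, so assume $L\subseteq V(\mathsf I(H))$. For distinct $w,w'\in L$ and any vertex $z$ outside $\{w,w'\}$, the transposition $(w\,w')$ acts as an automorphism of $\mathsf I(H)$ fixing $z$, whence $z$ is adjacent to $w$ if and only if $z$ is adjacent to $w'$; thus the elements of $L$ have pairwise identical neighbourhoods outside $L$, i.e.\ they are clones in $\mathsf I(H)$. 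For the final assertion, the symmetric group on $L$ acts on $\mathsf I(H)$ by automorphisms and is transitive on ordered pairs of distinct elements of $L$, so adjacency is constant across all such pairs; hence the induced subgraph $\mathsf I(H)[L]$ is either complete or edgeless, that is, $L$ is a clique or an independent set.

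The argument is routine, and the only step meriting attention is the commutation of the interpretation with automorphisms, which is immediate from the invariance of first-order satisfaction under automorphisms. All of the combinatorial input---namely that the $I$-children of $v$ carry identical colors and are adjacent only to $v$ in $H$---is already encapsulated in \Cref{fact:auto}, so no further graph-theoretic work is required here.
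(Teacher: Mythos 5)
Your proof is correct and follows the same route as the paper: the paper's argument is exactly that, by \Cref{fact:auto}, every permutation of $L_{P,f}(v,I)$ is an automorphism of $H$ and hence descends to an automorphism of $\mathsf I(H)$, from which the clone/clique-or-independent-set conclusion follows. You simply spell out the (standard) commutation of interpretations with automorphisms in more detail than the paper does.
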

\begin{proof}
	According to \Cref{fact:auto}, every permutation of f $L_{P,f}(v,I)$ defines an automorphism of~$H$, hence an automorphism of $\mathsf I(H)$.
\end{proof}

Let $\mathcal U=\{M_1,\dots,M_p\}$ be a set of unary relations.

\begin{prop}
	\label{fact:cat_path}
	Let $\Delta$ be an integer. The class of $\mathcal U$-colored caterpillars with maximum degree $\Delta$ is a non-copying transduction of the class $\Pp$ of paths.
\end{prop}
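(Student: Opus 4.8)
The plan is to exhibit a single non-copying transduction $\mathsf T=\mathsf I\circ\Gamma_{\mathcal U'}$, with parameters depending only on $\Delta$ and $p$, such that every $\mathcal U$-colored caterpillar $H$ of maximum degree $\Delta$ lies in $\mathsf T(P)$ for an appropriate path $P$. Write $\mathcal U=\{M_1,\dots,M_p\}$ and fix the block length $B:=\Delta+2$. Given $H$, I would choose a spine $s_1\dots s_m$, so that every vertex of $H$ is either some $s_j$ or a leaf attached to a unique $s_j$; since $\Delta(H)\le\Delta$, each $s_j$ has at most $\Delta$ legs. Then I encode $H$ on the path $P$ with $mB$ vertices $w_0,\dots,w_{mB-1}$ by reserving the $j$-th block $w_{(j-1)B},\dots,w_{jB-1}$ for $s_j$: the block-start $w_{(j-1)B}$ represents $s_j$, the next $\le\Delta$ vertices (packed at the lowest positions) represent its legs, and the final vertex $w_{jB-1}$ is an unused separator.

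The coloring of $P$ uses three groups of unary predicates in $\mathcal U'$. A periodic \emph{offset} coloring $D_0,\dots,D_{B-1}$ marks $w_i$ by $D_{i\bmod B}$; thus $D_0$ marks exactly the spine vertices and offset $B-1=\Delta+1$ marks the separators. A predicate $U$ marks the vertices actually used by $H$ (all spine vertices and the occupied leg slots). Finally predicates $M_1',\dots,M_p'$ copy the color type of $H$ onto the host, so that $w_i\in M_\ell'$ iff the vertex of $H$ represented by $w_i$ lies in $M_\ell$. Crucially, the offset coloring provides a bounded-range orientation of the path: inside each block the offsets increase by one from the spine vertex outwards, which is what lets a first-order formula recover, for each leg, its parent spine vertex.

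The interpretation $\mathsf I$ has domain $\nu(x):=U(x)$ and sets the output colors by $M_\ell(x):=M_\ell'(x)$. Its adjacency formula $\eta(x,y)$ is the symmetrized disjunction of a spine--spine clause and a leg--spine clause. The spine--spine clause is $D_0(x)\wedge D_0(y)\wedge \dist(x,y)=B$; since the $D_0$-vertices sit at the multiples of $B$, this connects exactly consecutive spine vertices. The leg--spine clause is $\bigvee_{k=1}^{\Delta}\mathrm{par}_k(x,y)$, where $\mathrm{par}_k(x,y)$ asserts $D_0(x)\wedge D_k(y)$ together with the existence of a length-$k$ path from $y$ to $x$ along which the offset predicates read $D_k,D_{k-1},\dots,D_0$; this is a bounded, hence first-order (indeed strongly local), formula. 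A decreasing-offset path out of a leg at offset $k$ is forced step by step and necessarily runs back to the block-start, so $\mathrm{par}_k$ attaches each leg to its own spine vertex and to no other. I would then check that no spurious edges arise: leg--leg pairs fail both clauses (neither endpoint is $D_0$), a leg never has a decreasing-offset path into a neighboring block (the separator at offset $\Delta+1$ blocks it), and two $D_0$-vertices are at distance $B$ only when consecutive. Hence $\mathsf I$ applied to the colored $P$ yields a $\mathcal U$-colored graph isomorphic to $H$.

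The one genuinely delicate point is exactly this leg--parent assignment: an undirected path carries no first-order-definable global orientation, and ``nearest spine vertex'' is wrong, because a leg at offset $k$ may be closer to the \emph{next} spine vertex than to its own. The fix is the periodic offset coloring, which supplies a local orientation and makes the parent recoverable by a strongly local formula. Everything else — the block length, the finitely many offset predicates, and the fixed disjunction over $k\le\Delta$ — depends only on $\Delta$ and $p$, so $\mathsf T$ is a single non-copying transduction witnessing that the class of $\mathcal U$-colored caterpillars of maximum degree $\Delta$ is a non-copying transduction of $\Pp$. (If desired, \Cref{fact:auto} confirms that the particular packing of legs into slots is irrelevant, since the multiset of legs of each type at a spine vertex is all that matters up to isomorphism.)
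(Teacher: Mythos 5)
Your proof is correct and follows essentially the same approach as the paper: both encode the caterpillar along the host path block by block (one block per spine vertex containing that vertex and its legs), use auxiliary unary predicates to let a strongly local formula recover each leg's parent spine vertex and the consecutive-spine adjacencies, and observe that all parameters depend only on $\Delta$ and $|\mathcal U|$. The only difference is bookkeeping — you use fixed-length blocks of size $\Delta+2$ with a periodic offset coloring $D_0,\dots,D_{B-1}$ plus an occupancy predicate $U$, whereas the paper uses variable-length blocks delimited by two marker predicates $S$ and $T$ and tests adjacency via ``no/exactly one $S$ between $x$ and $y$'' within distance $\Delta+2$ — and both devices serve the identical purpose of making the leg-to-parent assignment first-order definable.
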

\begin{proof}
	Let $\mathcal U'=\mathcal U\cup\{S,T\}$, where $S$ and $T$ are two unary symbols (not in $\mathcal U$).
	Let $G$ be a $\mathcal U$-colored caterpillar, and 
	let $(P,f)$ be a compressed caterpillar with $\Upsilon(P,f)=G$.
	Let $v_1,\dots,v_n$ be the vertices of $P$ (in order).
	For each $i\in[n]$ we associate with $v_i$ a path  $\mathcal U'$-colored path $Q_i$ consisting of a vertex marked $S$, a vertex marked the same way as $v_i$,
	for each $I\subseteq [p]$, $f(v_i,I)$ vertices marked by all the $M_i$ with $i\in I$, then a vertex marked $T$.
	We consider the interpretation $\mathsf I=(\neg S(x)\vee \neg T(x),\eta(x,y)$, where $\eta(x,y)$ expresses that
	 $x$ and $y$ are at distance at most $\Delta+2$, and 
	\begin{itemize}
		\item either one of $x$ and $y$ is adjacent to a vertex marked $S$ and no vertex  between $x$ and $y$ is marked $S$,
		\item or both $x$ and $y$ are adjacent to a vertex marked $S$ and there is only one vertex marked $S$ between $x$ and $y$.
	\end{itemize}
	Let $Q$ be the path obtained by concatenating the paths $Q_1,\dots,Q_n$. Then, $\mathsf I(Q)=G$ (cf \Cref{fig:cat_path}).
	It follows that the non-copying transduction defined by $\mathsf I$ witnesses that the class of $\mathcal U$-colored caterpillars with maximum degree $\Delta$ is a non-copying transduction of $\Pp$.
\end{proof}

\begin{figure}[h!t]
\centering\includegraphics[width=.75\textwidth]{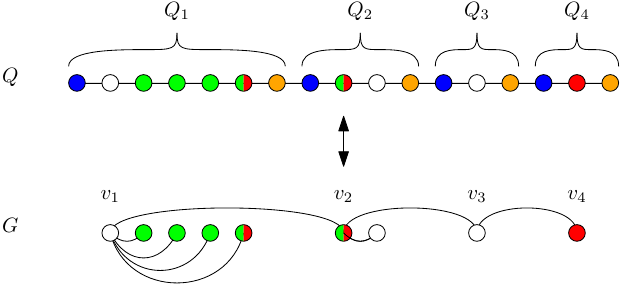}
\caption{Encoding a caterpillar with bounded degree in a path. Mark $S$ is blue, mark $T$ is orange.}
\label{fig:cat_path}
\end{figure}

		Let $(P,f_1)$ and $(P,f_2)$ be two compressed caterpillars 
based on  the same $\mathcal U$-colored path such that $\min(f_1(v,I),q)=\min(f_2(v,I),q)$ and $f_1(v,I)\leq f_2(v,I)$ for all $v\in V(P)$ and all 
$I\subseteq [p]$, and let $H_1=\Upsilon(P,f_1)$ and $H_2=\Upsilon(P,f_1)$, where $H_1$ is naturally identified with an induced subgraph of $H_2$.

\begin{lem}
	For every formula $\varphi(x,y)$ with quantifier rank at most $q-2$ and for every two vertices $u,v$ in $H_1$ we have 
	
	\[
	H_1\models \varphi(u,v)\qquad\iff\qquad H_2\models \varphi(u,v).\]
\end{lem}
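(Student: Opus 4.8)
The plan is to prove this by an Ehrenfeucht--Fra{\"\i}ss\'e game argument. Since $\varphi(x,y)$ has two free variables and quantifier rank at most $q-2$, by the Ehrenfeucht--Fra{\"\i}ss\'e theorem in its parameterised form (the game of the previous paragraph, started from a nonempty position, characterises indistinguishability by formulas with free variables) it suffices to show that Duplicator wins the $(q-2)$-round game played on $H_1$ and $H_2$ from the initial position mapping $u\mapsto u$ and $v\mapsto v$. This initial map is a partial isomorphism: as $H_1$ is an induced subgraph of $H_2$ containing $u$ and $v$, these two vertices have the same colours and the same mutual adjacency in both graphs. Note that over the entire game at most $2+(q-2)=q$ vertices get pebbled on each side.

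First I would fix the relevant vocabulary. Call a vertex \emph{spine} if it lies on $P$ and a \emph{leaf} otherwise; every leaf is a pendant vertex whose unique neighbour is a spine vertex, and no two leaves are adjacent. Each leaf has a \emph{type} $(w,I)$, where $w\in V(P)$ is its parent and $I\subseteq[p]$ its colour set, so that the leaves of type $(w,I)$ in $H_i$ are exactly the set $L_{P,f_i}(w,I)$, which by the earlier fact consists of mutually interchangeable clones. The two graphs share the spine $P$ verbatim (same vertices, colours, and path-edges). The threshold hypothesis yields a dichotomy for each type $(w,I)$: either $f_1(w,I)=f_2(w,I)<q$, and then the leaves of this type in $H_1$ are literally those in $H_2$ via $H_1\subseteq H_2$ (the ``small'' case), or else $f_1(w,I)\ge q$ and $f_2(w,I)\ge q$, so each side has at least $q$ leaves of this type (the ``large'' case).

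Next I would describe Duplicator's strategy, maintaining the invariant that the current partial map is the identity on all pebbled spine vertices and sends each pebbled leaf to a leaf of the same type. If Spoiler plays a spine vertex, Duplicator copies it identically. If Spoiler plays a leaf $\ell$ of type $(w,I)$ that is already matched, Duplicator replays its partner; if $\ell$ is fresh, then in the small case Duplicator answers with the same vertex (which lies in both graphs and, by the invariant, is unpebbled on both sides precisely when it is unpebbled on one), and in the large case Duplicator answers with any not-yet-pebbled leaf of type $(w,I)$ on the opposite side. Verifying that the invariant is preserved is then routine: a leaf is adjacent only to its parent and leaves of a common type carry equal colours, so a matched pair of leaves is adjacent exactly to the image of the shared parent and to nothing else pebbled; together with the spine being mapped identically, this gives a partial isomorphism at every stage.

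The main obstacle is simply the counting that justifies the large case: when a fresh leaf of a large type is requested, a not-yet-pebbled leaf of that type must remain on the answering side. This holds because before any move at most $q-1$ vertices are pebbled on that side, while at least $q$ leaves of the type are present; this is exactly why the hypothesis takes the threshold at $q=(q-2)+2$. Everything else (preservation of adjacency, colours, and equality by the map) follows immediately from the fact that leaves are pendant clones and the spine is common to $H_1$ and $H_2$, so I expect no further difficulty.
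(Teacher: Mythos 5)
Your proof is correct and takes exactly the approach the paper intends: the paper's entire proof is the sentence ``This follows from a standard argument based on an Ehrenfeucht--Fra{\"\i}ss\'e game,'' and your Duplicator strategy (identity on the spine, type-preserving matching on the pendant clones, with the threshold $q$ guaranteeing a fresh leaf is always available in the large case) is precisely that standard argument, worked out with the correct bookkeeping.
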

\begin{proof}
	This follows from a standard argument based on an Ehrenfeucht-Fra{\"\i}ss\'e game.
\end{proof}

\begin{cor}
	\label{cor:cat_ind}
	For every interpretation $\mathsf I=(M_1(x),\eta(x,y))$ where $\eta$ has quantifier rank at most $q-2$, the graph 
	$\mathsf I(H_1)$ is an induced subgraph of $\mathsf I(H_2)$.
\end{cor}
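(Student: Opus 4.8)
The plan is to observe that this statement is essentially immediate once we understand how the interpretation $\mathsf I=(M_1(x),\eta(x,y))$ acts on vertex sets, and that all the genuine work has already been done by the preceding Lemma. Recall that $\mathsf I(H)$ has vertex set equal to the set of $M_1$-vertices of $H$, and two such vertices are adjacent exactly when $\eta$ holds between them.

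First I would check the inclusion of vertex sets. Since $H_1=\Upsilon(P,f_1)$ is identified with an induced subgraph of $H_2=\Upsilon(P,f_2)$, and both graphs are colored according to the same recipe built from the common $\mathcal U$-colored path $P$ (so the unary predicates, in particular $M_1$, agree on every vertex of $H_1$, using that $f_1(v,I)\le f_2(v,I)$ so the $f_1(v,I)$ retained children of each $(v,I)$ carry exactly the colors of the first children in $H_2$), the $M_1$-vertices of $H_1$ form a subset of the $M_1$-vertices of $H_2$. Hence $V(\mathsf I(H_1))\subseteq V(\mathsf I(H_2))$. Next I would handle adjacency: for any two vertices $u,v\in V(\mathsf I(H_1))$ we have $uv\in E(\mathsf I(H_1))$ iff $H_1\models\eta(u,v)$ and $uv\in E(\mathsf I(H_2))$ iff $H_2\models\eta(u,v)$. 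Applying the preceding Lemma with $\varphi:=\eta$, which is licit precisely because $\eta$ has quantifier rank at most $q-2$, yields $H_1\models\eta(u,v)\iff H_2\models\eta(u,v)$. Thus adjacency in $\mathsf I(H_1)$ coincides with adjacency in $\mathsf I(H_2)$ on every pair of vertices of $\mathsf I(H_1)$.

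Combining these two observations shows that $\mathsf I(H_1)$ is exactly the subgraph of $\mathsf I(H_2)$ induced by $V(\mathsf I(H_1))$, i.e.\ an induced subgraph, as claimed. There is essentially no obstacle at the level of the corollary: the substantive point is the Lemma, which is the real Ehrenfeucht--Fra{\"\i}ss\'e argument bounding how much first-order formulas of quantifier rank $q-2$ can see when one merely increases the multiplicities of identically colored children past the threshold $q$. The only detail demanding (minimal) care here is the consistency of the coloring, and hence of $M_1$, under the identification of $H_1$ as an induced subgraph of $H_2$; this is guaranteed by both compressed caterpillars being based on the same colored path $P$ together with the hypothesis $f_1(v,I)\le f_2(v,I)$.
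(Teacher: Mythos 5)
Your proof is correct and matches the paper's intent exactly: the paper states this corollary without proof, treating it as an immediate consequence of the preceding lemma applied to $\eta$ (plus the observation that the unary predicate $M_1$, being quantifier-free, agrees on $H_1$ under its identification as an induced subgraph of $H_2$). Your careful check of the vertex-set inclusion and of the adjacency equivalence is precisely the intended argument.
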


\begin{lem} 
	\label{lem:cat_tech}
	Let $\mathcal U=\{M_1,\dots,M_p\}$ be a set of unary relations,
	let $\mathsf I=(M_1(x),\eta(x,y)$ be an interpretation of graphs in $\mathcal U$-colored graphs, let $q$ be at least the quantifier rank of~$\eta$ plus two, 
	and let  $G=\Upsilon(P,f)$.
	Define the function $\widehat{f}:V(P)\times\mathcal P([p])\rightarrow \{0,\dots,q\}$ by
	$\widehat f(v,I)=\min(f(v,I),q)$, and let $\widehat G=\Upsilon(P,\widehat f)$.
	
	Then, the graph $\mathsf I(G)$ 
	is obtained from $\mathsf I(\widehat G)$ as follows:
	for each $v\in V(P)$ and each $I\subseteq [p]$ with $1\in 1$ and $f(v,I)>q$, select arbitrarily a vertex $\ell(v,I)\in L_{P,\widehat f}(v,I)$ and
	blow the vertex $\ell(v,I)$ in $\mathsf I(\widehat{G})$ into a clique or an independent set of size $f(v,I)-\widehat{f}(v,I)+1$ (depending on whether $L_{P,\widehat f}(v,I)$ is, in $\mathsf I(\widehat{G})$, a clique or an independent set).
\end{lem}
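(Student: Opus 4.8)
The plan is to read the statement as a two-step comparison of $\mathsf I(G)$ with its truncation $\mathsf I(\widehat G)$, using the two corollaries established immediately above: \Cref{cor:cat_ind}, which says that capping multiplicities at $q$ yields an induced subgraph, and the corollary following \Cref{fact:auto}, which says that each set $L_{P,f}(v,I)$ induces a set of clones---hence a clique or an independent set---in $\mathsf I$. Since $\widehat G=\Upsilon(P,\widehat f)$ is exactly the graph obtained from $G$ by capping every multiplicity $f(v,I)$ at $q$, the only way $\mathsf I(G)$ can differ from $\mathsf I(\widehat G)$ is through surplus clones, and the blow-up in the statement is precisely the operation that reinstates them.

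First I would apply \Cref{cor:cat_ind} with $H_1=\widehat G$ and $H_2=G$. Its hypotheses hold because $\widehat f(v,I)=\min(f(v,I),q)\le f(v,I)$ pointwise and $\min(\widehat f(v,I),q)=\min(f(v,I),q)$, capping at $q$ being idempotent. Hence $\mathsf I(\widehat G)$ is identified with an induced subgraph of $\mathsf I(G)$. It is essential here that $\widehat f$ caps the multiplicity for \emph{every} $I\subseteq[p]$, including those with $1\notin I$: such children never lie in the domain $\nu(x)=M_1(x)$, but they may witness the quantifiers of $\eta$, and capping them at $q\ge\mathrm{qr}(\eta)+2$ is exactly what makes \Cref{cor:cat_ind} applicable. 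Next I would locate the vertices of $\mathsf I(G)$ that are absent from $\mathsf I(\widehat G)$: as $\nu=M_1$, the domain of $\mathsf I(G)$ consists of the $M_1$-marked vertices of $P$ together with the sets $L_{P,f}(v,I)$ for $1\in I$; the path vertices and the sets with $f(v,I)\le q$ are shared with $\mathsf I(\widehat G)$, so the missing vertices are precisely the $f(v,I)-\widehat f(v,I)$ surplus elements of each $L_{P,f}(v,I)$ with $1\in I$ and $f(v,I)>q$.

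I would then use the clone corollary in $\mathsf I(G)$: every element of $L_{P,f}(v,I)$ has the same adjacency to each vertex outside the set, and the set induces a clique or an independent set; this dichotomy is inherited by $L_{P,\widehat f}(v,I)$ in the induced subgraph $\mathsf I(\widehat G)$, where it agrees with the description used in the statement. Fixing a representative $\ell(v,I)\in L_{P,\widehat f}(v,I)$, I would verify that replacing it by a clique (resp.\ independent set) of size $f(v,I)-\widehat f(v,I)+1$ rebuilds $L_{P,f}(v,I)$ faithfully: the $\widehat f(v,I)-1$ retained elements and the $f(v,I)-\widehat f(v,I)+1$ new copies total $f(v,I)$ vertices; the copies inherit the external adjacencies of $\ell(v,I)$, which by the induced-subgraph identification coincide with the true external adjacencies in $\mathsf I(G)$; and their mutual adjacencies, as well as their adjacencies to the retained elements, follow the common clique/independent pattern. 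Carrying out this blow-up simultaneously over all pairs $(v,I)$ with $1\in I$ and $f(v,I)>q$ yields $\mathsf I(G)$.

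The main obstacle is checking that the simultaneous blow-ups do not interfere across different pairs. This is again handled by the clone property: for distinct pairs $(v,I)$ and $(v',I')$, every element of $L_{P,f}(v,I)$ is adjacent to all or to none of $L_{P,f}(v',I')$, so the bipartite pattern between any two such sets is all-or-nothing; consequently, letting the copies inherit the cross-adjacencies of $\ell(v,I)$ and $\ell(v',I')$ reproduces exactly the cross-adjacencies of $\mathsf I(G)$. With this consistency verified, the stated reconstruction of $\mathsf I(G)$ from $\mathsf I(\widehat G)$ follows.
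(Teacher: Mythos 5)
Your proposal is correct and rests on exactly the two ingredients the paper uses: \Cref{cor:cat_ind} to identify $\mathsf I(\widehat G)$ with an induced subgraph of $\mathsf I(G)$, and the clone corollary following \Cref{fact:auto} to force the adjacencies of the surplus children. The only difference is organizational --- the paper peels off one surplus child at a time by induction on $\sum_{v,I}|f(v,I)-\widehat f(v,I)|$, which sidesteps the cross-pair consistency check that your simultaneous blow-up requires (and which you correctly supply via the clone property).
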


\begin{proof}
	We prove the statement of the lemma by induction over 
	\[W(f):=\sum_{v\in V(P)}\sum_{I\subseteq [p]} |f(v,I)-\widehat f(v,I)|.\]
	If $W(f)=0$, then $f=\widehat f$, $\widehat{G}=G$ and $\mathsf{I}(\widehat{G})=\mathsf I(G)$. Hence, the base case obviously holds.
	Assume that the statement holds for $W(f)=k\geq 0$ and 
	let $f$ be such that $W(f)=k+1$. Then, there exists a vertex $v_0\in V(P)$ and a subset $I_0\subseteq [p]$ with 
	$f(v_0,I_0)>0$.  Let $f(v,I)=f(v,I)-\delta_{v,v_0}\cdot\delta_{I,I_0}$, where $\delta$ is Kronecker's symbol\footnote{By definition, $\delta_{x,y}$ is $0$ if $x\neq y$ and $1$ if $x=y$.}.
	Note that $G'=G-v_0$ is the $\mathcal U$-colored caterpillar represented by $(P,f')$.
	If $1\notin I$, it directly follows from \Cref{cor:cat_ind} that
	$\mathsf I(G')=\mathsf I(G)$. Thus, the statement of the theorem holds by the induction hypothesis.
	Otherwise, by~\Cref{cor:cat_ind}, we have $\mathsf I(G')=\mathsf I(G)-v_0$. By construction, there exists a vertex $u_0\in V(P)$ such that $v_0$ is an $I$-child of $u_0$ in $G$.  As the set of $I_0$-children of $u_0$ in~$G'$ has size at least two and are clones in $\mathsf I(G')$, there is no choice about how to define the adjacencies of $v_0$, and the statement of the lemma holds.
\end{proof}

\begin{thm}
	\label{thm:trans_cat}
	Let $\Cc$ be a (non-copying) transduction of the class of all caterpillars. If the class $\Cc$ has bounded maximum degree, then $\Cc$ is a transduction of the class of all paths. Otherwise, there exists a class $\Cc'$ with bounded degree that is a transduction of the class of all caterpillars, such that the graphs in $\Cc$ are obtained from graphs in $\Cc'$ by blowing each vertex either into an independent set or into a complete graph (of arbitrary size).
\end{thm}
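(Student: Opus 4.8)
The plan is to run the whole argument through the compressed-caterpillar formalism and to reduce everything to the bounded-degree situation, where \Cref{fact:cat_path} applies directly. First I would dispose of the copy operation and of the \emph{global} part of the interpretation. By \Cref{thm:normal} it is enough to understand non-copying transductions (the copy operation $\mathsf C_k$ of the full normal form only multiplies each child into a bounded gadget, which can be folded into the later bookkeeping), and by \Cref{lem:normal0} we may assume the interpretation has the form $\mathsf I=(M_1(x),\eta(x,y))$ with $\eta$ a $t$-local formula; set $q:=\mathrm{qr}(\eta)+2$. Every $G\in\Cc$ is then $G=\mathsf I(\Upsilon(P,f))$ for some compressed $\mathcal U$-colored caterpillar $(P,f)$.

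The heart of the reduction is \Cref{lem:cat_tech}. Capping the multiplicities by $\widehat f:=\min(f,q)$, that lemma exhibits $\mathsf I(\Upsilon(P,f))$ as the graph obtained from the \emph{core image} $\mathsf I(\Upsilon(P,\widehat f))$ by blowing up, for each pair $(v,I)$ with $1\in I$ and $f(v,I)>q$, a single representative child $\ell(v,I)$ into a clique or an independent set, according to whether $L_{P,\widehat f}(v,I)$ induces a clique or an independent set in the core. The decisive gain is that the capped caterpillar $\widehat G:=\Upsilon(P,\widehat f)$ has maximum degree at most $2+q\cdot 2^{|\mathcal U|}$, i.e. bounded. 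Hence every $G\in\Cc$ is a clique/independent-set blow-up of a graph $\mathsf I(\widehat G)$ drawn from a class that is a non-copying transduction of the class of bounded-degree $\mathcal U$-colored caterpillars, and therefore of $\Pp$ (hence of caterpillars) by \Cref{fact:cat_path}.

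It remains to make the candidate core class bounded-degree and to split the two cases. The subtle point is that a $t$-local but not strongly local $\eta$ may insert long-range edges, so $\mathsf I(\widehat G)$ need not have bounded degree even though $\widehat G$ does (already $\eta(x,y)=A(x)\wedge A(y)$ turns a marked set into a clique). I would cure this by applying the local normal form \emph{again}, now to $\mathsf I$ on the bounded-degree caterpillars $\widehat G$: by \Cref{thm:normal} this writes $\mathsf I(\widehat G)$ as a perturbation $\mathsf P$ of an immersive image $\mathsf T_{\rm imm}(\widehat G)$, and since $\widehat G$ has bounded degree and the interpretation of $\mathsf T_{\rm imm}$ is strongly $r$-local, $\mathsf T_{\rm imm}(\widehat G)$ has degree at most $\Delta(\widehat G)^{r+1}$, which is bounded. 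Taking $\Cc'$ to be this immersive image yields a bounded-degree class that is a transduction of caterpillars, with each $G\in\Cc$ obtained from a graph of $\Cc'$ by the leaf-module blow-ups together with the perturbation $\mathsf P$. For the first case, when $\Cc$ has bounded maximum degree it is weakly sparse, so \Cref{cor:rel-normal} (via \Cref{lem:perws}) shows $\Cc$ is \emph{nearly} a weakly sparse hereditary immersive transduction of the caterpillars; boundedness of the degree forces the blow-up multiplicities to be bounded (a clique blow-up of size $s$ creates degree $\ge s-1$, an independent-set blow-up forces the degree of a neighbour to grow), so $\Cc$ collapses to a class of bounded-degree caterpillar images and is a transduction of $\Pp$. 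Otherwise we retain the blow-up description of the second case.

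The step I expect to be the main obstacle is precisely the interaction of the perturbation $\mathsf P$ with the blow-up structure, i.e. showing that $\mathsf P$ does not destroy the honest clique/independent-set blow-ups and does not secretly manufacture unbounded cliques outside them. My plan here is to use that, by \Cref{fact:auto}, the children of a leaf-module $L_{P,\widehat f}(v,I)$ are clones carrying identical colours, so each subset-complementation of $\mathsf P$ contains such a module entirely or not at all; hence each leaf-module lies in a single perturbation class $V_{\mathbf x}$ in the sense of \Cref{lem:perturb} and survives as a clique or an independent set with uniform external adjacency. Tracking simultaneously how $\mathsf P$ acts on the bounded-degree core $\mathsf T_{\rm imm}(\widehat G)$ and on the blown-up modules—and verifying that in the unbounded-degree case the resulting description is exactly ``a perturbation of clique/independent-set blow-ups of a bounded-degree transduction of $\Pp$''—is the bookkeeping that requires the most care, and is the point at which the role of the perturbation must be made explicit.
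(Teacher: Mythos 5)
Your skeleton coincides with the paper's up to the decisive step: both reduce to a local interpretation $(M_1(x),\eta(x,y))$ via \Cref{lem:normal0} and then let \Cref{lem:cat_tech} cap the leaf multiplicities at $q$ and describe $\mathsf I(\Upsilon(P,f))$ as a clique/independent-set blow-up of the capped image $\mathsf I(\Upsilon(P,\widehat f))$. The genuine gap is in your treatment of the second case. The theorem asserts that the graphs of $\Cc$ are obtained from a bounded-degree class $\Cc'$ by blow-ups \emph{alone}; your construction instead produces them as a perturbation of blow-ups of an immersive image, and you explicitly leave the absorption of that perturbation as an unresolved ``main obstacle''. It cannot be absorbed in general: a perturbation of a bounded-degree class need not have bounded degree, and a subset complementation applied after the blow-ups destroys the module structure (it can merge a blown-up independent set with part of the core into a large clique that is not the blow-up of any single bounded-degree vertex), so the resulting description is strictly weaker than the one claimed. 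The paper never invokes \Cref{thm:normal} here at all: \Cref{lem:cat_tech} is proved for a merely local $\eta$, because the clone/automorphism argument (\Cref{fact:auto}, \Cref{cor:cat_ind}) is insensitive to the range of $\eta$, and the second case is read off from it directly with $q=\mathrm{qr}(\eta)+2$, taking $\Cc'$ to be the class of capped images --- no immersive/perturbation decomposition is needed or wanted.

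Your bounded-degree case is also off the paper's route and has a hole of its own. The detour through \Cref{cor:rel-normal} and \Cref{lem:perws} is unnecessary, and the degree argument you sketch fails for isolated vertices: an independent-set blow-up of a vertex with no neighbours in the image forces no degree to grow, so bounded degree of $\Cc$ does not by itself bound those multiplicities. The paper handles exactly this by first deleting isolated vertices (forming $\Cc'$ from $\Cc$), arguing that a clone class $X$ meeting a non-isolated vertex forces some vertex to have degree at least $|X|-1$, hence $f(v,I)\leq\Delta+1$ whenever $1\in I$, so the capped caterpillars already reproduce $G$ exactly and have bounded degree; \Cref{fact:cat_path} then places them below $\Pp$, and the isolated vertices are reattached via $\Cc\subseteq\Cc'+\Ee$ and \Cref{fact:add_gluing}. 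Your observation that $\mathsf I(\widehat G)$ need not have bounded degree when $\eta$ is only local is a fair point to raise against the terseness of the paper's second case, but any fix must preserve the pure blow-up form of the conclusion, which your perturbation-based repair does not.
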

\begin{proof}
	Let $\Cc$ be a  $\mathsf T$-transduction of the hereditary closure  $\Dd$ of the class of all caterpillars\footnote{as the hereditary closure of a class is non-copying transduction equivalent to it, we can consider the hereditary closure of the class of all the caterpillars instead of the class of all the caterpillars.}, where $\mathsf T$ is non-copying.  Let $\mathsf I$ be the transduction defining $\mathsf T$. 
	According to \Cref{lem:normal0}, we may assume that $\mathsf I=(M_1(x),\eta(x,y))$, where $M_1$ is a unary relation and $\eta$ is a local formula. Let $q$ be the quantifier-rank of $\eta$.
	Let $\Cc'$ be the class containing, for each graph $G\in\Cc$, the induced subgraph of $G$ obtained by deleting all the isolated vertices.
	Note that $\Cc'\subseteq \mathsf T(\Dd)$.
	
	Assume that $\Cc$ has bounded degree, and let $\Delta$ be the maximum degree of the graphs in~$\Cc$. Let $G\in\Cc'$ and let $H$ be a graph with a  minimum number of vertices such that $H\in\Dd$ and $G\in\mathsf T(H)$.
	As $G\in\mathsf T(H)$, there exists a unary expansion $H^+$ of $H$ with $G=\mathsf I(H^+)$. 
	Let $(P,f)$ be a compressed $\mathcal U$-colored caterpillar representing $H^+$, let $\widehat f$ be defined from $f$ as in  \Cref{lem:cat_tech}  and let $\widehat{H}^+$ be the $\mathcal U$-colored caterpillar represented by~$(P,\widehat f)$.
	According to \Cref{lem:cat_tech}  we get that $\mathsf I(G)$ contains a subset $X$ of clones of $G=\mathsf I(H^+)$ of size $\max_{v\in V(P)}\max_{1\in I\subseteq [p]} f(v,I)$. As $G$ contains no isolated vertex, any neighbor of some $v\in X$ has degree at least $|X|-1$. Hence, $\max_{v\in V(P)}\max_{1\in I\subseteq [p]} f(v,I)\leq \Delta+1$.
	It follows that if we choose $q\geq\Delta+1$, we have $\mathsf I(\widehat{H}^+)=\mathsf I(H^+)=G$.
	Note that $\widehat{H}^+$ is a $\mathcal U$-colored caterpillar with maximum degree at most $\Delta'=2^p (\Delta+1)+2$.
	 It follows that $\Cc'$ is a non-copying transduction of the class  $\Dd'$  of all the caterpillars with maximum degree~$\Delta'$.
Hence, according to \Cref{fact:cat_path}, the class $\Dd'$ is a non-copying transduction of the class $\Pp$ of all paths.	
As $\Ee$ is also a non-copying transduction of $\Pp$ (consider the transduction defined by the interpretation $(\top,\bot)$ that delete all the edges) and as $\Dd\subseteq\Dd'+\Ee$ (every graph in $\Dd$ is the disjoint union of a graph in $\Dd'$ and isolated vertices), we deduce from \Cref{fact:add_gluing} that $\Dd$ is a non-copying transduction of $\Pp$.

We now consider the case where $\Cc$ does not have bounded degree. Then, the conclusion directly follows from  \Cref{lem:cat_tech} where we choose $q$ to be the quantifier rank of $\eta$ plus two.
\end{proof}
\section{A distributive lattice among some low classes}

Recall that a \emph{lattice} is a partially ordered set in which every pair of elements has a unique supremum (also called a least upper bound or join) and a unique infimum (also called a greatest lower bound or meet). 
A lattice is \emph{distibutive} if the operations of join and meet distribute over each other. 
We shall see in \cref{thm:nolattice} that the transduction quasi-order is not a lattice. However, we now give some non-trivial examples of the existence of the greatest lower bound of two incomparable classes.

\begin{exa}
	\label{ex:BT}
	The class of all cubic trees is the greatest lower bound of the class of all trees and of the class of all cubic graphs.
\end{exa}
\begin{proof}
	That the class of all cubic trees is a lower bound of the class of all trees and the class of all cubic graphs is obvious, as it is included in both classes.
	
	Let  $\Cc$ be a lower bound of the class of all trees and the class of all cubic graphs. 
	Being a transduction of the class of all cubic graphs, $\Cc$ is a perturbation of a class $\Dd$ with bounded degree (by~\Cref{prop:transcubic}). Trees have cliquewidth $3$ and the property of a class of having bounded cliquewidth is preserved by transduction (see \cite{colcombet2007combinatorial}, for instance). Thus, 
	being a transduction of the class of all trees,  the class $\Dd$ has bounded cliquewidth. As the class $\Dd$ is also weakly sparse, it has bounded treewidth  \cite{Gurski2000}. 
	Thus, $\Cc$ is a perturbation of a class with both bounded degree and bounded treewidth. Hence, according to \Cref{lem:BT}, $\Cc$ is a transduction of the class of all cubic trees.
\end{proof}

Also, we have:
\begin{exa}
	\label{ex:P}
	The class of all paths is the greatest lower bound of the class of all caterpillars  and of the class of all cubic graphs.
\end{exa}
\begin{proof}
	That the class of all paths is a lower bound of the class of all caterpillars  and of the class of all cubic graphs is obvious, as it is included in both classes.
	
	Let  $\Cc$ be a lower bound  of the class of all caterpillars  and of the class of all cubic graphs.
	Being a transduction of the class of all cubic graphs, $\Cc$ is a perturbation of a class~$\Dd$ with bounded degree (by~\Cref{prop:transcubic}). Note that it follows that $\Dd$ is a perturbation of $\Cc$.
	As $\Cc$ is a transduction of the class of all caterpillars, so is $\Dd$.
	According to \Cref{thm:trans_cat}, $\Dd$~(and hence $\Cc$)  is a non-copying transduction of  the class of all paths.
\end{proof}
Note that in the above, the class of cubic graphs can be replaced by any class of bounded degree transducing all paths, for instance the class of all cubic trees or the class of all grids.

The lattice structure shown in~\Cref{fig:lattice} is effectively present in the quasi-order, in the sense that the meets and joins shown in the figure are actually meets and joins in the general quasi-order.

\begin{figure}[ht]
$$\xy
\xymatrix@!C=1cm{
	&\text{\scriptsize tree} \cup\text{\scriptsize cubic}\ar@{-}[dl]\ar@{-}[dr]\\
	\text{\scriptsize\bf tree}\ar@{-}[dr]&&\text{\scriptsize caterpillar}\cup\text{\scriptsize cubic}\ar@{-}[dl]\ar@{-}[dr]\\
	&\text{\scriptsize caterpillar} \cup\text{\scriptsize cubic tree}\ar@{-}[dl]\ar@{-}[dr]&&\text{\scriptsize star forest}\cup\text{\scriptsize cubic}\ar@{-}[dl]\ar@{-}[dr]\\
	\text{\scriptsize\bf caterpillar}\ar@{-}[dr]&&\text{\scriptsize star forest}\cup\text{\scriptsize cubic tree}\ar@{-}[dl]\ar@{-}[dr]&&\text{\scriptsize\bf cubic}\ar@{-}[dl]\\
	&\text{\scriptsize star forest}\cup\text{\scriptsize path}\ar@{-}[dl]\ar@{-}[dr]&&\text{\scriptsize\bf cubic tree}\ar@{-}[dl]\\
	\text{\scriptsize\bf  star forest}\ar@{-}[dr]&&\text{\scriptsize\bf path}\ar@{-}[dl]\\
	&\text{\scriptsize\bf edgeless}
}
\endxy$$
\caption{The lattice structure induced by the classes of all trees, by the class of all caterpillars (i.e. the class of all connected graphs with pathwidth $1$ \cite{proskurowski1999classes}), by the class of all star forests, by the class of edgeless graphs, by the class of all paths, by the class of all cubic trees, and by the class of all cubic graphs.
This hierarchy takes place in the lower part of Fig~\ref{fig:hierarchy}.}
\label{fig:lattice}
\end{figure}

\part{Applications} \label{part:applications}
Our applications are divided into three parts. In structural graph theory, it is common to weaken properties by not insisting that they hold globally but only at every local scale, as in \cite{grohe2003local}. As might be expected, this interacts well with the locality of first-order logic, and we give some applications in the first section. The next section is concerned with the minimum and maximum classes for properties $\Pi$ of the transduction quasi-order, which serve as canonical obstructions for $\Pi$ or capture all the complexity of $\Pi$, respectively. This leads to transduction dualities, which are a framework for dichotomy statements akin to the Grid Minor Theorem \cite{robertson1986graph} stating that a class has bounded tree-width if and only if it does not contain arbitrarily large grids as minors (and which can be seen as an MSO-transduction duality for weakly sparse classes between the class of trees and the class of grids). Finally, we turn to dense analogues, which uses the transduction quasi-order to generalize various properties of weakly sparse classes to general hereditary classes.

\section{Local properties}\label{sec:monadic-dependence-and-monadic-stability}
We begin with yet another application of the local normal form, showing that all transductions of certain graph classes that are locally well-behaved can be obtained by perturbations of such classes. 
For a transduction \dset~$\Pi$ (a class property $\Pi$ is a \emph{transduction \dset} --- that is, a downset in the transduction quasi-order --- if it is preserved by transductions), the set of classes of graphs that are \emph{locally} $\Pi$, denoted $\mbox{\sf loc-}\Pi$, is the set of all classes~$\mathscr C$ such that for every integer $d$, the class $\rloc{d}{\Cc}$ of all balls of radius~$d$ of graphs in $\mathscr C$ belongs to $\Pi$, and we denote by $\overline{\mbox{\sf loc-}\Pi}$  the set of all the classes that are perturbations of a class in $\mbox{\sf loc-}\Pi$.
\begin{thm}
	\label{thm:local-prop}
	Let $\Pi$ be a transduction \dset. Then $\overline{\mbox{\sf loc-}\Pi}$  is a transduction \dset.
\end{thm}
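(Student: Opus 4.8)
The plan is to unfold both layers in the hypothesis, fold the two perturbations into a single transduction, and then invoke the local normal form (\Cref{thm:normal}) so that the only real work is to see how immersive transductions and the copy operation interact with local balls. First I would set up the data. Let $\Cc\in\overline{\mbox{\sf loc-}\Pi}$, so $\Cc\subseteq\mathsf P_0(\Cc_0)$ for some perturbation $\mathsf P_0$ and some $\Cc_0\in\mbox{\sf loc-}\Pi$, and suppose $\Dd\sqsubseteq_\FO\Cc$, say $\Dd\subseteq\mathsf T(\Cc)$. Since a perturbation is a non-copying transduction, $\mathsf T\circ\mathsf P_0$ is again a transduction and $\Dd\subseteq(\mathsf T\circ\mathsf P_0)(\Cc_0)$. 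By \Cref{thm:normal}, $\mathsf T\circ\mathsf P_0\le\mathsf P\circ\mathsf T_{\rm imm}\circ\mathsf C_k$ for a copy operation $\mathsf C_k$, an immersive transduction $\mathsf T_{\rm imm}$, and a perturbation $\mathsf P$. Writing $\Dd_0:=\mathsf T_{\rm imm}(\mathsf C_k(\Cc_0))$, we get $\Dd\subseteq\mathsf P(\Dd_0)$, i.e. $\Dd$ is a perturbation of $\Dd_0$. So it suffices to prove $\Dd_0\in\mbox{\sf loc-}\Pi$, that is, $\rloc{d}{\Dd_0}\in\Pi$ for every $d$.

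The heart of the matter is the claim that balls in $\Dd_0$ are transductions of slightly larger balls in $\Cc_0$: concretely $\rloc{d}{\Dd_0}\sqsubseteq_\FO\rloc{(d+1)r}{\Cc_0}$, where $r$ is the locality radius of the strongly local interpretation $\mathsf I=(M(x),\eta(x,y))$ of $\mathsf T_{\rm imm}$ (using \Cref{lem:normal0} to assume the domain is a unary relation $M$). To establish this, fix $H\in\Dd_0$, say $H=\mathsf T_{\rm imm}(G^+)$ with $G=\mathsf C_k(F)$, $F\in\Cc_0$, and fix $v\in V(H)$. Strong locality gives $\models\eta(x,y)\rightarrow\dist(x,y)\le r$, so every edge of $H$ joins vertices at $G$-distance at most $r$, whence $B_d^H(v)\subseteq B_{dr}^G(v)$. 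Moreover, since $\eta$ is $r$-local and $M$ is unary, for $u,w\in B_d^H(v)$ the truth of $\eta(u,w)$ and of $M(u)$ depends only on $B_r^{G^+}(\{u,w\})\subseteq B_{(d+1)r}^{G^+}(v)$; hence $B_d^H(v)$ is an induced subgraph of $\mathsf I\bigl(B_{(d+1)r}^{G^+}(v)\bigr)$.

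It remains to recognise the ball $B_{(d+1)r}^{G^+}(v)$ as a coloured induced subgraph of $\mathsf C_k$ applied to a ball of $F$. Since distances in $\mathsf C_k(F)$ dominate distances in $F$, a radius-$R$ ball in $\mathsf C_k(F)$ around a clone of $v_0$ has vertex set contained in $B_R^F(v_0)\times[k]$, so it is an induced subgraph of $\mathsf C_k\bigl(B_R^F(v_0)\bigr)$. Consequently $B_{(d+1)r}^{G^+}(v)$ lies in $\mathsf H\bigl(\Gamma_{\mathcal U}\bigl(\mathsf C_k(\rloc{(d+1)r}{\Cc_0})\bigr)\bigr)$, and applying $\mathsf I$ and then extracting the induced subgraph on $B_d^H(v)$ shows $B_d^H(v)\in\mathsf H\bigl(\mathsf T_{\rm imm}\bigl(\mathsf C_k(\rloc{(d+1)r}{\Cc_0})\bigr)\bigr)$. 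As $H$ and $v$ were arbitrary, this proves $\rloc{d}{\Dd_0}\sqsubseteq_\FO\rloc{(d+1)r}{\Cc_0}$. Now since $\Cc_0\in\mbox{\sf loc-}\Pi$ we have $\rloc{(d+1)r}{\Cc_0}\in\Pi$, and because $\Pi$ is a transduction \dset\ this gives $\rloc{d}{\Dd_0}\in\Pi$; as $d$ was arbitrary, $\Dd_0\in\mbox{\sf loc-}\Pi$, and therefore $\Dd\in\overline{\mbox{\sf loc-}\Pi}$.

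The main obstacle is the ball-transduction claim of the second and third paragraphs: one must track the radius blow-up under the composition ``$\mathsf C_k$ followed by an immersive interpretation'' and verify that induced subgraphs of balls are enough, so that the hereditary transduction $\mathsf H$ cleanly closes the argument. The perturbation and copy steps cause no trouble precisely because the normal form isolates them: the perturbation $\mathsf P$ is exactly what makes $\Dd$ a member of $\overline{\mbox{\sf loc-}\Pi}$ rather than of $\mbox{\sf loc-}\Pi$, and the copy $\mathsf C_k$ is absorbed into the transduction computing the balls of $\Cc_0$.
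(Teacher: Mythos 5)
Your proposal is correct and follows essentially the same route as the paper: unfold the definition of $\overline{\mbox{\sf loc-}\Pi}$, absorb the inner perturbation into the transduction, apply the local normal form of \Cref{thm:normal}, and then show that a radius-$d$ ball in the image of the immersive-plus-copy part is an induced subgraph of that same transduction applied to a radius-$d'$ ball of the source, so that membership in $\Pi$ transfers ball-by-ball. The only difference is that you make the radius $d'=(d+1)r$ and the locality bookkeeping explicit where the paper merely asserts the existence of a suitable $d'$; this is a faithful filling-in of the same argument rather than a different one.
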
	
\begin{proof}
	Let $\mathscr C\in \overline{\mbox{\sf loc-}\Pi}$ and let $\mathsf T$ be a transduction.
	By definition, there exists a perturbation~$\mathsf P_0$ and a class $\mathscr C'\in\mbox{\sf loc-}\Pi$ with $\mathscr C\subseteq\mathsf P_0(\mathscr C')$. According to Theorem~\ref{thm:normal}, the transduction $\mathsf T\circ \mathsf P_0$ is subsumed by the composition $\mathsf P\circ\mathsf T_{\rm imm}\circ\mathsf C$ of a copying transduction, an immersive transduction, and a perturbation. Consider $H\in\mathsf T_{\rm imm}\circ\mathsf C(G)$, for $G\in\mathscr C'$. For every integer $d$, there exists an integer $d'$ such that for every vertex $v\in V(H)$ there is a vertex $v'\in V(G)$ (the projection of $v$) with the property that $B_d(H,v)$ is an induced subgraph of $\mathsf T_{\rm imm}\circ\mathsf C(B_{d'}(G,v'))$, thus in a fixed transduction of $B_{d'}(G,v')$.
	As $\mathscr C'$ is locally~$\Pi$, the class $\mathscr C_{d'}'=\{B_{d'}(G,v')\colon G\in\mathscr C', v'\in V(G)\}$ belongs to $\Pi$.
	As $\Pi$ is a transduction \dset, the class $\{B_d(H,v)\colon H\in \mathsf T_{\rm imm}\circ\mathsf C(\mathscr C')\}$ is also in $\Pi$. As this holds for every integer~$d$, $\mathsf T_{\rm imm}\circ\mathsf C(\mathscr C')\in\mbox{\sf loc-}\Pi$. Hence, $\mathsf T(\mathscr C)\in\overline{\mbox{\sf loc-}\Pi}$.
\end{proof}

Since having bounded linear cliquewidth, having bounded cliquewidth, and having bounded twin-width are transduction {\dset}s (See \cite{colcombet2007combinatorial,twin-width1}), we have
the following corollaries.

\begin{cor}
	\label{cor:loc_lcw}
	Every transduction of (a perturbation of) a class with locally bounded linear cliquewidth is a perturbation of a class with locally bounded linear cliquewidth.
\end{cor}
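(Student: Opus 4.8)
The plan is to obtain this as an immediate instance of \Cref{thm:local-prop}. First I would set $\Pi$ to be the class property of having bounded linear cliquewidth, and recall from \cite{colcombet2007combinatorial} that this property is a transduction \dset: a class admitting a transduction from a class of bounded linear cliquewidth again has bounded linear cliquewidth (equivalently, the relation $\Cc\sqsubseteq_\FO\Hh$, with $\Hh$ the class of half-graphs, is preserved downward under transductions). With this choice of $\Pi$, the set $\mbox{\sf loc-}\Pi$ is exactly the collection of classes with locally bounded linear cliquewidth, and $\overline{\mbox{\sf loc-}\Pi}$ is exactly the collection of perturbations of such classes.

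Next I would invoke \Cref{thm:local-prop}, which asserts that whenever $\Pi$ is a transduction \dset, so is $\overline{\mbox{\sf loc-}\Pi}$. Unwinding the definition, the assertion ``$\overline{\mbox{\sf loc-}\Pi}$ is a transduction \dset'' says precisely that if a class $\Cc$ is a transduction of some class $\Dd\in\overline{\mbox{\sf loc-}\Pi}$, then $\Cc\in\overline{\mbox{\sf loc-}\Pi}$. Since the members of $\overline{\mbox{\sf loc-}\Pi}$ are the perturbations of classes with locally bounded linear cliquewidth, this is the verbatim content of the corollary.

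There is essentially no obstacle here beyond verifying that the property in question is indeed a transduction \dset, which is the cited result; everything else is bookkeeping with the definitions of $\mbox{\sf loc-}\Pi$ and $\overline{\mbox{\sf loc-}\Pi}$. The only subtlety worth flagging is the parenthetical ``(a perturbation of)'' in the statement: because a perturbation is itself a transduction, a transduction of a perturbation of a class in $\mbox{\sf loc-}\Pi$ is the same as a transduction of a class in $\overline{\mbox{\sf loc-}\Pi}$, so the parenthetical adds no generality and is absorbed directly into the hypothesis of \Cref{thm:local-prop}.
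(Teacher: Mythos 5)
Your proposal is correct and matches the paper's own argument: the corollary is stated there as an immediate instance of \Cref{thm:local-prop}, justified exactly by the observation (citing \cite{colcombet2007combinatorial}) that bounded linear cliquewidth is a transduction \dset. Your additional remark that the parenthetical ``(a perturbation of)'' is absorbed because perturbations are transductions is a correct and harmless elaboration of the same bookkeeping.
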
	
\begin{cor}
	\label{cor:loc_cw}
	Every transduction of (a perturbation of) a class with locally bounded cliquewidth is a perturbation of a class with locally bounded cliquewidth.
\end{cor}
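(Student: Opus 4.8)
The plan is to obtain this corollary as a direct specialization of \Cref{thm:local-prop}. I would take $\Pi$ to be the class property of having bounded cliquewidth and simply feed it into that theorem. The single external input required is that bounded cliquewidth is preserved under transductions, i.e.\ that $\Pi$ is a transduction \dset; this is precisely the content of \cite{colcombet2007combinatorial}, as recorded in the sentence preceding the corollary. With that fact in hand, \Cref{thm:local-prop} immediately yields that $\overline{\mbox{\sf loc-}\Pi}$ is again a transduction \dset.

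The remaining work is purely a matter of matching the vocabulary of the statement to the definitions of $\mbox{\sf loc-}\Pi$ and $\overline{\mbox{\sf loc-}\Pi}$. A class with locally bounded cliquewidth is by definition a class in $\mbox{\sf loc-}\Pi$ (for every $d$, the class $\rloc{d}{\Cc}$ of radius-$d$ balls has bounded cliquewidth), and a perturbation of such a class is exactly a class in $\overline{\mbox{\sf loc-}\Pi}$; conversely every member of $\overline{\mbox{\sf loc-}\Pi}$ is a perturbation of a class with locally bounded cliquewidth. Thus the hypothesis ``a transduction of (a perturbation of) a class with locally bounded cliquewidth'' is just ``a transduction of a class in $\overline{\mbox{\sf loc-}\Pi}$,'' the parenthetical ``a perturbation of'' being absorbed by the closure operation $\overline{(\cdot)}$. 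Invoking that $\overline{\mbox{\sf loc-}\Pi}$ is a transduction \dset\ then keeps the image inside $\overline{\mbox{\sf loc-}\Pi}$, which is the asserted conclusion.

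I do not expect a genuine obstacle here: all the substance lives in \Cref{thm:local-prop}, whose proof manages the interaction between the local normal form of \Cref{thm:normal} and the behavior of radius-$d$ balls under an immersive transduction composed with copying. The only point deserving a line of care is the bookkeeping just described, namely that the hypothesis and the conclusion both amount to membership in $\overline{\mbox{\sf loc-}\Pi}$, so that the corollary is literally the statement that this set is a transduction \dset\ for the specific choice $\Pi=\text{bounded cliquewidth}$.
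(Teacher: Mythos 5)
Your proposal is correct and matches the paper's own derivation exactly: the corollary is stated as an immediate consequence of \Cref{thm:local-prop} applied to $\Pi$ being the property of having bounded cliquewidth, using \cite{colcombet2007combinatorial} for the fact that this property is a transduction \dset. The bookkeeping you describe (the parenthetical perturbation being absorbed into $\overline{\mbox{\sf loc-}\Pi}$) is precisely how the statement is meant to be read.
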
	
\begin{cor}
	\label{cor:loc_tww}
	Every transduction of (a perturbation of) a class with locally bounded twin-width is a perturbation of a class with locally bounded twin-width.
\end{cor}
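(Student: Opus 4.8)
The plan is to obtain \Cref{cor:loc_tww} as a direct instantiation of \Cref{thm:local-prop}, exactly parallel to \Cref{cor:loc_lcw} and \Cref{cor:loc_cw}. First I would fix notation: let $\Pi$ be the class property of having bounded twin-width. The only external ingredient needed is that $\Pi$ is a transduction \dset, i.e.\ that bounded twin-width is preserved under \FO-transductions; this is precisely the content of the result of \cite{twin-width1} that I would invoke. With this in hand, the corollary is no longer about twin-width specifically, but simply a special case of the abstract statement already proved.

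Next I would recall the relevant definitions so the instantiation is transparent. By definition, $\mbox{\sf loc-}\Pi$ is the collection of classes $\Cc$ such that for every integer $d$ the ball class $\rloc{d}{\Cc}$ has bounded twin-width, which is exactly what ``locally bounded twin-width'' means; and $\overline{\mbox{\sf loc-}\Pi}$ is the collection of all classes that are perturbations of some class in $\mbox{\sf loc-}\Pi$, i.e.\ perturbations of classes with locally bounded twin-width. Thus the hypothesis class in the corollary---a perturbation of a class with locally bounded twin-width, and a fortiori a class with locally bounded twin-width itself---is by definition a member of $\overline{\mbox{\sf loc-}\Pi}$.

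I would then apply \Cref{thm:local-prop} to $\Pi$: since $\Pi$ is a transduction \dset, the property $\overline{\mbox{\sf loc-}\Pi}$ is again a transduction \dset. Concretely, if $\Cc\in\overline{\mbox{\sf loc-}\Pi}$ and $\mathsf T$ is any transduction, then $\mathsf T(\Cc)\in\overline{\mbox{\sf loc-}\Pi}$. Unwinding the definition of $\overline{\mbox{\sf loc-}\Pi}$ one last time, this says precisely that every transduction of (a perturbation of) a class with locally bounded twin-width is itself a perturbation of a class with locally bounded twin-width, which is the claim. No further argument about balls, projections, or the structure of immersive transductions is required here, since all of that work has been absorbed into the proof of \Cref{thm:local-prop}.

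The only genuine obstacle is the cited fact that bounded twin-width is closed under transductions; everything else is bookkeeping against the definitions. I would therefore present the proof in two sentences---``$\Pi$ is a transduction \dset\ by \cite{twin-width1}; apply \Cref{thm:local-prop}''---exactly as the companion corollaries are handled, and would resist the temptation to re-prove any locality argument, since doing so would merely duplicate \Cref{thm:local-prop}.
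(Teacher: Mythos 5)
Your proposal is correct and matches the paper exactly: the paper derives \Cref{cor:loc_tww} (together with \Cref{cor:loc_lcw,cor:loc_cw}) by noting that bounded twin-width is a transduction \dset\ by \cite{twin-width1} and then applying \Cref{thm:local-prop}. Your unwinding of the definitions of $\mbox{\sf loc-}\Pi$ and $\overline{\mbox{\sf loc-}\Pi}$ is exactly the intended bookkeeping, and you are right that no further locality argument is needed.
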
	

Assume that a transduction \dset $\Pi$ is included in a set of classes~$\Pi'$. Then it directly follows from 
\Cref{thm:local-prop} that every transduction of a class that is locally~$\Pi$ is a perturbation of a class that is locally~$\Pi'$. 
Recall that a class is \emph{$\chi$-bounded} if there is a function $f$ such that every graph $G\in\mathscr C$ with clique number $\omega$ has chromatic number at most $f(\omega)$.
It is known that  every transduction of a class with bounded expansion is $\chi$-bounded~\cite[Corollary 4.1]{nevsetril2020rankwidth}. Hence, we have

\begin{cor}
	\label{cor:loc_SBE}
	Every transduction of a  locally bounded expansion class is a perturbation of a locally $\chi$-bounded class.
\end{cor}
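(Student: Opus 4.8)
The plan is to deduce this immediately from \Cref{thm:local-prop} by choosing the transduction \dset{} correctly. The naive choice $\Pi=\{$classes of bounded expansion$\}$ does not work, since bounded expansion is not preserved by transductions; so the first thing I would do is pass to its transduction closure. Concretely, set
\[
\Pi:=\{\Cc : \Cc\sqsubseteq_\FO\Dd\text{ for some class }\Dd\text{ of bounded expansion}\}.
\]
Because transductions compose, a transduction of a class in $\Pi$ is again a transduction of a bounded expansion class, so $\Pi$ is closed under transductions and hence is a transduction \dset{} to which \Cref{thm:local-prop} applies.

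Next I would record two easy inclusions. First, every class of bounded expansion lies in $\Pi$ (via the identity transduction $\mathsf{Id}$), so any \emph{locally bounded expansion} class $\mathscr C$ is \emph{locally} $\Pi$: for each integer $d$ the class $\rloc{d}{\Cc}$ has bounded expansion and therefore belongs to $\Pi$. Second, by the cited result that every transduction of a bounded expansion class is $\chi$-bounded (\cite[Corollary 4.1]{nevsetril2020rankwidth}), we have $\Pi\subseteq\Pi'$, where $\Pi'$ denotes the set of $\chi$-bounded classes.

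Now I would invoke the observation stated just before the corollary, which is itself a direct consequence of \Cref{thm:local-prop}: since $\Pi$ is a transduction \dset{} with $\Pi\subseteq\Pi'$, every transduction of a class that is locally $\Pi$ is a perturbation of a class that is locally $\Pi'$. Combining this with the first inclusion above, a transduction of a locally bounded expansion class is a transduction of a locally $\Pi$ class, hence a perturbation of a locally $\Pi'$ class, i.e.\ a perturbation of a locally $\chi$-bounded class, which is exactly the desired conclusion.

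The only genuinely delicate point, and the one I expect to be the main obstacle, is the very first step: recognizing that one cannot use bounded expansion directly (it is not transduction-closed) and must instead work with its transduction closure $\Pi$, while simultaneously checking that the hypothesis ``locally bounded expansion'' survives this replacement, i.e.\ that it still implies ``locally $\Pi$''. Once this is set up correctly, everything else is a formal application of \Cref{thm:local-prop} and the $\chi$-boundedness input from \cite{nevsetril2020rankwidth}.
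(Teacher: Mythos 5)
Your proposal is correct and takes essentially the same route as the paper: the paper deduces the corollary from \Cref{thm:local-prop} via the remark immediately preceding it (a transduction \dset{} $\Pi$ contained in a set $\Pi'$ gives that transductions of locally $\Pi$ classes are perturbations of locally $\Pi'$ classes), with $\Pi$ the transduction closure of bounded expansion and $\Pi'$ the $\chi$-bounded classes, using \cite[Corollary 4.1]{nevsetril2020rankwidth} for the inclusion. You have simply made explicit the choice of $\Pi$ that the paper leaves implicit.
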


We now turn to the local versions of some the most important model-theoretic properties in the quasi-order, namely (monadic) stability and (monadic) dependence. Recall (\cref{sec:dividing-lines}) that dependence and stability correspond to the impossibility to define arbitrarily large \emph{power set graphs} (that is, bipartite graphs with vertex set $[n]\cup 2^{[n]}$ where $i$ is adjacent to $I$ whenever $i\in I$) and arbitrarily large half-graphs (see \Cref{def:halfgraph}), respectively.
Further model-theoretic dividing lines may be derived by considering monadic expansions, i.e. expansions by unary predicates. We say that a class $\Cc^+$ of 
colored graphs is a \emph{monadic expansion} of a class $\Cc$ of graphs if each colored graph in $\Cc^+$ is a monadic expansion of some graph in $\Cc$. 
\begin{itemize}
	\item A class $\Cc$ of graphs is \emph{monadically dependent} if every monadic expansion of $\Cc$ is dependent;
	\item a class $\Cc$ of graphs is \emph{monadically stable} if every monadic expansion of $\Cc$ is stable.
\end{itemize}

These dividing lines are characterized by the impossibility to transduce some classes.

\begin{thmC}[\cite{baldwin1985second}]
	\label{thm:BS}
	A class $\Cc$ is \emph{monadically dependent} (or \emph{monadically NIP}) if the class of all graphs is not a transduction of $\Cc$; it is \emph{monadically stable} if the class of all half-graphs is not a transduction of $\Cc$.
\end{thmC}

The distinction between the classical versions of these properties and their monadic variants is actually not so important in the setting of hereditary classes, as witnessed by the following theorem.

\begin{thmC}[\cite{braunfeld2022existential}]
	Let $\Cc$ be a hereditary class of graphs. Then,
	\begin{itemize}
		\item $\Cc$ is monadically dependent iff $\Cc$ is dependent;
		\item $\Cc$ is monadically stable iff $\Cc$ is stable.
	\end{itemize}
\end{thmC}

We also introduce monadically straight classes. Recall the definitions of trivially perfect graphs and forest orders from \cref{sec:GT}.

\begin{defi}
	A class $\Cc$ is {\em monadically straight} if the class of trivially perfect graphs (equivalently, the class of forest orders) is not transducible from $\Cc$.
	\end{defi}

 We believe that monadic straightness could well be an important dividing line in the transduction quasi-order (and maybe from a general model-theoretic point of view).

\pagebreak
As above, we may consider the local versions of these properties, which we will prove to be equivalent to their non-local versions.

\begin{thm}
\label{thm:local}
For a class $\Cc$ of graphs we have the following equivalences:
\begin{enumerate}
\item $\Cc$ is locally monadically dependent if and only if $\Cc$ is
  monadically dependent;
\item $\Cc$ is locally monadically straight if and only if $\Cc$ is
  monadically straight;
\item $\Cc$ is locally monadically stable if and only if $\Cc$ is
  monadically stable.
\end{enumerate}
\end{thm}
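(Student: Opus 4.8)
The plan is to prove each equivalence by splitting it into an easy implication (global $\Rightarrow$ local) and a harder one (local $\Rightarrow$ global), treating the three properties uniformly. First I would record the two structural facts I need about all three properties: each of monadic dependence, monadic straightness and monadic stability is a transduction \dset (they are defined by the non-transducibility of $\Gg$, $\TP$ and $\Hh$ respectively, and non-transducibility of a fixed class is preserved downward under $\sqsubseteq_\FO$), and each is closed under perturbation. For the easy direction, note that for every $d$ the ball class $\rloc{d}{\Cc}$ consists of induced subgraphs of graphs of $\Cc$, so $\rloc{d}{\Cc}\subseteq\mathsf H(\Cc)$ and hence $\rloc{d}{\Cc}\sqsubseteq_\FO\Cc$; since the properties are transduction {\dset}s, if $\Cc$ has the property globally then every $\rloc{d}{\Cc}$ does too, which is exactly local $\Pi$.

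For the hard direction I would argue the contrapositive in the uniform form: writing $\mathscr K\in\{\Gg,\TP,\Hh\}$ for the relevant obstruction, I would show that $\mathscr K\sqsubseteq_\FO\Cc$ forces $\mathscr K\sqsubseteq_\FO\rloc{r}{\Cc}$ for some $r$ (so that the single ball class $\rloc{r}{\Cc}$ already fails the property, and $\Cc$ is not locally $\Pi$). The central device is the apex: by \Cref{cor:apex}, an \emph{immersive} transduction encoding a class containing $\mathscr K\join K_1$ in some class $\Ff$ pushes $\mathscr K$ into the balls $\rloc{r}{\Ff}$, precisely because adding an apex to $G$ collapses the whole preimage of $G\join K_1$ into one ball of bounded radius around the preimage of the apex. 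The main obstacle is that the local normal form (\Cref{thm:normal}) only gives $\mathscr K\join K_1\subseteq \mathsf P\circ\mathsf T_{\rm imm}\circ\mathsf C_k(\Cc)$, and the trailing perturbation $\mathsf P$ is non-local: it can connect the apex's preimage to far-away vertices, so the apex no longer confines the preimage to one ball.

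To remove the perturbation I would replace $\mathscr K$ by an \emph{addable} class $\mathscr K^{*}$ with $\mathscr K^{*}\equiv_\FO\mathscr K$ and $\mathscr K\join K_1\subseteq\mathscr K^{*}$. Granting this, $\mathscr K\sqsubseteq_\FO\Cc$ gives $\mathscr K^{*}\sqsubseteq_\FO\Cc$, and since $\mathscr K^{*}$ is addable, \Cref{crl:slunion} (elimination of the perturbation) yields a copy operation $\mathsf C$ and an immersive transduction $\mathsf T_{\rm imm}$ with $\mathscr K^{*}\subseteq\mathsf T_{\rm imm}\circ\mathsf C(\Cc)$, with \emph{no} perturbation left. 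As $\mathscr K\join K_1\subseteq\mathscr K^{*}$, the immersive $\mathsf T_{\rm imm}$ encodes a class containing $\mathscr K\join K_1$ in $\Ff=\mathsf C(\Cc)$, so \Cref{cor:apex} produces an $r$ with $\mathscr K\sqsubseteq_\FO^\circ\rloc{r}{\mathsf C(\Cc)}$. Finally I would discharge the copying: a ball of radius $r$ in $\mathsf C_k(G)$ is an induced subgraph of $\mathsf C_k(B_r^G(u))$ (clone jumps only increase distance), whence $\rloc{r}{\mathsf C(\Cc)}\sqsubseteq_\FO\rloc{r}{\Cc}$ and therefore $\mathscr K\sqsubseteq_\FO\rloc{r}{\Cc}$, completing the contrapositive.

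It then remains to produce the addable witnesses $\mathscr K^{*}$. For $\mathscr K=\Gg$ and $\mathscr K=\TP$ the class itself works: both are addable and closed under adding an apex (for $\TP$, taking the apex to be the common root of the rooted forest keeps the graph trivially perfect), so one may set $\mathscr K^{*}=\mathscr K\supseteq\mathscr K\join K_1$. The genuinely technical case is $\mathscr K=\Hh$, which is not addable; here I would let $\Hh^{*}$ be the closure under disjoint union of the apexed half-graphs $\{H_n\join K_1\}$. It is addable and contains $\Hh\join K_1$ by construction, and $\Hh\sqsubseteq_\FO\Hh^{*}$ by deleting apexes. The remaining equivalence $\Hh^{*}\sqsubseteq_\FO\Hh$ is the last hurdle, which I expect to prove by the routine but fiddly transduction that cuts a large half-graph into consecutive blocks, deletes the edges between distinct blocks, and turns each block into an apexed half-graph by promoting its extremal vertex to a universal vertex---all first-order definable from the two side-markings, a block-boundary marking, and the linear orders on the two sides, which are themselves definable in any half-graph via neighborhood containment.
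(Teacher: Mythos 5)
Your proposal is correct and follows essentially the same route as the paper: the paper's proof also reduces each item to showing that an obstruction class $\mathscr K\in\{\Gg,\TP,\Hh\}$ transduces from $\Cc$ iff it transduces from some ball class $\rloc{r}{\Cc}$, and it does so by passing to an addable class containing $\mathscr K\join K_1$ (the paper uniformly uses $\{n(G\join K_1): n\in\mathbb N,\ G\in\mathscr K\}$ where you use $\mathscr K$ itself for $\Gg,\TP$ and a bespoke $\Hh^{*}$), eliminating the perturbation via \Cref{crl:slunion}, invoking the apex lemma \Cref{cor:apex}, and then removing the copying from the balls exactly as you do. The only substantive detail you supply beyond the paper is the block-cutting transduction witnessing $\Hh^{*}\sqsubseteq_\FO\Hh$, which the paper asserts without proof, and your sketch of it is sound.
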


\begin{proof}
	The proof will follow from the following claim. 

		\begin{claim}
\label{cl:golocal}
	Let $\Cc$ be a class such that the class $\Cc'=\{n(G\join K_1)\mid n\in\mathbb N, G\in\Cc\}$ is a transduction of $\Cc$. Then, for every class $\Dd$ we have
	$\Cc\sqsubseteq_\FO \Dd$ if and only if there exists some integer $r$ with $\Cc\sqsubseteq_\FO \rloc{r}{\Dd}$.
\end{claim}
\begin{claimproof}
	Let $\mathsf H$ be the hereditary transduction; which closes a class by taking induced subgraphs (see \Cref{ex:hered}).
	
	As $\rloc{r}{\Dd}\sqsubseteq_\FO \Dd$ (as witnessed by $\mathsf H$), if there exists some integer $r$ with $\Cc\sqsubseteq_\FO \rloc{r}{\Dd}$, then $\Cc\sqsubseteq_\FO \Dd$.
	Now assume  $\Cc\sqsubseteq_\FO\Dd$. 
		First, note that the class $\Cc'$ is addable (as $n(G+K_1)\union m(G+K_1)=(m+n)(G+K_1)$). 
	As $\Cc'\sqsubseteq_\FO\Cc$ (by assumption), we have 
	$\Cc'\sqsubseteq_\FO\Dd$. 
Let  $\mathsf T$ be a transduction such that $\Cc'\subseteq\mathsf T(\Dd)$.
	According to 	 
		  \Cref{lem:slunion},  there exist  of a copy operation $\mathsf C$ and an immersive transduction $\mathsf T_{\rm imm}$, such that $\mathsf T$ is subsumed by $\mathsf T_{\rm imm}\circ\mathsf C$. Let $\Dd'=\mathsf C(\Dd)$. According to \Cref{cor:apex}, there is an integer $r$ such that $\Cc\sqsubseteq_\FO^\circ \rloc{r}{\Dd'}$.
		  Note that, for every graph $G$ and every positive integer $r$, every ball of radius $r$ of $\mathsf C(G)$ is an induced subgraph of $\mathsf C(B)$, for some ball $B$ of radius $r$ of $G$. Consequently,
		 $\rloc{r}{\Dd'}\subseteq \mathsf H\circ \mathsf C(\rloc{r}{\Dd})$. Thus, we have	$\Cc\sqsubseteq_\FO \rloc{r}{\Dd'}\sqsubseteq_\FO \rloc{r}{\Dd}$.
\end{claimproof}

		The class $\{n(G\join K_1)\mid n\in\mathbb N, G\in\Gg\}$ is obviously a transduction of~$\Gg$. Hence, according to \Cref{cl:golocal}, 
		a class $\Cc$ is locally monadically dependent if and only if it is monadically dependent.
	The class $\{n(G\join K_1)\mid n\in\mathbb N, G\in\TP\}$ is a  transduction of~$\TP$. Hence, according to \Cref{cl:golocal}, 
		a class $\Cc$ is locally monadically straight if and only if it is monadically straight.
		The class $\{n(G\join K_1)\mid n\in\mathbb N, G\in\Hh\}$ is a  transduction of~$\Hh$.
	Hence, according to \Cref{cl:golocal}, 
		a class $\Cc$ is locally monadically stable if and only if it is monadically stable.	
\end{proof}

\begin{exa}
Although the class of unit interval graphs has unbounded clique-width,
	every proper hereditary subclass of unit interval graphs
	has bounded clique-width~\cite[Theorem 3]{UIcw}. 
	This is in particular the case for the class of unit interval graphs with bounded radius. 
		
	As having bounded cliquewidth is preserved by transductions and as the class of all graphs has unbounded cliquewidth,  classes with bounded cliquewidth are monadically dependent.
	Hence, the class of unit interval graphs, being locally monadically dependent, is monadically dependent. 
\end{exa}

We next prove the analogue of \Cref{thm:local} for stability and dependence, after recalling their definitions.

	For a formula $\phi(\bar{x},\bar{y})$ and a bipartite graph $H=(I,J,E)$, we say a graph $G$ {\em encodes~$H$ via~$\phi$} if
	there are sets $A=\set{\bar{a}_i | i \in I} \subseteq V(G)^{|x|}, B=\set{\bar{b}_j | j \in J} \subseteq V(G)^{|y|}$ such that 
	$G\models\phi(\bar{a}_i,\bar{b}_j)\Leftrightarrow H\models E(i,j)$ for all $i,j\in I\times J$.  
	
	Given a class $\Cc$ of graphs, {\em $\phi$ encodes $H$ in $\Cc$} if there is some $G \in \Cc$ encoding $H$ via $\phi$.

Recall that a formula $\phi(\bar{x}, \bar{y})$ with its free variables partitioned into two parts is {\em dependent} over a class $\Cc$ of graphs if there is some finite bipartite graph $H$ such that $\phi$ does not encode~$H$ in $\Cc$, while $\phi$ is {\em stable} over $\Cc$ if there is some half-graph $H$ such that $\phi$ does not encode~$H$ in $\Cc$. The class $\Cc$ is {\em dependent, resp.\ stable}, if every partitioned formula is dependent, resp.\ stable, over $\Cc$ (cf  \Cref{sec:dividing-lines}).

\begin{lem}
	If a class $\Cc$ of graphs is independent (resp.\ unstable), then there is a strongly local
	formula that is independent (resp.\ unstable). 
\end{lem}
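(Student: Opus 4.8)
The plan is to first replace the witnessing formula by a local one, then split off the ``separated'' part of its Gaifman decomposition (which provably cannot carry the independence/order property), leaving a strongly local witness.

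First I would fix a partitioned formula $\phi(\bar x;\bar y)$ witnessing that $\Cc$ is independent (resp.\ unstable): for unboundedly large $n$ there is $G\in\Cc$ in which $\phi$ has the $n$-independence property, i.e.\ there are $\bar a_i$ ($i\in[n]$) and $\bar b_J$ ($J\subseteq[n]$) with $G\models\phi(\bar a_i;\bar b_J)\iff i\in J$ (resp.\ the $n$-order property, with $\bar a_i,\bar b_j$ and $G\models\phi(\bar a_i;\bar b_j)\iff i\le j$). By \Cref{lem:gaifman}, $\phi$ is equivalent to a Boolean combination of $t$-local formulas and finitely many basic local sentences $\theta_1,\dots,\theta_m$. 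On each $G$ these sentences take a fixed truth vector $\bar\epsilon\in\{0,1\}^m$, so infinitely many of the witnesses share one $\bar\epsilon$; restricting to those and substituting $\bar\epsilon$, I may assume the witnessing formula $\psi(\bar x;\bar y)$ is $t$-local (with the same partition).

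The core is a decomposition of $\psi$. For a pair of parameter tuples I would form the combined tuple $\bar c=(\bar a,\bar b)$ of length $k=|\bar x|+|\bar y|$ and consider the Gaifman graph on its $k$ marked positions with an edge between positions at distance $\le 2t$; its connected components are pairwise $2t$-separated and have diameter at most $2t(k-1)=:R$. Since $\psi$ is $t$-local, its truth depends only on the $t$-balls of the positions, which split along these components, so by \Cref{thm:FV} (iterating \Cref{cor:loc_dec}) $\psi(\bar c)$ is, for each fixed component partition $\mathcal P$, equivalent to a Boolean combination of formulas each using only the positions of a single component. Call a component \emph{pure} if all its positions come from $\bar x$, or all from $\bar y$ (its formula then depends on one side only), and \emph{mixed} otherwise; a mixed component has all its positions within distance $R$, so conjoining $\bigwedge_{u,v}\dist(u,v)\le R$ to its formula yields a strongly local partitioned formula agreeing with it on the configuration. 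To make $\mathcal P$, the relevant local types, and the pure-component truth values uniform across the witnesses, I would either apply Ramsey's theorem to the index set (homogenizing the finitely many partition types and local joint types on pairs, then pigeonholing the pure-side bits on singletons), or, more cleanly, pass to an infinite model of $\Th(\Cc)$ carrying an (order-)indiscernible sequence of parameter tuples, on which the component structure depends only on the relative order of the indices.

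On the resulting uniform subconfiguration the pure-component bits are constant, so $\psi(\bar a_i;\bar b_j)$ (resp.\ $\psi(\bar a_i;\bar b_J)$) reduces to a fixed Boolean combination $B$ of the strongly local mixed-component formulas $\gamma_1,\dots,\gamma_r$, and this combination still encodes arbitrarily large half-graphs (resp.\ power-set graphs), hence is unstable (resp.\ independent). Here $r\ge 1$ necessarily, since with no mixed component $\psi$ would be constant on the subconfiguration, contradicting the order (resp.\ independence) property. I would then invoke the standard closure of stable (resp.\ dependent) partitioned formulas under finite Boolean combinations: since $B(\gamma_1,\dots,\gamma_r)$ is unstable (resp.\ independent), some $\gamma_l$ is itself unstable (resp.\ independent), and $\gamma_l$ is strongly local, as required. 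The main obstacle is the bookkeeping of this tuple decomposition — checking that the mixed-component formulas are genuinely strongly local and genuinely partitioned — together with the uniformization step; in particular the independence case needs extra care to homogenize over the power-set side (using, e.g., that the independence property can be witnessed by an indiscernible sequence on the $\bar x$-side, together with the symmetry of the independence property). The appeal to closure of stability/dependence under Boolean combinations is the one genuinely model-theoretic input beyond the locality machinery already developed.
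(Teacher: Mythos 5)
Your overall skeleton --- Gaifman's theorem to reduce to a local witness, a Feferman--Vaught component decomposition to pass from local to strongly local, and then closure of dependence/stability under Boolean combinations --- is exactly the skeleton of the paper's proof, which is only a few lines: it applies Gaifman together with \Cref{lem:sl} to rewrite $\phi$ as a Boolean combination of basic local sentences and strongly local formulas, notes that a sentence cannot be independent, and concludes by the closure fact. Your hand-rolled decomposition (components of the distance-$\le 2t$ graph on the positions, Feferman--Vaught across the $2t$-separated pieces, distance guards to make the component formulas strongly local) is in effect a proof of \Cref{lem:sl}, which the paper simply imports from \loclimref. So the route is the same; you have just re-derived one of the inputs.

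The one place you genuinely diverge is the uniformization step, and that is also where the argument has a gap. Homogenizing the component partition $\mathcal P$ across the witnessing configurations is unnecessary: you can instead produce a single formula valid on \emph{all} tuples by writing $\psi(\bar{x},\bar{y})$ as $\bigvee_{\mathcal P}\bigl(\delta_{\mathcal P}(\bar{x},\bar{y})\wedge B_{\mathcal P}\bigr)$, where $\delta_{\mathcal P}$ asserts that the positions realize the partition $\mathcal P$. Since $\delta_{\mathcal P}$ is itself a Boolean combination of the strongly local binary formulas $\dist(u,v)\le R$ (and their negations), the whole right-hand side is one Boolean combination of strongly local formulas; the pure-component constituents, mentioning variables from only one side of the partition, are trivially dependent and stable, so a single application of the closure fact yields an independent (resp.\ unstable) strongly local constituent, necessarily a mixed one. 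By contrast, the homogenization you propose is genuinely problematic in the independence case: the parameters $\bar{b}_J$ range over the $2^n$ subsets, so neither Ramsey applied to $[n]\times 2^{[n]}$ nor indiscernibility of the sequence $(\bar{a}_i)$ controls the component partition of the pairs $(\bar{a}_i,\bar{b}_J)$, and the fix you gesture at (indiscernibility on the $\bar{x}$-side plus ``symmetry'') does not pin this down. The stability case of your argument goes through; the independence case, as written, does not, and the cleanest repair is to delete the uniformization step and argue as above.
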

\begin{proof}
	We make use of the standard fact that dependence of formulas is preserved by Boolean combinations \cite[Lemma 2.9]{simon2015guide}.
	 Suppose $\phi(\bar{x}, \bar{y})$ is independent. Using Gaifman normal form and \Cref{lem:sl}, we may rewrite $\phi$ as a Boolean combination of basic local sentences and strongly local formulas. Since no sentence can be independent, some strongly local formula must be. 
	
	The argument for stability is identical,  using the fact that stability of formulas is preserved by Boolean combinations \cite[Remark 3.4]{palacin2018introduction}.
\end{proof}

\begin{cor}
	A class of graphs is dependent (resp.\ stable) if and only if it is locally dependent (resp.\ locally stable).
\end{cor}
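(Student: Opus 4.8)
The plan is to prove both equivalences via their contrapositives, using the previous lemma, which guarantees that independence (resp.\ instability) of $\Cc$ is always witnessed by a \emph{strongly local} formula. Here "locally dependent" means that $\rloc{d}{\Cc}$ is dependent for every integer $d$, and similarly for stability.

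For the direction "dependent $\Rightarrow$ locally dependent" I would argue the contrapositive: if some $\rloc{d}{\Cc}$ is independent, then $\Cc$ is independent. The key point is that a ball $B_d^G(v)$ is uniformly definable in $G$ with its centre $v$ as a parameter, via the distance formula $\dist(x,v)\le d$. So if $\phi(\bar x;\bar y)$ encodes arbitrarily large power-set graphs inside balls $B_d^G(v)$ with $G\in\Cc$, I would relativise every quantifier of $\phi$ to $\{w:\dist(w,z)\le d\}$, obtaining $\psi(\bar x;\bar y,z)$ with the new parameter $z$ set to $v$; since atomic formulas are unchanged on passing to an induced subgraph, $G\models\psi(\bar a;\bar b,v)\iff B_d^G(v)\models\phi(\bar a;\bar b)$ for all tuples inside the ball. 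Hence $\psi$ encodes the same power-set graphs in $\Cc$, so $\Cc$ is independent. The stable case is identical with half-graphs in place of power-set graphs.

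For "locally dependent $\Rightarrow$ dependent" I again take the contrapositive and invoke the previous lemma: if $\Cc$ is independent, there is a strongly $r$-local $\phi(\bar x;\bar y)$ encoding arbitrarily large power-set graphs in $\Cc$, and I will show that then $\rloc{3r}{\Cc}$ is already independent. Fix $G\in\Cc$ with tuples $(\bar a_i)_{i\in[n]}$, $(\bar b_J)_{J\subseteq[n]}$ realizing the power-set graph $P_n$ via $\phi$. Strong $r$-locality says that whenever $\phi(\bar a_i;\bar b_J)$ holds, all elements of $\bar a_i$ and $\bar b_J$ are pairwise within distance $r$. I would then exploit the full-set vertex: since $\phi(\bar a_i;\bar b_{[n]})$ holds for every $i$, fixing a coordinate $b$ of $\bar b_{[n]}$ puts every $\bar a_i$ inside $B_r^G(b)$; and for any nonempty $J$, choosing $i\in J$ forces $\bar b_J\subseteq B_{2r}^G(b)$. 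Thus every tuple except the single vertex $\bar b_\emptyset$ lies in $B_{2r}^G(b)$, and by $r$-locality the value of $\phi$ computed in $B:=B_{3r}^G(b)\in\rloc{3r}{\Cc}$ agrees with its value in $G$ on all these tuples, since the $r$-neighbourhoods of tuples in $B_{2r}^G(b)$ remain inside $B$. Discarding $\bar b_\emptyset$ still leaves an encoding of $P_{n-1}$ (take points $\{2,\dots,n\}$ and sets $S\cup\{1\}$ for $S\subseteq\{2,\dots,n\}$), so $\rloc{3r}{\Cc}$ encodes arbitrarily large power-set graphs. For stability the localisation is even more direct: the half-graph's extreme tuples $\bar a_1$ (adjacent to all $\bar b_j$) and $\bar b_n$ (adjacent to all $\bar a_i$) pull the whole configuration into $B_{2r}^G(a)$ for a coordinate $a$ of $\bar a_1$.

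The main obstacle is the power-set graph case in this second direction: unlike a half-graph, a power-set graph has no single point vertex adjacent to all set vertices, so one cannot localise both sides symmetrically. The resolution above is to localise the point side using the full-set vertex $\bar b_{[n]}$ and then to observe that every \emph{nonempty} set vertex is automatically near some point, hence near the centre, so only the empty-set vertex is lost, which costs nothing asymptotically. A minor technical point I would verify explicitly is that the locality transfer is exact, i.e.\ $B_r^G(\bar c)=B_r^B(\bar c)$ for tuples $\bar c\subseteq B_{2r}^G(b)$ with $B=B_{3r}^G(b)$, which holds because a shortest path of length at most $r$ from $\bar c$ has all its vertices inside $B$.
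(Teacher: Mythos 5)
Your proof is correct and follows the same route the paper intends: the paper states this as an immediate corollary of the preceding lemma (reducing to a strongly local witness of independence/instability), and you supply exactly the two missing halves --- relativising quantifiers to a definable ball for the easy direction, and using strong locality to pull the witnessing configuration into a bounded-radius ball for the converse. Your handling of the power-set graph's empty-set vertex (localising via the full-set tuple and sacrificing only $\bar b_\emptyset$) and the check that $B_r^G(\bar c)=B_r^B(\bar c)$ are precisely the details the paper leaves implicit, and both are handled correctly.
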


\section{Maximum classes, minimum classes, and transduction dualities}\label{sec:dual}
For some properties, there is a unique (up to transduction-equivalence) maximum class with that property, which can be thought of as essentially exhibiting all the complexity of classes with the property. For example, the maximum class of bounded cliquewidth is the class of trivially perfect graphs (or forest orders; or trees, if considering \MSO-transductions) and the maximum class of bounded linear cliquewidth is the class of half-graphs (or linear orders; or paths, if considering \MSO-transductions), and these statements capture the ``tree-like'' and ``order-like'' behavior of these classes.

Dually, for some properties there is a unique minimum class without that property, which can be considered a canonical obstruction to that property. Model-theoretic properties are often defined by such canonical obstructions, such as (monadic) stability and the class of linear orders or (monadic) dependence and the class of all graphs. 

Some properties may admit both types of characterization, which we will term a \emph{(singleton) transduction duality}; this is in analogy with the well-established theory of homomorphism dualities \cite{nesetril2000duality}, which suggests many further lines of inquiry. More generally, a \emph{transduction duality} $(\Pi, \Psi)$ consists of two sets $\Pi$ and $\Psi$ of graph classes such that for any graph class~$\Cc$, either there is some $\Ff \in \Pi$ such that $\Cc \sqsubseteq_\FO \Ff$, or  there is some $\Dd \in \Psi$ such that $\Dd \sqsubseteq_\FO \Cc$
This formalizes a structural dichotomy stating that the obstructions to being in $\Pi$ are caputred by $\Psi$.  For example, while the classes of shrubdepth $\leq d$ form a strictly increasing chain in the transduction quasi-order \cite[Theorem 4.5]{ganian2019shrub} and so the bounded shrubdepth property does not admit a maximum class, \cite{shrubbery} shows a transduction duality where $\Pi = \{ \mathscr T_n \mid n \in \N\}$ where $\mathscr T_n$ is the class of trees of height $n$, and $\Psi$ is the singleton class of paths.

\subsection{A transduction duality for structurally bounded degree}\label{sec:sbddual}
Given a class property~$\Pi$, the class property {\em structurally $\Pi$} is the set of  graph classes that are transducible from a class in $\Pi$. From \Cref{prop:transcubic}, we see that the class of cubic graphs is the maximum class of structurally bounded degree. We now establish a transduction duality by showing the class of star forests is the minimum class not of structurally bounded degree. The proof will largely follow from known results on the model-theoretic property of \emph{mutual algebraicity}.

\begin{defi}
	Given a structure $M$, an $n$-ary relation $R(\xbar)$ is {\em mutually algebraic} if there is a constant $K$ such that for each $m \in M$, the number of tuples  $\bbar \in M^n$ such that $M \models R(\bbar)$ and $m \in \bbar$ is at most $K$. Note a unary relation is always mutually algebraic. Given a structure $M$, a formula $\phi(\xbar)$ is {\em mutually algebraic} if it defines a mutually algebraic relation in $M^{|\xbar|}$.
	
	A relational structure $M$ is {\em bounded degree} if every atomic relation is mutually algebraic.
	
	A relational structure $M$ is {\em mutually algebraic} if every formula with parameters from~$M$ is equivalent to a Boolean combination of mutually algebraic formulas with parameters from~$M$.
	
	A hereditary class $\Cc$ is {\em mutually algebraic} if every model of $\Th(\Cc) := \bigcap_{M \in \Cc} \Th(M)$ is mutually algebraic.
\end{defi}

We remark that \cite[Theorem 3.3]{MA} shows that mutual algebraicity of a theory is equivalent to {\em monadic {\NFCP}}. {\NFCP} is a model-theoretical property that is stronger than stability at the level of theories, although not at the level of individual formulas. At the level of formulas, {\NFCP} has appeared in \cite{DomInd} in the context of graph algorithms.

There are two main points in what follows. If $M$ is not mutually algebraic, then (some elementary extension of) $M$ transduces an equivalence relation with infinitely many infinite classes by \cite[Theorem 3.2]{Worst}. If $M$ is mutually algebraic, then some expansion naming finitely many constants is quantifier-free interdefinable with a bounded degree structure $M'$ by \cite[Lemma 4.3]{LT2}, which is thus transduction-equivalent to $M$. There is then some work to transfer these results to hereditary graph classes.

\begin{prop} \label{p:MA}
	Let $\Cc$ be a hereditary class of relational structures in a finite language. Then the following hold.
	\begin{enumerate}
		\item $\Cc$ is mutually algebraic if and only if $\Cc$ is (non-copying, quantifier-free) transduction-equivalent to a hereditary class of bounded degree structures,
		\item If $\Cc$ is mutually algebraic, then so is every transduction of $\Cc$,
		\item $\Cc$ is mutually algebraic if and only if $\Cc$ does not transduce the class of all equivalence relations.
	\end{enumerate}
\end{prop}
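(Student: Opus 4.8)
The three statements are interdependent, and the plan is to prove them in the order (2), (1), (3), since (2) is used both for the backward direction of (1) and the forward direction of (3). Throughout, the engine is the equivalence from \cite[Theorem 3.3]{MA} that a hereditary class $\Cc$ is mutually algebraic exactly when $\Th(\Cc)$ is monadically $\NFCP$, together with the two single-structure inputs \cite[Theorem 3.2]{Worst} and \cite[Lemma 4.3]{LT2}. A recurring tool for passing from single structures (or from infinite models of $\Th(\Cc)$) back down to $\Cc$ is the observation recorded in the preliminaries: a sentence holds in some model of $\Th(\Cc)$ if and only if it holds in some member of $\Cc$.

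For (2), I would show that monadic $\NFCP$ is preserved by each of the three ingredients of a transduction. Closure under the coloring operation $\Gamma_{\mathcal U}$ is immediate, since a monadic expansion of a coloring of $\Cc$ is itself a monadic expansion of $\Cc$. Closure under a simple interpretation $\mathsf I$ is the standard pullback argument: a monadic expansion of $\mathsf I(\Cc)$ is witnessed by a monadic expansion of $\Cc$ (colour the preimages), and $\NFCP$ transports along interpretations. The only delicate point is the copy operation $\mathsf C_k$, which genuinely enlarges the universe; here I would decompose any formula over $\mathsf C_k(M)$ according to the finitely many copy indices, i.e.\ view $\mathsf C_k(M)$ as a finite cover of $M$, to conclude that $\mathsf C_k$ preserves monadic $\NFCP$. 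Combining the three yields that every transduction of a mutually algebraic class is mutually algebraic.

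For (1), the backward direction is quick: a hereditary class of bounded degree structures is mutually algebraic (every atomic relation is mutually algebraic with a uniform bound, and such theories are monadically $\NFCP$), so if $\Cc$ is a transduction of such a class then $\Cc$ is mutually algebraic by (2). The forward direction is the main obstacle. From \cite[Lemma 4.3]{LT2}, each mutually algebraic $M$ becomes, after naming finitely many constants, quantifier-free interdefinable with a bounded degree structure $M'$; the difficulty is to make the number of named constants and the defining quantifier-free formulas uniform over all of $\Cc$, so that a single non-copying quantifier-free transduction (colouring the constants, then applying the interpretation) realizes the reduction. I would extract this uniformity by applying the lemma at the level of the monadically $\NFCP$ theory $\Th(\Cc)$, whose single $\NFCP$ bound controls the algebraicity constant and hence the complexity of the construction; the finitely many constants are then marked by finitely many unary singletons in the colouring step. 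Defining $\Dd$ to be the hereditary closure of the resulting bounded degree structures (bounded degree is inherited by induced substructures) gives a hereditary bounded degree class with $\Cc\equiv_\FO\Dd$ via non-copying quantifier-free transductions in both directions, the backward one supplied by the interdefinability.

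Finally, for (3) the forward implication is the contrapositive of (2): the class of all equivalence relations is not mutually algebraic, since a model of its theory is an equivalence relation with infinitely many infinite classes in which the atomic relation is not a Boolean combination of mutually algebraic formulas; hence if the class of all equivalence relations were a transduction of $\Cc$, then by (2) the class $\Cc$ could not be mutually algebraic. For the backward implication I would take a model $N\models\Th(\Cc)$ that is not mutually algebraic and apply \cite[Theorem 3.2]{Worst} to obtain an elementary extension $N^{*}$ (still a model of $\Th(\Cc)$) and a transduction $\mathsf T$ with $\mathsf T(N^{*})$ containing an equivalence relation with infinitely many infinite classes, hence containing $nK_n$ as an induced substructure for every $n$. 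For fixed $n$, the existence of a colored copy-configuration in the source that $\mathsf T$ turns into an induced $nK_n$ is expressible by a single sentence $\theta_n$ (the $k$ copy-indices, the finitely many relevant vertices, and the finitely many colour choices are existentially guessed as a finite disjunction); since $N^{*}\models\theta_n$, the preliminary transfer fact yields some $M_n\in\Cc$ with $M_n\models\theta_n$, so $nK_n\in\mathsf H\circ\mathsf T(M_n)$. Thus $\mathsf H\circ\mathsf T(\Cc)$ contains all the $nK_n$ and therefore all equivalence relations, completing the duality. The two places I expect to cost real effort are the uniformity step in (1) and this compactness transfer of \cite[Theorem 3.2]{Worst} from an infinite model down to the finite members of $\Cc$.
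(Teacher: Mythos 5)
Your overall architecture is reasonable, and the directions you call quick ((1) backward, (3) forward) do match the paper, but there are two places where your plan leaves real gaps. First, the handling of the copy operation and the uniformization in (1): the paper proves the forward direction of (1) \emph{first}, by citing \cite[Proposition 4.12]{LT2}, which already supplies exactly the uniformization you flag as hard --- it decomposes a mutually algebraic hereditary class into finitely many subclasses, each (non-copying, quantifier-free) transduction-equivalent to a bounded degree hereditary class. With that in hand, preservation under copying in (2) comes for free: a copy of $\Cc$ is a non-copying transduction of a copy of a bounded degree class, copies of bounded degree classes still have bounded degree, and bounded degree classes are mutually algebraic by \cite[Lemma 4.3]{LT2}. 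Your plan proves (2) before (1) and therefore needs a direct argument that $\mathsf C_k$ preserves monadic $\NFCP$ (the ``finite cover'' decomposition), and separately needs to re-derive the uniformization in (1) from Lemma 4.3 applied to $\Th(\Cc)$. Both are only asserted; the second is essentially the content of Proposition 4.12, and it is not clear that a single uniform recipe exists for all of $\Cc$ (the cited proposition produces finitely many recipes, one per subclass), so you should either cite that proposition or expect substantial work here.

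Second, in the backward direction of (3), your sentence $\theta_n$ (``there exists a colored copy-configuration that $\mathsf T$ turns into an induced copy of $n$ disjoint $K_n$'s'') is not first-order: the existential over colorings is monadic second-order, and the truth of the interpreting formula can depend on the colors of vertices far from the finitely many witnesses, so it cannot be reduced to a finite disjunction over color choices of the relevant vertices. The transfer fact from the preliminaries only moves first-order sentences between models of $\Th(\Cc)$ and members of $\Cc$. The paper sidesteps this by extracting, from the proof of \cite[Theorem 3.2(4)]{Worst}, a first-order formula $\phi(x,y)$ in the original language and elements $\{b_i\}\cup\{a_{i,j}\}$ with $M\models\phi(a_{i,j},b_k)\iff i=k$; the existence of such a configuration of size $n$ is a genuine first-order sentence, compactness then yields $M_n\in\Cc$ carrying it, and only afterwards is a transduction (two unary marks $A$, $B$ plus the formula $A(x)\wedge A(y)\wedge\exists z\,(B(z)\wedge\phi(x,z)\wedge\phi(y,z))$) applied to the finite structures. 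You should restructure your argument the same way: transfer the first-order configuration, not the transduction output.
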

\begin{proof}
	(1) $(\Ra)$ By \cite[Proposition 4.12]{LT2}, if $\Cc$ is mutually algebraic then there are bounded degree hereditary classes $\tCC_1, \dots, \tCC_m$ such that each $\tCC_i$ is (non-copying, quantifier-free) transduction-equivalent to some $\Cc_i \subset \Cc$ and $\bigcup_{i \in [m]} \Cc_i = \Cc$. Thus $\Cc$ is (non-copying, quantifier-free) transduction-equivalent to $\bigcup_{i \in [m]} \tCC_i$	
	
	(2) Mutual algebraicity of $\Cc$ is clearly preserved by simple interpretations. By \cite[Theorem 3.3]{MA}, mutual algebraicity is preserved by coloring. We now show mutual algebraicity is preserved by copying. Suppose $\Cc$ is mutually algebraic, and by (1) let $T \colon \Dd \to \Cc$ be a non-copying transduction with $\Dd$ bounded degree. Then $T$ also induces a non-copying transduction from $C_k(\Dd)$ to $C_k(\Cc)$.  Taking copies preserves being bounded degree, bounded degree classes are mutually algebraic by \cite[Lemma 4.3]{LT2}, and we have already shown that non-copying transductions preserve mutual algebraicity.
	
	(1) $(\La)$ By \cite[Lemma 4.3]{LT2}, bounded degree classes are mutually algebraic, and we have shown mutual algebraicity is preserved by transductions.
	
	(3) $(\Ra)$ The class of all equivalence relations is not mutually algebraic, and mutual algebraicity is preserved by transductions.
	
	$(\La)$ From the proof of \cite[Theorem 3.2(4)]{Worst}, if $\Cc$ is not mutually algebraic then there is a formula $\phi(x,y)$ and $M \models Th(\Cc)$ containing elements $\set{b_i | i \in \N} \cup \set{a_{i,j} | i,j \in \N}$ such that $M \models \phi(a_{i,j}, b_k) \iff i = k$. By compactness, for every $n \in \N$ there is some $M_n \in \Cc$ containing elements $\set{b_i | i \in [n]} \cup \set{a_{i,j} | i,j \in [n]}$ such that $M_n \models \phi(a_{i,j}, b_k) \iff i = k$. Adding unary predicates $A  := \set{a_{i,j} | i,j \in [n]}$ and $B := \set{b_i | i \in [n]}$, we have that~$M_n$ transduces an equivalence relation with $n$ classes of size $n$ by the formula $\phi(x,y) := A(x) \wedge A(y) \wedge \exists z(B(z) \wedge \phi(x, z) \wedge \phi(y, z))$, and similarly can transduce any substructure of that equivalence relation.
\end{proof}

	When $\Cc$ is a class of graphs rather than relational structures, we may strengthen the result.

\begin{thm} \label{thm:dual}
	Let $\Cc$ be a hereditary graph class. Then the following are equivalent.
	\begin{enumerate}
		\item $\Cc$ is mutually algebraic;
		\item $\Cc$ is transduction-equivalent to (in particular, is a perturbation of) a hereditary class of bounded degree graphs;
		\item $\Cc$ is a transduction of the class $\Cubic$ of all cubic graphs,
		\item $\Cc$ does not (non-copying) transduce the class $\Tt_2$ of all star forests.
	\end{enumerate}
\end{thm}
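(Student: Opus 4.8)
The plan is to arrange the four conditions into the cycle (3)$\Ra$(2)$\Ra$(1)$\Ra$(3), supplemented by the separate equivalence (1)$\Lra$(4), leaning throughout on \Cref{prop:transcubic} and on the relational statement \Cref{p:MA}. Most implications are short; the genuine content sits in (1)$\Ra$(3), where one must pass from bounded degree \emph{structures} to bounded degree \emph{graphs}, and in (4)$\Ra$(1), where one must extract a \emph{non-copying} transduction of star forests from the failure of mutual algebraicity.

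First I would dispatch (2)$\Lra$(3). For (2)$\Ra$(3): if $\Cc\equiv_\FO\Dd$ with $\Dd$ of bounded degree, then in particular $\Cc\sqsubseteq_\FO\Dd$, and the equivalence of items~(1) and~(2) of \Cref{prop:transcubic} gives $\Cc\sqsubseteq_\FO\Cubic$. For (3)$\Ra$(2): by item~(3) of \Cref{prop:transcubic} the class $\Cc$ is a perturbation $\mathsf P(\Dd)$ of a bounded degree class $\Dd$; restricting $\Dd$ to $\mathsf P(\Cc)$ makes $(\mathsf P,\mathsf P)$ a pairing, so $\Cc\equiv_\FO\Dd$, and passing to the hereditary closure $\mathsf H(\Dd)$ — still of bounded degree, since taking induced subgraphs cannot increase degrees — produces a hereditary bounded degree class transduction-equivalent to $\Cc$, which is exactly (2). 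Next, (2)$\Ra$(1) is immediate: a bounded degree graph class is a bounded degree relational structure class, hence mutually algebraic by \Cref{p:MA}(1), and mutual algebraicity is preserved by transductions by \Cref{p:MA}(2), so $\Cc$ itself is mutually algebraic.

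The crux is (1)$\Ra$(3). By \Cref{p:MA}(1), a mutually algebraic $\Cc$ satisfies $\Cc\equiv_\FO\Dd$ for some hereditary class $\Dd$ of bounded degree structures. I would encode each $M\in\Dd$ by its incidence graph $\mathrm{inc}(M)$: keep the elements of $M$, add one new vertex for every tuple occurring in a relation of $M$, and join each such tuple-vertex to the coordinates of its tuple, recording relation names and coordinate positions by a bounded palette of colors (themselves realized by bounded pendant gadgets so that one stays among plain graphs). Since $M$ has bounded degree and the language is finite of bounded arity, every element-vertex lies in boundedly many tuples and every tuple-vertex has degree at most the maximal arity, so $\mathrm{inc}(M)$ has bounded degree. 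The key point is that only the \emph{easy} direction of this encoding is needed: $M$ is recovered from $\mathrm{inc}(M)$ by a non-copying transduction (take the element-vertices as domain and set $R(\bar a)$ whenever an $R$-coloured tuple-vertex is correctly incident to $\bar a$). Hence $\Dd$, and therefore $\Cc$, is a transduction of the bounded degree graph class $\{\mathrm{inc}(M):M\in\Dd\}$, and \Cref{prop:transcubic} yields $\Cc\sqsubseteq_\FO\Cubic$, i.e.\ (3). (The reverse direction of the encoding, which would have to create the tuple-vertices, is never invoked, so no copying issues arise.)

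It remains to settle (1)$\Lra$(4), using that the class $\Tt_2$ of all star forests is non-copying transduction-equivalent to the class of all equivalence relations: a union of cliques arises from a star forest by joining two vertices that share a common neighbour, and a star forest arises from a union of cliques by marking one vertex per clique and keeping only its incident edges. For (1)$\Ra$(4): star forests have unbounded degree, so their edge relation is not mutually algebraic; if $\Cc$ transduced $\Tt_2$, then \Cref{p:MA}(2) would force $\Tt_2$ to be mutually algebraic, a contradiction, so $\Cc$ does not transduce $\Tt_2$, a fortiori not by a non-copying transduction. For (4)$\Ra$(1) I argue contrapositively: if $\Cc$ is not mutually algebraic, the $(\La)$ half of \Cref{p:MA}(3) produces the class of all equivalence relations already by a \emph{non-copying} transduction (coloring by $A,B$ together with the interpretation $A(x)\wedge A(y)\wedge\exists z(B(z)\wedge\phi(x,z)\wedge\phi(y,z))$); composing with the non-copying transduction from equivalence relations to star forests shows that $\Cc$ non-copying transduces $\Tt_2$, negating (4). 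The main obstacle is the bookkeeping in (1)$\Ra$(3) needed to ensure that the incidence encoding genuinely lands among bounded degree graphs while still allowing $M$ to be read back off, together with the care required to keep every transduction in (4)$\Lra$(1) non-copying.
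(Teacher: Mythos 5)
Your proposal is correct and rests on the same two pillars as the paper's proof --- \Cref{p:MA} for the model-theoretic content and \Cref{prop:transcubic} for the graph-theoretic side, together with the transduction-equivalence of star forests and equivalence relations for $(1)\Leftrightarrow(4)$ --- but it handles the one genuinely delicate implication differently. The paper proves $(1)\Rightarrow(2)$ by invoking the fine print of \cite[Definition 4.10]{LT2}: the bounded-degree structures produced there have arity no greater than that of the source, hence only unary and binary relations, so one may symmetrize, forget colors, take $2$-subdivisions, and recover the directed edge-colored structure by coloring the two subdivision vertices. You instead prove $(1)\Rightarrow(3)$ via a general incidence-graph encoding valid for any finite language of bounded arity, and you correctly isolate the key point that both arguments share: only the \emph{decoding} direction (structure from encoding) needs to be a transduction, while the encoding direction is only used combinatorially to certify bounded degree, so no copying is ever required to create the auxiliary vertices. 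Your route is more robust --- it does not depend on the arity bound holding in \cite{LT2} --- at the cost of one imprecision you should repair: coordinate positions of a tuple cannot be recorded by a color on the tuple-vertex alone (colors live on vertices, not edges), so you must either subdivide each incidence edge and color the subdivision vertex by the position, or attach one intermediate vertex per coordinate; either fix keeps the degree bounded and, in the binary case, degenerates exactly into the paper's $2$-subdivision construction. With that adjustment the argument is complete, and your organization of the remaining implications (the cycle $(3)\Rightarrow(2)\Rightarrow(1)\Rightarrow(3)$ plus $(1)\Leftrightarrow(4)$, with explicit attention to keeping the transductions in $(4)\Leftrightarrow(1)$ non-copying) matches the paper's in substance.
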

\begin{proof}
	
	\Cref{p:MA} immediately gives $(2) \Rightarrow (1)$, \Cref{prop:transcubic} gives $(2)\Leftrightarrow(3)$,
	and from the fact that the class of star forests is transduction-equivalent to the class of all equivalence relations also gives $(1) \Leftrightarrow (4)$. It nearly gives $(1) \Rightarrow (2)$, but a priori only produces a class $\Dd_0$ of bounded degree relational structures, rather than graphs, such that $\Dd_0 \equiv^\circ_{\FO} \Cc$.
	
	From \cite[Definition 4.10]{LT2}, we see the relations of $\Dd_0$ are given by relations of arity no greater than those of $\Cc$, so they are unary and binary. The class $\Dd_0^-$ obtained by forgetting the unary relations still transduces $\Cc$ since the transduction can add the unary relations back.  We may view $\Dd_0^-$ as a class of directed edge-colored graphs, and we let $\Dd_1$ be the class of 2-subdivisions of the class of graphs obtained from~$\Dd_0^-$ by symmetrizing the edge relations and forgetting the edge-colors. Then $\Dd_1$ still has bounded degree, and~$\Dd_1$ transduces $\Dd_0^-$ as follows: we define a colored directed edge by taking a 2-subdivided edge, coloring the subdivision vertex closer to what will be the out-vertex of the directed edge with a special color, and then coloring the other subdivision vertex with the color of the desired directed edge. Thus $\Dd_1$ transduces $\Cc$. As every transduction of a class with bounded degree is a perturbation of a class with bounded degree, we are finished.
\end{proof}

In particular, we have the following singleton transduction-duality:
\[
\Tt_2\not\sqsubseteq_\FO \Cc\qquad\iff\qquad \Cc\sqsubseteq_\FO\Cubic.
\]

There is a correspondence between dualities and covers in the homomorphism quasi-order \cite{nesetril2000duality}, and we see a continued connection here.

\begin{cor}
	The only two $\sqsubseteq_\FO$-covers of the class of edgeless graphs are star forests and paths. 
\end{cor}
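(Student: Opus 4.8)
The plan is to combine two transduction dualities with a single structural lemma. The dualities are the star-forest duality of \Cref{thm:dual} (and the singleton duality displayed after it), asserting $\Tt_2\not\sqsubseteq_\FO\Cc$ iff $\Cc\sqsubseteq_\FO\Cubic$, and the path duality of \cite{shrubbery}, asserting that $\Pp\not\sqsubseteq_\FO\Cc$ iff $\Cc$ has bounded shrubdepth (equivalently $\Cc\sqsubseteq_\FO\Tt_n$ for some $n$). The engine driving everything is the observation that a class $\Dd$ of \emph{bounded degree} satisfies $\Pp\not\sqsubseteq_\FO\Dd$ if and only if its connected components have bounded size, in which case $\Dd\equiv_\FO\Ee$ by \Cref{lem:Ee}. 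Indeed, in a bounded-degree graph every geodesic is an induced path, so if components had unbounded size (hence, by bounded degree, unbounded diameter) then $\Dd$ would contain arbitrarily long induced paths and $\Pp\sqsubseteq_\FO\Dd$ would follow via the hereditary transduction $\mathsf H$; conversely bounded component size forces $\Dd\equiv_\FO\Ee$, and $\Pp\not\sqsubseteq_\FO\Ee$ is known (\Cref{thm:transpath}).

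First I would record that $\Pp$ and $\Tt_2$ genuinely lie strictly above $\Ee$. Here $\Ee\sqsubset\Pp$ is known (\Cref{thm:transpath}), while $\Tt_2\not\sqsubseteq_\FO\Ee$ holds by \Cref{thm:dual}: since $\Ee$ trivially has bounded degree we have $\Ee\sqsubseteq_\FO\Cubic$, and the singleton duality then gives $\Tt_2\not\sqsubseteq_\FO\Ee$, so $\Ee\sqsubset\Tt_2$.

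Next, the core case analysis. Let $\Cc$ be a class with $\Ee\cover\Cc$. If $\Pp\sqsubseteq_\FO\Cc$, then $\Ee\sqsubset\Pp\sqsubseteq_\FO\Cc$ and the cover property forces $\Pp\equiv_\FO\Cc$. Otherwise $\Pp\not\sqsubseteq_\FO\Cc$, so by the path duality $\Cc$ has bounded shrubdepth. If now $\Tt_2\sqsubseteq_\FO\Cc$, then $\Ee\sqsubset\Tt_2\sqsubseteq_\FO\Cc$ and the cover property forces $\Tt_2\equiv_\FO\Cc$. The remaining case $\Tt_2\not\sqsubseteq_\FO\Cc$ is where I expect to derive a contradiction: by the singleton duality we then have $\Cc\sqsubseteq_\FO\Cubic$, so by \Cref{thm:dual} (equivalence of mutual algebraicity and transduction-equivalence to a hereditary bounded-degree class; see also \Cref{prop:transcubic}) the class $\Cc$ is transduction-equivalent to a hereditary bounded-degree class $\Dd$. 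Since $\Pp\not\sqsubseteq_\FO\Dd$, the engine lemma gives $\Dd\equiv_\FO\Ee$, hence $\Cc\equiv_\FO\Ee$, contradicting $\Ee\sqsubset\Cc$.

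Finally, the same machinery shows that $\Pp$ and $\Tt_2$ really are covers, not merely that every cover is among them. If $\Ee\sqsubseteq_\FO\Ff\sqsubseteq_\FO\Pp$ with $\Ff\not\equiv_\FO\Ee$, then \Cref{thm:transpath} presents $\Ff$ as a perturbation of a bounded-bandwidth (hence bounded-degree) class $\Dd\equiv_\FO\Ff$; as $\Ff\not\equiv_\FO\Ee$ the engine lemma yields $\Pp\sqsubseteq_\FO\Dd\equiv_\FO\Ff$, so $\Ff\equiv_\FO\Pp$, leaving no strict intermediate. Likewise, if $\Ee\sqsubseteq_\FO\Ff\sqsubseteq_\FO\Tt_2$ with $\Ff\not\equiv_\FO\Ee$, then $\Ff$ has bounded shrubdepth so $\Pp\not\sqsubseteq_\FO\Ff$, and were $\Tt_2\not\sqsubseteq_\FO\Ff$ the contradiction argument above would give $\Ff\equiv_\FO\Ee$; hence $\Tt_2\sqsubseteq_\FO\Ff$ and $\Ff\equiv_\FO\Tt_2$. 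The main obstacle is the engine lemma, and specifically justifying that ``structurally bounded degree'' can be replaced by genuine bounded degree via \Cref{thm:dual} so that the geodesics-are-induced-paths argument applies and \Cref{lem:Ee} can be invoked; the rest is bookkeeping with the two dualities and the definition of a cover.
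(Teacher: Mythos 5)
Your proof is correct and follows essentially the same route as the paper: reduce via \Cref{thm:dual} to a hereditary bounded-degree class, then observe that bounded degree plus unbounded component size yields arbitrarily long induced paths (hence $\Pp$), while bounded component size collapses to $\Ee$ by \Cref{lem:Ee}. The only difference is that you additionally spell out that $\Tt_2$ and $\Pp$ genuinely are covers and strictly above $\Ee$, steps the paper leaves implicit; the detour through the shrubdepth duality is harmless but not needed.
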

\begin{proof}
	Suppose $\Cc$ is a hereditary graph class that does not transduce star forests. By \Cref{thm:dual}, $\Cc$ is transduction-equivalent to a class $\Dd$ of graphs with bounded degree. 
	Suppose there is no bound on the size of connected components in $\Dd$. Since $\Dd$ has bounded degree, we can find arbitrarily long induced paths in the graphs in $\Dd$, hence we can transduce~$\Pp$ in $\Dd$.
	
	If there is a bound on the size of connected components, then $\Dd$ is \FO-transduction equivalent to  the class of edgeless graphs by \Cref{lem:Ee}.
\end{proof}

\subsection{The absence of maximum classes}
We prove a strong negative result, implying that no property containing all bounded expansion classes admits a maximum class. The idea is that we may ``diagonalize'' against all the countably many transductions from any candidate maximum class. On the other hand, we prove a positive result for countable sets of nowhere dense classes or classes of bounded expansion.

\begin{prop} \label{prop:nomax}
	Up to transduction equivalence, the class of all graphs is the only class that is an $\sqsubseteq_\FO$-upper bound for all classes with bounded expansion.
\end{prop}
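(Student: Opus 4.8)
The plan is to prove the contrapositive: assuming $\Cc$ is monadically dependent (that is, $\Gg\not\sqsubseteq_\FO\Cc$), I will show that $\Cc$ cannot be an upper bound. Equivalently — and this is the form I will actually establish — if every bounded expansion class is a transduction of $\Cc$, then $\Gg\sqsubseteq_\FO\Cc$. For a non-decreasing $f\colon\N\to\N$ with $f\ge 6$, let $\mathscr B_f$ be the class of all graphs $G$ such that $G\,\nabla\,r$ has average degree at most $f(r)$ for every $r$; each $\mathscr B_f$ is a monotone bounded expansion class, and every bounded expansion class is contained in some $\mathscr B_f$. By hypothesis $\mathscr B_f\sqsubseteq_\FO\Cc$, so there is a transduction $\mathsf T_f$ with $\mathscr B_f\subseteq\mathsf T_f(\Cc)$.

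Since there are only countably many transductions, list them as $\mathsf S_1,\mathsf S_2,\dots$ and set $F_j=\{f : \mathsf T_f=\mathsf S_j\}$, so that $\{F_j\}_j$ is a countable partition of the (uncountable) index set of admissible functions. First I would observe that no countable family of pointwise bounds can cover all functions: if each $F_j$ were pointwise bounded, say $\sup_{f\in F_j}f(r)\le h_j(r)$ for all $r$, then the function $g^\ast(r)=6+\max_{i\le r}h_i(r)$ (replaced by its running maximum to be non-decreasing) lies in the index set yet satisfies $g^\ast(j)>h_j(j)$ for every $j$, so $g^\ast\notin F_j$ for any $j$, contradicting that the $F_j$ cover all functions. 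Hence some part $F_{j_0}$ is not pointwise bounded: there is a radius $r_0$ with $\sup_{f\in F_{j_0}}f(r_0)=\infty$.

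Now set $\mathscr W=\bigcup_{f\in F_{j_0}}\mathscr B_f$, a monotone class contained in $\mathsf S_{j_0}(\Cc)$. The key step is to show $\mathscr W$ is somewhere dense, for which I use the $2r_0$-subdivided cliques $S_{2r_0}(K_m)$ as gadgets. One checks that $K_m$ is a depth-$r_0$ shallow minor of $S_{2r_0}(K_m)$, while — because the $m$ high-degree branch vertices lie pairwise at distance $2r_0+1$, so radius-$\rho$ balls around them remain disjoint and consist almost entirely of degree-two path vertices — the expansion profile of $S_{2r_0}(K_m)$ is flat: $S_{2r_0}(K_m)\,\nabla\,\rho$ has average degree bounded by an absolute constant ($\le 6$, say) for all $\rho<r_0$, jumping to $m-1$ only at $\rho\ge r_0$. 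Consequently, for any non-decreasing $f\ge 6$ with $f(r_0)\ge m-1$ we have $f\ge f_{S_{2r_0}(K_m)}$ and hence $S_{2r_0}(K_m)\in\mathscr B_f$. Applying this to functions $f\in F_{j_0}$ with $f(r_0)\to\infty$ shows that $\mathscr W$ contains $S_{2r_0}(K_m)$ for arbitrarily large $m$, so $\mathscr W\,\widetilde\nabla\,r_0$ contains arbitrarily large cliques and $\mathscr W$ is somewhere dense.

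Finally, a monotone somewhere dense class transduces the class of all graphs (the somewhere dense side of the sparsity dichotomy, via the characterization of nowhere denseness recalled in the introduction). Thus $\Gg\sqsubseteq_\FO\mathscr W\sqsubseteq_\FO\Cc$, contradicting $\Gg\not\sqsubseteq_\FO\Cc$; together with the trivial $\Cc\sqsubseteq_\FO\Gg$ this gives $\Cc\equiv_\FO\Gg$. I expect the main obstacle to be the gadget computation of the previous paragraph — verifying that sufficiently subdivided cliques realize large cliques as shallow minors of a fixed small depth while keeping their entire expansion profile below that depth bounded by an absolute constant — since this flatness is exactly what lets a family of $\mathscr B_f$'s that is unbounded at $r_0$ reconstitute a somewhere dense class inside a single $\mathsf S_{j_0}(\Cc)$. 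The diagonal/pigeonhole step and the closing appeal to the dichotomy are comparatively routine.
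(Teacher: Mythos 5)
Your proof is correct, and it reaches the conclusion by a route that differs in its mechanics from the paper's, though both are diagonalizations against the countably many transductions using the same gadget (subdivided cliques whose shallow-minor profile stays flat below the subdivision depth and jumps only at that depth --- your computation of this flatness is sound, and in fact the bound is $4$ rather than $6$: small branch sets have degree at most $2$, and each big branch set can be ``committed'' to at most one subdivided edge, so the big--big part is a union of stars). The paper works in the other direction: it enumerates the transductions $\mathsf T_1,\mathsf T_2,\dots$, sets $h_{\Cc_i}(i)$ to be the largest $n$ with the exact $i$-subdivision of $K_n$ in $\mathsf T_i(\mathscr B)$ (finite because each $\mathsf T_i(\mathscr B)$ is monadically dependent), and builds a \emph{single} bounded expansion class --- the hereditary closure of $\{i\text{-subdivided }K_{h_{\Cc_i}(i)+1}\colon i\in\N^+\}$ --- that escapes every $\mathsf T_i(\mathscr B)$ at once, reserving subdivision depth $i$ to defeat the $i$-th transduction. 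You instead parametrize bounded expansion classes by their expansion functions, pigeonhole the uncountably many admissible functions into the countably many transductions, and extract one transduction whose image must contain $2r_0$-subdivided cliques of unbounded size for a fixed $r_0$; you then close via ``monotone somewhere dense implies transducing $\Gg$.'' The paper's version is shorter and needs the flatness fact only to certify that its one diagonal class has bounded expansion; yours costs an extra pigeonhole and an appeal to the sparsity dichotomy, but it buys a slightly sharper picture --- it exhibits a single transduction of the candidate bound whose image is already monotone and somewhere dense, localized at one depth $r_0$ --- and it delivers the statement in the positive form $\Gg\sqsubseteq_\FO\Cc$ rather than as a bare contradiction.
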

\begin{proof}
	Assume $\mathscr B$ is a monadically dependent bound for all classes with bounded expansion.
	To a monadically dependent hereditary class $\mathscr C$ we associate the mapping $h_{\mathscr C}\colon\mathbb N^+\rightarrow\mathbb N$ as follows: $h_{\mathscr C}(i)$ is the largest integer $n$ such that the exact $i$-subdivision of $K_n$ belongs to $\mathscr C$ (this is well-defined by the assumption that $\Cc$ is monadically dependent).  
	We now consider an enumeration $\mathsf T_1,\mathsf  T_2,\dots$ of all first-order transductions (there are countably many) and let $\mathscr C_i=\mathsf T_i(\mathscr B)$. We further define the function  $h \colon \mathbb N^+\rightarrow\mathbb{N}$  by $h(i)=h_{\mathscr C_i}(i)+1$. 
	By taking $\Dd$ to be the hereditary closure of the set $\set{i\text{-subdivided } K_{h(i)} \colon i \in \N^+}$ we get a bounded expansion class with $h_{\mathscr D}(i)\geq h(i)$ for every integer $i$. 
	As $\mathscr B$ is a bound of all bounded expansion classes, there exists a transduction, say $\mathsf T_k$, such that 
	$\mathscr D\subseteq \mathsf T_k(\mathscr B)=\Cc_k$. However, this implies $h_{\mathscr D}(i)\leq h_{\Cc_k}(i)$ for every integer $i$, which contradicts 
	$h_{\mathscr D}(k)\geq h(k)\geq h_{\Cc_k}(k)+1$.
\end{proof}

However, we show that for countable sets of classes with bounded expansion, we can find an upper bound with bounded expansion.

\begin{lem}
	\label{lem:countBE}
	Let $\mathscr C_1,\dots,\mathscr C_k,\dots$ be countably many classes of graphs.
	
	Then, there exists a  class $\mathscr B$ such that $\mathscr C_i\sqsubseteq_\FO^\circ \mathscr B$ holds for all $i\in\mathbb N$, and such that
\begin{itemize}
	\item if every  $\Cc_i$ has bounded expansion, then $\mathscr B$ has bounded expansion;
\item if every $\Cc_i$ is nowhere dense, then $\mathscr B$ is nowhere dense.
\end{itemize}
\end{lem}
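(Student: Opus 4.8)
The plan is to realise $\mathscr B$ as a union of sparse ``stretched'' copies of the given classes, where the amount of stretching grows with the index $i$ so that, at any fixed shallow-minor depth, all but finitely many of the classes contribute only degenerate structure. Concretely, for a graph $G$ and an integer $p\ge 1$ write $G^{(p)}$ for the \emph{exact $p$-subdivision} of $G$, obtained by replacing every edge by a path with $p$ internal vertices. I would fix $p_i=i$, set $\mathscr B_i=\{\,G^{(i)}\colon G\in\mathscr C_i\,\}$, and take $\mathscr B=\bigcup_{i\ge 1}\mathscr B_i$.

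First I would verify $\mathscr C_i\sqsubseteq_\FO^\circ\mathscr B$ for every $i$. Recovering $G$ from $G^{(i)}$ is a fixed non-copying transduction $\mathsf T_i$: colour the branch vertices (the original vertices of $G$) by a unary predicate $M$, restrict the domain to $M$, and declare two marked vertices adjacent exactly when $\dist(x,y)=i+1$. Since the $i$ internal vertices of each subdivided edge have degree $2$, any simple path between two branch vertices is a concatenation of entire subdivided edges; hence two branch vertices are at distance $i+1$ iff they were adjacent in $G$ (otherwise the distance is at least $2(i+1)$), and this is expressible by a single first-order formula. Thus $\mathscr C_i\subseteq\mathsf T_i(\mathscr B_i)\subseteq\mathsf T_i(\mathscr B)$, as $\mathsf T_i$ is monotone.

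The heart of the matter is the sparsity transfer, which I would run through the topological-grad characterisations recalled in \Cref{sec:sparse}: $\mathscr B$ is nowhere dense (resp. of bounded expansion) iff for each $r$ the class $\mathscr B\,\widetilde\nabla\,r$ has bounded clique number (resp. bounded average degree). Fix $r$ and note $\mathscr B\,\widetilde\nabla\,r=\bigcup_i(\mathscr B_i\,\widetilde\nabla\,r)$, then split on $i$. For the finitely many $i\le 2r$, subdivision preserves both properties: a $\le 2r$-subdivision of some $H$ sitting inside $G^{(i)}$ projects (dissolving degree-$2$ vertices) to a $\le 2r$-subdivision of $H$ inside $G$, so $H\in\mathscr C_i\,\widetilde\nabla\,r$ and $\mathscr B_i\,\widetilde\nabla\,r$ inherits a finite bound from $\mathscr C_i$; I take the maximum over these finitely many indices. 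For $i\ge 2r+1$ the branch vertices of $G^{(i)}$ are pairwise at distance $\ge i+1>2r+1$, so no graph of minimum degree $\ge 3$ can be a depth-$r$ topological minor of $G^{(i)}$ (its branch vertices would be degree-$\ge 3$ vertices of $G^{(i)}$, i.e. branch vertices, joined by paths of length $\le 2r+1$, which is impossible). As depth-$r$ topological minors are closed under subgraphs, every $H\in\mathscr B_i\,\widetilde\nabla\,r$ is $2$-degenerate, hence contains no $K_4$ and has average degree below $4$ — a bound independent of $i$.

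Combining the two regimes gives, for every $r$, a finite bound on the clique number (resp. average degree) of $\mathscr B\,\widetilde\nabla\,r$, which is exactly the required characterisation. The only genuine obstacle is this uniformity over the infinitely many classes at a common depth, and it is precisely what the growing subdivision lengths secure: for each depth $r$, all but the indices $i\le 2r$ fall into the uniformly $2$-degenerate regime, so the unboundedly growing sparsity parameters $f_i$ (or clique-number functions $g_i$) of the individual classes never accumulate at any single depth.
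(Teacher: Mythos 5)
Your construction ($\mathscr B=\bigcup_i\{G^{(i)}:G\in\mathscr C_i\}$, recovery of $\mathscr C_i$ by the non-copying transduction connecting marked vertices at distance exactly $i+1$, and sparsity control via the topological-grad characterisation using the fact that at depth $r$ only the finitely many indices $i\le 2r$ can contribute non-degenerate shallow topological minors) is correct and is essentially the paper's own proof; your explicit treatment of the uniformly $2$-degenerate regime for $i>2r$ is a slightly more careful rendering of a step the paper leaves implicit.
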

\begin{proof}
	Let $\mathscr B$ contain, for each $i\in\mathbb N$ the $i$-subdivision of all the graphs in $\mathscr C_i$. 
	
	By construction, the non-copying transduction $\mathsf T_i$ which connects vertices at distance exactly $i+1$ and keep only marked vertices is such that $\mathsf T_i(\mathscr B)\supseteq \mathscr C_i$. Thus, $\mathscr C_i\sqsubseteq_\FO^\circ \mathscr B$ holds for all $i\in\mathbb N$.
	
	Assume that the $k$-th subdivision $H^{(k)}$ of a graph $H$ belongs to $\mathscr B$, where $H$ has minimum degree at least $3$. Then, $H^{(k)}$ is the $i$-subdivision of some graph $H'\in\mathscr C_i$ for some $i\leq k$. It follows that 
	$\mathscr B\,\widetilde\nabla\,k\subseteq \bigcup_{i=1}^k \mathscr C_i\,\widetilde\nabla\,k$.
	Consequently, if all the $\mathscr C_i$'s  have bounded expansion then $\mathscr B$ has bounded expansion, and if all the $\mathscr C_i$'s  are nowhere dense then $\mathscr B$ is nowhere dense.
\end{proof}

\subsection{Bounded twin-width}
Graph classes of bounded twin-width were introduced in \cite{twin-width1}, and have been intensively studied since. While we will not need their definition for this section, we note that the property of bounded twin-width is preserved by FO-transductions, and any further required facts will be referenced. We provide a counterexample to a conjecture, stated in different terminology, that the cubic graphs are the minimum class of unbounded twin-width.

\begin{defi}
	Let $\Cc$ be a hereditary graph class. Then $\Cc$ is \emph{delineated} if for every hereditary subclass $\Dd \subseteq \Cc$, $\Dd$ has bounded twin-width if and only if $\Dd$ is monadically dependent.
	\end{defi}

In \cite[Conjecture 66]{bonnet2022twin8}, it was conjectured that if $\Cc$ is not delineated then $\Cc$ transduces the class $\Ss$ of subcubic graphs. This is equivalent to conjecturing that $\Ss$ is the minimum class of unbounded twin-width. In one direction, suppose $\Ss$ is the minimum such class; if $\Cc$ is not delineated then it has unbounded-twin width, and so $\Ss \sqsubseteq_\FO \Cc$. In the other direction, suppose the original conjecture holds. If $\Cc$ has unbounded twin-width then either it is not monadically dependent or it is not delineated (witnessed by considering $\Cc$ as a subclass of itself); in either case, $\Ss \sqsubseteq_\FO \Cc$.

\begin{defi}
	A graph class $\Cc$ is {\em small} (resp.\ {\em unlabeled-small)} if there is $c \in \R$ such that the number of labeled structures of size $n$ in $\Cc$ is $O(c^n \cdot n!)$ (resp.\ the number of unlabeled structures --- that is the number of structures up to isomorphism --- of size $n$ in $\Cc$ is $O(c^n)$).
\end{defi}

\begin{lem} \label{lem:bdsmall}
	Let $\Cc$ be an unlabeled-small class of graphs with bounded degree. Then every transduction $\Dd$ of $\Cc$ is also unlabeled-small.
\end{lem}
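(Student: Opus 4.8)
The plan is to combine the locality of the interpretation with the bounded degree of $\Cc$ to confine each preimage to a subgraph of size $O(m)$, and then let unlabeled-smallness of $\Cc$ do the counting. Write $\mathsf T=\mathsf I\circ\Gamma_{\mathcal U}\circ\mathsf C_k$ with $\Dd\subseteq\mathsf T(\Cc)$. Applying \Cref{lem:normal0} to the non-copying part $\mathsf I\circ\Gamma_{\mathcal U}$, I may assume $\mathsf I=(M(x),\eta(x,y))$ where $\eta$ is $t$-local. The copy operation turns a degree bound $D$ into a degree bound $D':=D+k-1$ on $\mathsf C_k(G)$, so every $t$-ball of $\mathsf C_k(G)$ has at most $L:=(D'+1)^{t+1}$ vertices. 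The goal is to show that the number of unlabeled graphs of size $m$ in $\Dd$ is at most $C^m$ for some constant $C$.

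The core step is \emph{localization}. Given $H\in\Dd$ with $|V(H)|=m$, fix $G\in\Cc$ and a $\mathcal U$-expansion with $H=\mathsf I(\mathsf C_k(G)^+)$, and set $S:=V(H)$ (the $M$-marked vertices), so $|S|=m$. Let $W:=B_t^{\mathsf C_k(G)}(S)$, so $|W|\le mL$, and let $R$ be the induced colored (marked) subgraph $\mathsf C_k(G)^+[W]$. I claim $H=\mathsf I(R)$. Since $\eta$ is $t$-local, for $u,v\in S$ the truth of $\eta(u,v)$ depends only on $B_t(\{u,v\})$; and because $u,v\in S$, every shortest path of length $\le t$ from $u$ or $v$ stays inside $W=B_t(S)$, so $B_t^{R}(\{u,v\})=B_t^{\mathsf C_k(G)}(\{u,v\})$ as colored induced subgraphs. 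Hence $\eta$ takes the same value in $R$ and in $\mathsf C_k(G)^+$, and the $M$-marked vertices of $R$ are exactly $S$; therefore $\mathsf I(R)=H$. This path-survival verification is the technical heart and the step I would write out most carefully.

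It follows that the isomorphism type of $H$ is determined by that of the colored marked graph $R$. Now comes the crucial use of heredity: projecting $W$ to $P:=\pi(W)\subseteq V(G)$ (where $\pi$ forgets the copy index) gives $|P|\le |W|\le mL$, and since $\mathsf C_k(G)[P\times[k]]=\mathsf C_k(G[P])$, the graph $R$ is an induced subgraph of $\mathsf C_k(G[P])$ with $G[P]\in\Cc$ \emph{because $\Cc$ is hereditary}. Thus $R$ (hence $H$) up to isomorphism is specified by: the isomorphism type of a graph $G[P]\in\Cc$ of size at most $mL$; a subset $W\subseteq P\times[k]$; a $\mathcal U$-coloring of $W$; and the marked set $S\subseteq W$. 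This yields the bound
\[
\#\{\text{size-}m\text{ graphs in }\Dd\}\ \le\ \Bigl(\sum_{t'\le mL}\#\{\text{size-}t'\text{ graphs in }\Cc\}\Bigr)\cdot 2^{kmL}\cdot 2^{|\mathcal U|mL}\cdot 2^{mL}.
\]
Using that $\Cc$ is unlabeled-small, the parenthesized sum is at most $(mL+1)\,c^{mL}$, so the right-hand side is $(mL+1)\,c^{mL}\,2^{(k+|\mathcal U|+1)mL}=O(C^m)$ with $C:=2\,c^{L}\,2^{(k+|\mathcal U|+1)L}$, proving $\Dd$ unlabeled-small.

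The main obstacle is precisely getting both ingredients to cooperate: locality of $\eta$ plus bounded degree forces the relevant preimage into a window of size linear in $m$, which is exactly the scale at which unlabeled-smallness of $\Cc$ is usable. I also expect to flag that \emph{heredity of $\Cc$ is essential}, not a convenience: without it the statement fails, as witnessed by taking $\Cc=\{\,W_i\uplus P_{p_i}\,\}$, where the $W_i$ enumerate all connected cubic graphs and the $p_i$ are chosen so that the graphs have pairwise distinct orders (making $\Cc$ trivially unlabeled-small of bounded degree), while the hereditary transduction $\mathsf H$ recovers all $W_i$ and so produces a class that is not unlabeled-small. Consequently the argument is carried out under the standing assumption — in force throughout this subsection — that $\Cc$ is hereditary, which is what makes $G[P]\in\Cc$ legitimate in the counting step.
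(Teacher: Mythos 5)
Your proof is correct and follows essentially the same route as the paper's: both use locality of the interpretation together with the degree bound to confine the relevant preimage of a target graph $H$ to a window of size linear in $|V(H)|$, and then count colored marked induced subgraphs using the unlabeled-smallness of $\Cc$. The only cosmetic difference is that the paper first invokes the full normal form of \Cref{thm:normal} (discarding copying and perturbations and working with a strongly local immersion), whereas you work directly with the $t$-local form from \Cref{lem:normal0} and absorb the copy operation and the colors into the final count; both are fine. Your remark that heredity of $\Cc$ is indispensable is a genuine and worthwhile catch: the paper's own proof uses it tacitly (when it asserts that the localized preimage $C'\subseteq C$ again lies in $\Cc$), the lemma as literally stated omits the hypothesis, and your padding counterexample shows it cannot be dropped --- though nothing is lost in the paper's application, where the class in question is hereditary.
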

\begin{proof}
	Let $\mathsf T$ be a transduction so that $\Dd \subseteq \mathsf T(\Cc)$. By Theorem \ref{thm:normal}, we may decompose~$\mathsf T$ into the composition of a copy operation, an immersion, and a perturbation. Note that copying and perturbations both preserve being unlabeled-small since every structure in the image has a pre-image that is no larger, and that copying also preserves being bounded-degree; thus these operations have no effect and we may assume $\mathsf T$ is an immersion. Fix a target graph $D \in \Dd$ and let $C \in \mathsf T^{-1}(D)$. If $\mathsf T$ is strongly $r$-local, then let $C' \subseteq C \in \Cc$ be obtained by keeping the vertices of $C$ that remain in $\mathsf T(C)$, as well as their $r$-neighborhoods. By strong $r$-locality, $C' \in \mathsf T^{-1}(D)$. Since $\Cc$ has bounded degree, there is some $k$ such that $|C'| \leq k |D|$.
	
	Let $\Cc_n$ denote the number of unlabeled graphs of size $n$ in $\Cc$, and similarly $\Dd_n$. Suppose $\Cc_n = O(c^n)$. Then if the transduction uses $\ell$ colors, we have $\Dd_n \leq 2^{\ell kn} O(c^{kn})$ since every graph in $\Dd_n$ is the $\mathsf T$-image of a graph in $\Cc_{kn}$, so $\Dd$ is unlabeled-small.
\end{proof}

\begin{cor}
The class of subcubic graphs is not the $\sqsubseteq_\FO$-minimum class of unbounded twin-width.
\end{cor}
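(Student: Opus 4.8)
The plan is to refute minimality of $\Ss$ directly, by producing a class $\Cc$ of subcubic graphs that has unbounded twin-width but does not transduce $\Ss$. The whole argument turns on \Cref{lem:bdsmall}, which guarantees that a bounded-degree unlabeled-small class transduces only unlabeled-small classes, together with the observation that $\Ss$ itself is very far from unlabeled-small.

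First I would check that $\Ss$ is not unlabeled-small. A standard counting argument (for instance via the configuration model) shows that the number of unlabeled subcubic graphs on $n$ vertices already grows super-exponentially, in fact like $n^{\Theta(n)}$, so it is not $O(c^n)$ for any constant $c$. Since a subclass of an unlabeled-small class is clearly unlabeled-small, it follows that $\Ss$ cannot be contained in any transduction of a bounded-degree unlabeled-small class.

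Next I would construct the witness. Recall that subcubic graphs have unbounded twin-width \cite{bonnet2022twin8}, so for every $k$ there is a subcubic graph $H_k$ with $\mathrm{tww}(H_k)\ge k$. Using that twin-width does not increase when passing to induced subgraphs, I may replace each $H_k$ by its disjoint union with a suitable subcubic graph, so as to arrange that the orders $|V(H_k)|$ are strictly increasing while keeping $\mathrm{tww}(H_k)\ge k$ and subcubicity. Set $\Cc=\{H_k : k\in\N\}$. Then $\Cc$ has maximum degree at most $3$, its twin-width is unbounded by construction, and it contains at most one graph of each size, so it is unlabeled-small.

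Finally I would assemble the pieces. By \Cref{lem:bdsmall} every transduction of the bounded-degree unlabeled-small class $\Cc$ is unlabeled-small; were $\Ss\sqsubseteq_\FO\Cc$, the class $\Ss$ would be a subclass of such a transduction, hence unlabeled-small, contradicting the first step. Therefore $\Ss\not\sqsubseteq_\FO\Cc$, while $\Cc$ has unbounded twin-width, so $\Ss$ is not the $\sqsubseteq_\FO$-minimum class of unbounded twin-width. Should a hereditary counterexample be wanted, one can replace $\Cc$ by its hereditary closure $\mathsf H(\Cc)$, which has the same twin-width and is transduction-equivalent to $\Cc$, so still does not transduce $\Ss$. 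The only genuinely nontrivial input is the existence of bounded-degree graphs of unbounded twin-width; once that is in hand, the step I would be most careful with is simply keeping all three properties (bounded degree, unlabeled-smallness, unbounded twin-width) alive in a single class, which the one-graph-per-size construction achieves.
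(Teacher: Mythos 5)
Your proof is correct, and it shares the paper's skeleton: both arguments rest on \Cref{lem:bdsmall} plus the observation that the class of subcubic graphs is not unlabeled-small, and both conclude by exhibiting a bounded-degree, unlabeled-small class of unbounded twin-width that therefore cannot transduce $\Ss$. Where you differ is in how that witness is produced. The paper imports a hereditary such class from \cite{bonnet2022twin7} (built from induced subgraphs of Cayley graphs of finitely generated groups) and then needs an extra remark to upgrade the published ``small'' bound to ``unlabeled-small''. You instead diagonalize by hand: using the known fact that subcubic graphs have unbounded twin-width, you pick one subcubic graph of twin-width at least $k$ per order, so unlabeled-smallness is immediate (at most one isomorphism type per size) and the only auxiliary facts needed are monotonicity of twin-width under induced subgraphs (to pad the orders) and the superexponential count of unlabeled subcubic graphs. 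Your route is more elementary and self-contained; the paper's route buys a \emph{hereditary}, monadically dependent witness, which matters for the surrounding delineation discussion since \cite[Conjecture 66]{bonnet2022twin8} is phrased for hereditary classes. Your closing remark handles this correctly: since $\mathsf H(\Cc)\sqsubseteq_\FO\Cc$ and transductions compose, the hereditary closure still fails to transduce $\Ss$ while retaining unbounded twin-width, and crucially you do not need $\mathsf H(\Cc)$ itself to be unlabeled-small.
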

\begin{proof}
		Let $\Cc$ be a hereditary unlabeled-small class of graphs of unbounded twin-width and bounded degree (so $\Cc$ is monadically dependent), as constructed in \cite{bonnet2022twin7}, so $\Cc$ is not delineated. Since the class of subcubic graphs is not unlabeled-small \cite[Formula 6.6]{noyHandbook}, \Cref{lem:bdsmall} shows it cannot be transduced from $\Cc$.
		
		We remark that the class $\Cc$ is only shown to be small rather than unlabeled-small in \cite{bonnet2022twin7}, using \cite[Lemma 41]{bonnet2020twin2} that the class of induced subgraphs of the Cayley graph of a finitely-generated group is small. However, the same proof shows such a class is unlabeled-small; the only change needed is that Cayley's formula for the number of labeled rooted forests on $n$ vertices should be replaced by an exponential upper bound on the number of unlabeled rooted forests on $n$ vertices, which follows from such a bound on unlabeled rooted trees \cite[Theorem 4.4.6]{drmotaHandbook}.
\end{proof}

\begin{prob}
	Is (unlabeled-)smallness preserved by transductions?
	\end{prob}

	However, we have the following positive results concerning maximum classes.
	
\begin{thm}[{follows from \cite[Theorem 1.5]{TWW-factor}}]
	\label{thm:max_TWW}
	There exists a constant $c$ such that every class with bounded twin-width is a transduction of the class of all graphs with twin-width at most $c$. In other words, the bounded twin-width property has a maximum.
\end{thm}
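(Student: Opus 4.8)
The plan is to produce a single fixed class that is $\sqsubseteq_\FO$-maximum among all bounded twin-width classes. For $d\in\mathbb{N}$, let $\mathscr W_d$ denote the class of all graphs of twin-width at most $d$. First I would exploit that ``bounded twin-width'' is a \emph{uniform} condition: a class $\Cc$ has bounded twin-width precisely when $\Cc\subseteq\mathscr W_d$ for some single $d$. Since $\Cc\subseteq\mathscr W_d$ gives $\Cc\sqsubseteq_\FO^\circ\mathscr W_d$ (an inclusion is witnessed by the identity/hereditary transduction, as recorded in \Cref{sec:transDef}), it suffices to find an absolute constant $c$, independent of $d$, such that $\mathscr W_d\sqsubseteq_\FO\mathscr W_c$ for every $d$; the class $\mathscr W_c$ will then be the sought maximum.

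The heart of the argument is \cite[Theorem 1.5]{TWW-factor}, which provides exactly such a universal factoring: there is a constant $c$ and, for each $d$, a first-order transduction $\mathsf T_d$ with $\mathscr W_d\subseteq\mathsf T_d(\mathscr W_c)$. In other words, every graph of twin-width at most $d$ can be encoded, by a transduction depending only on $d$, inside a graph whose twin-width is bounded by the absolute constant $c$. Composing this with the reduction above yields, for any bounded twin-width class $\Cc\subseteq\mathscr W_d$, the chain $\Cc\sqsubseteq_\FO^\circ\mathscr W_d\sqsubseteq_\FO\mathscr W_c$, and hence $\Cc\sqsubseteq_\FO\mathscr W_c$ by transitivity of the quasi-order.

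It remains to observe that $\mathscr W_c$ is genuinely a maximum for the bounded twin-width property rather than a mere strict upper bound: $\mathscr W_c$ itself has bounded twin-width (every member has twin-width at most $c$ by definition), and conversely bounded twin-width is preserved under first-order transductions, so every class lying $\sqsubseteq_\FO$-below $\mathscr W_c$ again has bounded twin-width. Thus $\mathscr W_c$ realizes the maximum, up to transduction equivalence. The only genuinely difficult step is the factoring itself --- the existence of a single constant $c$ serving all values of $d$ --- which is precisely the content of \cite[Theorem 1.5]{TWW-factor} and which we use here as a black box; the surrounding reduction and the verification of maximality use only the definition of bounded twin-width as a uniform bound together with the transduction-invariance of twin-width, both of which are routine.
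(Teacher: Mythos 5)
Your proposal is correct and follows essentially the same route as the paper, which offers no argument beyond the attribution ``follows from \cite[Theorem 1.5]{TWW-factor}'': both treat that factoring theorem as a black box supplying the universal constant $c$ with $\mathscr W_d\sqsubseteq_\FO\mathscr W_c$ for all $d$. The surrounding steps you add --- reducing a bounded twin-width class to some $\mathscr W_d$ via inclusion, composing transductions, and checking via transduction-invariance of twin-width that $\mathscr W_c$ is a genuine maximum rather than a mere upper bound --- are the routine details the paper leaves implicit.
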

	
	The following is also a direct consequence of results in \cite{TWW-factor}.
	
\begin{thm}
		\label{thm:max_sparse_TWW}
	The property of being monadically stable and having bounded twin-width  has a maximum, which can be chosen to be the class of all graphs with twin-width at most $c$ and girth at least $6$ (for some constant $c$).
\end{thm}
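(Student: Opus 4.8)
The plan is to exhibit the concrete class $\mathscr M_c := \{G : \mathrm{tww}(G)\le c,\ \mathrm{girth}(G)\ge 6\}$ as the maximum and to verify the two requirements a maximum must satisfy: that $\mathscr M_c$ itself belongs to the property (is monadically stable with bounded twin-width), and that every monadically stable class of bounded twin-width is an $\FO$-transduction of $\mathscr M_c$. The constant $c$ will be the one furnished by the sparsification results of \cite{TWW-factor}, so that the same $c$ serves in the definition of $\mathscr M_c$ and in the upper-bound statement.

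First I would check that $\mathscr M_c$ is monadically stable with bounded twin-width. Bounded twin-width holds by definition. For monadic stability, observe that girth at least $6$ forbids the $4$-cycle $C_4 = K_{2,2}$, so $\mathscr M_c$ is $K_{2,2}$-free and in particular weakly sparse. Since the property of bounded twin-width is preserved by transductions and the class $\Gg$ of all graphs has unbounded twin-width, no class of bounded twin-width can transduce $\Gg$; hence, by the characterization of monadic dependence from \cite{baldwin1985second} recalled above, $\mathscr M_c$ is monadically dependent. The weakly sparse version of \Cref{thm:adler} (obtained via \cite{DVORAK2018143}) then asserts that on a weakly sparse class monadic dependence and monadic stability coincide, so $\mathscr M_c$ is monadically stable.

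For maximality, let $\Cc$ be an arbitrary monadically stable class of bounded twin-width. Here I would invoke the monadically stable refinement of \Cref{thm:max_TWW} proved in \cite{TWW-factor}: there is a universal constant $c$ such that every monadically stable class of bounded twin-width is an $\FO$-transduction of the class of graphs with twin-width at most $c$ and girth at least $6$. This gives $\Cc\sqsubseteq_\FO\mathscr M_c$. Together with the previous paragraph, $\mathscr M_c$ is simultaneously a member of the property and an $\sqsubseteq_\FO$-upper bound for it, hence a maximum.

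The main obstacle lies entirely in the maximality step, namely in extracting from \cite{TWW-factor} the precise statement that monadic stability of a bounded twin-width class permits a genuinely sparse encoding. The intuition is that, whereas \Cref{thm:max_TWW} encodes an arbitrary bounded twin-width class into a (possibly dense) representative of twin-width at most $c$, the absence of arbitrarily large half-graphs guaranteed by monadic stability is exactly what allows one to produce a biclique-free---indeed girth-$6$---encoding while keeping twin-width bounded, and to transduce the original class back from it. Confirming that the construction in \cite{TWW-factor} delivers girth at least $6$ and a single uniform constant $c$ is the delicate point; by contrast, the membership verification for $\mathscr M_c$ is routine given the equivalences already recorded in the excerpt.
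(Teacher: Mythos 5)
Your membership verification for $\mathscr M_c$ is fine (and in fact more explicit than what the paper bothers to write down): girth $\geq 6$ gives $K_{2,2}$-freeness, bounded twin-width gives monadic dependence, and on weakly sparse classes dependence collapses to stability. The problem is the maximality step, which is where all the content lives and which you have deferred to a citation that does not exist in the form you state it. There is no single "monadically stable refinement of \Cref{thm:max_TWW}" in \cite{TWW-factor} saying that every monadically stable class of bounded twin-width transduces from the girth-$6$, twin-width-$\leq c$ class; asserting that is essentially restating the theorem you are trying to prove. You yourself flag "confirming that the construction in \cite{TWW-factor} delivers girth at least $6$ and a single uniform constant $c$" as the delicate point, but that confirmation is the proof, and it is not a matter of checking one reference.

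The missing idea is a two-step factorization through a weakly sparse intermediate class, drawing on two different sources. First, by \cite{gajarsky2022stable}, every monadically stable class $\Cc$ of bounded twin-width is an $\FO$-transduction of some weakly sparse class of bounded twin-width, i.e.\ of a class $\Cc_{k,t}$ of $K_{t,t}$-free graphs of twin-width at most $k$ (with $k,t$ depending on $\Cc$). Second, \cite[Theorem~1.4]{TWW-factor} supplies a universal constant $c$ and a function $f$ such that for every $G\in\Cc_{k,t}$ the $f(k,t)$-subdivision of $G$ has twin-width at most $c$ and girth at least $6$; since an exact $f(k,t)$-subdivision can be undone by an $\FO$-transduction (connect marked branch vertices joined by a subdivided path of the prescribed length), each $\Cc_{k,t}$ is an $\FO$-transduction of $\mathscr M_c$, and composing gives $\Cc\sqsubseteq_\FO\mathscr M_c$. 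Without the first step you cannot apply the subdivision theorem at all, because it is stated for biclique-free classes, not for arbitrary monadically stable classes of bounded twin-width; and without the second step you have no control over the twin-width bound or the girth of the encoding class, hence no single $c$.
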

\begin{proof}
	It has been proved in \cite{gajarsky2022stable} that every monadically stable class of graphs with bounded twin-width is a transduction of a weakly sparse class with bounded twin-width.
	For $k,t\in\mathbb N$, let $\Cc_{k,t}$ be the class of all $K_{t,t}$-free graphs with twin-width at most $k$.
	According to \cite[Theorem~1.4]{TWW-factor},  there exists a constant $c$ and a function $f$ such that, for every graph in $G\in \Cc_{k,t}$, the $f(k,t)$-subdivision of $G$ has twin-width at most $c$ (and girth at least $6$). It follows that $\Cc_{k,t}$ is an $\FO$-transduction of the class $\mathscr B$ of all graphs with girth at least $6$ and twin-width at most $c$. Consequently, $\mathscr B$ is a maximum for the property of being monadically stable and having bounded twin-width.
\end{proof}

\subsection{Not a lattice}

We now show the transduction quasi-order is not a lattice, by showing the classes of planar graphs and of half-graphs have no greatest lower bound, i.e. there is no maximal class for the downset below both classes. This also shows that the join of infinitely many classes is not generally defined, as we could then define the meet of two classes as the join of all classes below both. We will make use of \cref{lem:pl2pw} showing that the class of planar graphs transduces every class of bounded pathwidth. We first show that the pathwidth hierarchy is strict in the transduction quasi-order, and thus has no maximum class.

\begin{thm}
	\label{thm:PW}
	For $n\geq 1$ we have $\Tt_{n+2}\subseteq_\FO\PW_{n+1}$ but
	\mbox{$\Tt_{n+2}\not\sqsubseteq_\FO\PW_n$}.
	  Consequently, $\PW_{n+1}\not\sqsubseteq_\FO \PW_{n}$.
\end{thm}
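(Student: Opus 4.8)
The plan is to establish the separation $\Tt_{n+2}\not\sqsubseteq_\FO\PW_n$; the asserted containment is the easy part, since a forest of depth $n+2$ has treedepth at most $n+2$ and hence pathwidth at most $n+1$ (using ${\rm pw}(G)\le{\rm td}(G)-1$, as a trivially perfect graph is an interval graph), so $\Tt_{n+2}\subseteq\PW_{n+1}$ and thus $\Tt_{n+2}\sqsubseteq_\FO^\circ\PW_{n+1}$. The final consequence is then formal: if $\PW_{n+1}\sqsubseteq_\FO\PW_n$ held, we would get $\Tt_{n+2}\sqsubseteq_\FO\PW_{n+1}\sqsubseteq_\FO\PW_n$, contradicting the separation.

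For the separation I would argue by contradiction, assuming $\Tt_{n+2}\sqsubseteq_\FO\PW_n$. First I would eliminate the copying: since $\PW_n$ is closed under adding pendant vertices (for $n\ge 1$ attaching a leaf does not raise pathwidth), it is self-copying by \Cref{fact:non-copy-classes}, so \Cref{fact:non-copy} gives $\Tt_{n+2}\sqsubseteq_\FO^\circ\PW_n$. As $\Tt_{n+2}$ is weakly sparse (it is even $K_{2,2}$-free), \Cref{cor:rel-normal} then yields a weakly sparse hereditary class $\Ss$ that is an immersive transduction of $\PW_n$ and such that $\Tt_{n+2}$ is nearly $\Ss$, with some constant $h$. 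Since $\Tt_{n+2}$ contains the complete $b$-ary tree of depth $n+2$ for every $b$, and deleting at most $h$ vertices from such a tree leaves (as an induced subgraph) a complete $b'$-ary tree of depth $n+2$ with $b'\to\infty$ as $b\to\infty$, heredity of $\Ss$ forces $\Ss$ to contain complete $b$-ary trees of depth $n+2$ for arbitrarily large $b$.

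Now I would fix a very large $b$ and a tree $T\in\Ss$ of this kind, written as $T=\mathsf I(G^+)$ with $G\in\PW_n$ and $\mathsf I$ a strongly $r$-local interpretation; thus $V(T)\subseteq V(G)$, ${\rm pw}(G)\le n$ (so $G$ is $n$-degenerate), and every edge $uv$ of $T$ is realized by a path $P_{uv}$ of length at most $r$ in $G$. The heart of the argument is to extract from these short realizing paths an honest \emph{minor model} in $G$ of the complete ternary tree $Y_{n+2}$ of depth $n+2$. I would build it top-down over the bounded-size tree $Y_{n+2}$: at each node, apply the sunflower lemma to the internal-vertex sets (each of size at most $r$) of the short paths joining it to its $b$ children in $T$, keep three children whose petals are pairwise disjoint, absorb the common core into the current bag, and always avoid the boundedly many vertices already committed. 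Because $Y_{n+2}$ has only $O(3^{n+2})$ nodes and each connector has length at most $r$, the entire extraction touches only a bounded number of vertices and succeeds as soon as $b$ exceeds a threshold depending only on $n$ and $r$. Finally, pathwidth is minor-monotone and ${\rm pw}(Y_{n+2})=n+1$ exactly — the lower bound following from the recursion ${\rm pw}(Y_d)\ge{\rm pw}(Y_{d-1})+1$, valid because a vertex whose removal leaves three components of pathwidth $p$ forces pathwidth at least $p+1$ — so ${\rm pw}(G)\ge n+1$, contradicting ${\rm pw}(G)\le n$.

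The hard part will be this middle step: the short realizing paths may overlap heavily, and turning them into a valid minor model requires care. The sunflower lemma resolves the overlaps petal-by-petal, but the top-down bookkeeping must keep the bags connected and pairwise disjoint while guaranteeing that three children with enough fresh branching survive at each of the $n+2$ levels. It is precisely the passage through \Cref{cor:rel-normal} — reducing to a weakly sparse $\Ss$ realized by a non-copying \emph{immersive} transduction — that makes the realizing paths short and the internal-vertex sets bounded, so that a finite bounded-size extraction suffices and no multiplicative blow-up of the pathwidth parameter (which would occur if one worked with powers of $G$ or with copying) can spoil the count.
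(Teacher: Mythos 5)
Your reduction to the key configuration is mostly fine (one small repair: if the $\le h$ vertices that \Cref{cor:rel-normal} allows you to delete include the root of the complete $b$-ary tree, what remains need not contain a complete $b'$-ary tree of depth $n+2$ as an induced subgraph; work with $h+1$ disjoint copies of the tree, which still lie in $\Tt_{n+2}$, so that one copy survives untouched). The genuine gap is the middle step. The claim you are implicitly relying on --- that if $\mathrm{pw}(G)\le n$ and $V(G)$ contains the vertex set of a complete $b$-ary tree $T$ of depth $n+2$ all of whose edges join vertices at $G$-distance at most $r$, then for large $b$ the graph $G$ contains $Y_{n+2}$ as a minor --- is false. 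Take $n=1$, $r=2$, $G=K_{1,N}$ with center $z$: it has pathwidth $1$, every two leaves are at distance exactly $2$, so the leaves support \emph{any} graph whatsoever subject to the distance constraint, in particular a complete $b$-ary tree of depth $3$; yet $K_{1,N}$ has no $Y_3$ minor, since $\mathrm{pw}(Y_3)=2$ and pathwidth is minor-monotone. Your sunflower extraction fails exactly there: every connecting path has internal-vertex set $\{z\}$, so $z$ is absorbed into the root's branch set at the first level, and at the second level every path from a child to its grandchildren must again pass through $z$, so no edge between the child's and grandchild's branch sets can be produced and no amount of "avoiding committed vertices" helps. The distance bound supplied by strong locality is simply not enough information; what actually rules this situation out is the first-order definability of $T$ from boundedly many colors (a formula over colored $K_{1,N}$ can only produce boundedly many twin-classes of leaves, hence no deep complete tree), and your sketch never invokes definability at the decisive moment.

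This is precisely why the paper's proof takes a different, inductive route: it uses additivity of $\Tt_{n+1}$ together with \Cref{crl:slunion} and the apex trick of \Cref{cor:apex} to reduce to radius-$r$ balls of interval graphs with clique number $n+1$, observes that such a ball has a dominating shortest path $P$ of length at most $2r$ with $G-P$ of clique number $n$, and then re-encodes the adjacencies to $P$ by a monadic expansion --- a logical, not metric, step --- to push the transduction down one level of the pathwidth hierarchy and invoke the induction hypothesis. To salvage your direct approach you would need a definable analogue of your extraction step (some form of "an FO-interpretation with bounded colors cannot define arbitrarily deep complete trees on a pathwidth-$n$ host"), and that is essentially where the whole difficulty of the theorem resides.
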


\begin{proof}
	For convenience, we define $\Tt_1=\PW_0=\Ee$, which is consistent with our definitions.
 We now prove $\Tt_{n+1}\not\sqsubseteq_\FO\PW_{n-1}$
	by induction on $n$. For $n=1$, this follows from \Cref{lem:Ee} (or from the known fact $\Tt_2\sqsupset\Ee$). 
	Now assume that we have proved the statement for~$n\geq 1$ and assume towards a contradiction that $\Tt_{n+2}\sqsubseteq_\FO\PW_{n}$.
	The class~$\PW_{n}$ is the monotone closure of the class $\mathcal{I}_{n+1}$ of interval graphs with clique number at most~$n+1$. As the class~$\mathcal{I}_{n+1}$ has bounded star chromatic number, it follows from \Cref{lem:monotone} that $\PW_{n}\sqsubseteq_\FO\mathcal{I}_{n+1}$. Let~$\mathsf T$ be the composition of the respective transductions, so that  $\Tt_{n+2}\subseteq\mathsf T(\mathcal{I}_{n+1})$.
	As $\Tt_{n+2}$ is additive, 
	it follows from \Cref{crl:slunion} that there is a copy operation $\mathsf C$ and an immersive transduction~$\mathsf{T}_0$ such that $\mathsf T_0\circ \mathsf C$ subsumes~$\mathsf T$.
	As $\{G+K_1: G\in\Tt_{n+1}\}\subseteq \Tt_{n+2}$ if follows from \Cref{cor:apex} that there exists an integer~$r$ such that $\Tt_{n+1}\sqsubseteq^\circ_\FO \rloc{r}{\mathsf C(\mathcal I_{n+1})}\subseteq \mathsf C(\rloc{r}{\mathcal I_{n+1}})$. According to \Cref{lem:slunion}, as $\Tt_{n+1}$ is additive, there exists an immersive transduction $\mathsf{T}_1$ with $\Tt_{n+1}\subseteq \mathsf T_1\circ\mathsf C(\rloc{r}{\mathcal I_{n+1}})$.
	
	Let $G\in \rloc{r}{\mathcal I_{n+1}}$ and let $P$ be a shortest path connecting the 
	minimal and maximal vertices in an interval representation of $G$. Then $P$ has length at most $2r$, $P$ dominates $G$, and  $G-P\in\mathcal I_n$. 
	By encoding the adjacencies in~$G$ to the vertices of $P$ by a monadic expansion, we get that that there exists a transduction~$\mathsf T_2$ (independent of our choice of $G$) such that $G$ is a $\mathsf T_2$-transduction of $H$, where $H$ is obtained from $G$ by deleting all the edges incident to a  vertex in $P$. 
	In particular, $\rloc{r}{\mathcal I_{n+1}}\subseteq\mathsf T_2(\mathcal I_n)$ thus
	$\Tt_{n+1}\subseteq\mathsf T_1\circ\mathsf C\circ\mathsf T_2(\mathcal I_n)$, which contradicts our induction hypothesis. 
\end{proof}

\begin{thm}
	\label{thm:nolattice}
The class $\Hh$  of half-graphs and the  class $\Pl$ of planar graphs have no greatest lower bound in $\sqsubseteq_\FO$ (or in $\sqsubseteq_\FO^\circ$).
\end{thm}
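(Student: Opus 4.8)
The plan is to exhibit an infinite family of common lower bounds of $\Hh$ and $\Pl$ that has no least upper bound inside the downset they determine, and then to show that the existence of a greatest lower bound would collapse the pathwidth hierarchy. The family I would use is the chain of bounded-pathwidth classes $\PW_1\sqsubseteq_\FO\PW_2\sqsubseteq_\FO\cdots$, which by \Cref{thm:PW} is strictly increasing. First I would check that each $\PW_n$ is indeed a lower bound of both target classes: on one side, bounded pathwidth implies bounded linear cliquewidth, and by the characterization of bounded linear cliquewidth via $\sqsubseteq_\FO\Hh$ (\cite{colcombet2007combinatorial}) we get $\PW_n\sqsubseteq_\FO\Hh$; on the other side, \Cref{lem:pl2pw} gives $\PW_n\sqsubseteq_\FO^\circ\Pl$ directly. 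Thus every $\PW_n$ lies below both $\Hh$ and $\Pl$.

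Now suppose for contradiction that $\mathscr M$ is a greatest lower bound of $\Hh$ and $\Pl$. Being itself a lower bound, $\mathscr M\sqsubseteq_\FO\Hh$ forces $\mathscr M$ to have bounded linear cliquewidth, and $\mathscr M\sqsubseteq_\FO\Pl$ forces $\mathscr M$ to be monadically stable: indeed $\Pl$ is weakly sparse (it excludes $K_{3,3}$ as a subgraph) and nowhere dense (it has bounded expansion), hence monadically stable by \Cref{thm:adler} in the weakly sparse form of \cite{DVORAK2018143}, and monadic stability, being equivalent to $\Hh\not\sqsubseteq_\FO(\cdot)$, is inherited downward in $\sqsubseteq_\FO$. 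Moreover, since $\mathscr M$ is a greatest lower bound and each $\PW_n$ is a lower bound, we have $\PW_n\sqsubseteq_\FO\mathscr M$ for every $n$.

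The crux is then the result that among classes of bounded linear cliquewidth, the monadically stable ones are exactly the transductions of bounded-pathwidth classes (\cite{SODA_msrw, msrw}); applying it to $\mathscr M$ yields an integer $N$ with $\mathscr M\sqsubseteq_\FO\PW_N$. Combining this with $\PW_{N+1}\sqsubseteq_\FO\mathscr M$ gives $\PW_{N+1}\sqsubseteq_\FO\PW_N$, contradicting \Cref{thm:PW}. Hence no greatest lower bound exists. I expect the main obstacle to be the correct and precise invocation of the external $(\PW,\Hh)$-cover result, together with verifying that planar graphs are monadically stable so that this result is applicable to $\mathscr M$; the rest is bookkeeping about which properties $\mathscr M$ inherits from being a lower bound.

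For the statement in $\sqsubseteq_\FO^\circ$ I would run the identical argument: each $\PW_n$ is a non-copying lower bound of $\Pl$ by \Cref{lem:pl2pw}, and a non-copying lower bound of $\Hh$ as well, so a greatest lower bound $\mathscr M$ in $\sqsubseteq_\FO^\circ$ satisfies $\PW_n\sqsubseteq_\FO^\circ\mathscr M$ for all $n$ while still having bounded linear cliquewidth and being monadically stable (both derived through $\sqsubseteq_\FO$). Since $\sqsubseteq_\FO^\circ$ refines $\sqsubseteq_\FO$, the final chain $\PW_{N+1}\sqsubseteq_\FO^\circ\mathscr M\sqsubseteq_\FO\PW_N$ again produces $\PW_{N+1}\sqsubseteq_\FO\PW_N$ and the same contradiction with \Cref{thm:PW}.
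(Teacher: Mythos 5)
Your proof is correct and takes essentially the same route as the paper's: both derive that a hypothetical greatest lower bound $\mathscr M$ is monadically stable (via \Cref{thm:adler} applied to $\Pl$) and of bounded linear cliquewidth, invoke the result of \cite{SODA_msrw} to place $\mathscr M$ below some $\PW_N$, and then contradict the strictness of the pathwidth hierarchy (\Cref{thm:PW}) using the fact that $\PW_{N+1}$ is itself a common lower bound by \Cref{lem:pl2pw}. The only cosmetic difference is that you phrase the contradiction as $\PW_{N+1}\sqsubseteq_\FO\mathscr M\sqsubseteq_\FO\PW_N$ while the paper phrases it as $\mathscr M\sqsubset\PW_{N+1}$ with $\PW_{N+1}$ a common lower bound; these are the same argument.
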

\begin{proof}
	Consider one of the quasi-orders $\sqsubseteq_\FO$,  $\sqsubseteq_\FO^\circ$, 
 which we will denote by $(\mathfrak{X},\sqsubseteq)$.
	Assume~$\Cc$ is the greatest lower bound of $\Hh$ and $\Pl$ in~$(\mathfrak{X},\sqsubseteq)$.
	According to~\Cref{thm:adler}, the class $\Pl$, being nowhere dense, is monadically stable. Thus, as $\Cc$ is (by assumption) a transduction of $\Pl$, it is also monadically stable. Hence, according to \cite{SODA_msrw}, there exists an integer $k$ such that $\Cc\sqsubseteq_\FO^\circ\PW_k$ (hence $\Cc\sqsubseteq\PW_k$). According to \Cref{thm:PW}, 
	$\PW_{k+1}\not\sqsubseteq_\FO\PW_k$ (hence $\PW_{k+1}\not\sqsubseteq\PW_k$). Thus,
	$\Cc\sqsubset\PW_{k+1}$. However, $\PW_{k+1}$ is addable (hence additive), $\PW_{k+1}\sqsubseteq_\FO^\circ\Hh$ and, according to \Cref{lem:pl2pw},
	$\PW_{k+1}\sqsubseteq^\circ\Pl$.
	In particular, $\PW_{k+1}\in\mathfrak{X}$, $\PW_{k+1}\sqsubseteq\Hh$, and $\PW_{k+1}\sqsubseteq\Pl$, 
	what contradicts the assumption that $\Cc$ is the greatest lower bound of $\Hh$ and $\Pl$ in $(\mathfrak{X},\sqsubseteq)$.
\end{proof}

On the other hand, it is easy to see that the least upper bound of two classes is always defined.

\begin{prop} \label{prop:sup}
	Let $\Cc_1$ and $\Cc_2$ be graph classes. Then $\Cc_1 \cup \Cc_2$ is the least upper bound of $\Cc_1$ and $\Cc_2$ in $\sqsubseteq_\FO$ (and in $\sqsubseteq_\FO^\circ$).
	\end{prop}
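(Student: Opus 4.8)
The plan is to verify the two defining properties of a least upper bound separately: that $\Cc_1 \cup \Cc_2$ is an upper bound of both $\Cc_1$ and $\Cc_2$, and that it lies below every common upper bound. The first property is immediate: since $\Cc_i \subseteq \Cc_1 \cup \Cc_2$ for $i \in \{1,2\}$, and inclusion of classes already implies the non-copying transduction relation (as recorded right after the definition of $\sqsubseteq_\FO^\circ$, witnessed by the hereditary transduction $\mathsf H$ or the identity transduction $\mathsf{Id}$), we get $\Cc_i \sqsubseteq_\FO^\circ \Cc_1 \cup \Cc_2$, hence also $\Cc_i \sqsubseteq_\FO \Cc_1 \cup \Cc_2$.

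The substance is in the second property. Suppose $\Dd$ is a common upper bound, so there are transductions $\mathsf T_1, \mathsf T_2$ with $\Cc_1 \subseteq \mathsf T_1(\Dd)$ and $\Cc_2 \subseteq \mathsf T_2(\Dd)$. I would build a single transduction $\mathsf T$ satisfying $\mathsf T(G) \supseteq \mathsf T_1(G) \cup \mathsf T_2(G)$ for every graph $G$; then $\Cc_1 \cup \Cc_2 \subseteq \mathsf T_1(\Dd) \cup \mathsf T_2(\Dd) \subseteq \mathsf T(\Dd)$, giving $\Cc_1 \cup \Cc_2 \sqsubseteq_\FO \Dd$. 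To combine the two transductions I would take a fresh unary predicate $P$ as a global switch together with the (disjoint) color sets of $\mathsf T_1$ and $\mathsf T_2$, and use the copy operation $\mathsf C_k$ with $k$ the larger of the two copy numbers. Writing $\pi := \forall z\, P(z)$ for the sentence asserting that every vertex is colored $P$, I would define the interpretation of $\mathsf T$ so that it behaves like the interpretation of $\mathsf T_1$ when $\pi$ holds and like that of $\mathsf T_2$ when $\pi$ fails; note that $\eta := (\pi \wedge \eta_1) \vee (\neg\pi \wedge \eta_2)$ is still symmetric and anti-reflexive since $\pi$ is a sentence. Because the coloring operation $\Gamma_{\mathcal U}$ ranges over \emph{all} expansions, it in particular produces the expansion in which $P$ is the full vertex set and the one in which $P$ is empty; selecting these recovers, respectively, every graph in $\mathsf T_1(G)$ and every graph in $\mathsf T_2(G)$.

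The only technical point — and the step I expect to need care — is the mismatch between the copy numbers $k_1, k_2$ and the common $k$. I would resolve it by marking, with an additional unary predicate $Q$, exactly the clones corresponding to the copy $\mathsf C_{k_i}(G)$ sitting inside $\mathsf C_k(G)$, and relativizing all quantifiers of $\mathsf I_1$ and $\mathsf I_2$ to $Q$; since $\mathsf C_{k_i}(G)$ is an induced subgraph of $\mathsf C_k(G)$, the relativized formulas evaluated on the $Q$-part reproduce exactly $\mathsf I_i$ on $\mathsf C_{k_i}(G)$. The empty graph is a harmless edge case, as $\pi$ holds vacuously and the empty graph already lies in both outputs. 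Finally, I would observe that if both $\mathsf T_1$ and $\mathsf T_2$ are non-copying then so is the combined $\mathsf T$ (no copy operation and no $Q$-relativization are needed), which yields the identical statement for $\sqsubseteq_\FO^\circ$.
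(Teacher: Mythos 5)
Your proof is correct and follows essentially the same route as the paper: both arguments take the maximum of the two copy numbers and introduce a fresh unary predicate acting as a global switch that decides whether the interpretation of $\mathsf T_1$ or of $\mathsf T_2$ is applied (the paper tests whether \emph{some} vertex carries the new color, you test whether \emph{all} do, which is the same device). Your explicit handling of the copy-number mismatch via a $Q$-marked induced copy is a detail the paper leaves implicit, but it does not change the argument.
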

\begin{proof}
	We only do the proof for $\sqsubseteq_\FO$, since that for $\sqsubseteq_\FO^\circ$ is essentially identical. It is immediate that $\Cc_1 \cup \Cc_2$ transduces each of $\Cc_1$ and $\Cc_2$. So suppose $\Dd$ transduces $\Cc_1$ and $\Cc_2$ via transductions $\mathsf T_1$ and $\mathsf T_2$. We wish to encode these both in a single transduction (and for the proof to carry over to $\sqsubseteq_\FO^\circ$, we should not achieve this via copying). We do this by introducing a new color $U(x)$. We first perform the copy operation the max of the number of times it was performed in $\mathsf T_1$ and $\mathsf T_2$. Then, if any point is colored by $U$, we perform the interpretation from $\mathsf T_1$, and otherwise perform the interpretation from $\mathsf T_2$.
\end{proof}

\section{Dense analogues and open problems}
\label{sec:analogue}
In this section, we discuss the notion of \emph{dense analogue} of a class property, as introduced in \cite{gajarsky2022stable}.
In the following, we consider only infinite hereditary classes.
Let $\Sigma$
 be the class property of being weakly sparse (that is the property of a class that there is some integer $s$ such that no graph in the class includes $K_{s,s}$ as a subgraph).
Unlike the model-theoretic dividing lines of stability and dependence for hereditary classes, the notion of weakly sparse  is not preserved by transductions, but finds its motivation in the study of sparse classes.
Recall that a class property $\Pi$ is a \emph{transduction \dset} if it is preserved by transductions. 
In other words, a transduction \dset consists of a \dset for the transduction order $(\mathfrak C,\sqsubseteq_\FO)$.  
The \emph{transduction closure}~$\Pi^{\downarrow}$ of a class property $\Pi$ is the property 
to be a transduction of a class in~$\Pi$, that is the smallest transduction \dset (for inclusion) that includes $\Pi$. (This is the same as the class property structurally $\Pi$.)
A \emph{transduction ideal} is a transduction \dset $\Pi$ such that every two elements of $\Pi$ have an upper bound in $\Pi$, i.e.\ if the union of any two classes in $\Pi$ is in $\Pi$ (by \cref{prop:sup}).  
A class property $\Pi_s$ is a \emph{sparse transduction \dset} if it contains only weakly sparse classes and is preserved by transductions to weakly sparse classes. In other words, a sparse transduction \dset is the trace on $\Sigma$ of a transduction \dset. A sparse transduction \dset $\Pi_s$ is a \emph{sparse transduction ideal} if the union of any two classes in~$\Pi_s$ is in $\Pi_s$.

The \emph{dense analogue}  $\Pi$ of a sparse transduction \dset $\Pi_s$ is the largest transduction \dset~$\Pi$ (for inclusion) with $\Pi\cap\Sigma=\Pi_s$.  This definition is valid as $\Pi_s^{\downarrow}\cap\Sigma=\Pi_s$ for every sparse transduction \dset, and the union of all transduction {\dset}s $\Pi$ with
$\Pi\cap\Sigma=\Pi_s$ is a transduction \dset with the same property.

\begin{prop}
	The dense analogue of a sparse transduction ideal is a transduction ideal.
\end{prop}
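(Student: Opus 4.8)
The plan is to exploit the maximality built into the definition of the dense analogue. Write $\Pi$ for the dense analogue of $\Pi_s$; by definition $\Pi$ is a transduction \dset{} with $\Pi\cap\Sigma=\Pi_s$, and it is the \emph{largest} such. Since a transduction ideal is precisely a transduction \dset{} closed under the union of classes (which by \Cref{prop:sup} is the join in $\sqsubseteq_\FO$), and $\Pi$ is already a transduction \dset, it remains only to verify this closure. So I would fix two classes $\Cc_1,\Cc_2\in\Pi$ and aim to show $\Cc_1\cup\Cc_2\in\Pi$.

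To bring maximality to bear, I would form the class property $\Pi':=\Pi\cup\{\Dd\colon \Dd\sqsubseteq_\FO\Cc_1\cup\Cc_2\}$, obtained by adjoining to $\Pi$ the principal \dset{} generated by $\Cc_1\cup\Cc_2$. As a union of two transduction \dset{}s this is again a transduction \dset, and it contains $\Cc_1\cup\Cc_2$. If I can prove $\Pi'\cap\Sigma=\Pi_s$, then maximality of $\Pi$ forces $\Pi'\subseteq\Pi$, and hence $\Cc_1\cup\Cc_2\in\Pi$, which is exactly what is wanted.

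The substance of the argument lies in the inclusion $\Pi'\cap\Sigma\subseteq\Pi_s$ (the reverse inclusion is immediate from $\Pi\subseteq\Pi'$ and $\Pi\cap\Sigma=\Pi_s$). So suppose $\Dd$ is weakly sparse and $\Dd\sqsubseteq_\FO\Cc_1\cup\Cc_2$, witnessed by a transduction $\mathsf T$ with $\Dd\subseteq\mathsf T(\Cc_1\cup\Cc_2)$. The key observation is that transductions distribute over unions of classes, $\mathsf T(\Cc_1\cup\Cc_2)=\mathsf T(\Cc_1)\cup\mathsf T(\Cc_2)$, straight from $\mathsf T(\Cc)=\bigcup_{G\in\Cc}\mathsf T(G)$. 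Setting $\Dd_i:=\{G\in\Dd\colon G\in\mathsf T(\Cc_i)\}$ for $i\in\{1,2\}$ yields $\Dd=\Dd_1\cup\Dd_2$ with $\Dd_i\subseteq\mathsf T(\Cc_i)$, i.e.\ $\Dd_i\sqsubseteq_\FO\Cc_i$. Each $\Dd_i$ is a subclass of the weakly sparse class $\Dd$, hence weakly sparse with the same biclique bound; and since $\Pi$ is a \dset{} and $\Cc_i\in\Pi$, we get $\Dd_i\in\Pi\cap\Sigma=\Pi_s$. Finally, because $\Pi_s$ is a \emph{sparse transduction ideal} it is closed under unions, so $\Dd=\Dd_1\cup\Dd_2\in\Pi_s$, establishing $\Pi'\cap\Sigma=\Pi_s$ and completing the proof.

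I expect the main obstacle to be conceptual rather than technical: recognizing that the splitting $\Dd=\Dd_1\cup\Dd_2$ realigns an arbitrary transduction of the \emph{union} $\Cc_1\cup\Cc_2$ into two transductions of the individual classes, so that the sparse-ideal hypothesis on $\Pi_s$ can be applied to each piece and then reassembled. The remaining points are pure bookkeeping — that each $\Dd_i$ is a genuine isomorphism-closed class, that weak sparsity is inherited by subclasses, and that adjoining a principal \dset{} to a \dset{} gives a \dset. Notably, no structural machinery (no local normal form) is needed here: the statement is a formal consequence of the definitions together with distributivity of transductions over class unions and the ideal hypothesis.
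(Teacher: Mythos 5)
Your proof is correct and follows essentially the same route as the paper's: both hinge on splitting an arbitrary weakly sparse class $\Dd\sqsubseteq_\FO\Cc_1\cup\Cc_2$ into $\Dd_1\cup\Dd_2$ with $\Dd_i\sqsubseteq_\FO\Cc_i$, inferring $\Dd_i\in\Pi_s$ from the \dset{} property and weak sparsity, and reassembling via the ideal hypothesis. Your only departure is to make explicit, via the auxiliary \dset{} $\Pi'$ and maximality, the final step "every weakly sparse transduction of $\Cc_1\cup\Cc_2$ lies in $\Pi_s$, hence $\Cc_1\cup\Cc_2\in\Pi$," which the paper leaves implicit.
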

\begin{proof}
	Let $\Pi$ be the dense analogue of a sparse transduction ideal $\Pi_s$.
	Assume $\Cc_1,\Cc_2\in\Pi$, and let $\Dd$ be a weakly sparse transduction of $\Cc_1\union\Cc_2$. Then, 
	there is a partition $\Dd_1\union\Dd_2$ of~$\Dd$ with $\Dd_1\sqsubseteq\Cc_1$ and $\Dd_2\sqsubseteq\Cc_2$. As $\Dd$ is weakly sparse, so are $\Dd_1$ and~$\Dd_2$. Thus, both $\Dd_1$ and $\Dd_2$ belong to $\Pi_s$. As $\Pi_s$ is a sparse transduction ideal, $\Dd\in\Pi$. It follows that every weakly sparse transduction of $\Cc_1\union\Cc_2$ belongs to $\Pi_s$, thus $\Cc_1\union\Cc_2\in\Pi$.
	Therefore, $\Pi$ is a transduction ideal.
\end{proof}

\begin{prop}
	Let $\Pi$ be a transduction \dset.
	Then, $\Pi$ is the dense analogue of $\Pi\cap\Sigma$ if and only if 
	the complement $\Pi^*$ of $\Pi$ is such that 
	\[
	\Cc\in\Pi^*\quad\Longrightarrow\quad (\exists \Dd\in\Sigma\cap\Pi^*)\ \Dd\sqsubseteq_\FO\Cc\qquad\text{(\/property $\Pi^*$ is grounded in $\Sigma$)}.
	\]
\end{prop}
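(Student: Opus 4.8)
The plan is to give an explicit description of the dense analogue of $\Pi_s:=\Pi\cap\Sigma$ and then compare it with $\Pi$ by passing to complements. First note that since $\Pi$ is a transduction \dset, the trace $\Pi_s$ is a \emph{sparse} transduction \dset: it contains only weakly sparse classes, and if $\Cc\in\Pi_s$ and $\Dd\sqsubseteq_\FO\Cc$ with $\Dd$ weakly sparse, then $\Dd\in\Pi$ (downward closure) and $\Dd\in\Sigma$, whence $\Dd\in\Pi_s$. So the dense analogue $\widehat\Pi$ of $\Pi_s$ is well-defined, and the crux of the argument is the identity
\[
\widehat\Pi=\Theta:=\{\Cc : \text{every } \Dd\in\Sigma \text{ with } \Dd\sqsubseteq_\FO\Cc \text{ belongs to } \Pi_s\}.
\]

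To establish $\widehat\Pi=\Theta$ I would check the three defining features of the dense analogue. That $\Theta$ is a transduction \dset follows from transitivity of $\sqsubseteq_\FO$: if $\Cc\in\Theta$ and $\Cc'\sqsubseteq_\FO\Cc$, then any weakly sparse $\Dd\sqsubseteq_\FO\Cc'$ also satisfies $\Dd\sqsubseteq_\FO\Cc$, hence lies in $\Pi_s$, so $\Cc'\in\Theta$. That $\Theta\cap\Sigma=\Pi_s$ is seen by evaluating the defining condition on $\Cc\in\Sigma$: taking $\Dd=\Cc$ (via $\Cc\sqsubseteq_\FO\Cc$) shows $\Theta\cap\Sigma\subseteq\Pi_s$, while closure of $\Pi_s$ under weakly sparse transductions gives the reverse inclusion $\Pi_s\subseteq\Theta\cap\Sigma$. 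Finally, $\Theta$ is the largest transduction \dset with this trace: if $\Pi'$ is any transduction \dset with $\Pi'\cap\Sigma=\Pi_s$ and $\Cc\in\Pi'$, then every weakly sparse $\Dd\sqsubseteq_\FO\Cc$ lies in $\Pi'$ (downward closure) and in $\Sigma$, hence in $\Pi'\cap\Sigma=\Pi_s$, so $\Cc\in\Theta$ and $\Pi'\subseteq\Theta$. Applying this last point to $\Pi'=\Pi$ gives in particular $\Pi\subseteq\Theta=\widehat\Pi$.

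With this in hand the equivalence becomes a formal manipulation of complements. By definition $\Pi$ is the dense analogue of $\Pi_s$ exactly when $\Pi=\widehat\Pi$; since $\Pi\subseteq\widehat\Pi$ always holds, this is equivalent to $\widehat\Pi\subseteq\Pi$, that is, to $\Pi^*\subseteq\widehat\Pi^{*}$. Dualizing the formula for $\Theta$ yields
\[
\widehat\Pi^{*}=\{\Cc : \exists\,\Dd\sqsubseteq_\FO\Cc \text{ with } \Dd\in\Sigma\setminus\Pi_s\},
\]
and because for $\Dd\in\Sigma$ one has $\Dd\notin\Pi_s\iff\Dd\in\Pi^*$, this rewrites as $\widehat\Pi^{*}=\{\Cc:\exists\,\Dd\in\Sigma\cap\Pi^*,\ \Dd\sqsubseteq_\FO\Cc\}$. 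Thus the inclusion $\Pi^*\subseteq\widehat\Pi^{*}$ asserts precisely that every $\Cc\in\Pi^*$ admits some weakly sparse $\Dd\in\Pi^*$ below it in $\sqsubseteq_\FO$, which is verbatim the statement that $\Pi^*$ is grounded in $\Sigma$. Chaining these reformulations proves both implications simultaneously.

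I expect the only genuine content to be the identification $\widehat\Pi=\Theta$; once this closed form for the dense analogue is available, everything else is bookkeeping with complements. The step I would be most careful about is the inclusion $\Pi_s\subseteq\Theta\cap\Sigma$, where one must use that $\Pi_s$ is closed under transductions \emph{into} $\Sigma$ (not merely that $\Pi$ is a transduction \dset), together with the observation that the witnessing class $\Dd$ below $\Cc$ is required to be weakly sparse, so that its membership in $\Pi^*$ coincides with membership in $\Sigma\cap\Pi^*$; this is what makes the final rewriting of $\widehat\Pi^{*}$ match the grounding condition exactly.
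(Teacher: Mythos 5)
Your proof is correct. The underlying content is the same as the paper's---both arguments ultimately rest on the maximality clause in the definition of the dense analogue---but you organize it differently: you first establish the closed form $\widehat\Pi=\{\Cc:\text{every }\Dd\in\Sigma\text{ with }\Dd\sqsubseteq_\FO\Cc\text{ lies in }\Pi\cap\Sigma\}$ for the dense analogue of $\Pi\cap\Sigma$, and then obtain both implications at once by passing to complements, whereas the paper argues the two directions separately and directly from the definition. Your route has a concrete advantage: the paper's forward direction consists of a single unjustified assertion that being the dense analogue forces the grounding property (the implicit argument being that otherwise one could enlarge $\Pi$ by the \dset generated by a counterexample $\Cc$ without changing the trace on $\Sigma$); your identification of the dense analogue with $\Theta$ makes that step explicit and checkable. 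The two points you flag as delicate are indeed the ones that need care and you handle both correctly: the inclusion $\Pi\cap\Sigma\subseteq\Theta\cap\Sigma$ uses that $\Pi\cap\Sigma$ is closed under transductions landing in $\Sigma$ (which follows from $\Pi$ being a transduction \dset), and the restriction of the witness $\Dd$ to $\Sigma$ is exactly what makes $\Dd\notin\Pi\cap\Sigma$ equivalent to $\Dd\in\Sigma\cap\Pi^*$, so that $\Theta^*$ coincides verbatim with the grounding condition.
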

\begin{proof}
	Assume $\Pi$ is the dense analogue of $\Pi\cap\Sigma$ and let $\Pi^*$ be the complement of $\Pi$. 
	As~$\Pi$ is the dense analogue of $\Pi\cap\Sigma$, for every class $\Cc\notin\Pi$ there exists a weakly sparse class $\Dd\notin\Pi$ with $\Dd\sqsubseteq_\FO\Cc$. 
	
	Conversely,  let $\Pi_s=\Pi\cap\Sigma$. 
	For every $\Cc\notin\Pi$, we have $\Cc\in\Pi^*$.
	Thus, there exists $\Dd\sqsubseteq_\FO \Cc$ with $\Dd\in\Sigma\cap\Pi^*$. Hence, $\Dd$ is weakly sparse and $\Dd\notin\Pi$. It follows that $\Pi$ is the dense analogue of $\Pi_s$.
\end{proof}

\begin{exa}
	Monadic dependence
	is the dense analogue of nowhere dense.
	This can be expressed as follows.
	If a class is nowhere dense, then it is monadically dependent~\cite{adler2014interpreting}. 
	Conversely, assume that $\mathscr C$ is a hereditary weakly sparse monadically dependent class.  Assume for contradiction that $\mathscr C$ is not nowhere dense. Then, according to  \cite[Theorem 6]{DVORAK2018143}, there is some integer $k$ such that~$\mathscr C$ contains a $\leq k$-subdivision of every complete graph. It follows that the class of all graphs can be obtained as a transduction of $\mathscr C$, contradicting the hypothesis that $\mathscr C$ is 
	monadically dependent.
\end{exa}

Note that this shows that the dense analogue of $\Pi$ can be strictly larger than $\Pi^{\downarrow}$, since nowhere dense classes are monadically stable and thus their transduction closure is contained within monadic stability.

\begin{exa}
	Bounded shrubdepth is the dense analogue of bounded treedepth. Indeed, every weakly sparse class with bounded shrubdepth has bounded treedepth \cite[Proposition~6.4]{Sparsity}, while the class of paths (which has  unbounded treedepth) can be transduced from any class with unbounded shrubdepth \cite[Theorem 1.1]{shrubbery}.
\end{exa}

It would be tempting to think that the prominent role played by weakly sparse classes in our understanding of the transduction quasi-order is due to the fact that the complexity of a set of classes is determined by its relation to $\Sigma$. A strong formalization of this is given by the following order-theoretic notion: a subset $\Upsilon$ of classes is sup-dense in the transduction quasi-order $\sqsubseteq_\FO$  if, for every two classes $\Cc$ and $\Dd$ we have
\[
\Cc\equiv_\FO\Dd\qquad\iff\qquad\forall \Ff\in\Upsilon\quad (\Ff\sqsubseteq_\FO\Cc)\iff(\Ff\sqsubseteq_\FO\Dd).
\]

\begin{prop}
	\label{prop:non-sup-dense}
	The  set $\Sigma$ is not sup-dense in the FO-transduction quasi-order.
\end{prop}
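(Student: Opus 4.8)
The plan is to refute sup\-density directly, by producing two classes with \emph{identical} weakly sparse shadows that are nonetheless not transduction\-equivalent, and the two maximum classes furnished by \Cref{thm:max_TWW} and \Cref{thm:max_sparse_TWW} are tailor\-made for this. Concretely, I would let $\Dd$ be the class of all graphs of twin\-width at most $c'$ (a maximum for the property of bounded twin\-width, by \Cref{thm:max_TWW}), and let $\Cc$ be the class of all graphs of girth at least $6$ and twin\-width at most $c$ (a maximum for the property of being monadically stable with bounded twin\-width, by \Cref{thm:max_sparse_TWW}). Since the refutation only needs the forward failure of the equivalence, it suffices to show that $\Cc$ and $\Dd$ have the same $\Sigma$\-shadow while $\Cc\not\equiv_\FO\Dd$.

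First I would identify the two principal down\-sets. Both bounded twin\-width and monadic stability are transduction \dset s, so their maxima describe their down\-sets exactly: $\{\Ff:\Ff\sqsubseteq_\FO\Dd\}$ is the family of bounded twin\-width classes, and $\{\Ff:\Ff\sqsubseteq_\FO\Cc\}$ is the family of classes that are both monadically stable and of bounded twin\-width. Intersecting with $\Sigma$ gives the weakly sparse shadows $S(\Dd)=\Sigma\cap\{\text{bounded twin-width}\}$ and $S(\Cc)=\Sigma\cap\{\text{monadically stable and bounded twin-width}\}$, so plainly $S(\Cc)\subseteq S(\Dd)$, and the whole point is the reverse inclusion.

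The heart of the argument is thus the claim that \emph{every weakly sparse class of bounded twin\-width is already monadically stable}. I would establish this by the chain: classes of bounded twin\-width are monadically dependent; a weakly sparse monadically dependent class is nowhere dense (exactly as in the nowhere dense/monadic dependence example above, using \cite{DVORAK2018143}); and on weakly sparse classes nowhere denseness coincides with monadic stability by \Cref{thm:adler} together with its stated extension from monotone to weakly sparse classes. Hence $\Sigma\cap\{\text{bounded twin-width}\}\subseteq\{\text{monadically stable}\}$, which yields $S(\Dd)\subseteq S(\Cc)$ and therefore $S(\Cc)=S(\Dd)$; that is, for every $\Ff\in\Sigma$ we have $\Ff\sqsubseteq_\FO\Cc$ if and only if $\Ff\sqsubseteq_\FO\Dd$.

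Finally I would verify inequivalence. Half\-graphs have bounded twin\-width, so $\Hh\sqsubseteq_\FO\Dd$ and $\Dd$ is not monadically stable, whereas $\Cc$ is monadically stable by construction; as monadic stability is a transduction invariant, $\Cc\not\equiv_\FO\Dd$ (indeed $\Cc\sqsubsetneq_\FO\Dd$). Consequently $\Cc$ and $\Dd$ are distinguished by no weakly sparse class yet are inequivalent, so $\Sigma$ is not sup\-dense. I do not expect a genuine obstacle here: the only substantive step is the shadow\-coincidence claim, and the rest is bookkeeping with the two maximum theorems; the one point requiring care is that both maxima must be read relative to transduction \dset s, so that the corresponding down\-sets are exactly the two properties used above.
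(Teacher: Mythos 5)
Your proof is correct, but it takes a genuinely different route from the paper's. The paper refutes sup-density with the pair $\Pl$ (planar graphs) and $\Pl\cup\Hh$: these are inequivalent because $\Pl$ is monadically stable while $\Hh$ is not, yet they have the same weakly sparse down-shadow because, by \cite{SODA_msrw}, every weakly sparse transduction of $\Hh$ has bounded pathwidth and hence, by \Cref{lem:pl2pw}, is already a transduction of $\Pl$. You instead exploit the two maximum classes of \Cref{thm:max_TWW} and \Cref{thm:max_sparse_TWW}, reducing the shadow coincidence to the purely classification-theoretic chain ``bounded twin-width $\Rightarrow$ monadically dependent; weakly sparse $+$ monadically dependent $\Rightarrow$ nowhere dense (via \cite{DVORAK2018143}); nowhere dense $+$ weakly sparse $\Rightarrow$ monadically stable (the weakly sparse form of \Cref{thm:adler})'', and the inequivalence then rests on $\Hh$ having bounded twin-width --- a standard fact, but one the paper never records, so you should cite it (e.g.\ \cite{twin-width1}) rather than assert it. What the paper's choice buys is self-containedness: it needs only \Cref{lem:pl2pw} and \cite{SODA_msrw}, both already in play, and no twin-width input. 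What your choice buys is that both witnesses are single canonical maximum classes rather than an ad hoc union, and your template is essentially the one the paper itself uses for the \emph{stronger} statement proved immediately afterwards (the proposition exhibiting classes with the same weakly sparse classes both above and below), which is built from the maximum class of \Cref{thm:max_sparse_TWW}. One point worth making explicit in your write-up: the class of \Cref{thm:max_sparse_TWW} has girth at least $6$, hence is weakly sparse, so by your own key claim it is itself monadically stable and genuinely belongs to the property it is a maximum for; this is needed for your identification of its principal down-set.
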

\begin{proof}
	Let $\Cc$ be the class of all  planar graphs, and let $\Dd$ be the union of $\Cc$ and of the class~$\Hh$ of all half-graphs.
	As every transduction of $\Cc$ is monadically stable and $\Dd$ is not (as it includes $\Hh$), we have that
	$\Dd\sqsubseteq_\FO\Cc$ but 
	$\Dd\not\equiv_\FO\Cc$. 
		 However, according to \cite{SODA_msrw}, every weakly sparse transduction of half-graphs has bounded pathwidth, and thus by  \Cref{lem:pl2pw} is a transduction of $\Cc$. Thus $\Sigma$ is not sup-dense in the transduction quasi-order.
\end{proof}

While the classes $\Cc$ and $\Dd$ above had the same weakly sparse classes $\sqsubseteq_\FO$-below them, they had different weakly sparse classes $\sqsubseteq_\FO$-above them, since the only weakly sparse classes $\sqsubseteq_\FO$-above $\Dd$ are somewhere dense. We now show that even requiring that $\Cc$ and $\Dd$ have the same relation to every weakly sparse class does not force them to be $\equiv_\FO$-equivalent.

\begin{prop}
	There are hereditary graph classes $\Cc \not \equiv_\FO \Dd$ such that for every weakly sparse class $\Ff$, we have $\Ff \sqsubseteq_\FO \Cc \iff \Ff \sqsubseteq_\FO \Dd$ and $\Ff \sqsupseteq_\FO \Cc \iff \Ff \sqsupseteq_\FO \Dd$.
	\end{prop}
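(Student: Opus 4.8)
The plan is to reduce the statement to a purely ``dense'' separation problem by exploiting the dichotomy for weakly sparse classes, and then to realize that separation using twin-width. Recall that by \Cref{thm:adler} together with the extension to weakly sparse classes from \cite{DVORAK2018143}, a weakly sparse class $\Ff$ is nowhere dense (equivalently monadically stable, equivalently monadically dependent) if and only if it does not transduce the class $\Gg$ of all graphs; otherwise it transduces $\Gg$. First I would use this to handle the ``above'' direction: if both $\Cc$ and $\Dd$ are \emph{not} monadically stable, then no nowhere dense $\Ff$ can satisfy $\Ff\sqsupseteq_\FO\Cc$ or $\Ff\sqsupseteq_\FO\Dd$ (transductions preserve monadic stability), while every weakly sparse $\Ff$ that transduces $\Gg$ lies above everything; hence the weakly sparse classes above $\Cc$ and above $\Dd$ automatically coincide. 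For the ``below'' direction I would additionally require that both classes be monadically dependent (not maximum); then every weakly sparse class below either of them is nowhere dense, so it suffices that $\Cc$ and $\Dd$ transduce exactly the same nowhere dense classes. Thus the whole statement reduces to producing two transduction-inequivalent classes that are both monadically dependent, both not monadically stable, and have the same nowhere dense classes below them.

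For the construction I would take $\Cc=\mathscr W_c$, the maximum class of bounded twin-width provided by \Cref{thm:max_TWW}. It is monadically dependent (bounded twin-width classes are, by \cite{twin-width1}), it transduces $\Hh$ (half-graphs have bounded twin-width) and so is not monadically stable, and the nowhere dense classes below it are exactly the nowhere dense classes of bounded twin-width. I would then set $\Dd=\mathscr W_c\cup\mathscr E$, where $\mathscr E$ is a monadically dependent class of \emph{unbounded} twin-width all of whose weakly sparse transductions have bounded twin-width. With such an $\mathscr E$ the verification is clean: since $\mathscr E$ has unbounded twin-width we get $\mathscr E\not\sqsubseteq_\FO\mathscr W_c$, hence $\Cc\sqsubset_\FO\Dd$ and in particular $\Cc\not\equiv_\FO\Dd$; $\Dd$ is not monadically stable because it transduces $\Cc\sqsupseteq_\FO\Hh$; and $\Dd$ is monadically dependent because $\Gg$, being closed under disjoint union, is join-irreducible (see \Cref{thm:sl}), so a union of two monadically dependent classes cannot be equivalent to $\Gg$. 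Finally, same nowhere dense shadow follows from distributivity of the join-semilattice (\Cref{thm:sl}) and $\Cc_1\cup\Cc_2$ being the join (\Cref{prop:sup}): any nowhere dense $\Ff\sqsubseteq_\FO\mathscr W_c\vee\mathscr E$ splits as $\Ff=\Ff_1\vee\Ff_2$ with $\Ff_1\sqsubseteq_\FO\mathscr W_c$ and $\Ff_2\sqsubseteq_\FO\mathscr E$; both $\Ff_1,\Ff_2$ are nowhere dense (being below $\Ff$) of bounded twin-width, hence both lie below $\mathscr W_c$, so $\Ff\sqsubseteq_\FO\mathscr W_c=\Cc$. Combined with the reduction, this yields the proposition.

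The hard part will be the existence of $\mathscr E$: a monadically dependent class of unbounded twin-width whose weakly sparse transductions all have bounded twin-width. This is exactly a ``dense non-delineation'' phenomenon, and it is where the real content lies. The known bounded-degree monadically dependent classes of unbounded twin-width (such as the one in \cite{bonnet2022twin7} used in \Cref{lem:bdsmall} and its corollary) cannot be used directly, since being themselves weakly sparse they place their own unbounded twin-width into their shadow; the unboundedness must instead be carried by genuinely dense structure that no weakly sparse transduction can recover. I would obtain $\mathscr E$ by a dense encoding adapted from such twin-width constructions, and then bound the twin-width of all its weakly sparse transductions by a Feferman--Vaught/composition analysis (\Cref{thm:FV}) controlling how strongly local interpretations act, in the spirit of the growth argument of \Cref{lem:bdsmall}. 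Establishing that this shadow stays of bounded twin-width is the main obstacle; everything else is bookkeeping around the dichotomy and the lattice structure already developed in the paper.
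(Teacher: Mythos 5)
Your reduction is sound and in fact mirrors the paper's strategy: since neither candidate class is monadically stable, the weakly sparse classes above each are exactly the somewhere dense ones, so only the downward shadows matter; and taking unions with a fixed ``maximum'' base class so that the extra summand contributes nothing weakly sparse is precisely what the paper does. The distributivity/join argument you sketch for splitting a weakly sparse $\Ff\sqsubseteq_\FO\Cc_1\cup\Cc_2$ also works (and is cleaner if you note that the two pieces are \emph{subclasses} of $\Ff$, hence themselves weakly sparse, which is what licenses applying your hypothesis on $\mathscr E$ to the second piece).

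The genuine gap is the object your whole construction rests on: a monadically dependent class $\mathscr E$ of unbounded twin-width all of whose weakly sparse transductions have bounded twin-width. You explicitly defer its construction, and this is not a routine deferral --- it is the entire content of the separation, and it is unclear that such a class exists at all. Note that you cannot take $\mathscr E$ to be the class $\TP$ of trivially perfect graphs (the natural dense candidate): trivially perfect graphs are $P_4$-free, hence have cliquewidth at most $2$ and therefore \emph{bounded} twin-width, so $\TP\sqsubseteq_\FO\mathscr W_c$ and your two classes would coincide. The known monadically dependent classes of unbounded twin-width (e.g.\ the bounded-degree class of Bonnet et al.) are themselves weakly sparse, which, as you observe, disqualifies them. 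So the twin-width route forces you to manufacture a genuinely new ``dense non-delineation'' example, and your Feferman--Vaught plan for bounding the twin-width of all weakly sparse transductions of it is only a hope, not an argument.

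The paper avoids this entirely by separating with cliquewidth rather than twin-width. It takes $\Cc_0$ to be the maximum monadically stable class of bounded twin-width (\cref{thm:max_sparse_TWW}), and sets $\Cc=\Cc_0\cup\Hh$ and $\Dd=\Cc_0\cup\TP$. These are inequivalent because $\TP\sqsubseteq_\FO\Dd$ while $\TP\not\sqsubseteq_\FO\Cc$ (e.g.\ by unbounded linear cliquewidth of $\TP$ versus $\Hh$, together with non-stability versus $\Cc_0$). The weakly sparse shadows agree because, by Gurski--Wanke, every weakly sparse transduction of $\Hh$ or of $\TP$ has bounded treewidth and hence already lies below $\Cc_0$. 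Every ingredient is an already-cited known result, so no new construction is needed. If you want to salvage your approach, you should either prove the existence of your $\mathscr E$ (a substantial standalone result) or switch the separating invariant from twin-width to (linear) cliquewidth as the paper does.
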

\begin{proof}
	Let $\Cc_0$ be a maximum monadically stable class of bounded twin-width as given by \cref{thm:max_sparse_TWW}, let $\Hh$ be the class of half-graphs, let $\mathscr{T\!\!P}$ be the class of trivially perfect graphs, let $\Cc = \Cc_0 \cup \Hh$, and let $\Dd = \Cc_0 \cup \mathscr{T\!\!P}$. Then,
	$\Dd\sqsubseteq_\FO\Cc$ but 
	$\Dd\not\equiv_\FO\Cc$.
	  By \cite{Gurski2000}, every weakly sparse transduction of $\mathscr{T\!\!P}$ or $\Hh$ has bounded treewidth, and thus is contained in $\Cc_0$. On the other hand, the only weakly sparse classes above either $\Cc$ or $\Dd$ are somewhere dense.
\end{proof}

We now turn to various conjectures concerning dense analogues, and prove some equivalences between them, for which the following proposition will be helpful.

\begin{prop}
	\label{prop:topmin}
	Let $\Pi_s$ be a sparse transduction \dset.
	Assume that $\Pi_s$ has the following characterization, where
	$\Ff\in\Sigma$  is a  non-empty weakly sparse class of graphs: 
	\begin{equation}
		\label{eq:topm}
	\Cc\in\Pi_s\iff \Cc\in\Sigma\text{ and }\exists F\in\Ff\text{ s.t. }\Cc\text{ excludes $F$ as a topological minor.}
\end{equation}

	Then, a weakly sparse class $\Cc$ belongs to $\Pi_s$ if and only if it has no transduction to a class $\Dd$ containing, for each $F\in\Ff$, some subdivision of $F$.
\end{prop}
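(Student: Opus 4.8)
The plan is to unwind the equivalence using the hypothesis \eqref{eq:topm} together with the weak sparsity of $\Cc$, and then to prove the two resulting implications separately: the first is short, while the second carries the real content. Since $\Cc$ is weakly sparse we have $\Cc\in\Sigma$, so \eqref{eq:topm} simplifies to ``$\Cc\in\Pi_s$ iff some $F\in\Ff$ is excluded by $\Cc$ as a topological minor''. Negating, $\Cc\notin\Pi_s$ means exactly that $\Cc$ contains \emph{every} $F\in\Ff$ as a topological minor, i.e.\ for each $F\in\Ff$ there is a graph $G_F\in\Cc$ having a subdivision of $F$ as a subgraph. Abbreviating by $(\mathrm D)$ the property of a class ``containing, for each $F\in\Ff$, some subdivision of $F$'', the goal becomes: $\Cc\in\Pi_s$ if and only if no class $\Dd$ with property $(\mathrm D)$ satisfies $\Dd\sqsubseteq_\FO\Cc$.

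For the implication ``$\Cc\in\Pi_s$ implies no such $\Dd$'', the one auxiliary fact I would record is that \emph{subdivisions preserve weak sparsity}: if $F$ excludes $K_{t,t}$ as a subgraph, then every subdivision of $F$ excludes $K_{s,s}$ with $s=\max(t,3)$, since in a subdivision every vertex of degree at least $3$ is a branch vertex and two branch vertices are adjacent only along an unsubdivided edge, so a $K_{s,s}$ among them would already live inside $F$. Now suppose $\Cc\in\Pi_s$ and, for contradiction, that $\Dd\sqsubseteq_\FO\Cc$ for some $\Dd$ with property $(\mathrm D)$. Choose one subdivision per $F\in\Ff$ and let $\Dd_0$ be the resulting subclass of $\Dd$; by the fact above $\Dd_0$ is weakly sparse, and $\Dd_0\subseteq\Dd$ gives $\Dd_0\sqsubseteq_\FO^\circ\Dd\sqsubseteq_\FO\Cc$. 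As $\Pi_s$ is a sparse transduction \dset\ (preserved by transductions to weakly sparse classes) and $\Cc\in\Pi_s$, we get $\Dd_0\in\Pi_s$. By \eqref{eq:topm} some $F_0\in\Ff$ is excluded by $\Dd_0$ as a topological minor, contradicting that $\Dd_0$ contains a subdivision of $F_0$. This settles the first implication.

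The converse, in contrapositive form, is the hard part: assuming each $F\in\Ff$ occurs as a topological minor of some $G_F\in\Cc$, I must produce a \emph{single} transduction $\mathsf T$ (a fixed signature $\mathcal U$ and a fixed interpretation, independent of $F$) such that $\Dd:=\mathsf T(\Cc)$ has property $(\mathrm D)$; then $\Dd\sqsubseteq_\FO\Cc$ by construction. Concretely, for each $F$ I would color $G_F$ according to its topological-minor model (branch vertices, internal path vertices, plus a bounded number of auxiliary colors), and design $\eta(x,y)$ so that, for \emph{some} coloring of $G_F$, the interpretation outputs a subdivision of $F$; since $\Gamma_{\mathcal U}$ ranges over all colorings, it suffices that the model-coloring works, uniformly in $F$. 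The obstacle is that the subdivision sits in $G_F$ only as a \emph{subgraph}, not an induced one, and a first-order formula with boundedly many colors cannot in general tell genuine subdivision edges from chords; moreover a topological minor need not yield any induced subdivision inside a single graph (e.g.\ the spanning cycle of $C_n^2$ gives $C_n$ as a topological minor while $C_n^2$ has only short induced cycles), so one cannot simply apply the hereditary transduction $\mathsf H$ of \Cref{ex:hered}.

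The way I would overcome this is to exploit weak sparsity to make the recovery first-order and uniform. Because every $G_F$ is $K_{t,t}$-free, the ``interference'' along the subdivision paths (chords, and overlaps with the rest of $G_F$) is locally bounded, which is precisely what lets a fixed formula reconstruct a path structure, as already illustrated by the $C_n^2$ example, where genuine cycle edges are distinguished from distance-two edges by a common-neighbour count. The plan is thus to extract from the model, inside each $K_{t,t}$-free $G_F$, a \emph{clean} (possibly longer, but controlled) subdivision of $F$ via a Ramsey/extraction argument in the spirit of induced-subdivision results for $K_{s,s}$-free graphs, and to have the uniform interpretation $\eta$ reconnect branch vertices precisely along these clean internal paths. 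Running this uniformly over $\Ff$ yields a class $\Dd=\mathsf T(\Cc)$ with property $(\mathrm D)$ and $\Dd\sqsubseteq_\FO\Cc$, which contradicts the assumption that no such $\Dd$ exists and completes the converse. I expect this clean-extraction step — turning a non-induced topological-minor model in a weakly sparse host into a boundedly-colored, first-order-recoverable subdivision uniformly in $F$ — to be the main technical hurdle.
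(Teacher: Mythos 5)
Your first implication is correct and matches the paper's argument. The converse is where your proposal has a genuine gap: you reduce it to constructing a single uniform transduction that, from a topological-minor model of $F$ sitting as a (non-induced) subgraph of a weakly sparse host, outputs a subdivision of $F$, and you explicitly leave the ``clean extraction'' of such a model as an unproved technical hurdle. This step does not just need more work --- as you describe it, it cannot be carried out by passing to induced subdivisions, because a weakly sparse graph containing a subdivision of $F$ as a subgraph need not contain \emph{any} induced subdivision of $F$: your own example $C_n^2$ already shows this when $F$ is a long cycle, since $C_n$ is a spanning subgraph of $C_n^2$ while every induced cycle of $C_n^2$ has bounded length. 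The ad hoc common-neighbour trick for $C_n^2$ does not generalize to a fixed interpretation working uniformly over all weakly sparse hosts and all $F\in\Ff$, and the K\"uhn--Osthus/Dvo\v{r}\'ak-type extraction results you allude to produce induced subdivisions only under a large-average-degree (or somewhere-dense) hypothesis, not from a single topological-minor model.

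The paper sidesteps the extraction problem entirely via a dichotomy you are missing. Suppose $\Cc$ has no transduction to a class with your property $(\mathrm D)$. Since the hereditary closure $\mathsf H(\Cc)$ is a transduction of $\Cc$, some $F\in\Ff$ has no \emph{induced} subdivision in any graph of $\Cc$; combined with weak sparsity, \cite{DVORAK2018143} then gives that $\Cc$ has bounded expansion, hence bounded star chromatic number, hence (\Cref{lem:monotone}) the \emph{monotone closure} $\Cc'$ of $\Cc$ is a transduction of $\Cc$. Applying the hypothesis again to $\Cc'$, some $F'\in\Ff$ has no subdivision in $\Cc'$, i.e.\ $\Cc$ excludes $F'$ as a topological minor, and \eqref{eq:topm} yields $\Cc\in\Pi_s$. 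In other words, in the only case where subdivisions are present merely as subgraphs rather than as induced subgraphs for every $F$, the class is automatically of bounded expansion, and the monotone-closure transduction hands you every subdivision as a \emph{member} of the image class --- no first-order recovery of the model is ever needed. Without this dichotomy (or an actual proof of your extraction step), your converse direction is incomplete.
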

\begin{proof}
	Let $\Cc$ be a weakly sparse class. We prove by contraposition that if $\Cc$ belongs to~$\Pi_s$, then $\Cc$ has no transduction to a class~$\Dd$ containing, for each $F\in\Ff$, some subdivision of $F$.
	Assume that $\Cc$ 
	has a transduction to a class $\Dd$ containing, for each $F\in\Ff$, some subdivision of $F$.
	By considering a subclass, we can assume that $\Dd$ contains no other graphs than these subdivisions. Then, $\Dd$ is weakly sparse (as $\Ff$ is weakly sparse) and, according to \eqref{eq:topm}, $\Dd\notin\Pi_s$. As $\Pi_s$ is a \dset, $\Cc\notin\Pi_s$.

	Conversely,	assume that $\Cc$ has  no transduction to a class containing, for each $F\in\Ff$, some subdivision of $F$.
	In particular, the hereditary closure $\mathsf H(\Cc)$ of $\Cc$ does not contain any subdivision of some $F\in\Ff$.
	In other words, no graph in $\Cc$ contains a subdivision of $F$ as an induced subgraph.
	As $\Cc$ is weakly sparse, it follows from~\cite{DVORAK2018143} that $\Cc$ has bounded expansion.  In particular, $\Cc$ has bounded star chromatic number. According to \Cref{lem:monotone}  the
	monotone closure $\Cc'$ of $\Cc$ is a transduction of $\Cc$. Considering this time the monotone closure transduction of $\Cc$, we get that there is a graph $F'\in\Ff$ such that 
	no graph in $\Cc'$ contains a subdivision of $\Ff'$ as a subgraph. By~\eqref{eq:topm}, it follows that $\Cc'\in\Pi_s$. As $\Cc\subseteq \Cc'$, we get $\Cc\in\Pi_s$ as well.
\end{proof}

Let us give a pair of examples of applications of \Cref{prop:topmin}.

\begin{exa}
	\label{exa:tw}
	Every weakly  sparse class with unbounded treewidth has a transduction   containing a subdivision of each wall.
\end{exa}
\begin{proof}
	Let $\Pi_s$ be the property of having bounded treewidth.
	It is well known that a weakly sparse class has bounded treewidth if and only if it excludes some wall as a topological minor.
	Hence, according to \Cref{prop:topmin}, a class has unbounded twin-width if and only if it has a transduction to a class  containing a subdivision of each wall.
\end{proof}

\begin{exa}
	\label{exa:pw}
		Every weakly  sparse class with unbounded pathwidth has a transduction   containing a subdivision of each cubic tree.
\end{exa}
\begin{proof}
	Let $\Pi_s$ be the property of having bounded pathwidth.
	It is well known that a weakly sparse class has bounded pathwidth if and only if it excludes some cubic tree as a topological minor.
	Hence, according to \Cref{prop:topmin}, a class has unbounded pathwidth if and only if  it has a transduction to a class  containing a subdivision of each cubic tree.
\end{proof}

It was conjectured in \cite{gajarsky2022stable} that cliquewidth is the dense analog of treewidth and linear cliquewidth is the dense analog of pathwidth. Every weakly sparse transduction of a class with bounded cliquewidth (resp. linear cliquewidth) has bounded treewidth (resp. bounded pathwidth). Hence, according to the characterization given in \Cref{exa:tw,exa:pw}, these conjectures can be restated as follows.

\begin{conj}
	\label{conj:lcw}
	Bounded linear cliquewidth is the dense analogue of bounded pathwidth.
\end{conj}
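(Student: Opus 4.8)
The plan is to reduce the conjecture to a single \emph{grounding} statement via the characterization of dense analogues proven just above, and then to attack that statement through the excluded-topological-minor description of bounded pathwidth from \Cref{exa:pw}. Write $\Pi$ for the property of bounded linear cliquewidth and $\Pi_s$ for bounded pathwidth. Two ingredients are essentially known. First, $\Pi$ is a transduction downset, since a class has bounded linear cliquewidth exactly when it is an \FO-transduction of the class $\Hh$ of half-graphs \cite{colcombet2007combinatorial}, and $\sqsubseteq_\FO$ is a quasi-order. Second, $\Pi\cap\Sigma=\Pi_s$: pathwidth always bounds linear cliquewidth, while conversely every weakly sparse class of bounded linear cliquewidth has bounded pathwidth (the $K_{t,t}$-free, linear analogue of the Gurski--Wanke theorem \cite{Gurski2000,gurski2000tree}). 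By the grounding criterion established above, it therefore suffices to prove that the complement $\Pi^*$ is grounded in $\Sigma$, i.e. that \emph{every class $\Cc$ of unbounded linear cliquewidth admits a weakly sparse class $\Dd\sqsubseteq_\FO\Cc$ of unbounded pathwidth}.

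Using the trace equality together with \Cref{exa:pw}, I would first rephrase this grounding statement in the convenient form
\[
\Cc\not\sqsubseteq_\FO\Hh\quad\Longrightarrow\quad \Cc\text{ transduces a class containing a subdivision of every cubic tree.}
\]
Indeed, if $\Cc$ transduces a weakly sparse class of unbounded pathwidth, then by \Cref{exa:pw} that class---hence $\Cc$---transduces all subdivided cubic trees; conversely the class of all subdivided cubic trees is itself weakly sparse with unbounded pathwidth, so composing transductions recovers the grounding formulation. The reverse implication of the displayed equivalence is immediate: subdivided cubic trees have unbounded linear cliquewidth (being weakly sparse of unbounded pathwidth, by the trace equality), so a class transducing them cannot lie in the downset $\Pi$.

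The forward implication is the whole content of the conjecture, and I would split it on monadic dependence. If $\Cc$ is not monadically dependent, then $\Gg\sqsubseteq_\FO\Cc$, and since the class of all subdivided cubic trees is a subclass of $\Gg$ it is transduced by $\Cc$; this case is free. The remaining, genuinely hard case is a \emph{monadically dependent} class $\Cc$ of unbounded linear cliquewidth. Here the target is a structural dichotomy of the shape: a monadically dependent class either admits a \emph{linear}, half-graph-like decomposition (witnessing $\Cc\sqsubseteq_\FO\Hh$) or transduces arbitrarily branching sparse objects (all subdivided cubic trees). For the monadically \emph{stable} subcase I would try to leverage the existing link between monadic stability, bounded linear cliquewidth, and bounded pathwidth \cite{SODA_msrw,msrw}, together with recent flip-based structure theory for monadically stable classes. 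The properly dependent (unstable) subcase is where I expect the real obstacle, since transducing $\Hh$ itself supplies no branching (weakly sparse transductions of $\Hh$ have bounded pathwidth), so the unbounded linear cliquewidth must be converted into genuine tree-like branching by purely structural means.

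The main obstacle is thus exactly this last dichotomy: producing, from the mere failure of bounded linear cliquewidth in a monadically dependent class, an explicit transduction onto all subdivided cubic trees. The decomposition tools developed here (\Cref{thm:normal}, and especially the weakly sparse relative normal form \Cref{thm:rel-normal}) are well suited to \emph{analyzing} a hypothetical transduction onto a weakly sparse target and factoring it through a weakly sparse intermediate, but they do not by themselves \emph{synthesize} the required branching from the source. Closing this gap appears to require new structure theory for monadically dependent classes, and is precisely the point at which the conjecture of Gajarsk{\'y}, Pilipczuk, and Toru{\'n}czyk \cite{gajarsky2022stable} remains open.
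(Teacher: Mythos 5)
The statement you were asked to prove is a \emph{conjecture}: the paper offers no proof of it, only the observation that, via the grounding criterion for dense analogues and the characterization in \Cref{exa:pw}, it is a restatement of the conjecture of Gajarsk\'y, Pilipczuk, and Toru\'nczyk that linear cliquewidth is the dense analogue of pathwidth. Your proposal does not prove it either, and you say so --- which is the correct outcome here. Your reduction is sound and mirrors exactly the equivalence the paper itself uses: bounded linear cliquewidth is a transduction downset whose trace on $\Sigma$ is bounded pathwidth (pathwidth bounds linear cliquewidth, and weakly sparse classes of bounded linear cliquewidth have bounded pathwidth), so by the grounding proposition the conjecture amounts to showing that every class of unbounded linear cliquewidth transduces a weakly sparse class of unbounded pathwidth, equivalently (by \Cref{exa:pw}) a class containing a subdivision of every cubic tree. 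Your dispatch of the non-monadically-dependent case is fine, and your identification of the monadically dependent case as the genuinely open dichotomy is accurate. In short: there is no gap in your reasoning, only an honest acknowledgement of the gap in everyone's reasoning, and no proof in the paper to compare against.
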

\begin{conj}
	\label{conj:cw}
	Bounded cliquewidth is the dense analogue of bounded treewidth
\end{conj}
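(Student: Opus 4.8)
The plan is to use the grounding criterion for dense analogues. Write $\Pi$ for the property of bounded cliquewidth. It is a transduction \dset\ \cite{colcombet2007combinatorial}, and $\Pi\cap\Sigma$ is exactly bounded treewidth, since a weakly sparse class has bounded cliquewidth if and only if it has bounded treewidth \cite{Gurski2000}. By the proposition stating that a transduction \dset\ $\Pi$ is the dense analogue of $\Pi\cap\Sigma$ exactly when its complement is grounded in $\Sigma$, it therefore suffices to show that $\Pi^*$ (unbounded cliquewidth) is grounded in $\Sigma$: every class $\Cc$ of unbounded cliquewidth admits a weakly sparse class $\Dd$ of unbounded cliquewidth — equivalently, by \cite{Gurski2000}, of unbounded treewidth — with $\Dd\sqsubseteq_\FO\Cc$. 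Invoking \Cref{exa:tw}, this is in turn equivalent to the clean statement that a class has unbounded cliquewidth if and only if it $\FO$-transduces a class containing a subdivision of every wall. The backward implication is the easy half: such a class of wall-subdivisions is weakly sparse of unbounded treewidth, hence of unbounded cliquewidth by \cite{Gurski2000}, and unbounded cliquewidth passes up the order since $\Pi$ is a \dset. Thus the whole content lies in the forward direction.

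To attack the forward direction I would first split on monadic dependence. If $\Cc$ is monadically independent, it transduces the class of all graphs, and in particular every subdivided wall, so we are done. The remaining, genuinely hard case is that of a monadically dependent class of unbounded cliquewidth. Here the natural tool is the grid theorem for vertex-minors of Geelen, Kwon, McCarty and Wollan, which states that a class of unbounded rankwidth (equivalently unbounded cliquewidth) contains arbitrarily large comparability grids as vertex-minors. A single local complementation is a flip localized at a marked vertex, hence an $\FO$-transduction realizable through the local normal form \Cref{thm:normal}, and vertex deletion is the hereditary transduction \Cref{ex:hered}; from these one would hope to conclude that $\Cc$ transduces the class of comparability grids, and then sparsify, extracting subdivided walls by marking a skeleton and subdividing in the spirit of the encodings in \Cref{lem:pl2pw} and \Cref{lem:BT}, while using \Cref{thm:rel-normal} and \Cref{lem:monotone} to keep the output weakly sparse.

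The main obstacle — and the reason this is still stated as \Cref{conj:cw} — is twofold. First, a vertex-minor is produced by a number of local complementations that is not bounded independently of the target grid size, so the grid theorem does not directly yield a \emph{single} $\FO$-transduction from $\Cc$ onto all comparability grids; bounding or circumventing this blow-up is the missing ingredient. Second, even granting a transduction onto the comparability grids, turning these dense objects into arbitrarily large subdivided walls by one transduction, without creating a $K_{t,t}$ and without collapsing the grid structure, is delicate. An alternative route through twin-width, reducing unbounded cliquewidth to a bounded-twin-width skeleton and then applying the sparsification results behind \Cref{thm:max_sparse_TWW} and \cite{gajarsky2022stable}, runs into the analogous difficulty of first controlling the dense part. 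Closing either gap would settle \Cref{conj:cw}.
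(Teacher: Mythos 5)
The statement you are asked to prove is stated in the paper as \Cref{conj:cw}, an open conjecture: the paper gives no proof of it, so there is nothing to compare your argument against except the paper's own restatement of the conjecture. Your preliminary reductions are correct and in fact reproduce exactly that restatement: bounded cliquewidth is a transduction \dset, its trace on $\Sigma$ is bounded treewidth by Gurski--Wanke, and by the grounding criterion together with \Cref{prop:topmin}/\Cref{exa:tw} the conjecture is equivalent to the assertion that every class of unbounded cliquewidth $\FO$-transduces a class containing a subdivision of every wall. The paper carries out precisely this reduction (this is how it passes from the Gajarsk\'y--Pilipczuk--Toru\'nczyk formulation to \Cref{conj:cw}); what it actually proves is only \Cref{exa:tw}, the weakly sparse half of the story, which you correctly invoke. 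Everything up to and including your split on monadic dependence (the monadically independent case being trivial) is sound.

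The genuine gap is the one you yourself name: for a monadically dependent class of unbounded cliquewidth, the vertex-minor grid theorem produces the $n\times n$ comparability grid only after a sequence of local complementations whose length is not bounded independently of $n$, so it does not yield a single $\FO$-transduction onto the class of all comparability grids; and even granted such a transduction, extracting arbitrarily large subdivided walls from these dense targets while staying weakly sparse is not covered by \Cref{thm:rel-normal} or \Cref{lem:monotone}, which only control the output after a transduction to a weakly sparse class has already been exhibited. Since the forward direction is the entire content of the conjecture, your proposal is a correct reduction plus a plausible but incomplete attack, not a proof --- which is consistent with the statement remaining a conjecture in the paper. If you want to salvage a publishable fragment, the unconditional parts of your argument (the equivalence of \Cref{conj:cw} with the wall-transduction statement, and the reduction to the monadically dependent case) are already implicit in the paper and would not be new.
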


A particular case concerns sparse transduction \dset $\Pi_s$ that do not include bounded pathwidth. Indeed, if some class $\Cc$ with bounded pathwidth does not belong to $\Pi_s$, this means that the class of half-graphs does not belong to the dense analogue of $\Pi_s$. In such a case, the dense analogue of $\Pi_s$ is stable and the conjecture restates as:
\begin{conj}
	Let $\Pi_s$ be a  sparse transduction \dset that does not include all classes with bounded pathwidth.
	Then, the  largest transduction \dset whose trace on $\Sigma$ is  $\Pi_s$ is the smallest transduction \dset including $\Pi_s$.
\end{conj}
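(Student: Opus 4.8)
The plan is to prove the two inclusions between the dense analogue $\Pi$ of $\Pi_s$ and its transduction closure $\Pi_s^{\downarrow}$. One of them is immediate: as recalled just before the definition of dense analogue, $\Pi_s^{\downarrow}\cap\Sigma=\Pi_s$, so $\Pi_s^{\downarrow}$ is a transduction \dset whose trace on $\Sigma$ is $\Pi_s$; since $\Pi$ is by definition the \emph{largest} such \dset, we get $\Pi_s^{\downarrow}\subseteq\Pi$. All the work therefore goes into the reverse inclusion $\Pi\subseteq\Pi_s^{\downarrow}$, i.e.\ into showing that $\Pi_s^{\downarrow}$ is already the largest transduction \dset with trace $\Pi_s$.

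First I would exploit the hypothesis to show that \emph{every} transduction \dset $\Pi'$ with $\Pi'\cap\Sigma=\Pi_s$ is stable, in the sense that all its members are monadically stable. By assumption there is a class $\Cc_0$ of bounded pathwidth with $\Cc_0\notin\Pi_s$; as bounded pathwidth implies weak sparsity, $\Cc_0\in\Sigma$, hence $\Cc_0\notin\Pi'$. Since bounded pathwidth implies bounded linear cliquewidth we have $\Cc_0\sqsubseteq_\FO\Hh$, so if $\Hh\in\Pi'$ then, $\Pi'$ being a \dset, $\Cc_0\in\Pi'$, a contradiction; thus $\Hh\notin\Pi'$. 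Consequently no $\Cc\in\Pi'$ can transduce $\Hh$ (otherwise $\Hh\in\Pi'$), i.e.\ every member of $\Pi'$ is monadically stable. In particular the dense analogue $\Pi$ consists entirely of monadically stable classes.

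The reverse inclusion then reduces to a single sparsification statement. Take $\Cc\in\Pi$; it is monadically stable. Suppose one can produce a \emph{weakly sparse} class $\Dd$ with $\Cc\equiv_\FO\Dd$. Then $\Dd\sqsubseteq_\FO\Cc\in\Pi$ and $\Pi$ is a \dset, so $\Dd\in\Pi$; since $\Dd\in\Sigma$ this gives $\Dd\in\Pi\cap\Sigma=\Pi_s$; and $\Cc\sqsubseteq_\FO\Dd\in\Pi_s$ yields $\Cc\in\Pi_s^{\downarrow}$, as desired. Equivalently, phrased through the groundedness criterion for dense analogues proved earlier (the complement of $\Pi$ must be grounded in $\Sigma$): given $\Cc\notin\Pi_s^{\downarrow}$ one must exhibit a weakly sparse $\Dd\sqsubseteq_\FO\Cc$ with $\Dd\notin\Pi_s$. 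If $\Cc$ is not monadically stable this is easy, since then $\Cc_0\sqsubseteq_\FO\Hh\sqsubseteq_\FO\Cc$ already works; if $\Cc$ is monadically stable, an equivalent weakly sparse $\Dd$ again does the job. So the conjecture follows from the statement that \emph{every monadically stable class is transduction-equivalent to a weakly sparse class}.

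I expect this last statement to be the main obstacle: it is a strong form of the central sparsification conjecture for monadically stable classes (the conjectural identification of monadic stability with being structurally nowhere dense). The delicate point is that it requires full transduction-equivalence, not merely a one-way transduction from a weakly sparse class, because the witnessing sparse class must lie \emph{below} $\Cc$ in order to fall inside the downset $\Pi$. The known partial results, namely that monadically stable classes of bounded twin-width \cite{gajarsky2022stable} or of bounded cliquewidth \cite{nevsetril2020rankwidth} are transductions of weakly sparse classes, are stated one-directionally, so even in these structured regimes one would have to upgrade to an equivalence before the argument above applies. A reasonable route to partial progress would be to restrict to $\Pi_s$ contained in bounded treewidth, where \cite{nevsetril2020rankwidth} applies and the subdivision machinery behind \Cref{prop:topmin} is available, and to check that the bounded-treewidth model produced there can be taken $\equiv_\FO$-equivalent to $\Cc$; this would settle \Cref{conj:cw} as the first case, while the general statement would require the full stable sparsification conjecture.
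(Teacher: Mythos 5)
The statement you are proving is labelled as a \emph{conjecture} in the paper: the authors offer no proof of it, and in the surrounding text they explicitly present it as a restatement, in the stable regime, of their general ``dense analogue equals transduction closure'' conjectures. Your argument is a correct and cleanly executed \emph{reduction}, not a proof. The first half is sound and matches the paper's own remarks: $\Pi_s^{\downarrow}\cap\Sigma=\Pi_s$ gives $\Pi_s^{\downarrow}\subseteq\Pi$ for free; and the hypothesis that some bounded-pathwidth class $\Cc_0$ lies outside $\Pi_s$ forces $\Hh\notin\Pi'$ for every transduction \dset $\Pi'$ with trace $\Pi_s$ (via $\Cc_0\sqsubseteq_\FO\Hh$, since bounded pathwidth implies bounded linear cliquewidth), hence every member of $\Pi$ is monadically stable. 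Your derivation that sparsifiability of each $\Cc\in\Pi$ then yields $\Cc\in\Pi_s^{\downarrow}$ is also correct, as is the groundedness rephrasing.

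The genuine gap is the final ingredient: \emph{every monadically stable class is transduction-equivalent to a weakly sparse class}. This is precisely the open stable sparsification conjecture, and the paper itself proves (in the proposition following \Cref{conj:stable}) that the closely related \Cref{conj:stable} is \emph{equivalent} to this sparsifiability statement --- strong evidence that no shortcut around it is available and that your reduction cannot be completed with current knowledge. Your diagnosis of the one-directionality of the known partial results is accurate: \cite{nevsetril2020rankwidth} and \cite{gajarsky2022stable} produce a weakly sparse class \emph{above} $\Cc$ in the transduction quasi-order (a class from which $\Cc$ can be transduced), whereas the downset argument needs a weakly sparse class \emph{below} $\Cc$, so even in the bounded cliquewidth or bounded twin-width regimes an upgrade to $\equiv_\FO$-equivalence is required before your scheme closes. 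In short: the reduction is correct and consistent with the authors' own framing, but the statement remains a conjecture because its load-bearing step is open.
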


An example where this conjecture holds is  the property of having bounded degree:

\begin{prop}
	Let $\Pi_s$ be the property of nearly having bounded degree.
	Then, $\Pi_s$ is a sparse transduction \dset and  the largest  transduction \dset $\Pi$ whose trace on $\Sigma$ is  $\Pi_s$ is the smallest transduction \dset including $\Pi_s$.
	
	Furthermore, $\Pi$ consists of the mutually algebraic (equiv.\ monadically NFCP) classes.
\end{prop}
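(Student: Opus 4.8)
The plan is to route everything through a single structural characterization: a hereditary class $\Cc$ is nearly of bounded degree if and only if $\Cc$ is weakly sparse and mutually algebraic. Write $\mathsf{MA}$ for the property of being mutually algebraic. For the forward direction, suppose $\Cc$ is nearly $\Dd$ with $\Dd$ of maximum degree at most $D$, so each $G\in\Cc$ has a set $S$ of at most $h$ vertices with $G-S\in\Dd$. First, $\Cc$ is weakly sparse, since a $K_{s,s}$ with $s>D+h$ inside $G$ would force a vertex of $G-S$ to have degree exceeding $D$. Next, $\Cc$ is a non-copying transduction of the bounded degree class $\{H\,\union\,hK_1 : H\in\Dd\}$: using $h$ colors to record, for each added vertex, its neighborhood inside $H$, a constant number of further colors to record the adjacencies among the at most $h$ added vertices, and the $h$ isolated vertices as their hosts, a quantifier-free interpretation recovers $G$. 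Since bounded degree classes are mutually algebraic and mutual algebraicity is transduction-invariant (\Cref{thm:dual}, \Cref{p:MA}), we get $\Cc\in\mathsf{MA}$. For the converse, if $\Cc$ is weakly sparse and mutually algebraic, then \Cref{thm:dual} gives that $\Cc$ is a perturbation of a hereditary class $\Dd$ of bounded degree; as $\Cc$ and $\Dd$ are both weakly sparse and $\Dd$ is hereditary, \Cref{lem:perws} yields that $\Cc$ is nearly $\Dd$, hence nearly of bounded degree. In particular $\mathsf{MA}\cap\Sigma=\Pi_s$.

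With the characterization in hand, that $\Pi_s$ is a sparse transduction \dset is immediate: $\Pi_s\subseteq\Sigma$ by weak sparsity, and if $\Cc\in\Pi_s$ and $\Dd$ is a weakly sparse transduction of $\Cc$ (replacing $\Dd$ by its hereditary closure if needed, which preserves both properties), then $\Cc\in\mathsf{MA}$, so $\Dd\in\mathsf{MA}$ by \Cref{p:MA}, and being also weakly sparse, $\Dd\in\Pi_s$.

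Next I would identify the two extremal downsets with $\mathsf{MA}$, which is a transduction \dset by \Cref{p:MA}. For $\mathsf{MA}=\Pi_s^{\downarrow}$: the forward direction gives $\Pi_s\subseteq\mathsf{MA}$, hence $\Pi_s^{\downarrow}\subseteq\mathsf{MA}$; conversely every mutually algebraic class is, by \Cref{thm:dual}, a transduction of a bounded degree class, and bounded degree classes lie in $\Pi_s$ (take $h=0$), so $\mathsf{MA}\subseteq\Pi_s^{\downarrow}$. For $\mathsf{MA}=\Pi$: since $\mathsf{MA}$ is a transduction \dset with $\mathsf{MA}\cap\Sigma=\Pi_s$, maximality of $\Pi$ gives $\mathsf{MA}\subseteq\Pi$.

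The one genuinely nontrivial inclusion, and the place to be careful, is $\Pi\subseteq\mathsf{MA}$. Suppose $\Cc\in\Pi$ were not mutually algebraic. By \Cref{thm:dual} it would transduce the class $\Tt_2$ of star forests, and since $\Pi$ is a transduction \dset we would obtain $\Tt_2\in\Pi$; but $\Tt_2$ is weakly sparse, so $\Tt_2\in\Pi\cap\Sigma=\Pi_s$, contradicting that star forests are not nearly of bounded degree (for any $h,D$ the star forest $(h+1)K_{1,n}$ with $n>D+h$ cannot be brought to maximum degree $\le D$ by deleting $h$ vertices, as some star centre always survives with degree $>D$). Hence $\Pi=\mathsf{MA}=\Pi_s^{\downarrow}$, and this common value is exactly the mutually algebraic (equivalently, by \cite[Theorem 3.3]{MA}, monadically \NFCP) classes. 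The main obstacle is thus the structural equivalence of the first paragraph, where \Cref{thm:dual} and \Cref{lem:perws} do the real work; the order-theoretic identifications and the star-forest obstruction are then routine.
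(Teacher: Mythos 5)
Your proof is correct and follows essentially the same route as the paper's: both establish that the nearly-bounded-degree classes are exactly the weakly sparse mutually algebraic classes (the paper via \Cref{prop:transcubic}, you via a direct encoding plus \Cref{thm:dual} and \Cref{lem:perws}), identify $\Pi_s^{\downarrow}$ with mutual algebraicity using \Cref{thm:dual}, and obtain maximality from the star-forest obstruction in \Cref{thm:dual}. The extra details you supply (the $K_{s,s}$ bound, the explicit coloring, the $(h+1)K_{1,n}$ witness) are correct elaborations of steps the paper leaves implicit.
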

\begin{proof}
	First note that, according to \Cref{prop:transcubic}, $\Pi_s$ is equivalent to the property of being a weakly sparse perturbation of a class with bounded degree and is a sparse transduction \dset.
	
	By \Cref{thm:dual} every mutually algebraic class of graphs is transduction-equivalent to a class of bounded degree graphs. Hence, as the property of being mutually algebraic is a transduction \dset, it is the smallest transduction \dset including $\Pi_s$.
	
	On the other hand, if $\Cc$ is a class that is not mutually algebraic, it  transduces star forests by \Cref{thm:dual}, hence it does not belong to $\Pi$. Thus, the property of being mutually algebraic is also the largest 
	transduction \dset whose trace on $\Sigma$ is  $\Pi_s$.
\end{proof}

A related problem is the relation between sparse and stable properties.

\begin{conj}
	\label{conj:stable}
	Let $\Pi_s$ be a sparse transduction \dset of bounded expansion classes with dense analogue~$\Pi$.
	Then,  the smallest transduction \dset $\Pi_s^{\downarrow}$ including $\Pi_s$  is the set of all  (monadically) stable classes in $\Pi$.
\end{conj}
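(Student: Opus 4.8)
The plan is to reduce the conjecture to a single \emph{sparsification principle} and to handle the two inclusions separately. Write $S$ for the property of being monadically stable (equivalently, by \cite{braunfeld2022existential}, stable, since we work with hereditary classes); by \cite{baldwin1985second} a class is monadically stable exactly when it does not transduce the class $\Hh$ of half-graphs, so $S$ is a transduction \dset, because $\Hh\not\sqsubseteq_\FO\Dd$ together with $\Cc\sqsubseteq_\FO\Dd$ force $\Hh\not\sqsubseteq_\FO\Cc$ by composition of transductions. The statement to prove is then $\Pi_s^{\downarrow}=\Pi\cap S$.

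The inclusion $\Pi_s^{\downarrow}\subseteq\Pi\cap S$ is routine. From $\Pi\cap\Sigma=\Pi_s$ and $\Pi_s\subseteq\Sigma$ we get $\Pi_s\subseteq\Pi$, and since $\Pi$ is a transduction \dset it contains the smallest transduction \dset containing $\Pi_s$, namely $\Pi_s^{\downarrow}$; hence $\Pi_s^{\downarrow}\subseteq\Pi$. Every class in $\Pi_s$ has bounded expansion, so it is nowhere dense and weakly sparse, hence monadically stable by \Cref{thm:adler} (in the weakly sparse form noted after it); as $S$ is a transduction \dset this yields $\Pi_s^{\downarrow}\subseteq S$.

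The content lies in the reverse inclusion $\Pi\cap S\subseteq\Pi_s^{\downarrow}$. Fix $\Cc\in\Pi\cap S$. The key input I would invoke is a sparsification principle asserting that a monadically stable class is transduction-equivalent to a weakly sparse class, with the equivalence realized by a transduction pairing: there is a weakly sparse class $\Dd$ with $\Cc\sqsubseteq_\FO\Dd$ \emph{and} $\Dd\sqsubseteq_\FO\Cc$. Granting this, the conclusion drops out purely formally: since $\Dd\sqsubseteq_\FO\Cc$, $\Cc\in\Pi$, and $\Pi$ is a \dset, we get $\Dd\in\Pi$; as $\Dd$ is weakly sparse, $\Dd\in\Pi\cap\Sigma=\Pi_s$; and then $\Cc\sqsubseteq_\FO\Dd\in\Pi_s$ witnesses $\Cc\in\Pi_s^{\downarrow}$. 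It is worth stressing that this argument never computes $\Pi$ explicitly: it uses only that $\Pi$ is a transduction \dset with $\Pi\cap\Sigma=\Pi_s$, so the whole conjecture is reduced to the sparsification principle.

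The sparsification principle is exactly where the difficulty sits, and it is open at this level of generality. It is the common generalization of \cite{SODA_msrw} (monadic stability inside bounded linear cliquewidth is witnessed by bounded pathwidth), of \cite{nevsetril2020rankwidth} (inside bounded cliquewidth, by bounded treewidth), and of \cite{gajarsky2022stable} (inside bounded twin-width, by a weakly sparse class), and it is tightly bound to \Cref{conj:lcw} and \Cref{conj:cw}. In each known instance the weakly sparse witness is produced by a reversible construction --- essentially a controlled subdivision of suitably colored pieces --- which is precisely what supplies the back-transduction $\Dd\sqsubseteq_\FO\Cc$ needed for the pairing; the obstacle in general is to extract such a sparse, transduction-equivalent model from monadic stability alone, without the structural leverage of bounded twin-width or (linear) cliquewidth. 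A realistic route is therefore to first record the conditional implication above, and then to verify the sparsification principle for the particular $\Pi_s$ under consideration by feeding the relevant structural sparsification theorem into the pairing argument.
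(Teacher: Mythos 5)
The statement you are addressing is stated in the paper as a conjecture and is not proved there; your proposal does not prove it either, since everything hinges on the ``sparsification principle'' that every monadically stable class is transduction-equivalent to a weakly sparse class, which you correctly flag as open. So what you have is a conditional reduction, not a proof. The reduction itself is sound: the easy inclusion $\Pi_s^{\downarrow}\subseteq\Pi\cap S$ is argued correctly, and the deduction of $\Pi\cap S\subseteq\Pi_s^{\downarrow}$ from sparsifiability is exactly the argument the paper records in the unlabeled proposition following \Cref{conj:BE}, where it is shown that full sparsifiability of monadically stable classes is \emph{equivalent} to the \emph{unrestricted} variant of the statement (the one where $\Pi_s$ is an arbitrary sparse transduction \dset, not required to consist of bounded expansion classes).

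The genuine gap, beyond the statement being open, is that your reduction target is stronger than necessary for \Cref{conj:stable} as actually stated. The paper explicitly warns that ``it is plausible that one cannot relax the condition that the classes in $\Pi_s$ have bounded expansion,'' i.e.\ full sparsifiability may fail even if \Cref{conj:stable} is true, in which case your route could never close. The paper's own reduction of the bounded-expansion case goes through the strictly weaker \Cref{conj:BE}, by case-splitting on whether $\Cc\in\Pi\cap S$ has structurally bounded expansion. If it does, it is transduction-equivalent to a bounded expansion class (a known sparsification theorem, so no open hypothesis is consumed) and your formal argument finishes. If it does not, \Cref{conj:BE} yields a nowhere dense, non-bounded-expansion class $\Dd\sqsubseteq_\FO\Cc$; as $\Dd$ is weakly sparse and $\Dd\notin\Pi_s$, the \dset property forces $\Cc\notin\Pi$, so this case is vacuous. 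Your proposal instead demands a weakly sparse equivalent for every class in $\Pi\cap S$, including those not known to have structurally bounded expansion, which is precisely the regime where sparsification is most doubtful. If you wish to record a conditional statement, you should condition on \Cref{conj:BE} (or on sparsifiability restricted to the relevant structurally-bounded-expansion classes) rather than on full sparsifiability of all monadically stable classes.
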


This conjecture is deeply related to the studies in  \cite{SODA_msrw,nevsetril2020rankwidth,gajarsky2022stable}.
However, it is plausible that one cannot relax the condition that the classes in $\Pi_s$ have bounded expansion, because of the following equivalence.

\begin{prop}
	The following statements are equivalent:
	\begin{enumerate}
		\item 	For every sparse transduction \dset $\Pi_s$ with dense analogue $\Pi$, the  transduction \dset $\Pi_s^{\downarrow}$ is the set of all stable classes in $\Pi$;
		\item All monadically stable classes are sparsifiable (i.e.\ transduction-equivalent to some weakly sparse class).
	\end{enumerate}
\end{prop}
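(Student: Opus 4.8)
The plan is to prove the two implications separately, using as a pivot the running example that monadic dependence is the dense analogue of nowhere denseness, together with the two standing facts that (monadic) stability is preserved by \FO-transductions and that a weakly sparse class is stable if and only if it is nowhere dense (the weakly sparse extension of \Cref{thm:adler} noted after it). Throughout, recall that $\Pi_s^{\downarrow}$ is the smallest transduction \dset containing $\Pi_s$, so that $\Cc\in\Pi_s^{\downarrow}$ means exactly $\Cc\sqsubseteq_\FO\Dd$ for some $\Dd\in\Pi_s$, and that since $\Pi_s\subseteq\Pi$ and $\Pi$ is a transduction \dset we always have $\Pi_s^{\downarrow}\subseteq\Pi$.

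For $(1)\Rightarrow(2)$, I would instantiate $(1)$ at the sparse transduction \dset $\Pi_s$ of all nowhere dense classes, which is indeed a sparse transduction \dset: nowhere denseness implies weak sparsity, and a weakly sparse transduction of a nowhere dense class is again nowhere dense, by preservation of monadic stability together with the weakly sparse form of \Cref{thm:adler}. Its dense analogue is monadic dependence, as established in the example above. Since every stable class is dependent, the stable classes lying in $\Pi=\{\text{monadically dependent}\}$ are precisely the monadically stable classes, whereas $\Pi_s^{\downarrow}$ is the class of \emph{structurally nowhere dense} classes. Hence $(1)$ for this $\Pi_s$ reads ``structurally nowhere dense $=$ monadically stable'', so in particular every monadically stable class is a transduction of a nowhere dense, hence weakly sparse, class. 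It then remains to upgrade ``transduction of a weakly sparse class'' to the transduction-\emph{equivalence} demanded by $(2)$; I would do this by replacing the nowhere dense target with the subclass of preimages actually witnessing the transduction and invoking the monadic stability of $\Cc$. I expect this upgrade from $\sqsubseteq_\FO$ to $\equiv_\FO$ to be the main technical obstacle of the whole proof.

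For $(2)\Rightarrow(1)$, let $\Pi_s$ be a sparse transduction \dset with dense analogue $\Pi$. The substantive inclusion is $\{\text{stable classes in }\Pi\}\subseteq\Pi_s^{\downarrow}$, and this is where $(2)$ enters cleanly: given a monadically stable $\Cc\in\Pi$, assumption $(2)$ yields a weakly sparse $\Dd$ with $\Cc\equiv_\FO\Dd$; then $\Dd\sqsubseteq_\FO\Cc\in\Pi$ forces $\Dd\in\Pi$, so $\Dd\in\Pi\cap\Sigma=\Pi_s$, and $\Cc\sqsubseteq_\FO\Dd$ gives $\Cc\in\Pi_s^{\downarrow}$. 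For the reverse inclusion $\Pi_s^{\downarrow}\subseteq\{\text{stable classes in }\Pi\}$ one has $\Pi_s^{\downarrow}\subseteq\Pi$ for free, while the stability of every member of $\Pi_s^{\downarrow}$ reduces to the stability of the members of $\Pi_s$ (which, in the relevant setting where $\Pi_s$ consists of nowhere dense classes, is automatic, again via preservation of monadic stability), so that each transduction of a class in $\Pi_s$ remains monadically stable. Combining the two inclusions yields the desired equality. The two delicate points to keep in view are therefore the $\sqsubseteq_\FO$-versus-$\equiv_\FO$ gap in the forward direction and the fact that the reverse inclusion of $(1)$ genuinely relies on the members of $\Pi_s$ being stable.
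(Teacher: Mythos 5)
Your $(2)\Rightarrow(1)$ direction is essentially the paper's argument for the substantive inclusion (pass from a stable $\Cc\in\Pi$ to a weakly sparse equivalent $\Dd$, observe $\Dd\in\Pi\cap\Sigma=\Pi_s$, conclude $\Cc\in\Pi_s^{\downarrow}$), and your remark that the reverse inclusion rests on the members of $\Pi_s$ being stable is a fair observation that the paper itself glosses over. The problem is in $(1)\Rightarrow(2)$, where your argument has a genuine gap that your own sketch does not close. Instantiating $(1)$ at the single global \dset of nowhere dense classes only yields that every monadically stable class $\Cc$ satisfies $\Cc\sqsubseteq_\FO\Dd$ for some nowhere dense $\Dd$; statement $(2)$ demands $\Cc\equiv_\FO\Dd'$ for some weakly sparse $\Dd'$. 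Your proposed repair --- restricting $\Dd$ to the subclass of preimages actually used by the transduction --- still only gives $\Cc\sqsubseteq_\FO\Dd'$: there is no reason the preimages should be recoverable from their images, and "monadic stability of $\Cc$" supplies no mechanism for producing a transduction back from $\Cc$ onto $\Dd'$. Closing this gap amounts to proving that structurally nowhere dense classes are sparsifiable, which is itself a deep statement and certainly not a routine upgrade; so as written the forward implication is not proved.

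The idea you are missing is that $(1)$ quantifies over \emph{all} sparse transduction {\dset}s, so you may tailor $\Pi_s$ to the class $\Cc$ at hand. The paper takes $\Pi_s:=\Cc^{\downarrow}\cap\Sigma$, where $\Cc^{\downarrow}$ is the set of all transductions of $\Cc$. Since $\Cc^{\downarrow}$ is a transduction \dset with trace $\Pi_s$ on $\Sigma$, it is contained in the dense analogue $\Pi$ of $\Pi_s$, so $\Cc$ is a stable class in $\Pi$ and $(1)$ places $\Cc$ in $\Pi_s^{\downarrow}$, i.e.\ $\Cc\sqsubseteq_\FO\Dd$ for some $\Dd\in\Cc^{\downarrow}\cap\Sigma$. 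The point of this choice is that the reverse transduction $\Dd\sqsubseteq_\FO\Cc$ now holds \emph{by definition} of $\Cc^{\downarrow}$, so the equivalence $\Cc\equiv_\FO\Dd$ comes for free and the $\sqsubseteq_\FO$-versus-$\equiv_\FO$ obstacle you identified never arises.
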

\begin{proof}
	(1)$\Rightarrow$(2): Let $\Cc$ be a monadically stable class, let $\Cc^{\downarrow}$ be the set of all transductions of $\Cc$ and let $\Pi_s=\Cc^{\downarrow}\cap\Sigma$. Obviously, $\Cc^{\downarrow}$ is included in the dense analogue of $\Pi_s$. Thus, (1) implies that $\Cc$ is a transduction of a class $\Dd\in\Pi_s$, which is itself a transduction of $\Cc$. Hence, $\Cc$ is sparsifiable.
	
	(2)$\Rightarrow$(1):  	Let $\Pi_s$ be a monotone sparse transduction \dset with dense analogue $\Pi$ and let $\Cc$ be a (monadically) stable class in $\Pi$.  By (2) $\Cc$ is transduction-equivalent to a weakly sparse class $\Dd$. As $\Pi\cap\Sigma=\Pi_s$, we have $\Dd\in\Pi_s$.  Hence, $\Cc\in\Pi_s^{\downarrow}$.
\end{proof}

For the case where $\Pi_s$ is the property of having bounded expansion, \Cref{conj:stable} restates as follows:
\begin{conj}
	\label{conj:BE}
	If a monadically stable class does not have structurally bounded expansion, then it has a transduction to a class that is nowhere dense but not bounded expansion.
\end{conj}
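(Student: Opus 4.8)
The final statement is the bounded-expansion instance of \Cref{conj:stable}, and it is convenient to read it through its contrapositive: one must show that if $\Cc$ is monadically stable but every nowhere dense transduction of $\Cc$ has bounded expansion, then $\Cc$ is structurally bounded expansion. The plan is to first remove the qualifier ``nowhere dense'' from the target. Indeed, it suffices to produce any \emph{weakly sparse} transduction of $\Cc$ that does not have bounded expansion: monadic stability is preserved by transductions, so such a class is monadically stable, and a weakly sparse monadically stable class is nowhere dense by \Cref{thm:adler} together with \cite{DVORAK2018143} (a weakly sparse somewhere dense class contains bounded subdivisions of all complete graphs, hence transduces $\Gg$, contradicting monadic dependence and so monadic stability). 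Thus the whole difficulty is to manufacture, from a monadically stable class of unbounded ``sparse complexity'', a single weakly sparse transduction of unbounded expansion.

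The first route I would take is to reduce the conjecture to the sparsifiability of monadically stable classes, using the preceding proposition. If every monadically stable class is transduction-equivalent to some weakly sparse class, then the statement follows at once: supposing $\Cc$ monadically stable with all its weakly sparse transductions of bounded expansion, let $\Dd \equiv_\FO \Cc$ be weakly sparse; then $\Dd$ is itself one of these transductions, hence has bounded expansion, so $\Cc$ is structurally bounded expansion. Hence it would be enough to settle statement~(2) of the preceding proposition. This is, however, a major open problem, and I would not expect to prove it in full generality; the realistic aim is to establish it in the regime forced by the hypotheses here.

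The second, more hands-on route is to build the desired weakly sparse transduction directly, following the pattern of the known special cases: monadically stable classes of bounded cliquewidth are transductions of bounded-treewidth classes \cite{nevsetril2020rankwidth}, and monadically stable classes of bounded linear cliquewidth are transductions of bounded-pathwidth classes \cite{SODA_msrw, msrw}. The tools I would use are the local normal form (\Cref{thm:normal}) and its weakly sparse relative version (\Cref{thm:rel-normal}, \Cref{cor:rel-normal}), which let one push a transduction down to an immersive transduction of a weakly sparse class up to a perturbation, together with the locality of monadic stability (\Cref{thm:local}) to work inside bounded-radius balls and the gluing construction (\Cref{lem:gluing}) to reassemble the pieces. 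The main obstacle is exactly the step that is open in the general sparsification program: locality- and flatness-type arguments provide a \emph{local} sparse model around each vertex, but turning this local sparseness into a single globally weakly sparse class that is transduction-equivalent to $\Cc$ --- that is, controlling the adjacencies \emph{between} the local pieces without reintroducing dense bipartite patterns --- is where the argument currently stalls, and is the crux of why the statement remains conjectural.
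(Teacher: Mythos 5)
The statement you were asked to prove is labeled as a conjecture in the paper (\Cref{conj:BE}), and the paper offers no proof of it; it only establishes that \Cref{conj:BE} is equivalent to \Cref{conj:stable}, and separately that \Cref{conj:stable} (in its general form) is equivalent to the sparsifiability of all monadically stable classes. Your proposal correctly recognizes the statement as open rather than pretending to close it, and the reductions you do carry out are sound and match the paper's surrounding material: your observation that it suffices to find a weakly sparse transduction of unbounded expansion (since monadic stability is preserved by transductions and weakly sparse monadically stable classes are nowhere dense via \Cref{thm:adler} and \cite{DVORAK2018143}) is correct, and your first route --- deducing the conjecture from sparsifiability of monadically stable classes --- is exactly the content of the paper's equivalence between \Cref{conj:stable} and sparsifiability. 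Your second, more constructive route via the local normal form, \Cref{thm:rel-normal}, and gluing is a reasonable sketch of where a direct attack would go and where it stalls. In short: there is no gap to report because neither you nor the paper proves the statement; your analysis is consistent with, and essentially reproduces, the paper's own reductions of the conjecture to the sparsification problem.
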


\begin{prop}
	\Cref{conj:stable,conj:BE} are equivalent.
\end{prop}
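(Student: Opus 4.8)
The plan is to prove the two implications separately, viewing \Cref{conj:BE} as the instance of \Cref{conj:stable} in which $\Pi_s$ is the property of having bounded expansion. First I would record that bounded expansion is indeed a sparse transduction \dset: every bounded expansion class is weakly sparse, and a weakly sparse transduction of a bounded expansion class again has bounded expansion (\cite{DVORAK2018143}); its transduction closure $\Pi_s^{\downarrow}$ is exactly the structurally bounded expansion classes. Writing $\Pi_{\mathrm{BE}}$ for its dense analogue, the $\supseteq$ part of this instance reads: a monadically stable $\Cc\in\Pi_{\mathrm{BE}}$ is structurally bounded expansion. Unfolding non-membership through the groundedness-in-$\Sigma$ characterization of dense analogues, its contrapositive says that a monadically stable $\Cc$ that is not structurally bounded expansion admits some weakly sparse $\Dd\sqsubseteq_\FO\Cc$ outside $\Pi_s$, i.e.\ weakly sparse and not of bounded expansion; since $\Dd$ inherits monadic stability from $\Cc$, the weakly sparse form of \Cref{thm:adler} forces $\Dd$ to be nowhere dense. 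This is exactly the conclusion of \Cref{conj:BE}, and running the chain backwards shows \Cref{conj:BE} is equivalent to this instance. In particular \Cref{conj:stable}$\Rightarrow$\Cref{conj:BE} is immediate.

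For the converse, assume \Cref{conj:BE} and fix a sparse transduction \dset $\Pi_s$ all of whose members have bounded expansion, with dense analogue $\Pi$. The inclusion $\Pi_s^{\downarrow}\subseteq\{\text{monadically stable classes of }\Pi\}$ is routine: $\Pi_s^{\downarrow}\subseteq\Pi$ by maximality of the dense analogue, and each class of $\Pi_s^{\downarrow}$ is a transduction of a bounded expansion (hence nowhere dense, hence monadically stable) class, so is monadically stable since monadic stability is preserved by transductions. For the reverse inclusion, take $\Cc\in\Pi$ monadically stable. Because $\Pi$ is a transduction \dset with trace $\Pi_s$ on $\Sigma$, every weakly sparse transduction of $\Cc$ lies in $\Pi_s$ and hence has bounded expansion; thus $\Cc$ lies in $\Pi_{\mathrm{BE}}$, and \Cref{conj:BE}, in its equivalent instance form, shows that $\Cc$ is structurally bounded expansion.

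The remaining step, upgrading ``structurally bounded expansion'' to membership in $\Pi_s^{\downarrow}$, is where the main obstacle lies. The clean way to close it is a single sparsification principle, uniform in $\Pi_s$: every monadically stable structurally bounded expansion class $\Cc$ is transduction-equivalent to a weakly sparse class $\mathcal F$. Granting this, $\mathcal F$ is a weakly sparse transduction of $\Cc\in\Pi$, so $\mathcal F\in\Pi\cap\Sigma=\Pi_s$, and $\Cc\sqsubseteq_\FO\mathcal F$ yields $\Cc\in\Pi_s^{\downarrow}$, finishing the reverse inclusion and hence \Cref{conj:stable}. I expect this sparsification statement to be the crux: \Cref{conj:BE} is vacuous on classes that are already structurally bounded expansion, so it cannot by itself supply this input, which must instead be obtained as a separate (and presumably known) ingredient, e.g.\ by extracting it from the sparsification machinery of \cite{nevsetril2020rankwidth, gajarsky2022stable}. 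Once it is available, the equivalence follows at once.
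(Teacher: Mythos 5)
Your proof is correct and follows essentially the same route as the paper's: the forward direction is the instance/groundedness unfolding you describe, and the converse splits on whether the monadically stable class $\Cc$ has structurally bounded expansion, invoking \Cref{conj:BE} in the negative case and sparsification in the positive one. The step you isolate as the crux is exactly the step the paper also relies on — it simply asserts, without citation, that every structurally bounded expansion class is transduction-equivalent to a bounded expansion class, which is a known sparsification theorem from the structurally-bounded-expansion literature (see \cite{SBE_TOCL}) rather than the references you guess — so your identification of the needed external ingredient is accurate, and your argument closes once it is invoked.
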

\begin{proof}
 The only direction to prove is that \Cref{conj:BE} implies \Cref{conj:stable}. Assume \Cref{conj:BE} holds and let $\Pi_s$ be a sparse transduction \dset of bounded expansion classes with dense analogue $\Pi$.
 Let~$\Cc$ be a monadically stable class. If $\Cc$ does not have structurally bounded expansion, then there exists a nowhere dense class $\Dd\sqsubseteq\Cc$ that does not have bounded expansion. Thus, $\Dd\notin\Pi_s$ and $\Cc\notin\Pi$.
 Otherwise, $\Cc$ is transduction-equivalent  to a class $\Dd$ with bounded expansion. Then, 
 \begin{align*}
 \Cc\in\Pi\iff\Dd\in\Pi\iff\Dd\in\Pi_s\iff\Cc\in\Pi_s^{\downarrow}. \tag*{\qedhere}
\end{align*}
\end{proof}

One main difficulty in proving all these conjectures is the non-monotonicity of the involved classes. The following, if true, would provide a great tool. Recall that a graph $G$ is \emph{$k$-degenerate} if every induced subgraph of $G$ has a minimum degree at most $k$. A class of graphs $\mathscr C$ is \emph{degenerate} if there exists $k\in\mathbb{N}$ such that all the graphs in $\mathscr C$ are $k$-degenerate. 

\begin{conj}
	\label{conj:nondeg}
	Every weakly sparse non-degenerate class has a transduction to a monotone non-degenerate  class.
\end{conj}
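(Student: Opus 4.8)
The plan is to prove the conjecture by splitting $\mathscr{C}$ according to the nowhere dense/somewhere dense dichotomy, which for weakly sparse classes is controlled by \cite{DVORAK2018143}, and to exhibit an explicit monotone non-degenerate target in each case. The guiding observation is that the monotone closure $\mathscr{C}\,\nabla\,0$ is automatically the ideal candidate: it is monotone by definition, inherits nowhere/somewhere denseness from $\mathscr{C}$, and is non-degenerate whenever $\mathscr{C}$ is, since any subgraph of minimum degree exceeding $k$ witnessing non-$k$-degeneracy of $\mathscr{C}$ already lies in $\mathscr{C}\,\nabla\,0$ and has degeneracy exceeding $k$. So the entire problem reduces to transducing a suitable monotone non-degenerate class from $\mathscr{C}$.

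If $\mathscr{C}$ is somewhere dense, then being weakly sparse it contains, by \cite[Theorem~6]{DVORAK2018143}, a $\leq k$-subdivision of every complete graph for some fixed $k$. Decoding these bounded subdivisions by marking the branch vertices and joining two of them when they are linked by a marked path of length at most $k+1$ transduces the class $\Gg$ of all graphs, exactly as in the argument that monadic dependence is the dense analogue of nowhere dense. Since $\Gg$ is monotone and has unbounded degeneracy, it is the desired monotone non-degenerate transduction of $\mathscr{C}$, and this case is complete.

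The real difficulty is the nowhere dense case, where the target must itself be monadically stable and so cannot be $\Gg$; here we are reduced to proving $\mathscr{C}\,\nabla\,0\sqsubseteq_\FO\mathscr{C}$, i.e.\ that a nowhere dense class transduces its own monotone closure. The one general tool for transducing the monotone closure is \Cref{lem:monotone}, which glues, over a fixed palette, the trivial subgraph-extracting transductions on each pair of colour classes of a star colouring. The decisive obstruction is that this tool is structurally unavailable in our setting: a graph admitting a star colouring with $c$ colours is the union of $\binom{c}{2}$ star forests, hence has degeneracy at most $\binom{c}{2}$, so bounded star chromatic number forces bounded degeneracy. Consequently every non-degenerate class has unbounded star chromatic number, and the bounded-palette gluing of \Cref{lem:monotone} can never be run.

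To overcome this I would try to reconstruct only the dense witnesses instead of the whole monotone closure, working at a bounded scale where nowhere dense classes are much tamer. Concretely, I would invoke the local normal form \Cref{thm:normal} and the locality transfer of \Cref{thm:local-prop} to reduce to balls of bounded radius, on which the relevant generalized colouring numbers stay controlled, and attempt to extract from the family of minimum-degree-$>k$ subgraphs a monotone non-degenerate subclass that can be glued with boundedly many colours after this localization. Turning this into an actual transduction---producing a monotone non-degenerate class out of a nowhere dense, unbounded-star-chromatic-number class despite the failure of the only known gluing mechanism---is precisely the crux, and I expect it to be the main obstacle; indeed it is what leaves the conjecture open.
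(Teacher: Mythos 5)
The first thing to say is that \Cref{conj:nondeg} is presented in the paper as an open conjecture: there is no proof of it there, only a proof that it is equivalent to \Cref{conj:nondeg2}. So there is no hidden argument to compare yours against, and you are right to concede at the end that you have not closed the problem. The partial content you do give is correct. The somewhere dense case is dispatched exactly as the paper dispatches the analogous step in its discussion of dense analogues: a weakly sparse somewhere dense class contains $\leq k$-subdivisions of all complete graphs by \cite[Theorem 6]{DVORAK2018143} and hence transduces the class $\Gg$ of all graphs, which is monotone and non-degenerate. Your observations that the monotone closure $\Cc\,\nabla\,0$ is the natural target, that it is non-degenerate whenever $\Cc$ is, and that \Cref{lem:monotone} is unusable here because bounded star chromatic number forces bounded degeneracy, are all accurate and correctly locate the obstruction.

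What remains is the entire substance of the conjecture, and the localization strategy you sketch for it would not work as stated. The subgraphs of minimum degree greater than $k$ that witness non-degeneracy can have unbounded diameter, so they are invisible in balls of any fixed radius. The paper's own example of a nowhere dense class without bounded expansion, the graphs $G$ with $\Delta(G)\leq \mathrm{girth}(G)$, is weakly sparse and non-degenerate (it contains $d$-regular graphs of girth at least $d$ for every $d$), yet its members of large minimum degree have large girth, so for every fixed $r$ its class of radius-$r$ balls consists of trees together with graphs of bounded size and is therefore degenerate. Reducing to bounded-radius balls via \Cref{thm:normal} and \Cref{thm:local-prop} thus discards exactly the structure you would need to reconstruct, and the star chromatic number obstruction to gluing reappears on whatever dense witnesses survive. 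Your write-up should therefore be read as a correct reduction of the conjecture to its nowhere dense case together with an honest account of why the known tools fail there, not as a proof; the statement remains open in the paper and in your proposal alike.
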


\pagebreak
\begin{conj}
	\label{conj:nondeg2}
	For every weakly sparse class of graphs $\Cc$,
	\begin{itemize}
		\item either $\Cc$ has a transduction to its monotone closure,
		\item or $\Cc$ has a transduction to a monotone nowhere dense class with unbounded fractional chromatic number.
	\end{itemize}
\end{conj}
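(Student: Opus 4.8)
The plan is to organize the argument around the standard density trichotomy for weakly sparse classes and to concentrate the difficulty in a single case. Write $\Cc^{\uparrow}$ for the monotone closure of $\Cc$. Every weakly sparse class is either somewhere dense, nowhere dense with bounded expansion, or nowhere dense without bounded expansion, and I would dispatch the first two cases immediately.

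If $\Cc$ is somewhere dense then, being weakly sparse, by \cite{DVORAK2018143} there is an integer $k$ such that $\Cc$ contains a $\le k$-subdivision of every complete graph; a transduction contracting subdivision paths of bounded length then produces the class of all graphs, which contains $\Cc^{\uparrow}$, so the first alternative holds. If $\Cc$ has bounded expansion then it has bounded star chromatic number \cite{nevsetvril2008grad}, and \Cref{lem:monotone} shows that $\Cc^{\uparrow}$ is already an immersive transduction of $\Cc$; again the first alternative holds.

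Thus the entire content of the statement lies in the case where $\Cc$ is nowhere dense but not of bounded expansion. Here I would use the failure of bounded expansion directly: there is a depth $r$ for which $\Cc\,\widetilde\nabla\, r$ has unbounded average degree, so $\Cc$ contains $\le 2r$-subdivisions of graphs $H_n$ of unbounded average degree, while nowhere denseness forces $\omega(H_n)$ to be bounded. Marking branch vertices together with the bounded-length subdivision paths yields a transduction extracting the class $\{H_n : n\in\N\}$, which is weakly sparse and contained in the monotone nowhere dense class $\Cc\,\widetilde\nabla\, r$. The natural next move is to split on the fractional chromatic number of the monotone closure $\mathcal M$ of $\{H_n : n\}$, which still lies inside $\Cc\,\widetilde\nabla\, r$, hence is monotone and nowhere dense, and whose supremum of $\chi_f$ equals that of $\{H_n : n\}$. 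If $\chi_f(\mathcal M)$ is unbounded, then $\mathcal M$ is exactly the monotone nowhere dense class of unbounded fractional chromatic number asked for in the second alternative; if it is bounded, one aims to leverage this control to transduce $\Cc^{\uparrow}$ and land in the first alternative.

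The main obstacle is twofold, and is precisely the non-monotonicity difficulty flagged before \Cref{conj:nondeg}. First, transducing a monotone closure is exactly what \Cref{lem:monotone} delivers from bounded star chromatic number, and in the present case that hypothesis fails; so even producing the monotone target $\mathcal M$, or producing $\Cc^{\uparrow}$ from $\Cc$ in the bounded-$\chi_f$ branch, seems to require a genuinely new transduction-aware colouring result that converts control of the fractional chromatic number on a nowhere dense class into a bounded decomposition into pieces that the gluing construction of \Cref{lem:gluing} can handle. Second, bounded fractional chromatic number is strictly weaker than the structural colourings of sparsity theory, since high degeneracy coexists with $\chi_f$ close to $2$ (as for dense bipartite graphs), so the bounded-$\chi_f$ branch cannot simply invoke existing tools. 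A sensible way to make progress would be to first settle \Cref{conj:nondeg}, reducing the target to a monotone non-degenerate class where the full colouring machinery of bounded-expansion and nowhere-dense theory is available, and then to propagate control of $\chi_f$ through that reduction; it is this missing bridge that keeps the statement at the level of a conjecture.
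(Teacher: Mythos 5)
First, a framing point: \Cref{conj:nondeg2} is stated in the paper as a conjecture — the paper offers no proof of it — and your proposal, which ends by conceding that the hard branch is open, is therefore not in competition with any proof. What the paper does prove is that \Cref{conj:nondeg2} is equivalent to \Cref{conj:nondeg}, and it is instructive to compare your reduction with that one. Your two easy cases are exactly what the paper does: a somewhere dense weakly sparse class contains bounded-depth induced subdivisions of all complete graphs by \cite{DVORAK2018143} and hence transduces the class of all graphs, and a bounded expansion class has bounded star chromatic number so \Cref{lem:monotone} applies. The divergence is in the remaining case (nowhere dense, unbounded expansion). You pivot on the fractional chromatic number of the extracted shallow topological minors and get stuck when it is bounded; the paper pivots on degeneracy instead. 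If $\Cc$ is degenerate, \cite{DVORAK2018143} yields shallow \emph{induced} subdivisions of graphs of arbitrarily large minimum degree, hence a transduction onto a monotone non-degenerate nowhere dense class; if $\Cc$ is non-degenerate, \Cref{conj:nondeg} is invoked to reach the same target; and then the result cited as \cite{chi_f} — that a monotone nowhere dense non-degenerate class contains $1$-subdivisions of graphs of unbounded fractional chromatic number — supplies exactly the bridge from non-degeneracy to unbounded $\chi_f$ that your bounded-$\chi_f$ branch lacks. So the ``missing bridge'' you identify is real, but the paper routes around $\chi_f$ entirely and concentrates the difficulty in the purely combinatorial \Cref{conj:nondeg}; your closing suggestion to first settle \Cref{conj:nondeg} is precisely the route the paper takes.

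Two smaller defects in your hard case should also be flagged. Your transduction extracting $\{H_n\}$ is not justified as written: the definition of $\Cc\,\widetilde\nabla\,r$ only provides the $\le 2r$-subdivision of $H_n$ as a \emph{subgraph} of some $G\in\Cc$, so the host graph may carry arbitrary extra edges among the marked vertices, and recovering $H_n$ by a first-order formula is exactly the monotone-closure step that \Cref{lem:monotone} performs only under bounded star chromatic number — a hypothesis unavailable here. (The results that do apply, such as \cite{DVORAK2018143}, produce \emph{induced} subdivisions, which is why the paper uses them.) Second, the claim that $\{H_n : n\in\N\}$ is weakly sparse is false in general: a $K_{s,s}$-free graph can contain a short subdivision of a large biclique, so shallow topological minors of a weakly sparse class may contain arbitrarily large bicliques. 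This does not damage the rest of your argument (you only need the target class to be monotone and nowhere dense), but the assertion should be dropped.
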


Though \Cref{conj:nondeg2} seems to be stronger than \Cref{conj:nondeg}, this is not the case.

\begin{prop}
	\Cref{conj:nondeg,conj:nondeg2} are equivalent.
\end{prop}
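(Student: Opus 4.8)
The plan is to prove the two implications separately, with \Cref{conj:nondeg2}~$\Rightarrow$~\Cref{conj:nondeg} being routine and \Cref{conj:nondeg}~$\Rightarrow$~\Cref{conj:nondeg2} carrying the content. Two elementary observations are used throughout. First, the monotone closure of a non-degenerate class is again non-degenerate, since degeneracy is inherited by subgraphs and is therefore unchanged by taking all subgraphs. Second, a $k$-degenerate graph is $(k+1)$-colorable and hence has fractional chromatic number at most $k+1$; thus a \emph{degenerate} monotone class has bounded fractional chromatic number, and contrapositively a monotone class of unbounded fractional chromatic number is non-degenerate.

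For \Cref{conj:nondeg2}~$\Rightarrow$~\Cref{conj:nondeg}, I would take a weakly sparse non-degenerate class $\Cc$ and apply \Cref{conj:nondeg2}. In the first case $\Cc$ transduces its monotone closure, which by the first observation is monotone and non-degenerate, so it witnesses \Cref{conj:nondeg}. In the second case $\Cc$ transduces a monotone nowhere dense class of unbounded fractional chromatic number, which by the second observation is non-degenerate, and again witnesses \Cref{conj:nondeg}.

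For the converse, assume \Cref{conj:nondeg} and let $\Cc$ be weakly sparse. I would split according to the density of $\Cc$. If $\Cc$ is somewhere dense, then by \cite{DVORAK2018143} it contains $\le k$-subdivisions of all complete graphs for a fixed $k$, and un-subdividing bounded-length subdivisions (exactly as in the monadic-dependence example above) shows that $\Cc$ transduces the class of all graphs, in particular its own monotone closure; this is the first option. If $\Cc$ has bounded expansion, then it has bounded star chromatic number \cite{nevsetvril2008grad}, so by \Cref{lem:monotone} it transduces its monotone closure, again the first option. In the remaining case $\Cc$ is nowhere dense but not of bounded expansion, so $\Cc\,\widetilde\nabla\,r$ has unbounded average degree for some $r$ \cite{POMNI}; hence $\Cc$ contains, as (induced, by weak sparsity) subgraphs, $\le 2r$-subdivisions of graphs $M_i$ of unbounded edge density, and un-subdividing these bounded-length subdivisions exhibits the non-degenerate class $\{M_i\}$ as a transduction of $\Cc$. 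This class is weakly sparse, since a large $K_{t,t}$ inside some $M_i$ would yield a bounded-depth $K_t$-minor of $\Cc$ with $t$ unbounded, contradicting nowhere-denseness. Applying \Cref{conj:nondeg} to $\{M_i\}$ then produces a monotone non-degenerate class $\Dd$ transducible from $\Cc$; and since $\Cc$ is weakly sparse and nowhere dense, hence monadically dependent (\Cref{thm:adler} together with \cite{DVORAK2018143}), $\Cc$ does not transduce all graphs, so $\Dd$ cannot be somewhere dense. Thus $\Dd$ is monotone, nowhere dense, and non-degenerate.

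The main obstacle is the final step: to reach the second option of \Cref{conj:nondeg2} I must promote this monotone, nowhere dense, non-degenerate $\Dd$ to a monotone nowhere dense class of \emph{unbounded fractional chromatic number} still transducible from $\Cc$. One cannot simply take $\Dd$ itself, because a monotone nowhere dense class may be non-degenerate yet have bounded fractional chromatic number — high-girth bipartite graphs of unbounded degree are the prototype. The plan is to exploit the unbounded degeneracy of $\Dd$ to transduce a denser graph — for instance a distance-$2$ graph on one side of a high-girth, high-minimum-degree member, which folds a large bipartite-type neighbourhood into a triangle-free graph of small independence ratio — while using the high girth of $\Dd$ to argue that nowhere-denseness is preserved. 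Establishing in full generality that every monotone non-degenerate nowhere dense class transduces a monotone nowhere dense class of unbounded fractional chromatic number is the crux, and the step I expect to require the most care.
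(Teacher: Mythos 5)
Your overall architecture matches the paper's: the direction \Cref{conj:nondeg2}~$\Rightarrow$~\Cref{conj:nondeg} via the two observations (monotone closures preserve non-degeneracy; unbounded fractional chromatic number forces non-degeneracy) is exactly what the paper leaves as ``obvious,'' and your three-way split of the converse (somewhere dense / bounded expansion / nowhere dense but not bounded expansion) reduces, just as the paper does, to the situation of a monotone, nowhere dense, non-degenerate class $\Dd$ transducible from $\Cc$. The intermediate manipulations (extracting the dense topological minors $M_i$, checking they are weakly sparse, applying \Cref{conj:nondeg} to them, and using \Cref{thm:adler} plus \cite{DVORAK2018143} to conclude $\Dd$ is nowhere dense) are all sound.

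However, the step you flag as the crux is a genuine gap, and it is precisely the step that carries the mathematical content of the proposition. The paper closes it by invoking an external theorem (cited as \cite{chi_f}): every monotone, nowhere dense, non-degenerate class contains $1$-subdivisions of graphs with unbounded fractional chromatic number; un-subdividing then yields the required monotone nowhere dense target class of unbounded fractional chromatic number. Your proposed substitute --- taking a distance-$2$ graph on one side of a ``high-girth, high-minimum-degree member'' --- is not carried out and is not obviously repairable: a general monotone non-degenerate nowhere dense class is only guaranteed to contain subgraphs of unbounded minimum degree, not high-girth ones (extracting high-girth dense subgraphs is itself a nontrivial result), and you would additionally need to verify that the derived power-graph class remains nowhere dense and genuinely has unbounded \emph{fractional} chromatic number rather than merely unbounded degeneracy (your own counterexample of high-girth bipartite graphs shows these can come apart). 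As written, the proposal establishes the equivalence only modulo this unproved lemma, so it does not constitute a complete proof; the missing ingredient is exactly the result the paper imports from \cite{chi_f}.
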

\begin{proof}
	On the one hand, \Cref{conj:nondeg2} obviously implies \Cref{conj:nondeg}. On the other hand, assume \Cref{conj:nondeg} holds and let $\Cc$ be a weakly sparse class of graphs. If $\Cc$ has bounded expansion or is not monadically dependent, then $\Cc$ has a transduction to its monotone closure. Otherwise, $\Cc$ is nowhere dense but not bounded expansion. If $\Cc$ is degenerate, then it follows from \cite{DVORAK2018143} that $\Cc$ contains an shallow induced subdivision of graphs with arbitrarily large minimum degree, thus transduces onto a non-degenerate monotone (monadically dependent) class. On the other hand, if $\Cc$ is non-degenerate, we have the same property according to \Cref{conj:nondeg}.
	Thus, we can reduce to the case where $\Cc$ is a non-degenerate monotone  monadically dependent  class. In particular, $\Cc$ is nowhere dense. 
	Then it follows from \cite{chi_f} that $\Cc$ contains $1$-subdivisions of graphs with unbounded fractional chromatic number. 
\end{proof}

\bibliographystyle{alphaurl}
\bibliography{ref}

\end{document}